\def\@oddhead{\hfill \shorttitle \hfill \thepage}
\def\@evenhead{\thepage \hfill \shortauthor \hfill}
\def\@oddfoot{}
\def\@evenfoot{}
\newtheorem{Thm}{Theorem}
\newtheorem{Cor}[Thm]{Corollary}
\newtheorem{Lem}[Thm]{Lemma}
\newtheorem{Con}[Thm]{Conjecture}
\newtheorem{Rem}[Thm]{Remark}
\newtheorem{Pro}[Thm]{Proposition}
\newcommand{\Energy}{{\text{E}}}
\newcommand{\ddd}{d}  
\newcommand{\Vol}{{\text{Vol}}}
\newcommand{\nn}{{\bf{n}}}
\newcommand{\Ric}{{\text{Ric}}}
\newcommand{\dist}{{\text {dist}}}
\newcommand{\Hess}{{\text {Hess}}}
\def\ZZ{{\bold Z}}
\def\RR{{\bold R}}
\def\SS{{\bold S}}
\def\CC{{\bold C }}
\newcommand{\dv}{{\text {div}}}
\newcommand{\e}{{\text {e}}}
\newcommand{\Area}{{\text {Area}}}
\newcommand{\Length}{{\text {Length}}}
\newcommand{\cT}{{\mathcal{T}}}
\newcommand{\cI}{{\mathcal{I}}}
\newcommand{\cSt}{{\mathcal{S}_{neck}}}
\newcommand{\cSu}{{\mathcal{S}_{ulsc}}}
\newcommand{\cB}{{\mathcal{B}}}
\newcommand{\cF}{{\mathcal{F}}}
\newcommand{\cL}{{\mathcal{L}}}
\newcommand{\cM}{{\mathcal{M}}}
\newcommand{\cP}{{\mathcal{P}}}
\newcommand{\cS}{{\mathcal{S}}}
\newcommand{\eqr}[1]{(\ref{#1})}
\date{}
\title{\ \\[0.4cm] \ \\ \bf  Minimal surfaces and mean curvature flow}
\author{Tobias H. Colding\footnote{MIT, Dept. of Math.
77 Massachusetts Avenue, Cambridge, MA 02139-4307.}\hspace{2mm}
and William P. Minicozzi II\footnote{Johns Hopkins University,
Dept. of Math.
3400 N. Charles St.
Baltimore, MD 21218}}
\begin{document}

\maketitle


\thispagestyle{empty}

\begin{abstract}
\vskip 3mm\footnotesize{

\vskip 4.5mm

We discuss recent results on minimal surfaces and mean curvature flow, focusing on the classification and structure of embedded minimal surfaces and the stable singularities of mean curvature flow.   
This article is dedicated to Rick Schoen.

\vspace*{2mm}
{\bf 2000 Mathematics Subject Classification:}

\vspace*{2mm}
{\bf Keywords and Phrases: }}

\end{abstract}
\vspace*{-8.2cm}
\vspace*{2mm} \hspace*{82mm}
\begin{picture}(41,10)(0,0)\thicklines\setlength{\unitlength}{1mm}
\put(0,2){\line(1,0){41}} \put(0,16){\line(1,0){41}}%
\put(0,12.){\sl \copyright\hspace{1mm}Higher Education Press}
\put(0,7.8){\sl \hspace*{4.8mm}and International Press}%
\put(0,3.6){{\sl \hspace*{4.8mm}Beijing-Boston} }
\end{picture}

\vspace*{-17.6mm}{{\sl The title of\\
This book*****}\\ALM\,?, pp.\,1--?} 
\vspace*{7.6cm}
\section{Introduction}

The main focus of this survey is on minimal surfaces and
mean curvature flow, but to put these topics in perspective we begin with more elementary analysis of the
energy of curves and functions.  This leads us to first variation formulas for energy and critical points for those.
The critical points are   of course geodesics and harmonic functions, respectively.  We continue by considering the gradient, or rather the negative gradient, flow for energy which leads us to the curve shortening flow and the heat equation.   
Having touched upon these more elementary topics, we move on to one of our main topics which is minimal surfaces.  We discuss first and second variations for area and volume and the gradient (or rather negative) gradient flow for area and volume which is the mean curvature flow.   Beginning as elementary as we do allows us later in the survey to draw parallels from the more advanced topics to the simpler ones.

The other topics that we cover are the Birkhoff min-max argument that produces closed geodesics and its higher dimensional analog that gives existence of closed immersed minimal surfaces.  We discuss stable and unstable critical points and index of critical points and eventually discuss the very recent classification of all stable self-similar shrinkers for the mean curvature flow.  For minimal surfaces, stability and Liouville type theorems have played a major role in later developments and we touch upon the Bernstein theorem that is the minimal surface analog of the Liouville theorem for harmonic functions and the curvature estimate that is the analog of the gradient estimate.  We discuss various monotone quantities under curve shortening and mean curvature flow like Huisken's volume, the width, and isoperimetric ratios of Gage and Hamilton.  We explain why Huisken's monotonicity leads to that blow ups of the flow at singular points in space time can be modeled by self-similar flows and explain why the classification of stable self-similar flows is expected to play a key role in understanding of generic mean curvature flow where the flow begins at a hypersurface in generic position.  One of the other main topics of this survey is that of embedded minimal surfaces where we discuss some of the classical examples going back to Euler and Monge's student Meusnier in the 18th century and the recent examples of Hoffman-Weber-Wolf and various examples that date in between.  The final main results that we discuss are the recent classification of embedded minimal surfaces and some of the uniqueness results that are now known.

\vskip2mm
It is a great pleasure 
for us to dedicate this article to Rick Schoen.

\section{Harmonic functions and the heat equation}

We begin with a quick review of the energy functional on functions, where the critical points are called harmonic functions and the gradient flow is the heat equation.  This will give some context for
the main topics of this survey, minimal surfaces and mean curvature flow, that are critical points and gradient flows, respectively, for the area functional.

\subsection{Harmonic functions}

Given a differentiable function
$
u:\RR^n\to \RR\notag \, ,
$
the energy is defined to be
\begin{equation}	\label{e:energy}
\Energy (u)=\frac{1}{2}\int |\nabla u|^2
=\frac{1}{2}\int \left(\left|\frac{\partial u}{\partial x_1}\right|^2+\cdots+\left|\frac{\partial u}{\partial x_n}\right|^2\right) \, . 
\end{equation}
This gives a functional defined on the space of functions.  We can construct a curve in the space of functions by taking a smooth function $\phi$ with compact support and considering the one-parameter family of functions
$
u + t \, \phi \, .
$
Restricting the energy functional to this curve gives
\begin{align}
\Energy (u+t\phi)=\frac{1}{2}\int|\nabla (u+t\,\phi)|^2
=\frac{1}{2}\int |\nabla u|^2+t\int \langle u,\nabla \phi\rangle+\frac{t^2}{2}\int |\nabla \phi|^2\, . 
\end{align}
Differentiating at $t=0$, we get that the directional derivative of the energy functional (in the direction $\phi$) is 
\begin{align}
\frac{d}{dt}_{t=0} \Energy (u+t\,\phi) = \int \langle \nabla u,\nabla \phi \rangle=-\int \phi \,\Delta u\, , 
\end{align}
where the last equality used the divergence theorem and the fact that $\phi$ has compact support.
We conclude that:

\begin{Lem}
The directional derivative
$\frac{d}{dt}_{t=0}\Energy =0 $ for all  $\phi$ if and only if
\begin{equation}	\label{e:harmonic}
\Delta u=\frac{\partial^2u}{\partial x_1^2}+\cdots+\frac{\partial^2u}{\partial x_n^2}=0\, . 
\end{equation}
\end{Lem}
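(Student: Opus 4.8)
The plan is to read the equivalence straight off the first-variation computation already carried out above, namely that for every smooth compactly supported $\phi$ one has
\[
\frac{d}{dt}_{t=0}\Energy(u+t\,\phi) = \int \langle \nabla u,\nabla \phi\rangle = -\int \phi\,\Delta u \, .
\]
The forward implication is then immediate: if $\Delta u \equiv 0$, the right-hand side vanishes identically in $\phi$, so the directional derivative is zero in every direction $\phi$.

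For the reverse implication I would argue by contraposition, using the standard bump-function trick. Suppose $\Delta u$ is not identically zero; after replacing $u$ by $-u$ if necessary we may assume $\Delta u(x_0) > 0$ at some point $x_0 \in \RR^n$. Since $u$ is twice differentiable, $\Delta u$ is continuous, so there is a ball $B_r(x_0)$ on which $\Delta u$ stays bounded below by a positive constant. Now choose $\phi$ to be smooth, nonnegative, not identically zero, and supported in $B_r(x_0)$ — for instance a translated and rescaled copy of the standard mollifier. Then $-\int \phi\,\Delta u < 0$, hence $\frac{d}{dt}_{t=0}\Energy(u+t\,\phi)\neq 0$ for this particular $\phi$, and $u$ is not a critical point. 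Contrapositively, if the directional derivative vanishes for all $\phi$, then $\Delta u \equiv 0$.

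The only step needing any real care — and it is entirely routine — is this reverse direction, which is the fundamental lemma of the calculus of variations: one must know the space of test functions is rich enough to detect the pointwise sign of a continuous function, and the explicit bump function supplies exactly that. Putting the two implications together yields the stated equivalence. (Alternatively one could phrase the reverse direction as: $\phi \mapsto -\int \phi\,\Delta u$ is the zero distribution, hence $\Delta u = 0$ as a distribution, hence pointwise since $\Delta u$ is continuous; but the bump-function argument is more self-contained and matches the elementary tone here.)
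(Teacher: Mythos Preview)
Your proposal is correct and matches the paper's approach: the paper simply states the lemma as an immediate consequence of the first-variation identity $\frac{d}{dt}\big|_{t=0}\Energy(u+t\phi)=-\int\phi\,\Delta u$, leaving the fundamental lemma of the calculus of variations implicit. You have spelled out that implicit step with the standard bump-function argument, which is exactly what is needed and nothing more.
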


Thus, we see that the critical points for energy are the functions $u$ with
  $\Delta u = 0$;  these are called  harmonic functions.   In fact, something stronger is true.
Namely, 
harmonic functions are not just critical points for the energy functional, 
but are actually minimizers.

\begin{Lem}
If $\Delta u = 0$ on 
  a bounded domain $\Omega$ and
$\phi$ vanishes on $\partial \Omega$, then 
\begin{align}
	\int_{\Omega} \left| \nabla (u+ \phi) \right|^2  = \int_{\Omega} \left| \nabla u \right|^2  
	 + \int_{\Omega} \left| \nabla \phi \right|^2 
	 \, . \notag 
\end{align}
\end{Lem}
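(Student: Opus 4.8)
The plan is simply to expand the square in the integrand and integrate by parts, exactly as in the first-variation computation above. Write $\nabla(u+\phi)=\nabla u+\nabla\phi$, so that pointwise
\[
	|\nabla(u+\phi)|^2=|\nabla u|^2+2\,\langle\nabla u,\nabla\phi\rangle+|\nabla\phi|^2\,.
\]
Integrating this identity over $\Omega$, the claimed equality is equivalent to the vanishing of the cross term $\int_\Omega\langle\nabla u,\nabla\phi\rangle$, so everything reduces to that one point.

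To handle the cross term, I would apply the divergence theorem to the vector field $\phi\,\nabla u$ on $\Omega$, which gives
\[
	\int_\Omega\langle\nabla u,\nabla\phi\rangle
	=\int_{\partial\Omega}\phi\,\langle\nabla u,\nn\rangle
	-\int_\Omega\phi\,\Delta u\,,
\]
where $\nn$ denotes the outward unit normal along $\partial\Omega$. The boundary integral vanishes since $\phi\equiv 0$ on $\partial\Omega$, and the remaining interior integral vanishes since $\Delta u=0$ on $\Omega$ by hypothesis. Hence $\int_\Omega\langle\nabla u,\nabla\phi\rangle=0$, and the stated identity follows at once.

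There is no real obstacle here; the only thing to be mildly careful about is regularity, so that the divergence theorem is legitimately applicable — e.g.\ $u\in C^2(\Omega)$, $\phi\in C^1(\overline\Omega)$ with $\phi|_{\partial\Omega}=0$, and $\partial\Omega$ piecewise smooth — under which hypotheses the boundary term above is well defined and the computation is valid verbatim. If one prefers to avoid any boundary-regularity discussion, one can instead prove the identity first for $\phi\in C_c^\infty(\Omega)$ using the compactly supported form of the divergence theorem (as already used above in the first variation) and then obtain the general case by approximation. Finally, note that this identity immediately yields the asserted minimizing property: since $\int_\Omega|\nabla\phi|^2\ge 0$, with equality only when $\phi$ is constant and hence $\phi\equiv 0$ by the boundary condition, the harmonic function $u$ is in fact the \emph{unique} minimizer of $\Energy$ among all functions with the same boundary values.
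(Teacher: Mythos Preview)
Your proof is correct and follows essentially the same approach as the paper: both expand $|\nabla(u+\phi)|^2$ and apply the divergence theorem to $\phi\,\nabla u$ to kill the cross term, using $\phi|_{\partial\Omega}=0$ and $\Delta u=0$. Your additional remarks on regularity and uniqueness of the minimizer are fine but go slightly beyond what the paper records.
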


\begin{proof}
Since $\Delta u = 0$ and $\phi$ vanishes on $\partial \Omega$, the divergence theorem gives
\begin{equation}
	0 = \int_{\Omega} \dv \, \left( \phi \nabla  u \right) = \int_{\Omega} \langle \nabla \phi , \nabla u \rangle \, .
\end{equation}
So we conclude that
\begin{align}
	\int_{\Omega} \left| \nabla (u+ \phi) \right|^2 &= \int_{\Omega} \left| \nabla u \right|^2 + 2 \, 
	 \langle \nabla \phi , \nabla u \rangle 
	 + \left| \nabla \phi \right|^2  \notag \\
	 & = \int_{\Omega} \left| \nabla u \right|^2  
	 + \int_{\Omega} \left| \nabla \phi \right|^2 
	 \, . \notag 
\end{align}
\end{proof}

\subsection{The heat equation}

The heat equation is the 
(negative) gradient flow (or steepest descent) for the energy functional.  This means that we evolve a function $u(x,t)$ over time in the direction of its Laplacian $\Delta u$, giving the linear parabolic heat equation
\begin{equation}
\frac{\partial u}{\partial t}   =\Delta u\, . 
\end{equation}

Given any finite energy solution $u$ of the heat equation that decays fast enough to justify integrating by parts, the energy is non-increasing along the flow.  In fact, we have
\begin{align}
\frac{d}{dt} E(u) =-\int \frac{\partial u}{\partial t}   \,\Delta u = -\int (\Delta u)^2 \, .\notag
\end{align}
 
 Obviously, harmonic functions are fixed points, or {\emph{static solutions}}, of the flow.  

 \subsection{Negative gradient flows near a critical point}

We are interested in the dynamical properties of the heat equation near a harmonic function.  Before getting to this, it is useful to recall the simple finite dimensional case.  Suppose therefore that $f: \RR^2 \to \RR$ is a smooth function with a non-degenerate critical point at $0$ (so $\nabla f (0) = 0$ but the Hessian of $f$ at $0$ has rank $2$).  The behavior of the negative gradient flow
$$
	(x' , y') = - \nabla f (x,y) 
$$
is determined by the Hessian of $f$ at $0$.

The behavior  depends on the index of the critical point, as is illustrated by the following examples:
\begin{enumerate}
\item[(Index 0):]
The function $f(x,y) = x^2 + y^2$ has a minimum at $0$.  The vector field is
$(-2x, -2y)$ and the flow lines are rays into the origin.  Thus every flow line limits to $0$.
\item[(Index 1):] The function $f(x,y) = x^2 - y^2$ has an index one critical point at $0$.  The vector field is
  $(-2x, 2y)$ and the flow lines are 
level sets of the function $h(x,y) = xy$.  
Only points where $y=0$ are on flow lines that limit to the origin. 
\item[(Index 2):]
 The function $f(x,y) = - x^2 - y^2$ has a maximum at $0$.  The vector field is
$(2x, 2y)$ and the flow lines are rays out of the origin.  Thus every flow line limits to $\infty$ and it is impossible to reach $0$.
\end{enumerate}

Thus, we see that the critical point $0$ is ``generic'', or dynamically stable, if and only if it has index $0$.  When the index is positive, the critical point is not generic and a ``random'' flow line will miss the critical point.

\vskip8mm
\includegraphics[totalheight=.35\textheight, width=.9\textwidth]{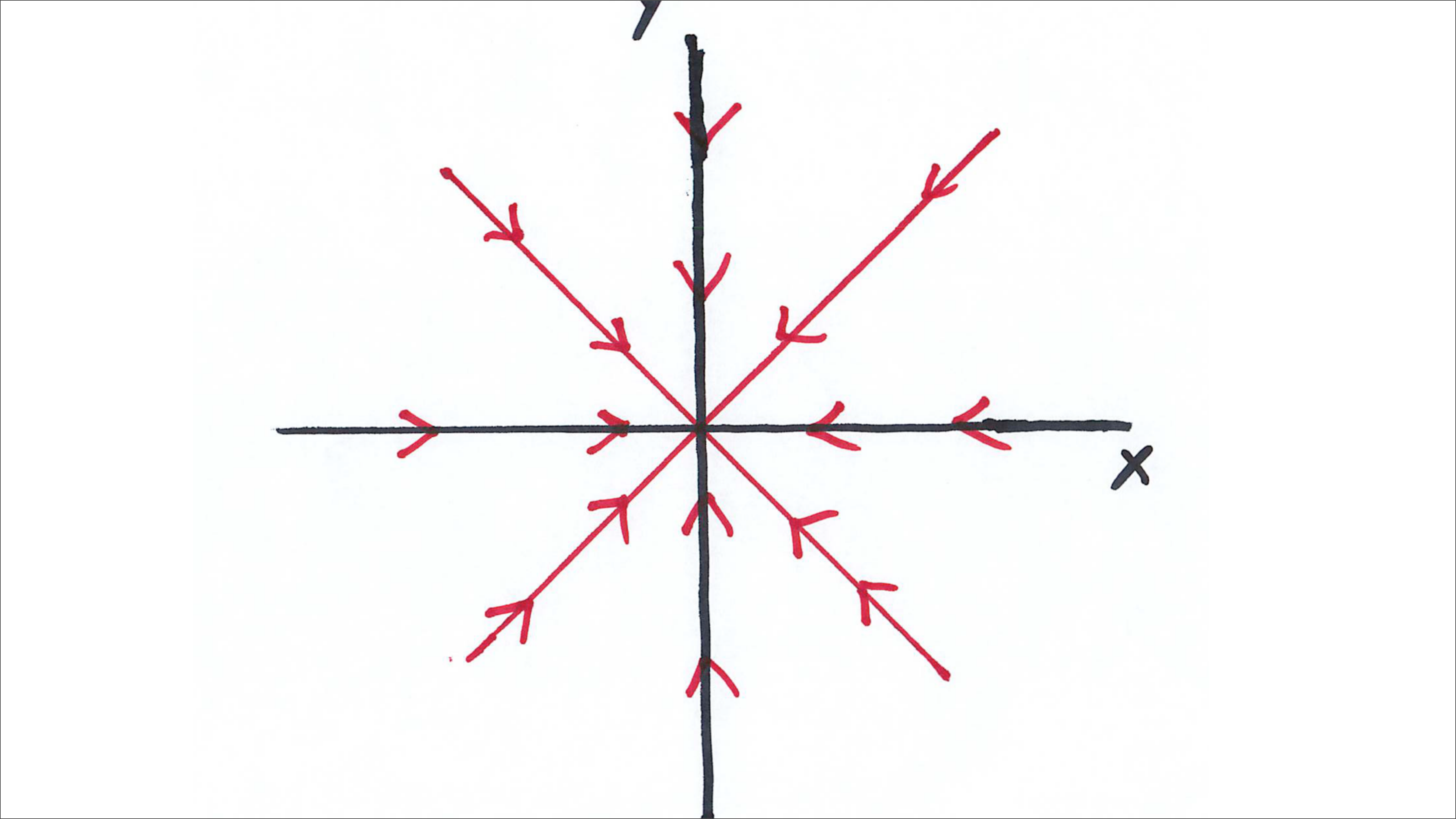}
\vskip1mm
\noindent
$f(x,y) = x^2 + y^2$ has a minimum at $0$.
Flow lines: Rays through the origin.

\vskip8mm
\includegraphics[totalheight=.35\textheight, width=.9\textwidth]{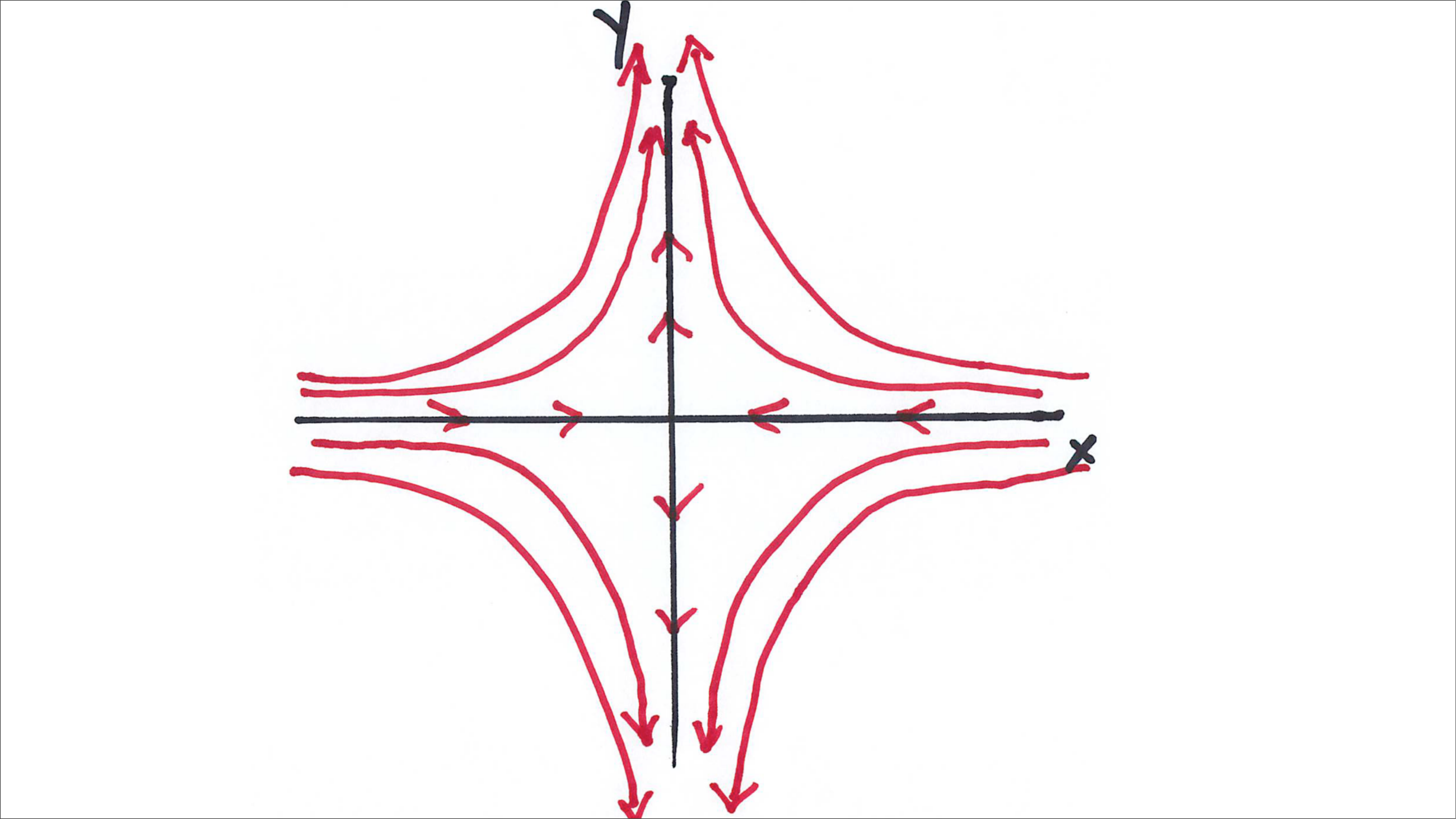}
\vskip1mm
\noindent
$f(x,y) = x^2 - y^2$ has an index one critical point at $0$.
Flow lines:  Level sets of   $ xy$.

\noindent
Only points where $y=0$ limit to the origin.

\subsection{Heat flow near a harmonic function}

To analyze the dynamical properties of the heat flow, 
suppose first that  $u$ satisfies the heat equation on a bounded domain $\Omega$ with $u=0$ on $\partial \Omega$.  By the maximum principle, a harmonic function that vanishes on $\partial \Omega$ is identically zero.  Thus, we expect that $u$ limits towards $0$.  We show this next.
 
Since $u=0$ on $\partial \Omega$, applying the divergence theorem to $u \nabla u$ gives
\begin{equation}
	\int_{\Omega} |\nabla u |^2 = - \int_{\Omega} u \, \Delta u \, . \notag 
\end{equation}
Applying Cauchy-Schwarz and then the Dirichlet Poincar\'e inequality gives
\begin{equation}
	\left( \int_{\Omega} |\nabla u |^2 \right)^2 \leq
	  \int_{\Omega} u^2 \, \int_{\Omega} \left( \Delta u  \right)^2 
	  \leq  C\,  \int_{\Omega} \left| \nabla  u \right|^2 \, \int_{\Omega} \left( \Delta u  \right)^2 
	  \, . \notag 
\end{equation}
Finally, dividing both sides by $\int_{\Omega} \left| \nabla  u \right|^2$ gives
\begin{equation}
	2 \, E(t) \equiv 2\, \Energy (u(\cdot , t)) = \int_{\Omega} |\nabla u |^2   
	  \leq  C\,    \int_{\Omega} \left( \Delta u  \right)^2 = - C \, E'(t)
	  \, , \notag
\end{equation}
where $C= C(\Omega)$ is the constant from the Dirichlet Poincar\'e inequality.  Integrating this 
gives  that $E(t)$ decays exponentially with
$$
E(t) \leq E(0) \, \e^{- \frac{2}{C} \, t } \, .
$$
Finally, another application of the Poincar\'e inequality shows that $\int u^2 (x,t)$ also decays exponentially in $t$, as we expected.

\vskip2mm
If $u$ does not vanish on $\partial \Omega$, there is a unique harmonic function $w$ with
\begin{equation}
	u \big|_{\partial \Omega} = w \big|_{\partial \Omega} \, .  \notag
\end{equation}
It follows that $(u-w)$ also solves the heat equation and is zero on $\partial \Omega$.
By the previous argument,   $(u-w)$ decays exponentially  and we conclude that all harmonic functions are attracting critical points of the flow.
Since we have already shown that harmonic functions are minimizers for the energy functional, and thus index zero critical points, this is 
exactly what the finite dimensional toy model suggests.

\section{Energy of a curve}

Geodesics in a Riemannian manifold $M^n$ arise variationally in two ways.  They are critical points of the energy functional restricted to  maps into $M$ (generalizing harmonic functions) and they are also critical points of the length functional.  We will first analyze the energy functional on curves.

\subsection{Critical points for energy are geodesics}

Suppose $\gamma$ is a closed curve in a Riemannian manifold $M^n$, i.e., 
\begin{equation}
\gamma :\SS^1\to M \,  , \notag
\end{equation}
where the circle $\SS^1$ is identified with $\RR / 2\pi \, \ZZ$.
The energy of $\gamma$ is 
\begin{equation}
\Energy (\gamma)=\frac{1}{2}\int_{\SS^1}|\gamma'|^2\, .\notag
\end{equation}
 A variation of $\gamma$ is a curve  in the space of curves that goes through $\gamma$.  We can specify this   by a map
$$F:\SS^1\times [-\epsilon,\epsilon]\to M^n $$
with $F(\cdot,0)=\gamma$.   The variation vector field $V$ is the tangent vector to this path given by
 $V=\frac{\partial F}{\partial t}$.  An easy calculation shows that
\begin{equation}
\frac{d}{dt} \big|_{t=0}\Energy (\gamma (\cdot , t))=\int \langle \gamma',F_{s,t}\rangle=-\int \langle \gamma'', V \rangle\, ,\notag
\end{equation}
where $\gamma'' = \nabla_{\gamma'} \gamma'$.
We conclude that
\begin{align}
\frac{d}{dt} \big|_{t=0}E=0 \text{  for all  }V  \notag
\end{align}
if and only if $\gamma'' = 0$.  Such a curve is called a geodesic.

\subsection{Second variation of energy of a curve in a surface}

We have seen that a closed geodesic 
 $\gamma: \SS^1\to M^2$ in a surface $M$   is a critical point for energy.   The hessian of the energy functional is given by the second variation formula.  For simplicity, we assume that
 $|\gamma'|=1$ and $V= \phi \, \nn$ is a normal variation  where $\nn$ is the unit normal to $\gamma$, so $V'=\phi' \,\nn$.   We compute
\begin{align}
\frac{d^2}{dt^2}_{t=0}E(t)&=\int \left(|F_{s,t}|^2-\langle \gamma',F_{s,tt}\rangle \right)
=\int \left(|F_{t,s}|^2-\langle \gamma',F_{s,tt}\rangle \right)\notag\\
&=\int \left(|V'|^2-\langle \gamma',F_{s,tt}\rangle \right)=\int \left( |\phi'|^2-K\,\phi^2\right)\notag\\
&=-\int \left(\phi''\,\phi+K\,\phi^2\right)\, ,\notag
\end{align}
where $K$ is the curvature of $M$.

In this calculation, we used that $F_{ss}=0$ since $\gamma$ is a geodesic and that the curvature $K$ comes in when one changes the order of derivatives, i.e.,
\begin{equation}
\langle F_s,F_{s,tt}\rangle =\langle F_s  , F_{tt,s} \rangle +K\, [|F_s|^2\,|F_t|^2-\langle F_s,F_t\rangle^2]\, .\notag
\end{equation}
Using that $F_s = V=\phi\,\nn$ is perpendicular to $F_t = \gamma'$ and $|\gamma'| = 1$ gives
\begin{equation}
\langle F_s,F_{s,tt}\rangle =\langle F_s , F_{tt,s} \rangle +K\, \phi^2\, .\notag
\end{equation}

A geodesic $\gamma_0$  is stable if the Hessian of the energy functional at $\gamma_0$ has index zero, i.e., if
\begin{equation}
\frac{d^2}{dt^2} \big|_{t=0} \, E(t)\geq 0\, ,  \notag
\end{equation}
for all variations of $\gamma_0$.  Roughly speaking, stable geodesics minimize energy compared to nearby curves.

\subsection{Geodesics in a free homotopy class}

The simplest way to produce geodesics is to look for minima of the energy functional.  To get a closed geodesic, the minimization is done in a free homotopy class.  A free homotopy class of a closed curve $c:\SS^1\to M$ on a manifold $M$ consists of all the curves that are homotopic to $c$.   Namely, a curve $\gamma$ is freely homotopic to $c$ if there exists a one parameter family $$F:\SS^1\times [0,1]\to M$$ so $F(\cdot,0)=c$ and $F(\cdot, 1)=\gamma$.  
The difference between a homotopy class and a free homotopy class is that
there is no fixed base point for a free homotopy class.

Standard arguments in Riemannian geometry then give:

\begin{Lem}
In each free homotopy class on a closed manifold, there is at least one curve that realizes the smallest energy.   This minimizing curve is a geodesic and is non-trivial if the homotopy class is non-trivial.
\end{Lem}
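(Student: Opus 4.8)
The plan is to run the direct method of the calculus of variations on the energy functional restricted to the free homotopy class $[c]$. First I would fix a minimizing sequence $\gamma_j$ in $[c]$, i.e. curves with $\Energy(\gamma_j) \to \inf_{[c]} \Energy$. Since every curve can be reparametrized at constant speed without increasing its energy (by Cauchy--Schwarz, $\frac{1}{2}\int |\gamma'|^2 \geq \frac{1}{2}(\Length)^2 / (2\pi)$ with equality iff $|\gamma'|$ is constant), I may assume each $\gamma_j$ has constant speed, and hence the $\gamma_j$ are uniformly Lipschitz. By the Arzel\`a--Ascoli theorem, after passing to a subsequence, $\gamma_j$ converges uniformly to a limit curve $\gamma_\infty$, which is itself Lipschitz, and lower semicontinuity of energy under this convergence (together with the fact that uniformly close curves are freely homotopic, so $\gamma_\infty \in [c]$) gives that $\gamma_\infty$ attains the infimum.

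Next I would upgrade regularity and derive the geodesic equation. On a small ball where $\gamma_\infty$ stays inside a single coordinate chart, one compares $\gamma_\infty$ with the curve obtained by replacing an arc by the minimizing geodesic between its endpoints; minimality forces $\gamma_\infty$ to agree with that geodesic, so $\gamma_\infty$ is a smooth geodesic locally, hence globally. The first variation computation from the subsection ``Critical points for energy are geodesics'' then confirms $\gamma_\infty'' = \nabla_{\gamma_\infty'}\gamma_\infty' = 0$. For the non-triviality statement: if $[c]$ is non-trivial, a constant curve is not in $[c]$, so the minimizer cannot be constant; alternatively, a curve of sufficiently small energy would be short enough to lie in a geodesically convex ball and hence be null-homotopic, contradicting $[c] \neq 0$.

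The main obstacle is the compactness step: one must ensure the minimizing sequence does not ``escape,'' which is exactly where closedness of $M$ is used — it supplies both the compactness needed for Arzel\`a--Ascoli and a uniform injectivity-radius lower bound so that the local geodesic-replacement comparison is available at a fixed scale. The subtlety to watch is that a priori the $\gamma_j$ need only be continuous (or $W^{1,2}$), so the preliminary reduction to constant-speed parametrizations, which converts an $L^2$ bound on $\gamma_j'$ into an honest equicontinuity estimate, is what makes Arzel\`a--Ascoli applicable; everything after that is standard Riemannian geometry.
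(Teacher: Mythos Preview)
Your argument is correct and is exactly the ``standard argument in Riemannian geometry'' the paper alludes to; the paper itself gives no proof of this lemma, simply stating it as a classical fact. The only small point worth tightening is the lower semicontinuity step: uniform convergence alone does not immediately give lower semicontinuity of energy, but since your constant-speed sequence is bounded in $W^{1,2}$ you can pass to a weak $W^{1,2}$ limit (which coincides with the uniform limit), and then weak lower semicontinuity of the convex energy functional applies.
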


\begin{figure}[htbp]
\centering\includegraphics[totalheight=.35\textheight, width=.9\textwidth]{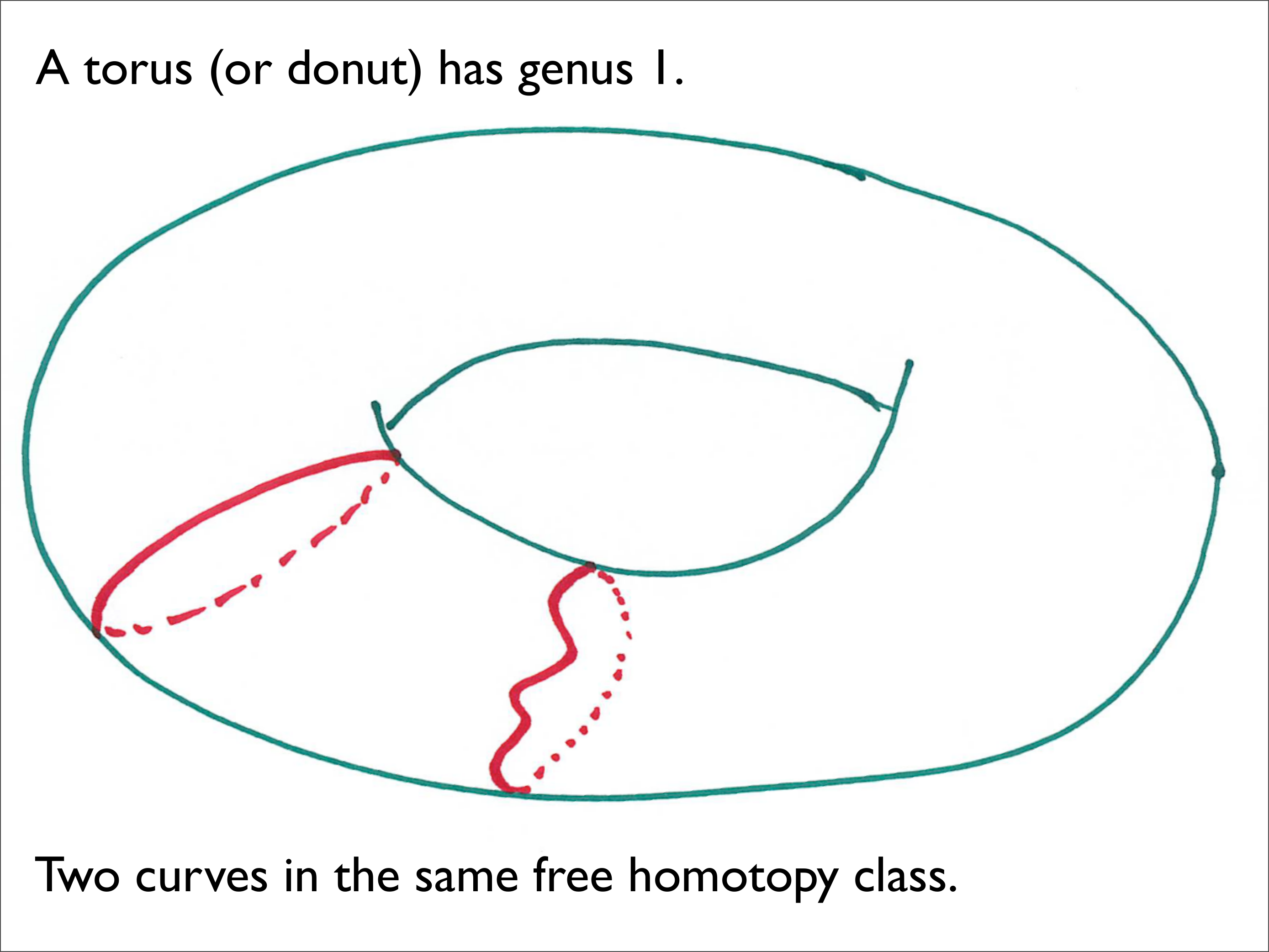}
\caption{Freely homotopic curves.}
\end{figure}
  
\section{Birkhoff: A closed geodesic on a two sphere}

In the 1910s, Birkhoff came up with an ingenious method of constructing non-trivial closed geodesics on a topological $2$-sphere.  Since   $\SS^2$ is simply-connected, this cannot be done by minimizing in a free homotopy class.  Birkhoff's instead used a min-max argument to find higher index critical points.  We will describe Birkhoff's idea and some related results in this section;  see \cite{B1},
\cite{B2} and  section $2$ in \cite{Cr} for more about  Birkhoff's ideas.

\subsection{Sweepouts and the width}
 
The starting point is   a min-max construction that uses a non-trivial homotopy class  of maps from $\SS^2$ to construct a geometric metric called the width.  Later, we will see that this invariant is realized as the length of a closed geodesic.   Of course, one has to assume that such a non-trivial homotopy class exists (it does on $\SS^2$, but not on   higher genus surfaces; fortunately, it is easy to construct minimizers on  higher genus surfaces).

Let $\Omega$ be the set of continuous maps $$\sigma :
\SS^1 \times [0,1] \to M$$ with the following three properties:
\begin{itemize}
\item 
 For each $t$ the map $\sigma (\cdot , t )$
is in $W^{1,2}$. 
\item The map  $t \to \sigma (\cdot , t )$
is continuous
 from $[0,1]$ to
$W^{1,2}$.
\item
$\sigma$ maps $\SS^1 \times \{ 0 \}$ and $\SS^1 \times \{ 1 \}$ to points.
\end{itemize}

 \begin{figure}[htbp]
\centering\includegraphics[totalheight=.4\textheight, width=1\textwidth]{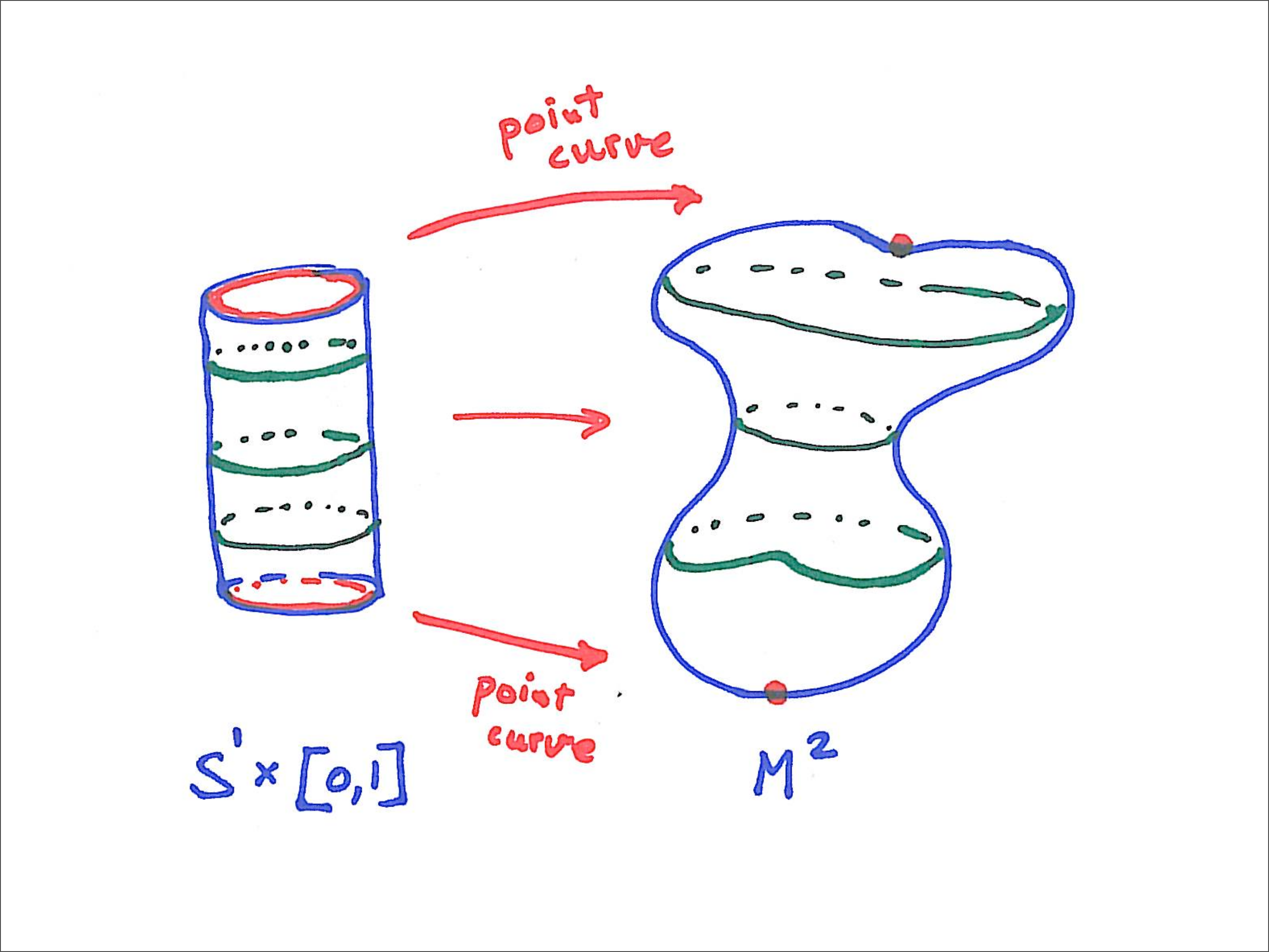}
\caption{A sweepout.}   
  \end{figure}

Given a map
    $\hat{\sigma} \in \Omega$, the homotopy class $\Omega_{\hat{\sigma}}$
 is defined to be the set of maps $\sigma \in \Omega$ that are homotopic to
    $\hat{\sigma}$ through maps in $\Omega$.
The width\index{width} $W = W (\hat{\sigma})$ associated to the homotopy class
 $\Omega_{\hat{\sigma}}$ is defined by taking $\inf$ of $\max$ of
the energy of each slice. That is,  set
\begin{equation}    \label{e:w}
    W = \inf_{  \sigma \in \Omega_{\hat{\sigma}}  } \,
       \,  \max_{ t \in [0, 1]} \,  \Energy \, (\sigma (\cdot , t ))
          \, ,
\end{equation}
where the energy is given by $$\Energy \, (\sigma (\cdot , t )) =
\int_{\SS^1} \, \left| \partial_x \sigma (x,t) \right|^2 \, dx \, .$$
The width is always non-negative and is positive if $\hat{\sigma}$
is in a non-trivial homotopy class.

A particularly
interesting example is when $M$ is a topological $2$-sphere and
the induced map from $\SS^2$ to $M$ has degree one. In this case,
the width is positive and realized by a non-trivial closed
geodesic.  To see that the width is positive on non-trivial
homotopy classes, observe that if the maximal energy of a slice is
sufficiently small, then each curve $\sigma (\cdot , t)$ is
contained in a convex geodesic ball in $M$. Hence,   a geodesic
homotopy connects $\sigma$ to a path of point curves, so $\sigma$
is homotopically trivial.

\subsection{Pulling the sweepout tight to obtain a closed geodesic}

 The key to finding the closed geodesic is to ``pull the sweepout tight'' using the Birkhoff curve shortening process (or BCSP).  The BCPS is 
 a kind of discrete gradient flow on the space of curves.  It is given by subdividing a curve and then replacing first the even segments by minimizing geodesics, then replacing the odd segments by minimizing geodesics, and finally reparameterizing the curve so it has constant speed.  
 
 It is not hard to see that the BCSP has the following properties:
 \begin{itemize}
 \item  It is continuous on the space of curves.
 \item Closed geodesics are fixed under the BCSP.
 \item If a curve is fixed under BCSP, then it is a geodesic.
 \end{itemize}
 It is possible to make each of these three properties quantitative. 
 
   \begin{figure}[htbp]
    \begin{minipage}[t]{0.5\textwidth}
    \centering\includegraphics[totalheight=.3\textheight, width=.9\textwidth]{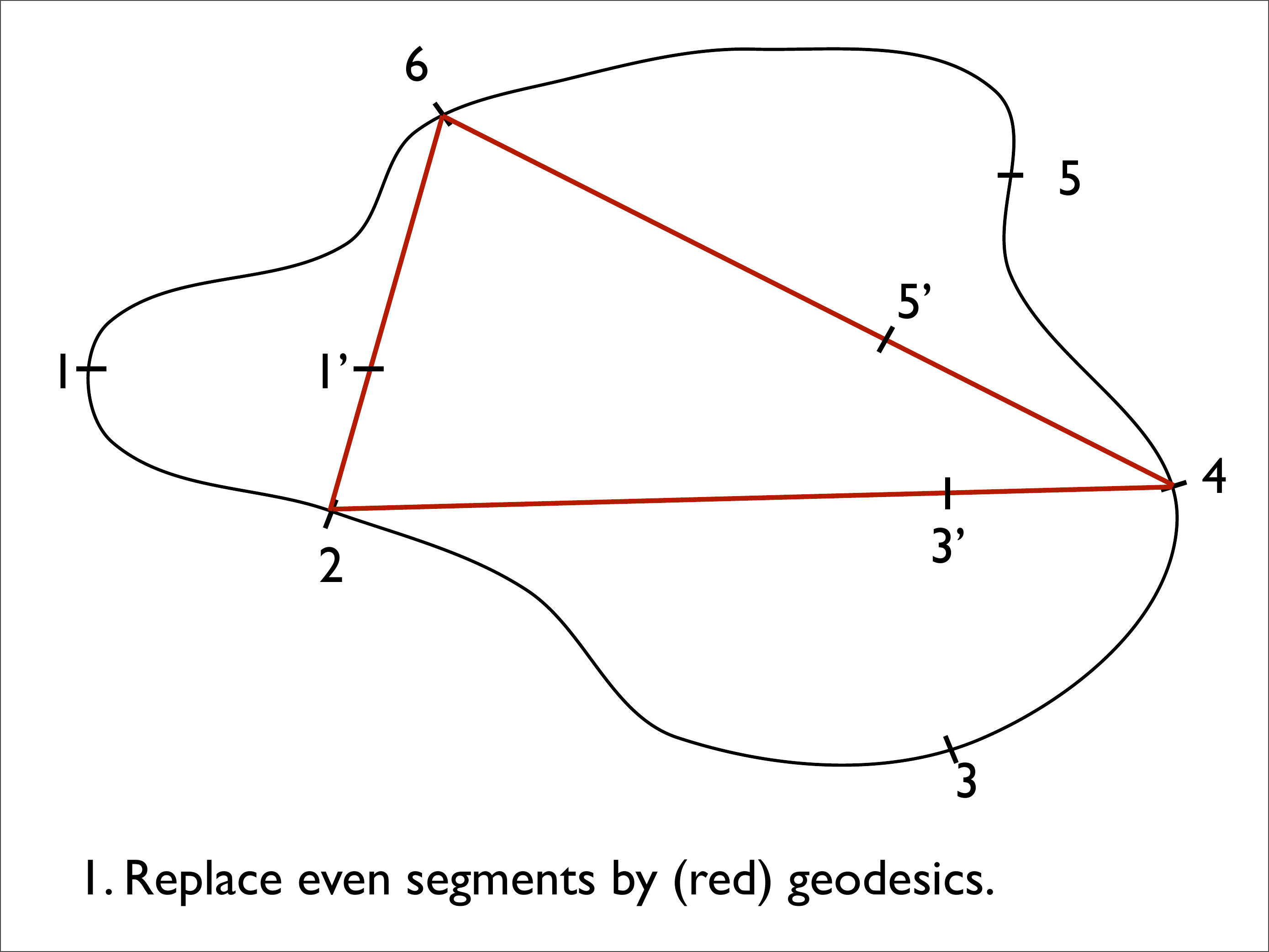}
    \end{minipage}\begin{minipage}[t]{0.5\textwidth}
    \centering\includegraphics[totalheight=.3\textheight, width=.9\textwidth]{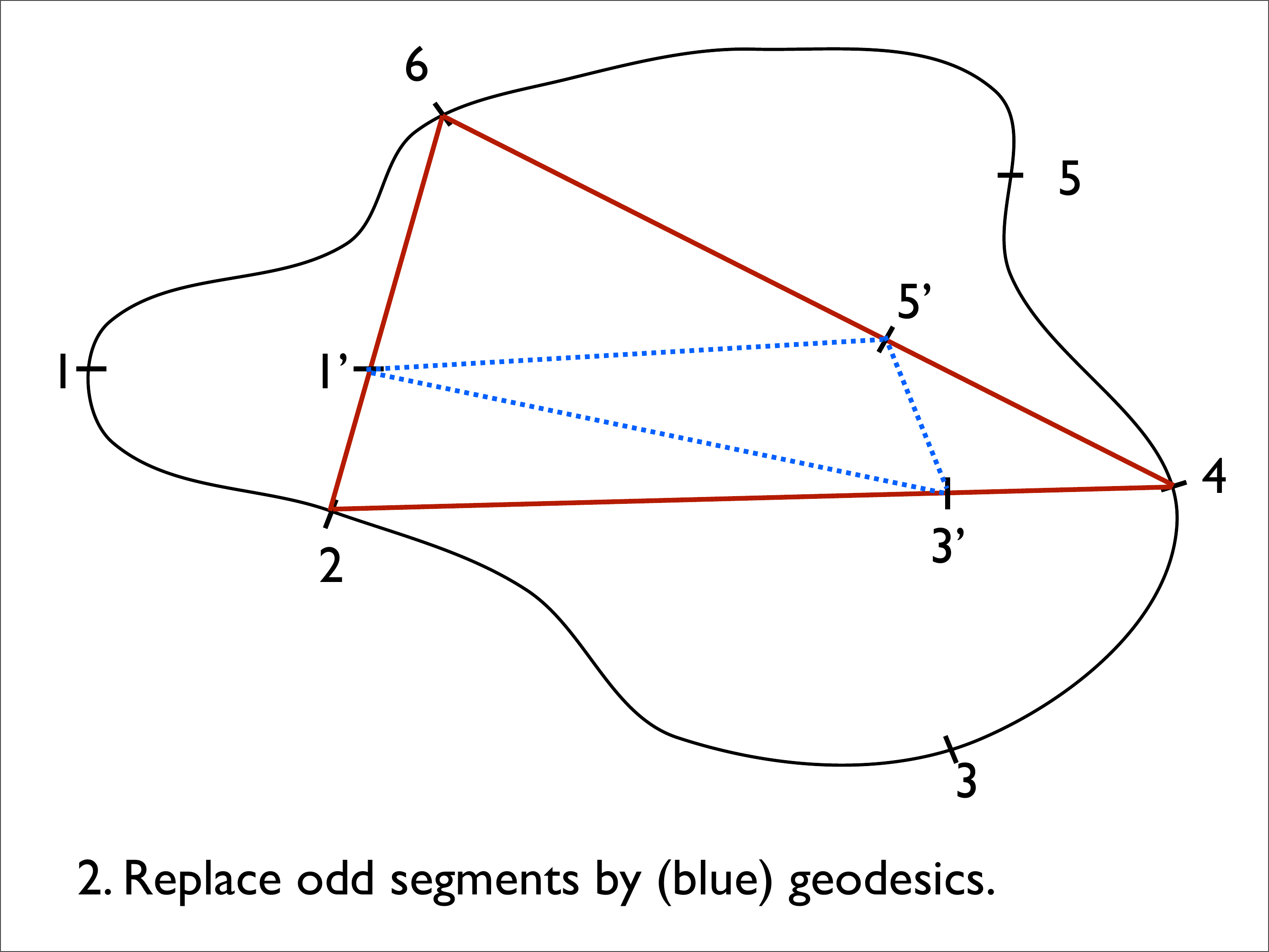}
    \end{minipage}
    \caption{Birkhoff's curve shortening process.}   
\end{figure}

 Using this map and more refined versions of these properties, we showed the existence of a sequence of tightened sweepouts:

\begin{Thm}   (Colding-Minicozzi, \cite{CM2})   \label{t:mm}
There exists a sequence of sweepouts $\gamma^j$ with the property that:  Given
  $\epsilon > 0$,  there is $\delta > 0$   so that
if $j
> 1/\delta$ and  
\begin{equation}    \label{e:close1}
    2\pi \, \Energy \, ( \gamma^j (\cdot , t_0))
        = \Length^2 \, ( \gamma^j (\cdot , t_0)) > 2\pi \, (W - \delta)  \, ,
\end{equation}
then for this $j$ we have $\dist \, \left(  \gamma^j (\cdot , t_0)
\, , \, G \right) < \epsilon$ where $G$ is the set of closed geodesics.
\end{Thm}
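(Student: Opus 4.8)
The plan is to obtain the $\gamma^j$ by tightening an energy-minimizing sequence of sweepouts with the Birkhoff curve shortening process (BCSP) and then to run a pigeonhole argument against quantitative forms of the three properties of the BCSP stated above. Fix $E_0=W+1$ and let $\Psi$ denote one pass of the BCSP, with the number of subdivisions taken large enough (depending on $E_0$) that every arc of a constant-speed curve of energy $\le E_0$ is shorter than the convexity radius of $M$; then $\Psi$ is defined on all such curves, and the three properties give: (a) $\Energy(\Psi(c))\le\Energy(c)$, with equality if and only if $c$ is a closed geodesic, while $\Psi(c)$ is always parametrized with constant speed and $\Psi$ is homotopic to the identity through maps preserving $\Omega$; (b) $\Psi$ is continuous on $\{\Energy\le E_0\}$; and (c), the quantitative version of ``fixed points are geodesics'': for every $\epsilon>0$ there is $\delta_0(\epsilon)>0$ so that $\Energy(c)\le E_0$ and $\Energy(c)-\Energy(\Psi(c))<\delta_0(\epsilon)$ force $\dist(c,G)<\epsilon$. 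Finally, because constant-speed geodesics of bounded length are uniformly Lipschitz, the set $G_{E_0}=\{g\in G:\Energy(g)\le E_0\}$ is compact (Arzel\`{a}--Ascoli, together with the fact that a $C^0$-limit of geodesics is a geodesic), so (b) and $\Psi|_G=\mathrm{id}$ also yield: for every $\epsilon'>0$ there is $\epsilon_1>0$ such that $\Energy(c)\le E_0$ and $\dist(c,G)<\epsilon_1$ imply $\dist(\Psi(c),G)<\epsilon'$.

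Granting these, I would choose $\sigma^j\in\Omega_{\hat\sigma}$ with $\max_t\Energy(\sigma^j(\cdot,t))<W+1/j$ and set $\gamma^j:=\Psi\circ\sigma^j$. By (a) and (b), $\gamma^j\in\Omega_{\hat\sigma}$, every slice is constant speed --- so that $2\pi\,\Energy(\gamma^j(\cdot,t))=\Length^2(\gamma^j(\cdot,t))$ identically --- and $W\le\max_t\Energy(\gamma^j(\cdot,t))\le\max_t\Energy(\sigma^j(\cdot,t))<W+1/j$. Given $\epsilon>0$, apply the last implication with $\epsilon'=\epsilon$ to get $\epsilon_1>0$, and set $\delta:=\tfrac12\min\{\delta_0(\epsilon_1),1\}$. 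Suppose $j>1/\delta$ and $2\pi\,\Energy(\gamma^j(\cdot,t_0))=\Length^2(\gamma^j(\cdot,t_0))>2\pi(W-\delta)$; equivalently $\Energy(\gamma^j(\cdot,t_0))>W-\delta$. Since $\Psi$ does not increase energy,
\[
W-\delta<\Energy\big(\Psi(\sigma^j(\cdot,t_0))\big)\le\Energy(\sigma^j(\cdot,t_0))<W+1/j\le E_0\,,
\]
hence $\Energy(\sigma^j(\cdot,t_0))-\Energy(\Psi(\sigma^j(\cdot,t_0)))<\delta+1/j<\delta_0(\epsilon_1)$. By (c), $\dist(\sigma^j(\cdot,t_0),G)<\epsilon_1$, and since $\Energy(\sigma^j(\cdot,t_0))\le E_0$ the choice of $\epsilon_1$ gives $\dist(\gamma^j(\cdot,t_0),G)=\dist(\Psi(\sigma^j(\cdot,t_0)),G)<\epsilon$, which is the assertion.

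The real content is in the quantitative properties of the BCSP, and the crux --- the step I expect to be the main obstacle --- is (c): a curve of energy at most $E_0$ whose energy the BCSP lowers by less than $\delta_0$ must be $C^0$-close to a closed geodesic, with a modulus $\delta_0$ depending only on $\epsilon$, $E_0$ and $M$. I would prove this by contradiction and compactness: if it failed, there would be constant-speed curves $c_i$ with $\Energy(c_i)\le E_0$, $\Energy(c_i)-\Energy(\Psi(c_i))\to 0$ and $\dist(c_i,G)\ge\epsilon$; being uniformly Lipschitz, a subsequence converges uniformly to a constant-speed curve $c_\infty$ with $\dist(c_\infty,G)\ge\epsilon$, and using continuity of $\Psi$ and lower semicontinuity of the energy one finds $\Energy(c_\infty)=\Energy(\Psi(c_\infty))$, so that $c_\infty$ is a closed geodesic by the equality case in (a) --- contradicting $\dist(c_\infty,G)\ge\epsilon$. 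Here the equality case in (a), i.e.\ that the BCSP strictly lowers the energy of every curve which is not already a closed geodesic, is itself the delicate point: it is the reason Birkhoff's process uses two interleaved, half-shifted subdivisions, since replacing the arcs of the second subdivision by minimizing geodesics removes the corners that the first pass leaves behind, so a curve the process fixes has no corners at all and is a smooth closed geodesic. The companion quantitative continuity and ``geodesics are fixed'' estimates underlying (a) and (b) should follow from the smooth dependence of short minimizing geodesics on their endpoints and from the stability of the constant-speed reparametrization on curves with a positive lower bound on speed.
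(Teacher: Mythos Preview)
Your approach is essentially the one the survey indicates: tighten a minimizing sequence of sweepouts by one pass of the Birkhoff curve shortening process and invoke quantitative versions of its three basic properties, with the key step being the ``almost-fixed implies almost-geodesic'' statement (your (c)); the paper itself does not give a detailed proof here but only this outline, citing \cite{CM2}. Your pigeonhole argument and the compactness proof of (c) are correct in spirit; the one slip is that in the application the slices $\sigma^j(\cdot,t_0)$ need not be constant speed, so in your proof of (c) you should replace ``constant-speed curves $c_i$ \ldots\ uniformly Lipschitz'' by ``$c_i$ with $\Energy(c_i)\le E_0$, hence bounded in $W^{1,2}(\SS^1)\hookrightarrow C^{0,1/2}$'' --- Arzel\`a--Ascoli and lower semicontinuity then go through, and the chain $\Energy(\Psi(c_\infty))\le\Energy(c_\infty)\le\liminf\Energy(c_i)=\lim\Energy(\Psi(c_i))=\Energy(\Psi(c_\infty))$ forces equality, upgrading the convergence to strong $W^{1,2}$ and giving the contradiction.
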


As an immediate consequence, we get the existence of non-trivial closed geodesics for any metric on $\SS^2$; this is due to Birkhoff.  See \cite{LzWl} for an alternative proof using the harmonic map heat flow.

 \begin{figure}[htbp]
\centering\includegraphics[totalheight=.35\textheight, width=.85\textwidth]{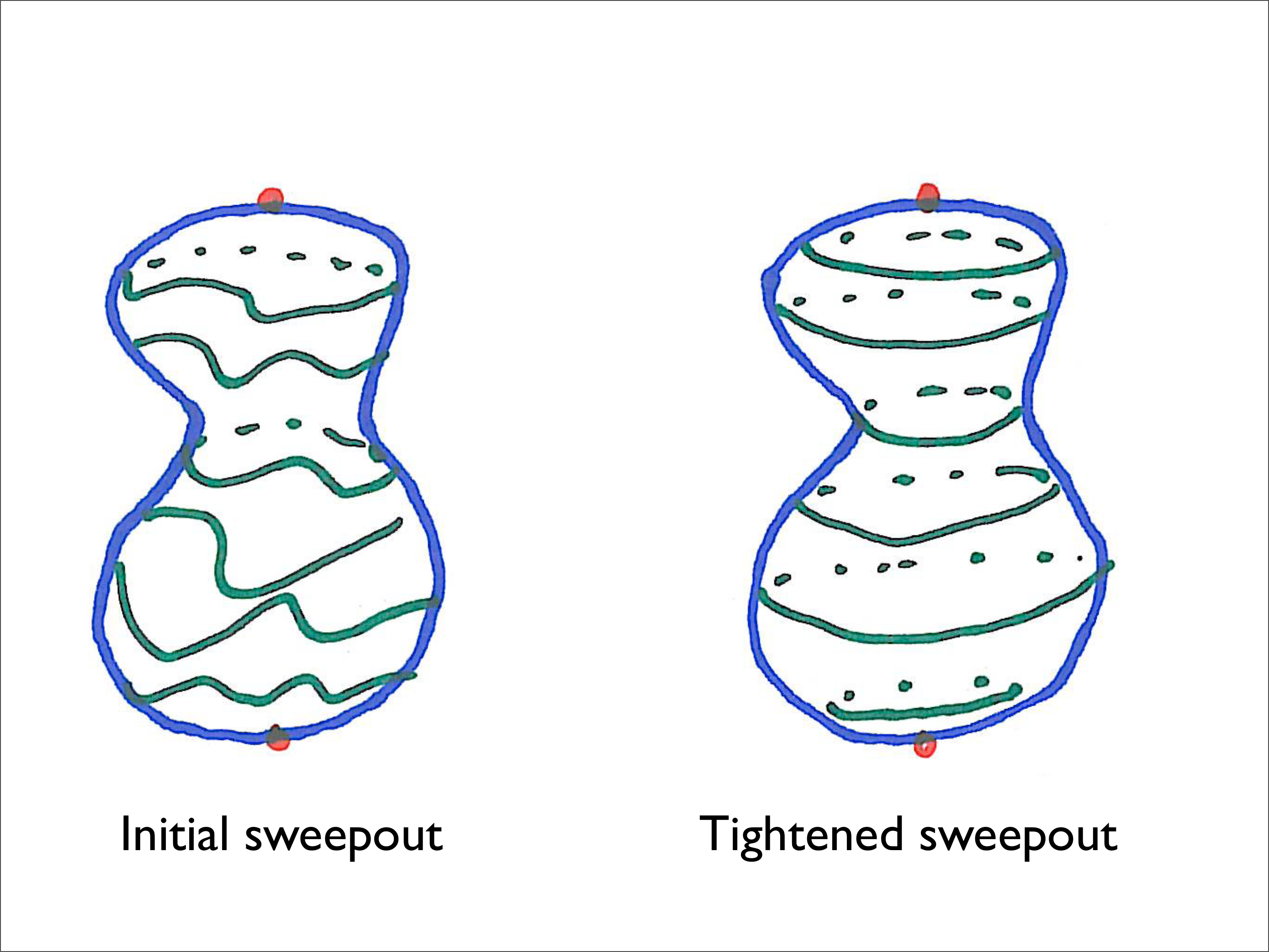}
\caption{Tightening the sweepout.}
\end{figure}

\subsection{Sweepouts by spheres}

We will now define a two-dimensional version of the width, where we sweepout by spheres instead of curves.
Let $\Omega$ be the set of continuous maps $$\sigma : \SS^2 \times
[0,1] \to M$$ so that:\index{sweepouts}
\begin{itemize}
\item 
 For each $t\in [0,1]$ the map $\sigma (\cdot , t )$
is in $C^0 \cap W^{1,2}$. 
\item The map  $t \to \sigma (\cdot , t )$ is
continuous
 from $[0,1]$ to
$C^0 \cap W^{1,2}$.
\item $\sigma$ maps $\SS^2 \times \{ 0
\}$ and $\SS^2 \times \{ 1 \}$ to points.   
\end{itemize}
Given a map
    $\beta \in \Omega$, the homotopy class $\Omega_{\beta}$
 is defined to be the set of maps $\sigma \in \Omega$ that are homotopic to
    $\beta$ through maps in $\Omega$.    We will call any such
     $\beta$  a
{\emph{sweepout}}.

The (energy) width\index{width} $W_E = W_E (\beta , M)$ associated to the homotopy class
 $\Omega_{\beta}$ is defined by taking the infimum of the maximum of
the energy of each slice.  That is,  set
\begin{equation}    \label{e:width}
    W_E = \inf_{  \sigma \in \Omega_{\beta}  } \,
       \,  \max_{ t \in [0, 1]} \,  \Energy \, (\sigma (\cdot , t ))
          \, ,
\end{equation}
where the energy is given by
\begin{equation}
    \Energy \, (\sigma
(\cdot , t )) = \frac{1}{2} \, \int_{\SS^2} \, \left| \nabla_x
\sigma (x,t) \right|^2 \, dx \, .
\end{equation}
 
The next result gives the existence of a sequence of good
sweepouts.

\begin{Thm}  (Colding-Minicozzi, \cite{CM19})  \label{t:existence}
Given a metric $g$ on $M$ and a map $\beta  \in \Omega$
representing a non-trivial class in $\pi_3 (M)$, there exists  a
sequence of sweepouts $\gamma^j \in \Omega_{\beta}$ with
$\max_{s \in [0,1]} \, \Energy (\gamma^j_s)\to W(g)$,
and so that given $\epsilon >
 0$, there exist $\bar{j}$ and $\delta > 0$
so that if $j > \bar{j}$ and
\begin{equation}    \label{e:eclose}
      \Area (\gamma^j (\cdot , s)) > W(g) - \delta \, ,
\end{equation}
then there are finitely many harmonic maps $u_i : \SS^2 \to M$
 with
\begin{equation}    \label{e:bclose}
    \ddd_V  \, (\gamma^j (\cdot , s) , \cup_i \{ u_i \}
     ) < \epsilon \, .
\end{equation}
\end{Thm}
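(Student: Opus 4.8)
The plan is to mimic the proof of Theorem \ref{t:mm} for curves, replacing the Birkhoff curve shortening process by a tightening procedure built from \emph{harmonic replacement} on small geodesic balls in $\SS^2$. First I would set up harmonic replacement: fix a small $\epsilon_0>0$ and, given $\sigma \in C^0\cap W^{1,2}(\SS^2,M)$ and a geodesic ball $B\subset \SS^2$ on which $\sigma$ has energy below $\epsilon_0$, replace $\sigma|_B$ by the energy-minimizing map with the same boundary trace; existence and uniqueness of this minimizer in the small-energy regime is the classical interior theory (Morrey, Hildebrandt--Kaul--Widman, Jost). The three properties that made the BCSP work have analogs: (i) harmonic replacement is continuous in $\sigma$ in the $C^0\cap W^{1,2}$ topology, away from the energy threshold; (ii) harmonic maps are fixed by it; (iii) a map fixed by replacement on every admissible ball is weakly harmonic, hence smooth by regularity. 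The essential new ingredient is a \emph{quantitative} version of (iii): if $\sigma$ is not $\epsilon$-close to harmonic on $B$, then harmonic replacement on $B$ lowers the energy there by a definite amount $\eta(\epsilon)>0$.

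Next I would assemble these local replacements into a global tightening map $\Psi$, exactly as even and odd segments are alternated in the BCSP: cover $\SS^2$ by two finite families of balls, perform harmonic replacement simultaneously on the first family, then on the second. Applied slicewise to a sweepout $\sigma(\cdot,t)$, this yields a new map $\Psi\circ\sigma$ that is again in the homotopy class $\Omega_\beta$ — continuity in $t$ is inherited from (i), and the two boundary slices remain point maps — with $\max_t \Energy(\Psi\circ\sigma(\cdot,t))\le \max_t \Energy(\sigma(\cdot,t))$. Starting from any minimizing sequence of sweepouts for $W(g)$ and applying $\Psi$ (possibly iterated, with a diagonal argument in $j$), I would extract the asserted sequence $\gamma^j\in\Omega_\beta$ with $\max_s \Energy(\gamma^j_s)\to W(g)$ and with the extra feature that slices of almost-maximal energy are almost fixed by $\Psi$, i.e. have small harmonic defect on every admissible ball. (Here one also uses that the area width and energy width coincide, via the usual conformal reparametrization, so that \eqref{e:eclose} and near-maximality of the energy are interchangeable.)

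The final step is the compactness/bubbling analysis identifying the limits of such almost-harmonic slices. A sequence of maps $\SS^2\to M$ with uniformly bounded energy whose harmonic defect tends to zero subconverges, after bubbling off at the finitely many points of energy concentration and rescaling there (Sacks--Uhlenbeck, Parker's bubble-tree convergence), to a finite collection of harmonic maps $u_1,\dots,u_k:\SS^2\to M$. Since $\ddd_V$ is chosen weak enough that it measures closeness of images (varifolds) rather than $W^{1,2}$-closeness, the near-maximal slice is $\ddd_V$-close to $\cup_i\{u_i\}$ even though bubbling obstructs $W^{1,2}$-closeness to any single $u_i$. A contradiction argument — assume $\epsilon>0$ and slices $\gamma^j(\cdot,s_j)$ with area $>W(g)-1/j$ but $\ddd_V$-distance $\ge\epsilon$ from every finite union of harmonic maps, then invoke the defect estimate plus bubble-tree compactness — gives the stated $\delta$--$\epsilon$ conclusion \eqref{e:bclose}.

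The main obstacle is that, unlike minimizing geodesics between nearby points (which always exist and are unique below the convexity radius, making the BCSP unconditionally defined), harmonic replacement on a fixed ball is only defined when the energy there is below $\epsilon_0$; when energy concentrates at a point, no ball around it qualifies, so the scheme can degenerate precisely along the slices of interest. Handling this — by adaptively subdividing and rescaling near concentration points while keeping $\Psi$ continuous as a function of the sweepout parameter $t$, so that $\Psi\circ\sigma$ remains an admissible sweepout — is the technical heart of the argument, and it is also what forces the conclusion to be phrased in terms of a finite \emph{union} $\cup_i\{u_i\}$ of harmonic maps rather than a single one.
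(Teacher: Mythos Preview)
The survey you are reading does not prove this theorem; it merely states the result and cites \cite{CM19} for the proof, adding only the remark that the energy identity (no energy loss in the bubble limit) is needed and was established by Siu--Yau and Jost. Your outline is essentially the strategy carried out in \cite{CM19}: harmonic replacement on balls of small energy as the analogue of the Birkhoff process, a quantitative ``almost fixed implies almost harmonic'' estimate, a slicewise tightening map that preserves the sweepout class, and bubble-tree compactness for the near-maximal slices. So your proposal is correct and aligned with the intended argument.

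One point worth sharpening: the obstacle you flag at the end --- that harmonic replacement is only defined below the energy threshold $\epsilon_0$, so concentration could prevent you from acting on the relevant balls --- is real, but in \cite{CM19} it is not handled by ``adaptively subdividing and rescaling near concentration points.'' Rather, one fixes in advance a cover of $\SS^2$ by balls of a single small radius and simply \emph{skips} those balls on which the energy exceeds $\epsilon_0/2$; since the total energy is bounded by roughly $W(g)$, only boundedly many balls are skipped, and this is exactly what produces the finitely many bubble points. The continuity-in-$t$ issue you raise (the set of skipped balls can jump as $t$ varies) is handled by interpolating between the original map and its replacement via a cutoff that depends continuously on the local energy, not by rescaling. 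This is a technical but genuine difference from what you wrote, and getting it right is what makes $\Psi\circ\sigma$ an honest sweepout.
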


In \eqr{e:bclose},  we have identified each map $u_i$ with the
varifold associated to the pair $(u_i, \SS^2)$ and then taken the
disjoint union of these $\SS^2$'s to get $\cup_i \{ u_i \}$.  The distance $\ddd_V$ in \eqr{e:bclose} is a weak measure-theoretic distance called    ``varifold distance''; see \cite{CM19} or   Chapter $3$ of \cite{CM14} for the definition.

\vskip2mm One immediate consequence of Theorem \ref{t:existence}
is that if $s_j$ is any sequence with $\Area (\gamma^j (\cdot ,
s_j))$ converging to the width $W(g)$ as $j \to \infty$, then a
subsequence of
 $\gamma^j (\cdot ,
s_j)$ converges to a collection of harmonic maps from $\SS^2$ to
$M$.  In particular, the sum of the areas of these maps is exactly
$W(g)$ and, since the maps are automatically conformal, the sum of
the energies is also $W(g)$.  
The existence of at least one
non-trivial harmonic map from $\SS^2$ to $M$ was first proven in
\cite{SaUh}, but they allowed for loss of energy in the limit; cf.
also \cite{St}.   Ruling out this possible energy loss in various settings 
is known as the ``energy identity''\index{energy identity}
and it can be rather delicate.
This energy loss was ruled out by Siu and Yau,
using also arguments of Meeks and Yau (see Chapter VIII in
\cite{ScYa2}). This was also proven later by Jost, \cite{Jo}.

\section{Curve shortening flow}

The Birkhoff curve shortening process was a kind of discrete gradient flow on the space of curves.  We turn next to a continuous gradient flow that is called the curve shortening flow.

Suppose that $\gamma_0$ again is a curve but this time we will think of it as an embedded submanifold in $\RR^2$ or, more generally, a surface $M^2$.
We can again look at variations $\gamma_t$ of the one-dimensional submanifold $\gamma_0$ and get for lengths:
\begin{equation}
\frac{d}{dt} \big|_{t=0} \Length (\gamma_t)=  \int_{\gamma_0} h \, \langle  \nn , V\rangle \, ,  \notag
\end{equation}
where $h$ is the (geodesic) curvature of the one-dimensional submanifold given by
\begin{equation}
 h = \langle \nabla_{e_1} \nn,e_1 \rangle\, ,  \notag
\end{equation}
where $e_1$ is a unit vector tangent to the curve $\gamma_0$ and $\nn$ is the unit normal to $\gamma_0$. It follows that
\begin{enumerate}
\item $\gamma_0$ is a critical point for length if and only if it is a geodesic (after being reparameterized to have constant speed).
\item The negative gradient flow for the length functional in $\RR^2$ is the {\emph{curve shortening flow}}
\begin{equation}
\partial_t x=   h\,\nn\,  .  \notag
\end{equation}
\end{enumerate}

 \begin{figure}[htbp]
\centering\includegraphics[totalheight=.35\textheight, width=1\textwidth]{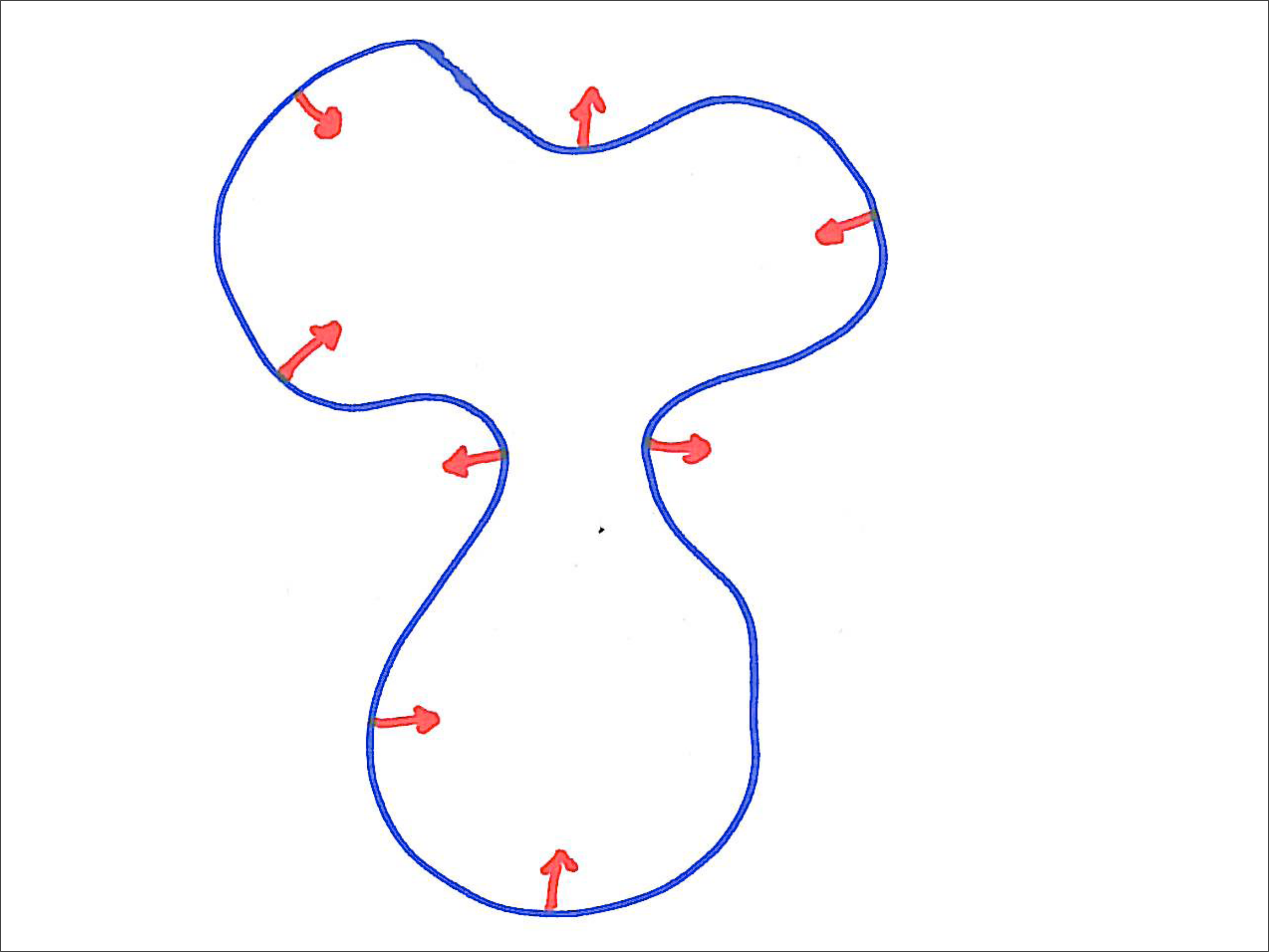}
\caption{Curve shortening flow: the curve evolves by its geodesic curvature.  The red arrows indicate direction of flow.}   
  \end{figure}

The simplest (non-trivial) solution of the curve shortening flow is given by a one-parameter family of concentric circles with
  radius $$r(t) = \sqrt{-2t}$$
   for $t$ in $(-\infty , 0)$.  This is an {\emph{ancient}} solution since it is defined for  all $t< 0$, it is {\emph{self-similar}} since the shape is preserved (i.e., we can think of it as a fixed circle moving under rigid motions of $\RR^2$), and it becomes {\emph{extinct}} at the origin in space and time.

 \begin{figure}[htbp]
\centering\includegraphics[totalheight=.35\textheight, width=.7\textwidth]{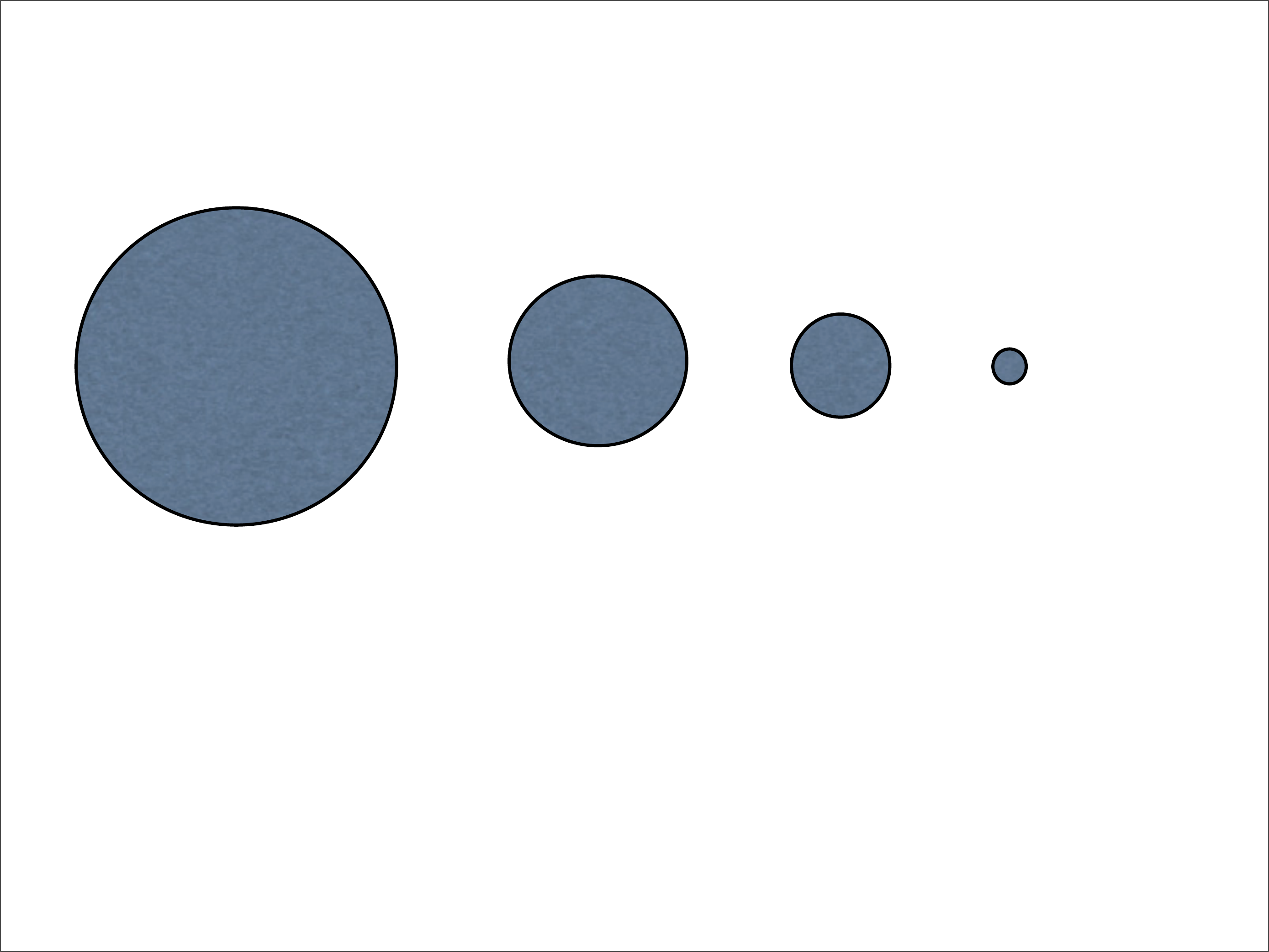}
\caption{Four snapshots in time of concentric circles shrinking under the curve shortening flow.}   
  \end{figure}

\subsection{Self-similar solutions}

A solution of the curve shortening flow is self-similar if the shape does not change with time.  The simplest example is a static solution, like a straight line, that does not change at all.  The next simplest is given by concentric shrinking circles, but there are many other interesting possibilities.  There are three types of self-similar solutions that are most frequently considered:
\begin{itemize}
\item Self-similar shrinkers.
\item Self-similar translators.
\item Self-similar expanders.
\end{itemize}

We will explain shrinkers first.
Suppose that
$c_t$ is a  one-parameter family of curves flowing by the curve shortening flow for $t< 0$.  
We say that $c_t$ is a 
{\emph{self-similar shrinker}} if $$c_t = \sqrt{-t} \, c_{-1}$$ for all $t< 0$.  For example, 
circles of radius $\sqrt{-2t}$ give such a solution.  In 1986, 
Abresch and Langer, \cite{AbLa}, classified such solutions and showed that the shrinking circles give the only embedded one (cf. Andrews, \cite{An}).  In 1987,  Epstein-Weinstein, \cite{EpW}, showed a similar classification 
and  analyzed  the dynamics of the curve shortening flow near a shrinker.

We say that $c_t$ is a {\emph{self-similar translator}} if there is a constant vector $V \in \RR^2$ so that
$$
c_t = c_0 + t \, V 
$$
for all $t \in \RR$.  These solutions are {\emph{eternal}} in that they are defined for all time.  It is easy to see that any translator must be non-compact.
Calabi discovered a self-similar translator in the plane that he named the {\emph{grim reaper}}. 
 Calabi's Grim Reaper is given as the graph of the 
  function 
\begin{equation}
u(x,\,t)=t-\log\sin x\, .  \notag
\end{equation}

{\emph{Self-similar expanders}} are similar to shrinkers, except that they move by expanding dilations.  In particular, the solutions are defined as $t$ goes to $+ \infty$.  It is not hard to see that expanders must be non-compact.

There are other possible types of self-similar solutions, where the solutions move by one-parameter families of rigid motions over time.  See Halldorsson, \cite{Hh}, for other self-similar solutions to the curve shortening flow.

\begin{figure}[htbp]
\centering\includegraphics[totalheight=.4\textheight, width=.8\textwidth]{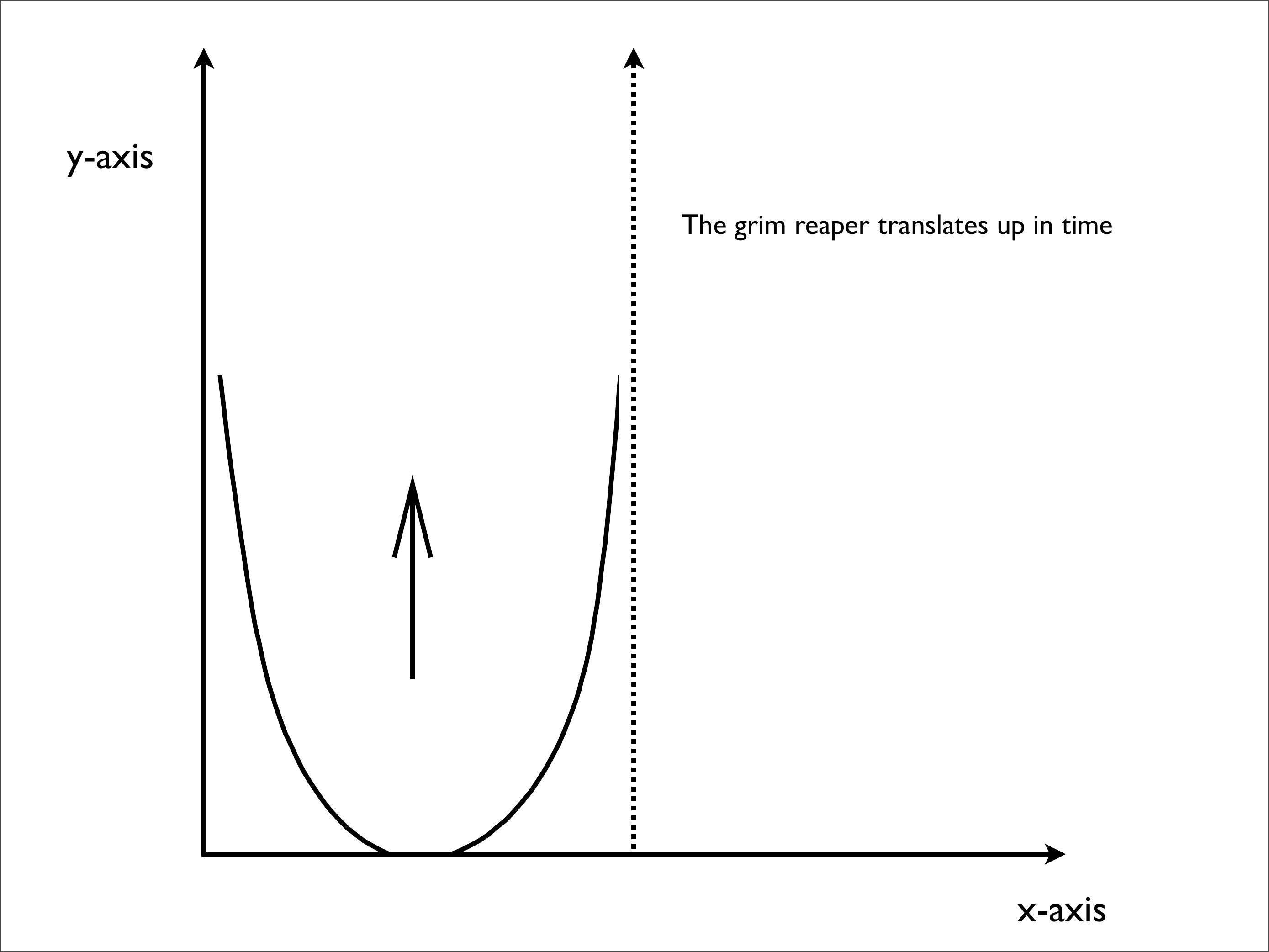}
\caption{Calabi's grim reaper moves by translations.}   
  \end{figure}

\subsection{Theorems of Gage-Hamilton and Grayson}

In 1986, building on earlier work of Gage,   \cite{Ga1} and \cite{Ga2},
Gage and  Hamilton classified closed convex solutions of the curve shortening flow:

\begin{Thm}
(Gage-Hamilton, \cite{GaH})
Under the curve shortening flow every simple closed convex curve remains smooth and convex and eventually becomes extinct in a ``round point''.
\end{Thm}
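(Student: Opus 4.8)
The plan is to follow the Gage--Hamilton argument in two stages: first show that convexity is preserved and curvature stays bounded as long as the curve is smooth, then analyze the asymptotic shape near the extinction time. For the first stage, I would parametrize the flow by the turning angle $\theta$ (the support function picture), since for a convex curve the Gauss map is a diffeomorphism onto $\SS^1$. Writing the curve in terms of $\theta$, the curvature $h = h(\theta,t)$ satisfies the parabolic equation
\begin{equation}
\partial_t h = h^2 \, \partial_\theta^2 h + h^3 \, . \notag
\end{equation}
Positivity of $h$ (hence convexity) is preserved by the maximum principle applied to this equation, and the same equation shows that the maximum of $h$ is controlled in terms of the enclosed area, which decreases at the constant rate $-2\pi$ by the first variation formula for length combined with the Gauss--Bonnet relation $\int h \, ds = 2\pi$. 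So the curve shrinks to a point at a finite time $T$ with enclosed area $\to 0$.

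For the second stage I would rescale: set $\tilde{h}(\theta,\tau) = \sqrt{2(T-t)}\, h(\theta,t)$ with the time change $\tau = -\tfrac12 \log(T-t)$, so that $\tau \to \infty$ as $t \to T$. The round circle corresponds to $\tilde{h} \equiv 1$. The rescaled curvature satisfies an equation of the form
\begin{equation}
\partial_\tau \tilde{h} = \tilde{h}^2 \, \partial_\theta^2 \tilde{h} + \tilde{h}^3 - \tilde{h} \, , \notag
\end{equation}
and the goal is to prove $\tilde{h} \to 1$ uniformly. The natural tool is a monotone (Lyapunov) quantity for this rescaled flow — the entropy-type functional $\int (\log \tilde{h} - \tfrac12 \tilde{h}^2)\, d\theta$ (or equivalently Gage's isoperimetric-ratio monotonicity mentioned earlier in the survey, $L^2/A$ decreasing under the unrescaled flow) — whose derivative controls $\int \tilde{h}^2 (\partial_\theta \tilde{h})^2 / \tilde{h}^2$ type terms and forces subsequential convergence to a stationary solution of the rescaled equation. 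One then must show the only stationary solution with the right periodicity and positivity is $\tilde{h} \equiv 1$, i.e. classify closed self-shrinkers — which is exactly the Abresch--Langer / Andrews result for embedded curves quoted earlier, so convexity plus embeddedness pins down the circle. Upgrading subsequential convergence to full convergence, and pushing it to $C^\infty$, uses standard parabolic regularity bootstrapping once one has an a priori bound like $c \le \tilde h \le C$.

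The main obstacle, and the technical heart of Gage--Hamilton, is obtaining the a priori pinching estimate $\max_\theta \tilde{h} \le C$ and $\min_\theta \tilde{h} \ge c > 0$ uniformly in $\tau$ — equivalently, showing the rescaled curve does not become long and thin. The upper bound on $\tilde h$ follows from bounding $\max h$ by $C/(T-t)^{1/2}$; the delicate part is ruling out the maximum of $h$ blowing up faster than this self-similar rate, which one does by a differential-inequality argument for $\max_\theta h(\cdot,t)$ combined with the lower bound on the inradius coming from the isoperimetric monotonicity. Once the curve is known to be uniformly convex and uniformly bounded after rescaling, the convergence to the round point — and the smoothness and convexity for all $t < T$ asserted in the theorem — follows from the maximum-principle and Lyapunov-functional arguments sketched above. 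I would present the preservation of convexity and finite-time extinction first, then the rescaling and entropy monotonicity, then invoke Abresch--Langer for the classification of the limit, and finish with the regularity bootstrap.
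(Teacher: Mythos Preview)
The paper does not actually prove this theorem; it is stated as a cited result of Gage--Hamilton, with only a one-sentence description of the method: ``They did this by tracking the isoperimetric ratio and showing that it was approaching the optimal ratio which is achieved by round circles.'' There is no proof in the paper for you to be compared against.

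That said, your sketch is a reasonable outline of an argument, but it drifts somewhat from the historical Gage--Hamilton route that the paper alludes to. The original proof is organized around Gage's monotonicity of the isoperimetric ratio $L^2/A$ (from \cite{Ga1}, \cite{Ga2}), which directly forces the rescaled curves to approach the optimal (circular) shape; you mention this only parenthetically while foregrounding an entropy functional and then invoking the Abresch--Langer classification of self-shrinkers to identify the limit. That is a legitimate modern viewpoint, but note that Abresch--Langer is contemporaneous with Gage--Hamilton and is not how the latter actually closed the argument --- they obtained convergence of the curvature to a constant more directly from the isoperimetric control together with integral estimates on $h$. If your goal is to match the paper's description, you should make the isoperimetric-ratio monotonicity the centerpiece rather than a side remark, and avoid leaning on Abresch--Langer for a result that logically precedes (or is independent of) it.
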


More precisely, they showed that the flow becomes extinct in a point and if the flow is rescaled to keep the enclosed area constant, then the resulting curves converge to a round circle.  They did this by tracking the isoperimetric ratio and showing that it was approaching the optimal ratio which is achieved by round circles.

In 1987, M. Grayson, \cite{G1}, showed that any simple closed curve eventually becomes convex under the flow:

\begin{Thm}
(Grayson, \cite{G1})
Any simple closed curve eventually becomes convex under the curve shortening flow.
Thus, by the result of Gage-Hamilton, it becomes extinct in a ``round point''.  
\end{Thm}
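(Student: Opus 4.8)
The plan is to combine the short-time theory and preservation of embeddedness with the area identity, Huisken's monotonicity and distance-comparison estimates, and the Abresch--Langer classification, and then to quote the Gage--Hamilton theorem above for the last step. First I would recall that a smooth embedded closed curve $\gamma_0 \subset \RR^2$ has a unique smooth solution $\gamma_t$ of the curve shortening flow on a maximal time interval $[0,T)$, that this solution stays embedded (two points of the curve cannot collide; this follows from the parabolic maximum principle applied to the chord $|x(p,t)-x(q,t)|$, equivalently from the avoidance principle applied to the curve and its nearby translates), and that the enclosed area evolves by
\begin{equation}
\frac{d}{dt}\,\Area(\gamma_t) \,=\, -\int_{\gamma_t} h\,ds \,=\, -2\pi \, ,  \notag
\end{equation}
by the theorem of turning tangents. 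Hence $\Area(\gamma_t) = \Area(\gamma_0) - 2\pi t$ and $T \le \Area(\gamma_0)/(2\pi)$, so the flow cannot exist for all time.

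The heart of the argument is to show that the curve stays smooth until its area runs out, i.e.\ that $T = \Area(\gamma_0)/(2\pi)$ and $\gamma_t$ shrinks to a point as $t \to T$. The route I would take is Huisken's distance-comparison (chord--arc) estimate: writing $\ell(p,q,t)$ for the arc length between $p$ and $q$ along $\gamma_t$ and $L(t)$ for the total length, one shows by a two-point maximum principle that the spatial maximum of
\begin{equation}
\frac{L(t)}{\pi\,|x(p,t)-x(q,t)|}\,\sin\!\left(\frac{\pi\,\ell(p,q,t)}{L(t)}\right)  \notag
\end{equation}
is non-increasing in $t$ (at an interior maximizing pair, the first- and second-order conditions for chord and arc length, fed into their evolution equations, give the differential inequality; on the round circle this ratio is identically one). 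This bounds the chord from below by a fixed multiple of the normalized arc length, which rules out the formation of a thin neck or a small loop; in its sharp form it yields a curvature bound of order $(T-t)^{-1/2}$, so every parabolic rescaling limit at a singular point is a smooth \emph{embedded} curve, and Huisken's monotonicity formula (discussed later in this survey) forces such a limit to be a self-similar shrinker. By the Abresch--Langer classification, the only embedded self-shrinker is the round circle (a line would have vanishing curvature and hence is not a genuine singularity model). Therefore a singularity can form only when the whole curve collapses, i.e.\ at $t=T$.

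Finally I would rescale parabolically about the extinction point $(p_0,T)$: set $\tilde\gamma_s = (2(T-t))^{-1/2}\,(\gamma_t - p_0)$ with $s = -\tfrac12\log(T-t)$. Since $\Area(\gamma_t) = 2\pi(T-t)$, the enclosed area of $\tilde\gamma_s$ equals $\pi$ for all $s$; together with the chord--arc estimate (which prevents two strands from piling up) this forces the limiting shrinking circle to occur with multiplicity one, and then parabolic regularity upgrades $\tilde\gamma_s \to \SS^1$ to smooth convergence. In particular $\tilde\gamma_s$, and hence $\gamma_t$ for $t$ close to $T$, is a convex curve. The Gage--Hamilton theorem stated above now applies and shows that the flow becomes extinct in a round point, as claimed.

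The step I expect to be the main obstacle is showing that no singularity occurs before the area vanishes, equivalently the curvature estimate encoded in the chord--arc comparison; once it is in hand, the blow-up analysis, the identification of the singularity model via monotonicity and Abresch--Langer, and the appeal to Gage--Hamilton are comparatively soft. Alternatives for this step are Grayson's original analysis of the evolving curve (controlling the number and sizes of near-self-intersections) or Hamilton's isoperimetric estimate for $L^2/\Area$, but in every approach this is where the real work lies.
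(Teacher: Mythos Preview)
The paper does not actually prove this theorem; it states Grayson's result and then, in the next subsection, outlines the later alternative approaches of Hamilton and Huisken via monotone isoperimetric-type quantities, followed by a one-paragraph ``rough idea'' of how a bound on such a quantity rules out non-circular blow-ups. Your proposal is a correct sketch of Huisken's distance-comparison proof (the paper's reference \cite{H5}): the chord--arc monotonicity, the resulting curvature bound and embedded blow-up limit, identification of the limit via monotonicity and Abresch--Langer, and the final appeal to Gage--Hamilton are all in order. Compared with the paper's discussion, you follow Huisken's two-point maximum principle rather than Hamilton's dividing-curve quantities $I$ and $J$; both routes feed into the same blow-up argument, and your version has the advantage of giving a sharp $(T-t)^{-1/2}$ curvature bound directly, while Hamilton's quantities are perhaps more elementary to state. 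One small slip: the monotonicity of $L^2/\Area$ you mention at the end is Gage's, not Hamilton's; Hamilton's estimates are the $I$ and $J$ quantities described in the paper.
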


\begin{figure}[htbp]
\centering\includegraphics[totalheight=.35\textheight, width=.95\textwidth]{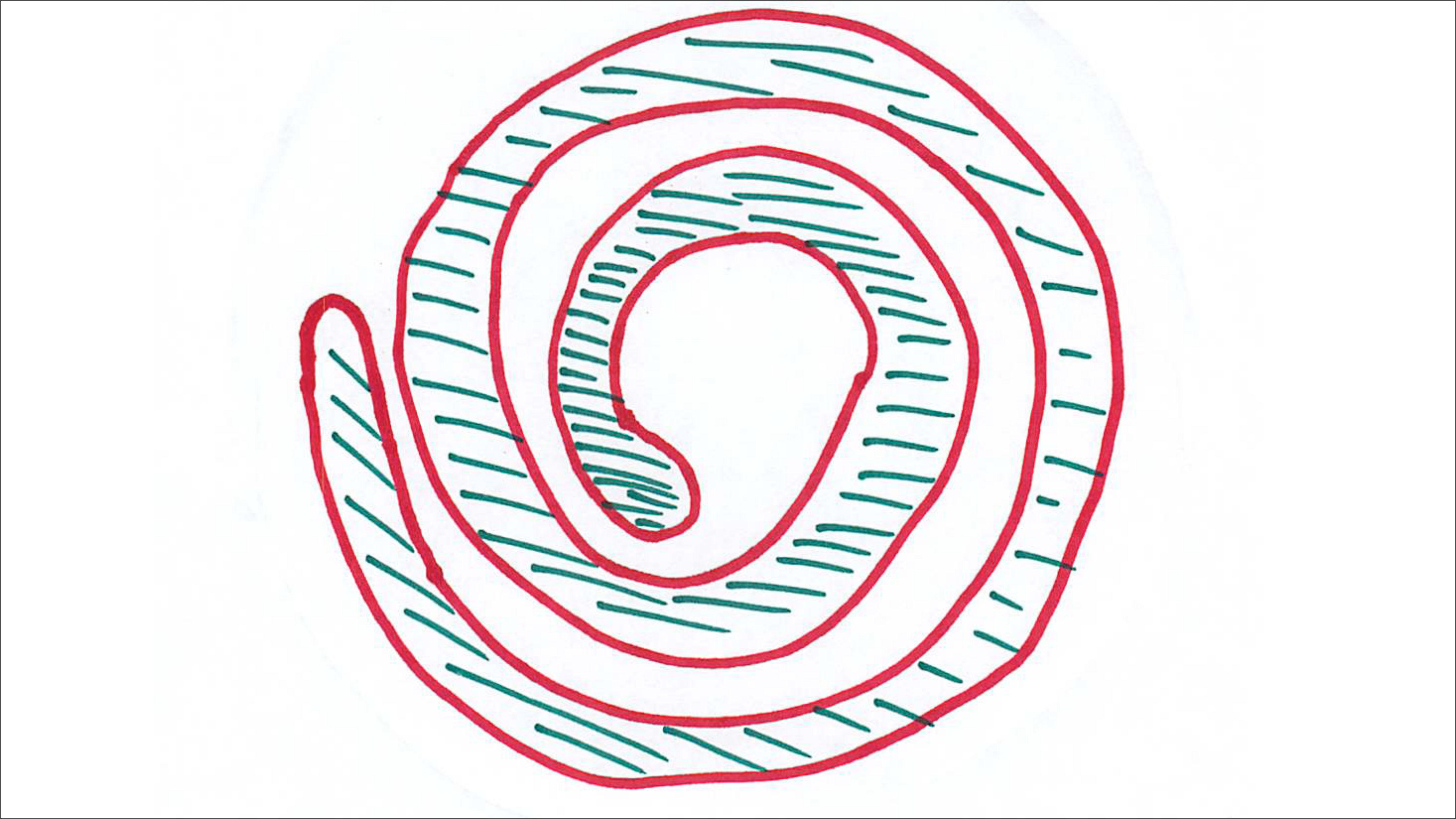}
\caption{The snake manages to unwind quickly enough to  become convex before extinction.}   
  \end{figure}

\subsection{Isoperimetric monotonicity under the curve shortening flow}

In 1995,  Hamilton, \cite{Ha1}, and Huisken, \cite{H5}, discovered two beautiful new ways to prove Grayson's theorem.  Both of these relied on proving monotonicity of various isoperimetric ratios under the curve shortening flow and using these to rule out singularities other than shrinking circles.  Recently, Andrews and Bryan, \cite{AnB} discovered another monotone quantity and used it to give a 
self-contained{\footnote{``Self-contained'' means avoiding the use of a blow up analysis.}} proof of Grayson's theorem.  We will describe two monotone quantities discovered by Hamilton.

\vskip2mm
 For both of Hamilton's quantities, we start with a simple closed curve
$$c:\SS^1\to \RR^2 \, .$$  
  The image of $c$  encloses a region in $\RR^2$.  Each simple curve $\gamma$ inside this region with boundary in the image of $c$ divides the region into two subdomains;  let $A_1$ and $A_2$ be the areas of these subdomains and let $L$ be the length of the dividing curve $\gamma$.
  
  \vskip2mm
  Hamilton's first quantity $I$ is defined to be
\begin{equation}
  I =\inf_{\gamma} \, \,  L^2\, \left(\frac{1}{A_1}+\frac{1}{A_2}\right)\, ,   \notag
\end{equation}
where the infimum is over all possible dividing curves $\gamma$.

\begin{figure}[htbp]
    \begin{minipage}[t]{0.5\textwidth}
    \centering\includegraphics[totalheight=.3\textheight, width=.9\textwidth]{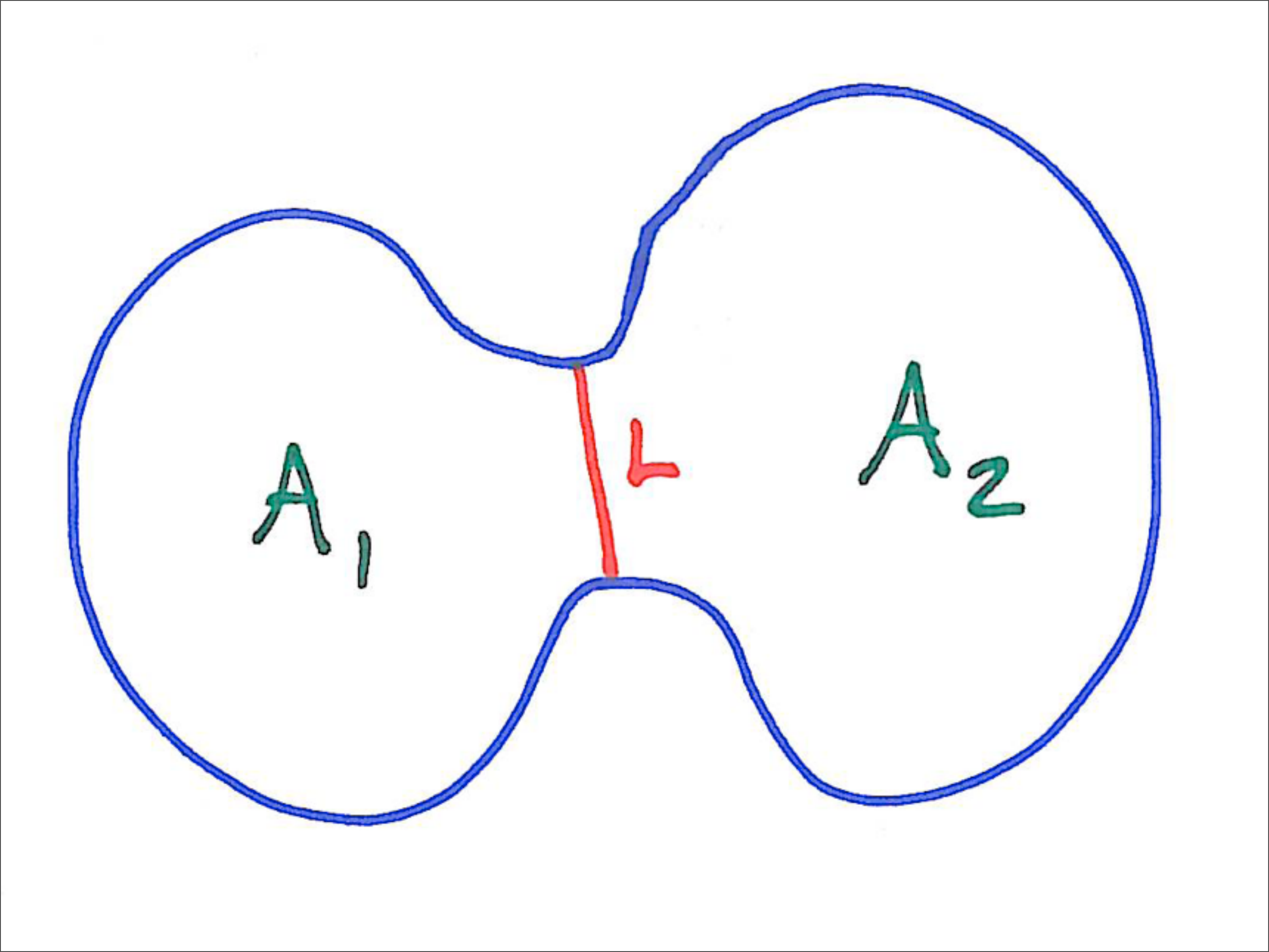}
    \caption{The minimizing curve $\gamma$ in Hamilton's first isoperimetric quantity.}  
    \end{minipage}\begin{minipage}[t]{0.5\textwidth}
    \centering\includegraphics[totalheight=.3\textheight, width=.9\textwidth]{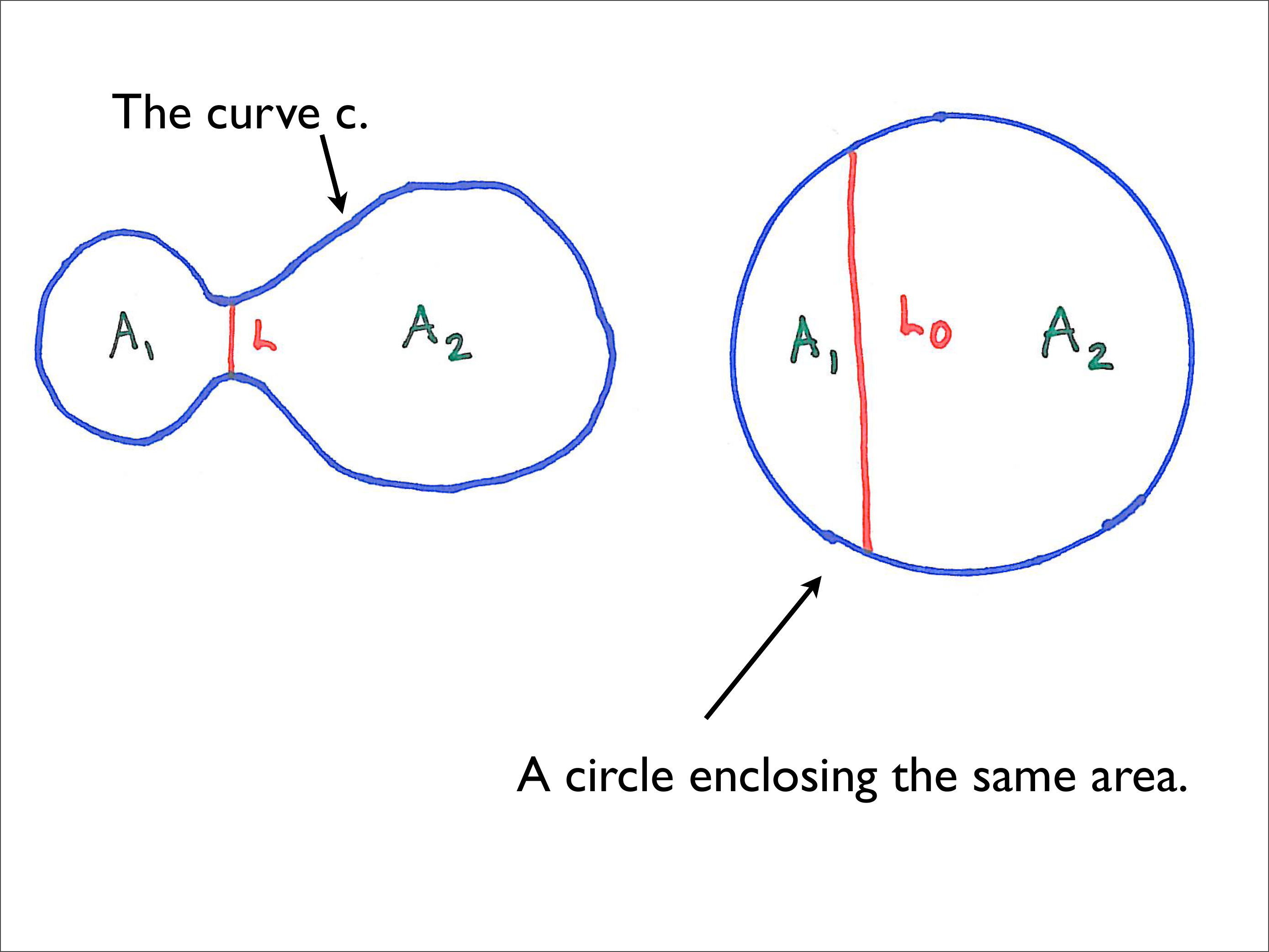}
    \caption{Defining the length $L_0$ in Hamilton's second isoperimetric quantity.} 
    \end{minipage}
\end{figure}

  \begin{Thm}
  (Hamilton, \cite{Ha1})
  Under the curve shortening flow,   $I$ increases  if $I \leq\pi$.
\end{Thm}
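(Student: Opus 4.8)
The plan is to prove that $I$ is non-decreasing (on the time interval where $I\le\pi$) by analysing, at each time, the dividing curve $\gamma$ that realizes the infimum, and showing that a suitable one-sided derivative of $I$ is $\ge 0$.

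\textbf{Structure of the minimizer.} First I would check that for each $t$ the infimum defining $I(t)$ is attained (a standard compactness argument, using that $I\le\pi$ rules out minimizing configurations in which the areas collapse), and that under the hypothesis $I\le\pi$ the minimizer $\gamma$ is a connected simple arc with both endpoints on $c_t$: a competitor that is a closed loop contained in the enclosed region satisfies $L^2(1/A_1+1/A_2)\ge 4\pi>\pi$ by the isoperimetric inequality, so it cannot be optimal. Taking the first variation of $L(\gamma)^2\,(1/A_1+1/A_2)$ — with interior normal variations and with variations sliding the two endpoints along $c_t$ — shows that $\gamma$ has constant geodesic curvature $k$ and meets $c_t$ orthogonally at both endpoints, and the Euler--Lagrange equation gives $k=\tfrac{L\,(A_2-A_1)}{2\,A_1A_2}$; so $\gamma$ is a circular arc, reducing to a straight segment when $A_1=A_2$.

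\textbf{Reduction to a first-order inequality.} Since $I(t)$ is only an infimum it need not be differentiable, so I would argue with one-sided comparisons, and — this is the point one must get right — using the minimizer at the \emph{later} time. Fix $t_0<t_1$, let $\gamma$ realize $I(t_1)$, and modify $\gamma$ near its endpoints — extending it out to, or trimming it back to, $c_{t_0}$ along the flow — to obtain an admissible dividing curve $\tilde\gamma$ for $c_{t_0}$. The modification changes the length at first order in $(t_1-t_0)$ and the two areas only at second order, so $I(t_0)\le L(\tilde\gamma)^2(1/\tilde A_1+1/\tilde A_2)=I(t_1)+(t_1-t_0)\,D+o(t_1-t_0)$, where $D$ is an explicit first-order quantity evaluated at the $t_1$-minimizer. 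This yields $\liminf_{t_1\to t_0^+}\frac{I(t_1)-I(t_0)}{t_1-t_0}\ge -D(t_0)$, and together with the continuity of $I$ (which follows from the same comparison run in the reverse direction), monotonicity follows once one shows $D(t_0)\le 0$ whenever $I(t_0)\le\pi$.

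\textbf{The first-order quantities.} The area enclosed by $c_t$ decreases at rate $-2\pi$, since the total geodesic curvature of an embedded closed plane curve is $2\pi$; this is split between the two subregions, and applying Gauss--Bonnet to each subregion $\Omega_i$ — whose boundary is an arc $\alpha_i\subset c_t$ together with $\gamma$, meeting at two right angles, in a flat ambient space — gives $\int_{\alpha_1}h=\pi-\sigma kL$ and $\int_{\alpha_2}h=\pi+\sigma kL$ with $\sigma=\pm1$ recording which way $\gamma$ bulges, hence $\tfrac{d}{dt}A_1=-\pi+\sigma kL$ and $\tfrac{d}{dt}A_2=-\pi-\sigma kL$. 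The first-order change of length coming from the endpoint modification is governed by the normal speed $h$ of $c_t$ at the two contact points $p_1,p_2$ — which is tangent to $\gamma$ there because the two curves are orthogonal — and equals $h(p_1)+h(p_2)$ up to sign. Substituting these into the definition of $D$ and using the Euler--Lagrange value of $k$ expresses $D$ explicitly in terms of $L$, $A_1$, $A_2$ and the single remaining unknown $h(p_1)+h(p_2)$.

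\textbf{Controlling the boundary curvatures and closing the argument.} The term $h(p_1)+h(p_2)$ carries genuine extra information about $c_t$ and must be bounded; here I would use that $\gamma$ is not just a critical point but a \emph{minimizer}. The second variation of $L^2(1/A_1+1/A_2)$ at $\gamma$ is nonnegative, and — as for free-boundary minimal submanifolds — its boundary term involves exactly the geodesic curvature of the constraint curve $c_t$ at $p_1$ and $p_2$, with the destabilizing sign; testing the stability inequality against a well-chosen variation then yields an upper bound for $h(p_1)+h(p_2)$ in terms of $L$, $A_1$, $A_2$. Inserting this bound and writing everything in terms of $Q:=L^2(1/A_1+1/A_2)$ reduces the whole statement to an explicit inequality in which $\pi$ appears precisely as the value of $Q$ at which the controlling side changes sign; as a consistency check, for the shrinking circle with $\gamma$ a diameter one computes $D\equiv 0$, matching $I\equiv 16/\pi$. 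I expect this last step — producing the sharp bound on $h(p_1)+h(p_2)$ and checking that the constant that falls out is exactly $\pi$ — to be the main obstacle; the infimum bookkeeping in the reduction (non-smooth times, the corners created by the modification, degenerate minimizers) is delicate but technical and standard in this setting.
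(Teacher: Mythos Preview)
The paper does not contain a proof of this theorem. This is a survey article, and the statement is simply quoted from Hamilton's paper \cite{Ha1} without any argument; the surrounding text only explains informally why such isoperimetric bounds are relevant to Grayson's theorem. So there is no ``paper's own proof'' to compare your proposal against.

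That said, your sketch follows the standard route that Hamilton's original argument takes: show the infimum is attained by a circular arc meeting $c_t$ orthogonally, compute the Euler--Lagrange value of its curvature, use Gauss--Bonnet to get the area derivatives $\dot A_i=-\pi\pm kL$, and control the endpoint contribution $h(p_1)+h(p_2)$ via the second variation (stability) of the minimizer. One point worth tightening: your comparison runs from the minimizer at the \emph{later} time $t_1$ back to $t_0$, which forces you to prove $D\le 0$; Hamilton instead freezes the minimizing arc at the earlier time, lets the outer curve flow, and computes the forward time-derivative of $L^2(1/A_1+1/A_2)$ for that fixed arc, which directly gives a lower bound for the lower Dini derivative of $I$. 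The content is the same, but the bookkeeping is cleaner his way and avoids the corner/trimming issues you flag. Your identification of the second-variation step as the place where the constant $\pi$ emerges is correct and is indeed the crux of Hamilton's computation.
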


Hamilton's second quantity $J$ is defined to be
\begin{equation}
 J=  \inf_{\gamma} \,  \frac{L}{L_0} \, ,   \notag
\end{equation}
where the infimum is again taken over  all possible dividing curves $\gamma$ and the quantity $L_0 = L_0 (A_1 , A_2)$ is 
the length of the shortest curve which divides a circle of area $A_1+A_2$ into two pieces of area $A_1$ and $A_2$.

    \begin{Thm}
  (Hamilton, \cite{Ha1})
  Under the curve shortening flow,   $J$ always increases.
\end{Thm}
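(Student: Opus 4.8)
The plan is to show that at each time $t_0$ the lower left Dini derivative of $t\mapsto J(t)$ is $\ge 0$; since $J$ is continuous in $t$, this forces $J$ to be non-decreasing. The key observation is that for \emph{any} one-parameter family $\gamma_t$ of dividing curves with $\gamma_{t_0}$ a minimizer for $c_{t_0}$ one has $J(t)\le g(t):=L(\gamma_t)/L_0(A_1(t),A_2(t))$ for all $t$, with equality at $t_0$, so that $\liminf_{s\to 0^+}s^{-1}\big(J(t_0)-J(t_0-s)\big)\ge g'(t_0)$; it therefore suffices to exhibit \emph{one} such family with $g'(t_0)\ge 0$. (That a minimizer exists: $L/L_0$ is bounded away from $0$ and blows up along dividing curves forced to cut off a region of vanishing area near a point of positive curvature of $c_{t_0}$, because there $L\gtrsim A_1^{1/3}$ while $L_0\sim A_1^{1/2}$, so the infimum is attained when $c_{t_0}$ is convex — e.g. always, for convex initial data, by Grayson's theorem — and in general one runs the argument with a minimizing sequence.) The first variation of $L/L_0$, allowing both interior normal variations and variations that slide the endpoints of $\gamma$ along $c_{t_0}$ (tangential sliding changes the two areas only at second order), shows that such a minimizer $\gamma$ is a circular arc, of constant geodesic curvature $h_\gamma$, meeting $c_{t_0}$ \emph{orthogonally} at its endpoints $p,q$. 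The identical Euler--Lagrange computation for the model problem shows the shortest curve dividing a round disc is also a circular arc orthogonal to the boundary circle, so $L_0(A_1,A_2)$ is smooth and, by scaling, homogeneous of degree $\tfrac12$; hence $A_1\partial_{A_1}L_0+A_2\partial_{A_2}L_0=\tfrac12L_0$, while the interior Euler--Lagrange equation reads $h_\gamma=J\big(\partial_{A_1}L_0-\partial_{A_2}L_0\big)$.

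For the family I would take $\gamma_t$ to be the curve shortening flow with orthogonal (Neumann) free boundary on $c_t$ starting from $\gamma$; orthogonality of contact is preserved, so each subregion $\Omega_1(t),\Omega_2(t)$ is a disc whose only corners are the two right angles at the moving endpoints. Differentiating the enclosed areas and invoking Gauss--Bonnet on $\Omega_1$ and $\Omega_2$ (total exterior angle $\tfrac\pi2+\tfrac\pi2=\pi$ for each) yields the pleasantly simple $\dot A_1=\dot A_2=-\pi$, and differentiating the length of $\gamma_t$ gives, at $t_0$,
\[
\dot L(\gamma)=-\big(h_\gamma^{2}L(\gamma)+h_c(p)+h_c(q)\big),
\]
where $h_c$ is the curvature of $c_{t_0}$; orthogonality of contact is exactly what makes the two endpoint contributions this clean. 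Because $\gamma$ is a critical point of $L/L_0$, the first order area terms cancel and, writing $J=L(\gamma)/L_0$,
\[
g'(t_0)=\frac1{L_0}\Big[\pi J\big(\partial_{A_1}L_0+\partial_{A_2}L_0\big)-h_\gamma^{2}L(\gamma)-h_c(p)-h_c(q)\Big].
\]
So $J$ increases once one proves
\[
h_\gamma^{2}L(\gamma)+h_c(p)+h_c(q)\ \le\ \pi J\big(\partial_{A_1}L_0+\partial_{A_2}L_0\big),
\]
which, after eliminating the derivatives of $L_0$ by the two relations above, is equivalent to the purely geometric inequality
\[
(A_1+A_2)\big(h_\gamma^{2}L(\gamma)+h_c(p)+h_c(q)\big)\ \le\ \pi L(\gamma)+\pi(A_2-A_1)\,h_\gamma
\]
for the circular arc $\gamma$ (signs fixed by the round-disc case, where it is an equality, consistent with $dJ/dt\equiv0$ for a circle).

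The main obstacle is this last inequality, and it is here, and only here, that one must use that $\gamma$ is \emph{minimizing} rather than merely critical. The natural tool is the second variation: at a critical point $\delta^2(L/L_0)=\tfrac1{L_0}\big(\delta^2L-J\,\delta^2L_0\big)\ge0$, and one evaluates this on the constant normal variation $\psi\equiv1$ of $\gamma$, for which the free-boundary second variation of length equals $-h_\gamma^{2}L(\gamma)-h_c(p)-h_c(q)$ and $\delta A_i=\pm L(\gamma)$; expressing $\delta^2L_0$ through $\partial_{A_i}L_0$, the Hessian of $L_0$, and the second variations $\delta^2A_i$ of the enclosed areas, and then using the homogeneity of $L_0$, one should read off exactly the required inequality. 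The delicate part is the bookkeeping of $\delta^2A_i$ under a variation whose endpoints are pinned to $c_{t_0}$: the second-order geometry of the constraint curve is precisely what generates the $h_c(p),h_c(q)$ terms, and these must be reconciled with the derivatives of the explicit function $L_0$. The round disc, where everything must collapse to an equality, fixes all constants and signs and is a trustworthy guide. Alternatively, one may attempt to prove the displayed geometric inequality directly from Gauss--Bonnet on $\Omega_1$, without reference to $L_0$; I expect that identity-juggling to be the real work of the proof in either route.
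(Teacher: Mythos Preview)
The paper does not contain a proof of this theorem. It is stated without proof and attributed to Hamilton \cite{Ha1}; the surrounding discussion only explains heuristically why such isoperimetric monotonicity yields Grayson's theorem, not how the monotonicity itself is established. So there is nothing in the paper to compare your argument against.

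For what it is worth, your outline tracks Hamilton's original strategy rather closely: pick a minimizing arc, use first variation to see it is a circular arc meeting $c$ orthogonally, differentiate in $t$ (the Gauss--Bonnet computation giving $\dot A_1=\dot A_2=-\pi$ is exactly Hamilton's), and then close up with second variation. You correctly identify the genuine content as the final inequality and correctly note that criticality is not enough---minimality (via the second variation) is what is needed. One technical caveat: your proposed family ``curve shortening flow with free Neumann boundary on the moving curve $c_t$'' is more machinery than necessary and its short-time existence is not entirely free; Hamilton instead simply differentiates $L/L_0$ along a one-parameter family of circular arcs meeting $c_t$ orthogonally, which avoids any auxiliary PDE. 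Also, the existence-of-minimizer paragraph is a bit loose in the non-convex case, though for the application (after Gage--Hamilton one may assume convexity) this is harmless.
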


We will give a rough idea why these theorems are related to Grayson's theorem.  Grayson had to rule out  a singularity developing before the curve became convex.  As you approach a singularity, the geodesic curvature $h$ must be larger and larger.  Since the curve is compact, one can magnify the curve just before this singular time to get a new curve where the maximum of $|h|$ is one.  There is a blow up analysis that shows that this dilated curve must look like a circle unless it is very long and skinny (like the grim reaper).  Finally, a bound on any of these isoperimetric quantities rules out these long skinny curves.

\section{Minimal surfaces}

We turn next to higher dimensions and the variational properties of the area functional.  Critical points of the area functional are called {\emph{minimal surfaces}}.  In this section, we will give a rapid overview of some of the basic properties of minimal surfaces;
  see the book \cite{CM14} for more details.

\subsection{The first variation of area for surfaces}

Let $\Sigma_0$ be a  hypersurface   in $\RR^{n+1}$ and $\nn$ its unit normal.  Given a vector field 
$$V: \Sigma \to \RR^{n+1}$$
with compact support, we get a one-parameter family of hypersurfaces
\begin{equation}
	\Sigma_s = \{ x +s \, V(x) \, | \, x \in \Sigma_0 \} \, . \notag
\end{equation}
The first variation of area (or volume) is
\begin{equation}
 \frac{d}{ds} \, \big|_{s=0} \, \, \Vol \, (\Sigma_s)  = \int_{\Sigma_0} \dv_{\Sigma_0} V  \, , 
 \notag
\end{equation}
where the divergence $\dv_{\Sigma_0}$ is defined by
\begin{equation}
\dv_{\Sigma_0} \, V  = \sum_{i=1}^n \langle \nabla_{e_i} V , e_i \rangle \, ,  \notag
\end{equation}
where $e_i$ is an orthonormal frame for $\Sigma$.
The vector field $V$ can be decomposed into the part $V^T$ tangent to $\Sigma$ and the normal part $V^{\perp}$.  The divergence of the normal part $V^{\perp} =
\langle V , \nn \rangle \nn$ is
\begin{equation}
	\dv_{\Sigma_0} V^{\perp} = \langle \nabla_{e_i} \, \left( \langle V , \nn \rangle \nn \right) , e_i \rangle  = 
	\langle V , \nn \rangle \, \langle \nabla_{e_i} \,    \nn , e_i \rangle 
	= H \,  \langle V , \nn \rangle \, ,  \notag
\end{equation}
where  the mean curvature scalar $H$ is 
\begin{equation}
H=\text{div}_{\Sigma_0} (\nn)  = \sum_{i=1}^n \langle \nabla_{e_i} \nn , e_i \rangle \, .  \notag
\end{equation}
 With this normalization, $H$ is $n/R$ on the $n$-sphere of radius $R$.

\subsection{Minimal surfaces}

By Stokes' theorem, $\dv_{\Sigma_0} V^T$ integrates to zero.  Hence, since  $	\dv_{\Sigma_0} V^{\perp} 	= H \,  \langle V , \nn \rangle $, 
we can rewrite the first variation formula as
\begin{equation}
 \frac{d}{ds} \, \big|_{s=0} \, \, \Vol \, (\Sigma_s)  = \int_{\Sigma_0}  H \,  \langle V ,   \nn \rangle \, .
 \notag
\end{equation}
A hypersurface $\Sigma_0$ is {\emph{minimal}} when it is a critical point for the area functional, i.e., when the first variation is zero for every compactly supported vector field $V$.
By the first variation formula, this is equivalent to $H = 0$.

 \begin{figure}[htbp]
\centering\includegraphics[totalheight=.4\textheight, width=.8\textwidth]{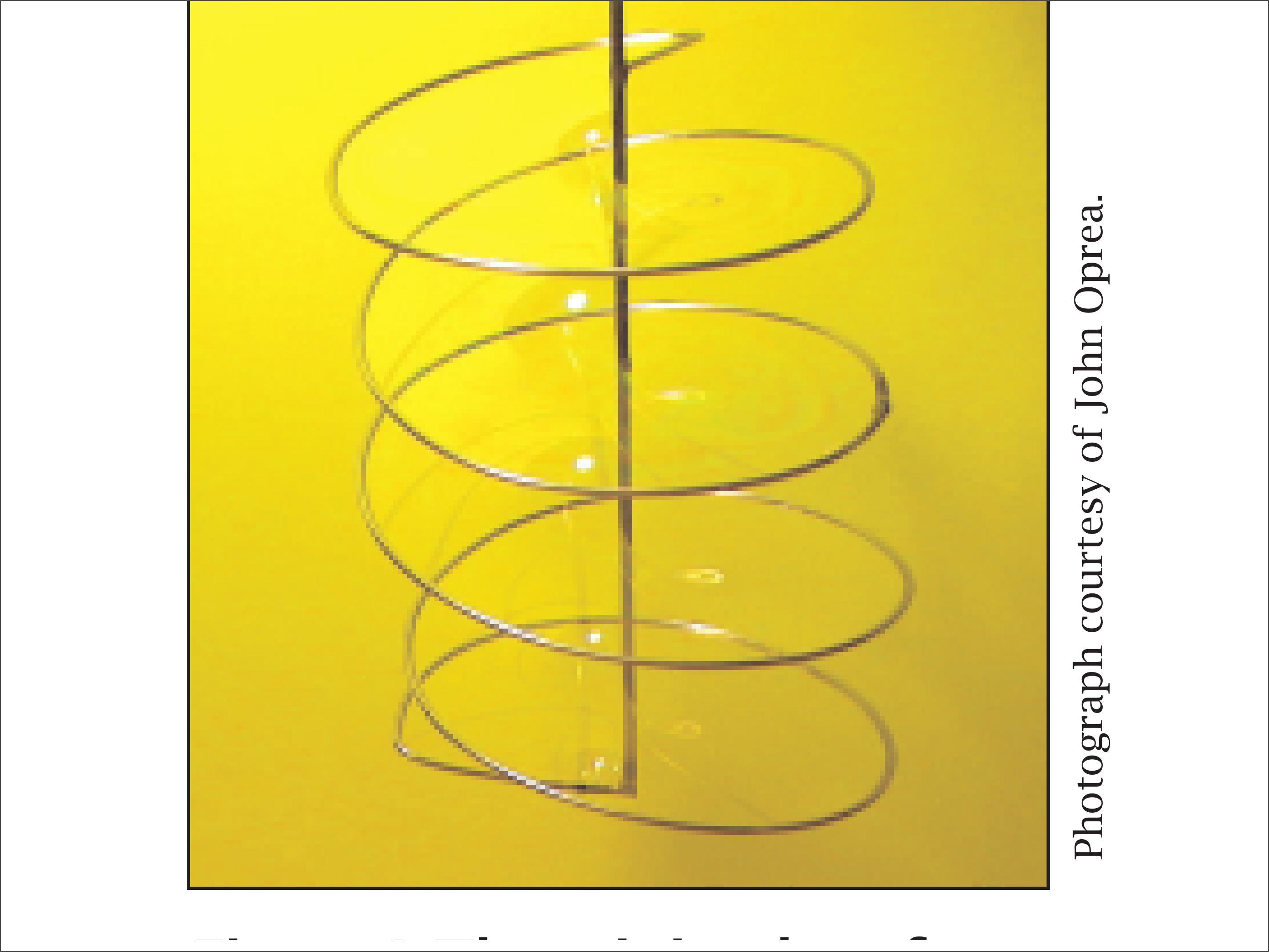}
\caption{The minimal surface called the helicoid is a double-spiral staircase.
This photo shows half of a helicoid as a soap film.}   
  \end{figure}

\subsection{Minimal graphs}

If $\Sigma$ is the graph of a function $u: \RR^n \to \RR$, then the upward-pointing unit normal is given by
\begin{equation}
	\nn = \frac{( - \nabla u , 1)}{ \sqrt{1 + \left| \nabla_{\RR^n} u  \right|^2 } } \, ,
\end{equation}
and the 
 mean curvature of $\Sigma$ is given by
\begin{equation}
	H   =  - \dv_{\RR^n} \, \, \left(  \frac{ \nabla_{\RR^n} u }{ \sqrt{1 + \left| \nabla_{\RR^n} u  \right|^2 } } \right)  \, . 
\end{equation}  	
Thus, minimal graphs are solutions of the nonlinear divergence-form PDE
\begin{equation}
	 \dv_{\RR^n} \, \, \left(  \frac{ \nabla_{\RR^n} u }{ \sqrt{1 + \left| \nabla_{\RR^n} u  \right|^2 } } \right)  = 0 \, .  
\end{equation}  
Every smooth hypersurface is locally graphical, so small pieces of a minimal surface satisfy this equation (over some plane).

In 1916, Bernstein proved that planes were the only entire solutions of the minimal surface equation:

\begin{Thm}
(Bernstein, \cite{Be}) Any minimal graph over all of $\RR^2$ must be flat (i.e., $u$ is an affine function).
\end{Thm}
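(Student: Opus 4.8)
The plan is to reduce Bernstein's theorem to a Liouville statement for the Gauss map, regarded as a holomorphic function on a Riemann surface. The structural fact about minimal surfaces I will use is that a minimal surface in $\RR^3$ carries a natural complex structure in which the three ambient coordinate functions become harmonic, so that the surface admits a Weierstrass-type representation and its Gauss map becomes meromorphic. The first step is to fix a global conformal (isothermal) parameter: since the graph $\Sigma$ of $u$ is diffeomorphic to $\RR^2$, it is simply connected and non-compact, so by uniformization it is conformally either the plane $\CC$ or the unit disk $\mathbb{D}$. In such a parameter $z$, the coordinate functions $x_1,x_2,x_3$ of $\Sigma\subset\RR^3$ are harmonic — this is exactly $H=0$ rewritten in isothermal coordinates, since $\Delta X$ is a multiple of $H\,\nn$ for a conformal immersion $X$ — so the derivatives $\phi_k=\partial x_k/\partial z$ are holomorphic, they satisfy $\phi_1^2+\phi_2^2+\phi_3^2=0$ by conformality, and the stereographically projected Gauss map $g$ is a meromorphic function assembled from the $\phi_k$.

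Next I would use that $\Sigma$ is a \emph{graph} in two essential ways. First, since $\Sigma$ is a graph over the $x_1x_2$-plane its upward unit normal satisfies $\langle\nn,e_3\rangle=(1+|\nabla u|^2)^{-1/2}>0$, so the image of the Gauss map lies in the open upper hemisphere of $\SS^2$; choosing stereographic projection from the south pole, $g$ omits a neighbourhood of $\infty$ — in particular $g$ has no poles — and is a \emph{bounded holomorphic function} on the conformal model of $\Sigma$. Second, and this is the crux, I would argue that the conformal model is all of $\CC$, not the disk. Granting this, Liouville's theorem forces $g$ to be constant; a constant $g$ means the Gauss map, hence $\nn$, is constant; a hypersurface with constant normal is an affine plane; and an affine plane that is a graph over all of $\RR^2$ is the graph of an affine function. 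That is the theorem.

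The main obstacle is therefore precisely the determination of the conformal type of $\Sigma$: ruling out that its conformal parameter ranges over $\mathbb{D}$. This is where completeness of the graph enters (the graph of $u:\RR^2\to\RR$ is a complete surface). The strategy, due to Osserman, is to exploit that the projection $\pi=(x_1,x_2):\Sigma\to\RR^2$ is a diffeomorphism onto the entire plane: writing $\partial x_1,\partial x_2$ as holomorphic $1$-forms in the conformal parameter and expressing the induced metric $ds^2$ and the Jacobian of $\pi$ in terms of these forms, one shows that if the parameter domain were $\mathbb{D}$, then completeness of $ds^2$ — which forces these forms to blow up fast enough near $\partial\mathbb{D}$ — could not be reconciled with $\pi$ being simultaneously one-to-one and onto $\RR^2$. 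Hence the parameter domain is $\CC$ and the argument closes. Verifying this incompatibility carefully is the only genuinely non-trivial analytic point in the proof.

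Finally, I note an alternative route, closer to the survey's theme of curvature estimates as the analog of the gradient estimate: prove an interior curvature estimate of the shape $|A|^2(p)\le C/R^2$ for any minimal graph over a disk of radius $R$ in the base centred under $p$, and then send $R\to\infty$ on an entire graph to get $A\equiv 0$, so that $\Sigma$ is totally geodesic in $\RR^3$, i.e., an affine plane. Here all the difficulty is concentrated in the curvature estimate, which one can obtain from stability of the graph — minimal graphs are area-minimizing by a calibration argument, hence stable — together with a blow-up and compactness argument. In dimension two I would regard the Gauss-map proof as the cleaner and more self-contained path; in higher dimensions, where the statement is much more delicate and ultimately false above dimension seven, genuinely different (geometric measure theoretic) methods are needed.
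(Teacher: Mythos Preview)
The paper does not prove Bernstein's theorem; it is stated as a classical result in this survey and immediately compared with Liouville's theorem, with no argument given. So there is nothing to match against, and your proposal should be judged on its own merits.

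Your outline is the classical Osserman proof and is essentially correct: uniformize the simply connected graph, observe that the stereographic Gauss map is holomorphic and bounded because the normal stays in an open hemisphere, and invoke Liouville once the conformal type is known to be $\CC$. You correctly identify the parabolicity step as the only real content and attribute it to Osserman. Your description of that step is admittedly impressionistic; the clean way to run it is to note that $f=\phi_1-i\phi_2$ is holomorphic and \emph{nowhere zero} (because $(x_1,x_2):\Sigma\to\RR^2$ is a diffeomorphism), so $w=\int f\,dz$ gives a global holomorphic change of parameter, and in the $w$-coordinate the induced metric and the graph structure force the parameter domain to be all of $\CC$. With that sharpening the argument is complete.

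It is worth noting that the alternative route you mention at the end --- curvature estimates from stability, then let $R\to\infty$ --- is exactly the circle of ideas the survey develops a few paragraphs later: minimal graphs are stable (positive Jacobi function $\langle e_3,\nn\rangle$), and Schoen's curvature estimate for stable minimal surfaces in $\RR^3$ then gives $|A|\le C/R$ on intrinsic balls, yielding flatness in the complete case. So while your primary Gauss-map argument is self-contained and classical, the alternative you sketch is the one more in tune with the paper's emphasis on stability and curvature estimates as the minimal-surface analogue of the gradient estimate.
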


Remarkably, this theorem holds for $n \leq 7$, but there are non-flat entire minimal graphs in dimensions $8$ and up.

The Bernstein theorem should be compared with the classical Liouville theorem for harmonic functions:

\begin{Thm}
(Liouville) A positive harmonic function on $\RR^n$ must be constant.
\end{Thm}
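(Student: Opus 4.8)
The plan is to deduce the statement from the mean value property of harmonic functions together with the positivity hypothesis; this is the classical Harnack-type argument and is short. First I would recall that a harmonic function $u$ on $\RR^n$ satisfies the mean value property: for every $x \in \RR^n$ and every $R>0$,
$$
u(x) = \frac{1}{\omega_n R^n} \int_{B_R(x)} u \, ,
$$
where $\omega_n$ is the volume of the unit ball in $\RR^n$ and $B_R(x)$ is the ball of radius $R$ about $x$. This is obtained exactly by the kind of divergence-theorem computation used in Section 2: one differentiates the spherical averages of $u$ in the radius and uses $\Delta u = 0$ to see they are constant, then integrates in $R$.

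Next, fix two points $x,y\in\RR^n$ and set $d = |x-y|$. For $R>d$ one has the inclusion $B_{R-d}(y)\subset B_R(x)$, so the hypothesis $u>0$ and the mean value property give
$$
u(x) = \frac{1}{\omega_n R^n} \int_{B_R(x)} u \;\geq\; \frac{1}{\omega_n R^n} \int_{B_{R-d}(y)} u \;=\; \frac{(R-d)^n}{R^n}\, u(y) \, .
$$
Letting $R\to\infty$, the factor $(R-d)^n/R^n$ tends to $1$, hence $u(x)\geq u(y)$. Interchanging the roles of $x$ and $y$ yields $u(y)\geq u(x)$, so $u(x)=u(y)$; since $x$ and $y$ were arbitrary, $u$ is constant.

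There is no serious obstacle here, but the one point worth flagging is that the positivity hypothesis is genuinely used: it is exactly what allows one to discard the contribution of $B_R(x)\setminus B_{R-d}(y)$ with the correct sign, and the conclusion is false without a lower bound (linear functions are harmonic and nonconstant). An alternative route, closer in spirit to the gradient and curvature estimates emphasized later in this survey, is to first establish the interior estimate $|\nabla \log u| \leq C(n)/R$ on $B_{R/2}(x)$ for positive harmonic functions (for instance by applying the Bochner formula to $\log u$, or by differentiating the Poisson kernel representation on $B_R(x)$) and then let $R\to\infty$ to get $\nabla u \equiv 0$. I would present the mean value proof as the cleanest, and mention the gradient-estimate proof as the harmonic-function prototype of the Bernstein-type curvature estimate.
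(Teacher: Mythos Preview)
Your proof is correct; the mean-value comparison over nested balls is one of the standard arguments, and your use of positivity to drop the complement $B_R(x)\setminus B_{R-d}(y)$ is exactly the point. Note, however, that the paper does not actually prove this theorem: it is stated as the classical Liouville theorem for comparison with Bernstein's theorem, with no proof given. So there is nothing to compare against, but your writeup would serve well as a supplied proof, and your remark about the gradient-estimate route is apt given the paper's later emphasis on curvature estimates as the minimal-surface analog.
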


\subsection{Consequences of the first variation formula}

Suppose that $\Sigma \subset \RR^{n+1}$ is a hypersurface with normal $\nn$. 
 Given $f: \RR^{n+1} \to \RR$, the Laplacian on $\Sigma$ applied to $f$ is
\begin{equation}	\label{e:lapla}
	\Delta_{\Sigma} f \equiv \dv_{\Sigma}  (\nabla f)^T
	= \sum_{i=1} \Hess_f (e_i , e_i) - \langle \nabla f , \nn \rangle \, H \, ,   
\end{equation}
where $e_i$ is a frame for $\Sigma$ and $\Hess_f$ is the $\RR^{n+1}$ Hessian of $f$.  We will use this formula several times with different choices of $f$.

First, when $f$ is the $i$-th coordinate function $x_i$, \eqr{e:lapla} becomes
\begin{equation}
	\Delta_{\Sigma} x_i 
	=  - \langle \partial_i , \nn \rangle \, H \, . \notag 
\end{equation}
We see that:

\begin{Lem}
$\Sigma$ is minimal $\iff$ all coordinate functions are harmonic.
\end{Lem}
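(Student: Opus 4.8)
The plan is to read both implications directly off the formula $\Delta_{\Sigma} x_i = - \langle \partial_i , \nn \rangle \, H$, which is the specialization of \eqr{e:lapla} to $f = x_i$ obtained by noting that the ambient Hessian of a linear coordinate function vanishes identically, so the sum $\sum_i \Hess_{x_i}(e_j,e_j)$ drops out. Granting this identity, no further geometry is required.

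For the forward implication, suppose $\Sigma$ is minimal, so $H \equiv 0$ by the first variation formula. Then the right-hand side of the displayed identity vanishes for every $i$, hence $\Delta_{\Sigma} x_i = 0$ for each of the $n+1$ coordinate functions; that is, all coordinate functions are harmonic on $\Sigma$.

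For the converse, suppose $\Delta_{\Sigma} x_i = 0$ for all $i$, so that $\langle \partial_i , \nn \rangle \, H = 0$ pointwise on $\Sigma$ for every $i$. Fix $p \in \Sigma$. Since $\partial_1, \dots, \partial_{n+1}$ is the standard orthonormal basis of $\RR^{n+1}$, the scalars $\langle \partial_i , \nn \rangle$ are exactly the components of the unit vector $\nn(p)$, so $\sum_{i=1}^{n+1} \langle \partial_i , \nn \rangle^2 = |\nn(p)|^2 = 1$. Multiplying $\langle \partial_i , \nn \rangle \, H = 0$ by $\langle \partial_i , \nn \rangle$ and summing over $i$ yields $H(p) = 0$. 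Since $p$ was arbitrary, $H \equiv 0$, and by the first variation formula $\Sigma$ is minimal.

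There is essentially no obstacle here: the entire content is packaged in the identity $\Delta_{\Sigma} x_i = -\langle \partial_i,\nn\rangle H$ already derived above, and the lemma is a short bookkeeping consequence of it together with $|\nn| = 1$. The one point worth a moment's care is that harmonicity of a \emph{single} coordinate function does not suffice — $\langle \partial_i,\nn\rangle$ may vanish along the locus where $\Sigma$ is tangent to the $x_i$-direction — and it is precisely the use of all $n+1$ coordinates, via the Pythagorean identity $\sum_i \langle\partial_i,\nn\rangle^2 = 1$, that forces $H$ to vanish.
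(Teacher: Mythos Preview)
Your proof is correct and follows exactly the approach the paper intends: the lemma is stated immediately after the identity $\Delta_{\Sigma} x_i = -\langle \partial_i,\nn\rangle H$ with the phrase ``We see that,'' so the paper regards it as an immediate consequence of that identity, which is precisely what you have written out. Your explicit use of $\sum_i \langle \partial_i,\nn\rangle^2 = 1$ to handle the converse is a clean way to make rigorous what the paper leaves implicit.
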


 Combining this with the maximum principle, we get Osserman's convex hull property, \cite{Os3}:
 
 \begin{Pro}	\label{p:cvx}
 If $\Sigma$ is compact and minimal, then $\Sigma$ is contained in the convex hull of $\partial \Sigma$.
 \end{Pro}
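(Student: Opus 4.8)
The plan is to exploit the fact that the coordinate functions are harmonic on $\Sigma$, together with the maximum principle. Recall from the preceding Lemma that minimality of $\Sigma$ is equivalent to $\Delta_\Sigma x_i = 0$ for every coordinate function; more generally, for any affine function $\ell(x) = \langle a, x\rangle + b$ on $\RR^{n+1}$ with $a \in \RR^{n+1}$, $b \in \RR$, we have $\Delta_\Sigma \ell = \langle a, \nabla_\Sigma(\text{coordinate part})\rangle$-type expression which vanishes by linearity, so $\ell|_\Sigma$ is harmonic on $\Sigma$. The convex hull of $\partial\Sigma$ is, by definition, the intersection of all closed half-spaces $\{x : \langle a, x\rangle \le c\}$ that contain $\partial\Sigma$.

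First I would fix an arbitrary closed half-space $H^- = \{x \in \RR^{n+1} : \langle a, x\rangle \le c\}$ containing $\partial\Sigma$, and set $\ell(x) = \langle a, x\rangle$, so $\ell \le c$ on $\partial\Sigma$. Since $\Sigma$ is compact, $\ell|_\Sigma$ attains its maximum at some point $p \in \Sigma$. If $p \in \partial\Sigma$ we are done for this half-space, since then $\max_\Sigma \ell = \ell(p) \le c$. If instead $p$ is an interior point of $\Sigma$, then $\ell|_\Sigma$ is a harmonic function on the connected component of $\Sigma$ containing $p$ attaining an interior maximum; by the strong maximum principle $\ell|_\Sigma$ is constant on that component, so in particular it equals its boundary values, which are $\le c$. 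Either way $\ell \le c$ on all of $\Sigma$, i.e. $\Sigma \subset H^-$. Intersecting over all such half-spaces $H^-$ gives $\Sigma \subset \operatorname{Conv}(\partial\Sigma)$, as claimed.

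The main point requiring a little care is the application of the maximum principle when $\Sigma$ is disconnected: one should argue component by component, noting that each component is compact with boundary contained in $\partial\Sigma$ (a component with empty boundary cannot occur, since a closed minimal surface in $\RR^{n+1}$ would contradict the harmonicity of the coordinate functions — a harmonic function on a closed manifold is constant, forcing the component to be a point). Apart from this bookkeeping, the argument is essentially the observation that affine functions restrict to harmonic functions on a minimal surface, so the geometry of $\Sigma$ is constrained exactly as the maximum principle dictates. No blow-up or regularity input is needed beyond the smoothness already assumed.
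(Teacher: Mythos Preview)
Your proof is correct and follows essentially the same approach as the paper: both use that affine (coordinate) functions restrict to harmonic functions on a minimal surface and then apply the maximum principle to conclude that $\Sigma$ lies in every half-space containing $\partial\Sigma$. The paper phrases this as a short contradiction argument (rotate so $\partial\Sigma \subset \{x_1 < 0\}$ while some $p \in \Sigma$ has $x_1(p) > 0$), whereas you argue directly by intersecting half-spaces, but the content is identical.
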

 
 \begin{proof}
 If not, then we could choose translate and rotate $\Sigma$ so that $\partial \Sigma \subset \{ x_1 < 0 \}$ but
 $\Sigma$ contains a point $p \in \{ x_1 > 0 \}$.  However, the function $x_1$ is harmonic on $\Sigma$, so the maximum principle implies that its maximum is on $\partial \Sigma$.  This contradiction proved the proposition.
 \end{proof}
 
 Applying \eqr{e:lapla} with $f=|x|^2$ and noting that 
the $\RR^{n+1}$ Hessian of $|x|^2$ is twice the identity and the gradient is $2x$, we see that
\begin{equation}
	\Delta_{\Sigma} |x|^2
	=  2\, n - 2 \, \langle x , \nn \rangle \, H  \, , \notag 
\end{equation}
when $\Sigma^n \subset \RR^{n+1}$ is a hypersurface.
When $\Sigma$ is minimal, this becomes
\begin{equation}
	\Delta_{\Sigma} |x|^2
	=  2\, n  \, . \notag 
\end{equation}
This identity is the key for the monotonicity formula:

\begin{Thm}	\label{t:monomin}
If $\Sigma \subset \RR^{n+1}$ is a minimal hypersurface, then 
\begin{equation}
	\frac{d}{dr} \, \, \frac{ \Vol (B_r \cap \Sigma)}{r^n} = 
	 \frac{1}{r^{n+1}} \, \int_{\partial B_r \cap \Sigma} \, \frac{ \left| x^{\perp} \right|^2 }{\left| x^T \right|} \geq 0 \, .  \notag
\end{equation}
\end{Thm}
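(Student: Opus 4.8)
The plan is to reduce the entire statement to the identity $\Delta_{\Sigma} |x|^2 = 2\,n$ proved just above, combined with the divergence theorem on $\Sigma$ and the coarea formula. Set $A(r) = \Vol(B_r \cap \Sigma)$ and $\Omega_r = B_r \cap \Sigma$, and note that by Sard's theorem almost every $r$ is a regular value of $|x|$ restricted to $\Sigma$, so $\partial \Omega_r = \partial B_r \cap \Sigma$ is a smooth hypersurface in $\Sigma$ for a.e.\ $r$; I will also assume (as is implicit) that $\Sigma$ is properly embedded without boundary, or at least that $B_r$ avoids $\partial \Sigma$, so that no term from $\partial \Sigma$ enters. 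Since $r^{-n} A(r)$ is locally absolutely continuous in $r$, it suffices to establish the differentiated identity for a.e.\ $r$.

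First I would rewrite $\Delta_{\Sigma} |x|^2 = 2\,n$ in divergence form. Because $\nabla_{\Sigma} \tfrac12 |x|^2 = x^T$, the tangential part of the position vector, the minimal-surface identity says exactly that $\dv_{\Sigma} x^T = n$. Applying the divergence theorem on $\Omega_r$ then gives
\[
n\, A(r) = \int_{\Omega_r} \dv_{\Sigma} x^T = \int_{\partial B_r \cap \Sigma} \langle x^T , \eta \rangle ,
\]
where $\eta$ is the outward unit conormal of $\partial \Omega_r$ in $\Sigma$. Since $\nabla_{\Sigma} |x| = x^T / |x|$ points in the direction of increasing $|x|$, we have $\eta = x^T / |x^T|$ along $\partial B_r \cap \Sigma$, hence $\langle x^T, \eta \rangle = |x^T|$ and $n\, A(r) = \int_{\partial B_r \cap \Sigma} |x^T|$.

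Next I would differentiate $A(r)$ directly using the coarea formula applied to $|x| : \Sigma \to \RR$. Since $\big|\nabla_{\Sigma} |x| \big| = |x^T|/|x|$, which equals $|x^T|/r$ on $\partial B_r \cap \Sigma$, the coarea formula yields $A(r) = \int_0^r \big( \int_{\partial B_s \cap \Sigma} s/|x^T| \big)\, ds$, so that $A'(r) = \int_{\partial B_r \cap \Sigma} r/|x^T|$ for a.e.\ $r$. Combining this with the previous display and using $\frac{d}{dr}\big( r^{-n} A(r) \big) = r^{-n-1}\big( r A'(r) - n A(r) \big)$,
\[
\frac{d}{dr}\, \frac{A(r)}{r^n} = \frac{1}{r^{n+1}} \int_{\partial B_r \cap \Sigma} \left( \frac{r^2}{|x^T|} - |x^T| \right) = \frac{1}{r^{n+1}} \int_{\partial B_r \cap \Sigma} \frac{r^2 - |x^T|^2}{|x^T|} ,
\]
and on $\partial B_r$ one has $r^2 = |x|^2 = |x^T|^2 + |x^{\perp}|^2$, so the integrand equals $|x^{\perp}|^2/|x^T| \geq 0$. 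This is precisely the asserted formula and yields the monotonicity.

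The differential-geometric content is the one-line identity $\dv_{\Sigma} x^T = n$; the rest is bookkeeping. Accordingly, the only real obstacle is the measure-theoretic care in running the divergence theorem and the coarea formula on the region $\Omega_r$, which need not have smooth boundary for every $r$: one uses Sard to get smooth slices for a.e.\ $r$, checks that the exceptional radii form a null set and that $r^{-n}A(r)$ is absolutely continuous (so the a.e.\ identity integrates up to monotonicity for all $r$), and, if $\Sigma$ has boundary, restricts to $r$ for which $\bar B_r$ is disjoint from $\partial \Sigma$ so that no spurious boundary integral appears.
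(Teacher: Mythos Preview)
Your proof is correct and follows precisely the approach the paper indicates: the paper states (but does not prove) the theorem, merely noting that the identity $\Delta_{\Sigma}|x|^2 = 2n$ ``is the key for the monotonicity formula.'' You have supplied exactly the standard argument built on that identity---divergence theorem applied to $x^T$ together with the coarea formula---so there is nothing to compare; your write-up simply fills in the details the paper omits.
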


Moreover,  the density ratio is constant  if and only if $x^{\perp} \equiv 0$; this is equivalent to   $\Sigma$ being  a cone with its vertex at the origin (i.e., $\Sigma$ is invariant with respect to dilations about $0$).

\subsection{Examples of minimal surfaces}

\subsubsection{The Catenoid}

The catenoid, shown in figure \ref{f:f2},  is the only non-flat minimal surface of revolution.  It was discovered by Euler in 1744
and
shown to be minimal by Meusnier (a student of Monge) in 1776.  
It is a complete embedded topological annulus (i.e., genus zero and two ends) and is given
  as the set where $x_1^2 + x_2^2 = \cosh^2 (x_3)$ in $\RR^3$.  It is easy to see that the catenoid has finite total curvature.

 \begin{figure}[htbp]
\centering\includegraphics[totalheight=.4\textheight, width=.85\textwidth]{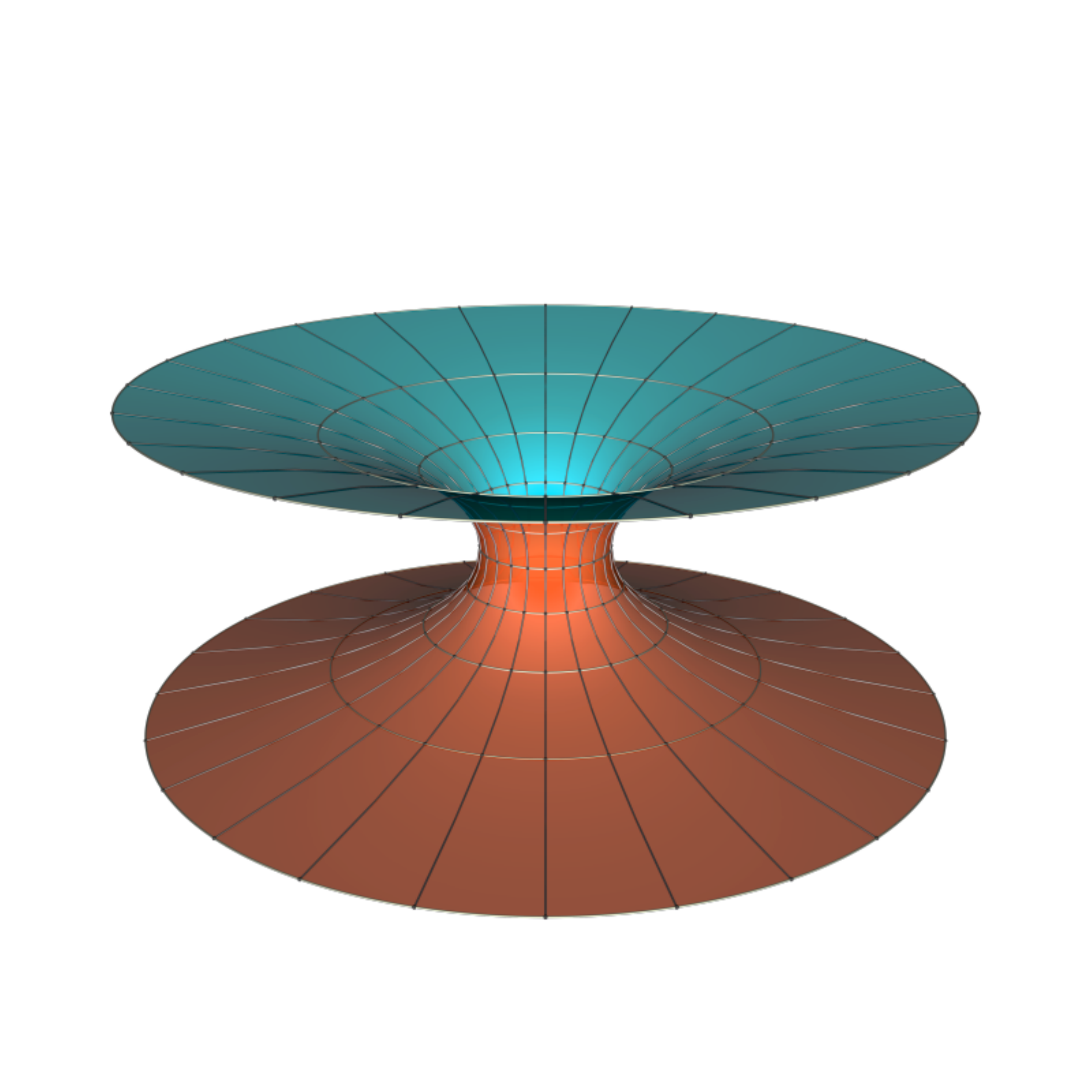}
\caption{The catenoid given by revolving $x_1= \cosh x_3$
around the $x_3$--axis. Credit: Matthias Weber, www.indiana.edu/~minimal.}    \label{f:f2}
  \end{figure}

\subsubsection{The Helicoid}  \label{e:xheli}

The helicoid (see figure \ref{f:f1}) is given as the set
\rm   $x_3 =\tan ^{-1} \left(
\frac{x_2}{x_1}\right)$; alternatively, it is given in parametric form by
\begin{equation}    \label{e:helicoid}
    (x_1,x_2,x_3)=(t\,\cos s,t\,\sin s,s) \, ,
\end{equation}
 where $s$, $t\in \RR$.   It was discovered by Meusnier (a student of Monge) in 1776.
  It is complete, embedded, singly-periodic and simply connected.
 
 The helicoid is a ruled surface since its
 intersections with horizontal planes $\{ x_3 = s \}$ are
 straight lines.  These lines lift and rotate with constant speed
 to form a double spiral staircase.  In 1842, Catalan showed that the helicoid is the only (non-flat) ruled minimal surface.
 A surface is said to be ``ruled'' if it can be parameterized by
\begin{equation}
	X(s,t) = \beta (t) + s \, \delta (t) {\text{ where }} s ,t \in \RR \, , 
\end{equation}
and $\beta$ and $\delta$ are curves in $\RR^3$.  The curve $\beta (t)$ is called the ``directrix'' of the surface, and a line having $\delta (t)$ as direction vector is called a ``ruling''.  For the standard helicoid, the $x_3$-axis is a directrix, and for each fixed $t$ the line $s \to (s\cos t , s \, \sin t , t )$ is a ruling.
 
 \begin{figure}[htbp]
    \centering\includegraphics[totalheight=.55\textheight, width=.85\textwidth]{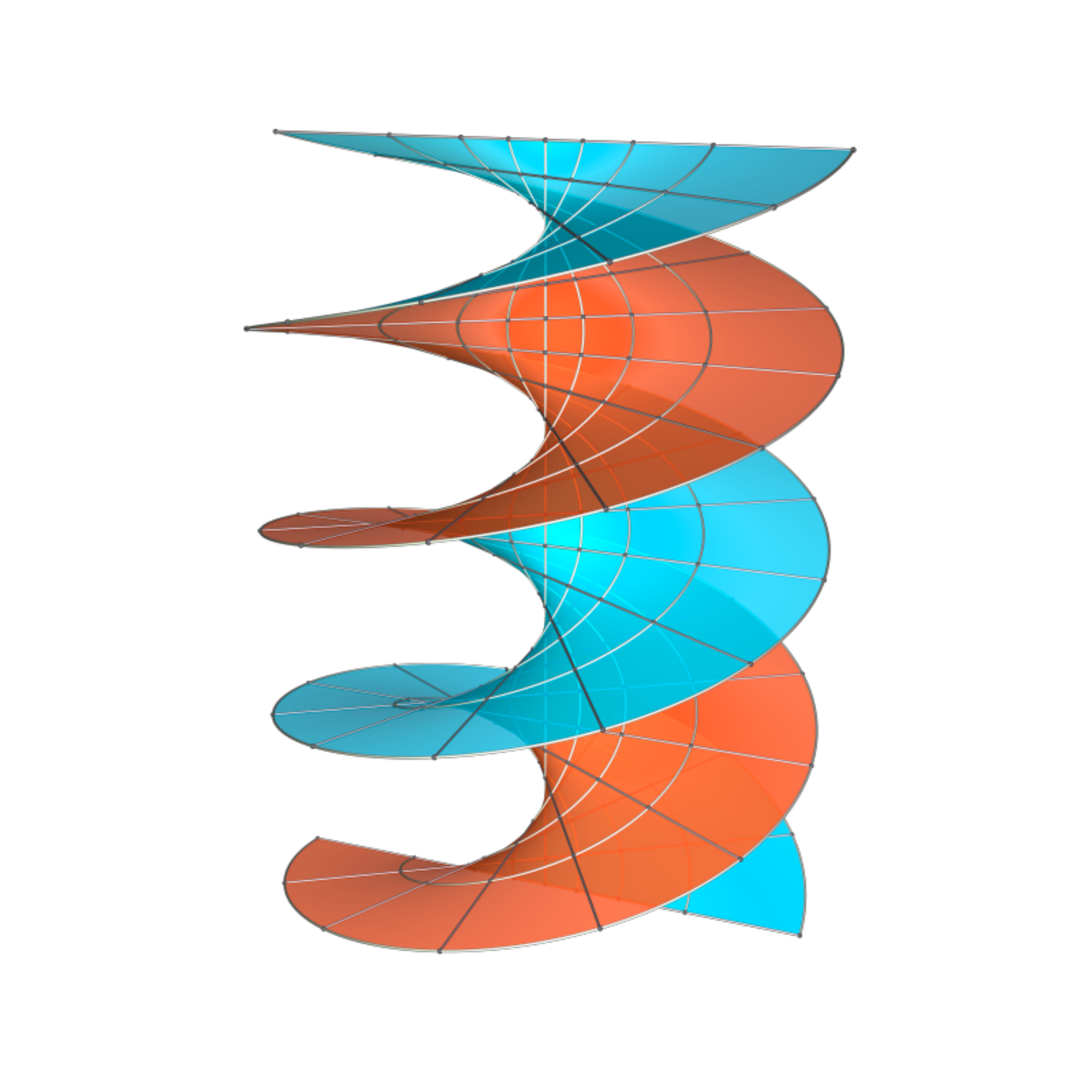}
    \caption{The helicoid, with the ruling pictured. Credit: Matthias Weber, www.indiana.edu/~minimal.}  \label{f:f1}
\end{figure}

\subsubsection{The Riemann Examples}

\begin{figure}[htbp]
    \begin{minipage}[t]{0.5\textwidth}
    \centering\includegraphics[totalheight=.4\textheight, width=.9\textwidth]{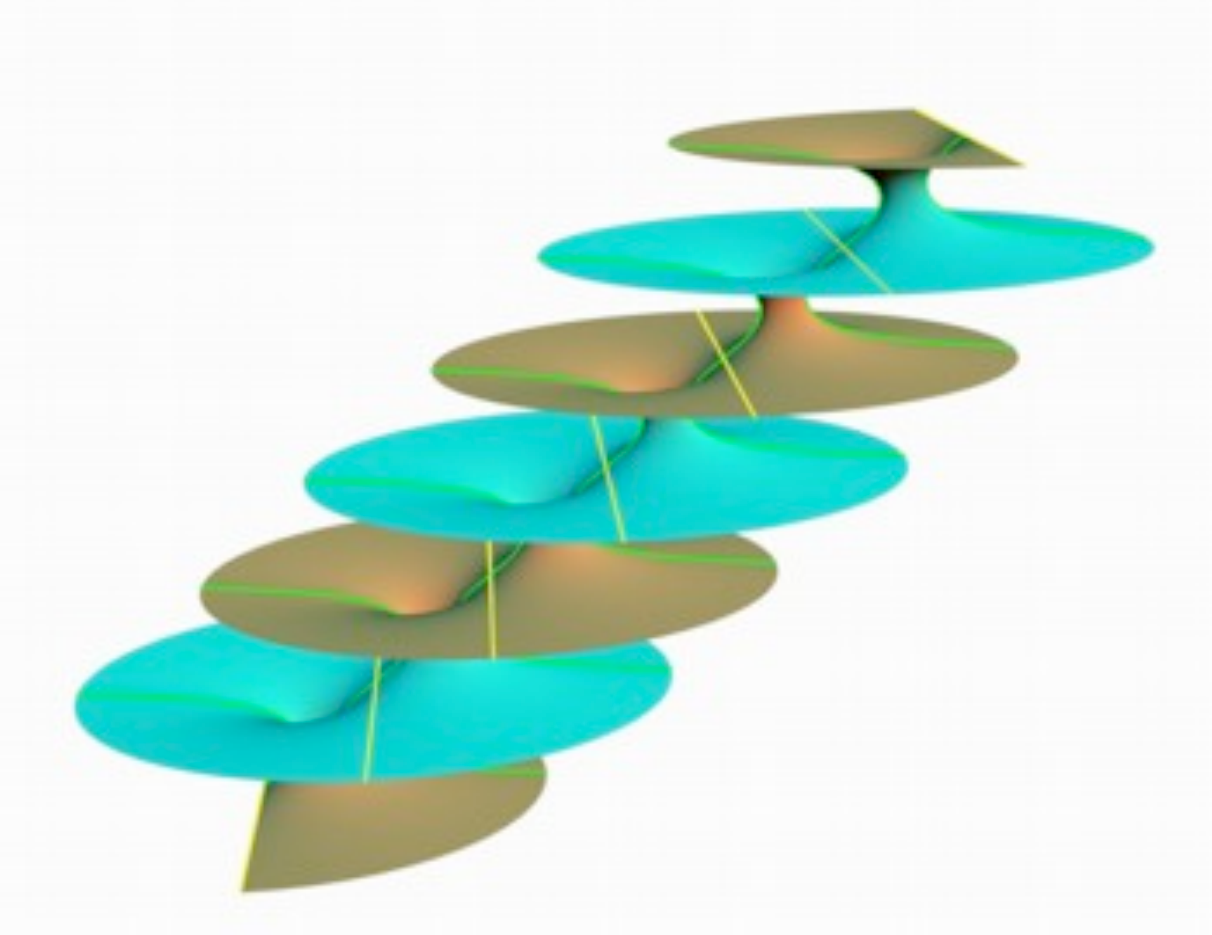}
     \end{minipage}\begin{minipage}[t]{0.5\textwidth}
    \centering\includegraphics[totalheight=.4\textheight, width=.9\textwidth]{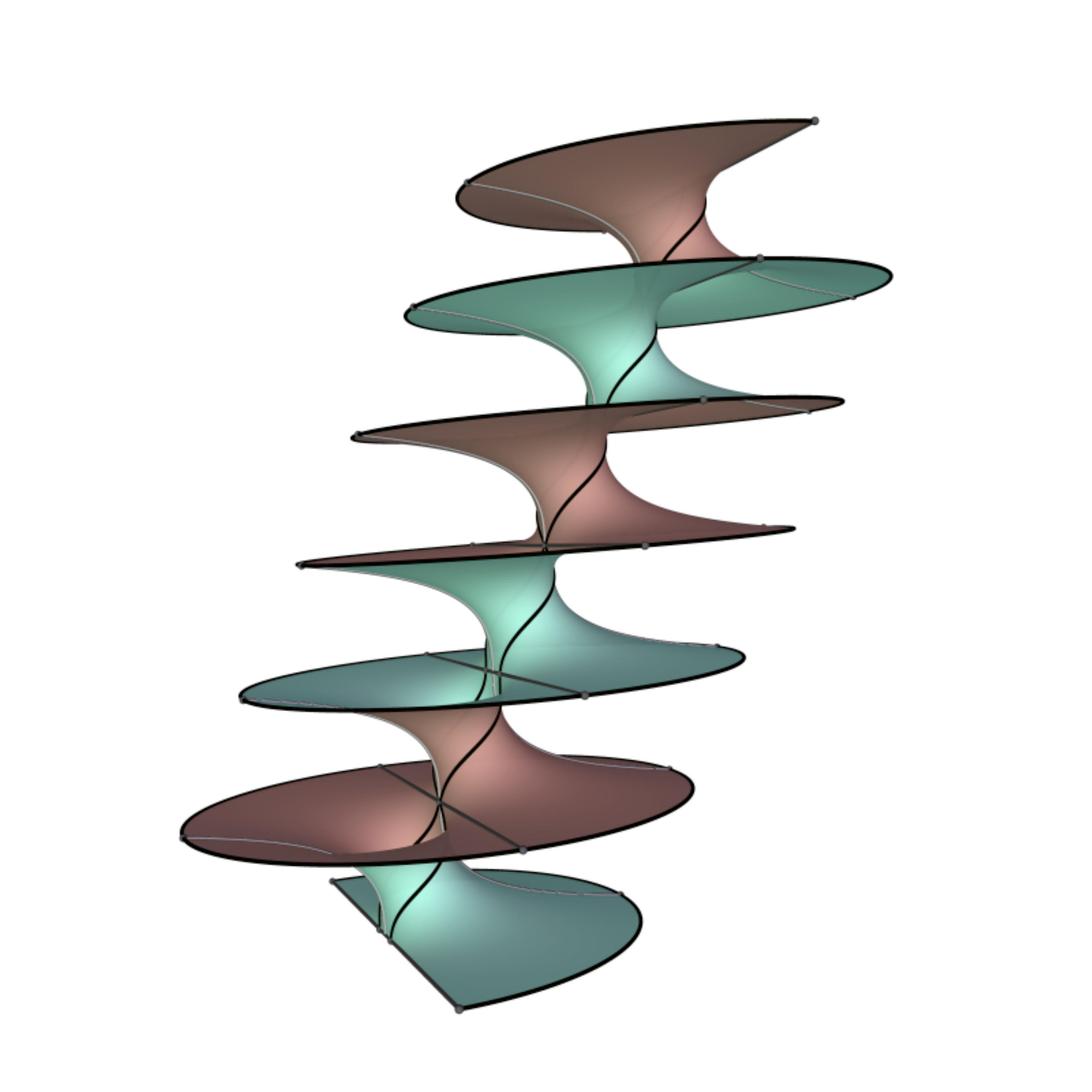}
        \end{minipage}
        \caption{Two of the Riemann examples.  The second one is starting to degenerate to helicoids.
        Credit: Matthias Weber, www.indiana.edu/~minimal.} 
   \end{figure}
Around 1860, 
  Riemann, \cite{Ri},  classified all minimal surfaces in $\RR^3$ that are foliated by circles and straight lines
  in horizontal planes.  He showed that the only such surfaces are the plane, the catenoid, the helicoid, and 
  a two-parameter family that is now known as the Riemann examples.\index{Riemann examples}  The surfaces that he
  discovered formed a family of complete embedded minimal surfaces that are singly-periodic and have genus zero.   Each of the surfaces has infinitely many parallel planar ends  connected by necks (``pairs of pants'').
 
Modulo rigid motions, this is a $2$ parameter family of minimal surfaces.   The parameters are:
\begin{itemize}
\item Neck size.
\item Angle between period vector and the ends.
\end{itemize}

If we keep the neck size fixed and allow the angle to become vertical (i.e., perpendicular to the planar ends), the family degenerates to a pair of oppositely oriented helicoids.  On the other hand, as the angle 
goes to zero, the family degenerates to a catenoid.

\subsubsection{The Genus One Helicoid}

In 1993, 
  Hoffman-Karcher-Wei gave numerical evidence for the existence of a complete embedded minimal surface with genus one that is asymptotic to a helicoid; they called it a ``genus one helicoid''.   In \cite{HoWW}, Hoffman, Weber and Wolf 
constructed such a surface as the limit of ``singly-periodic  genus one helicoids'', where each singly-periodic genus one helicoid was constructed via the Weierstrass representation.  Later, 
Hoffman and White constructed a genus one helicoid variationally in \cite{HoWh1}.

\begin{figure}[htbp]
    \begin{minipage}[t]{0.5\textwidth}
    \centering\includegraphics[totalheight=.3\textheight, width=1\textwidth]{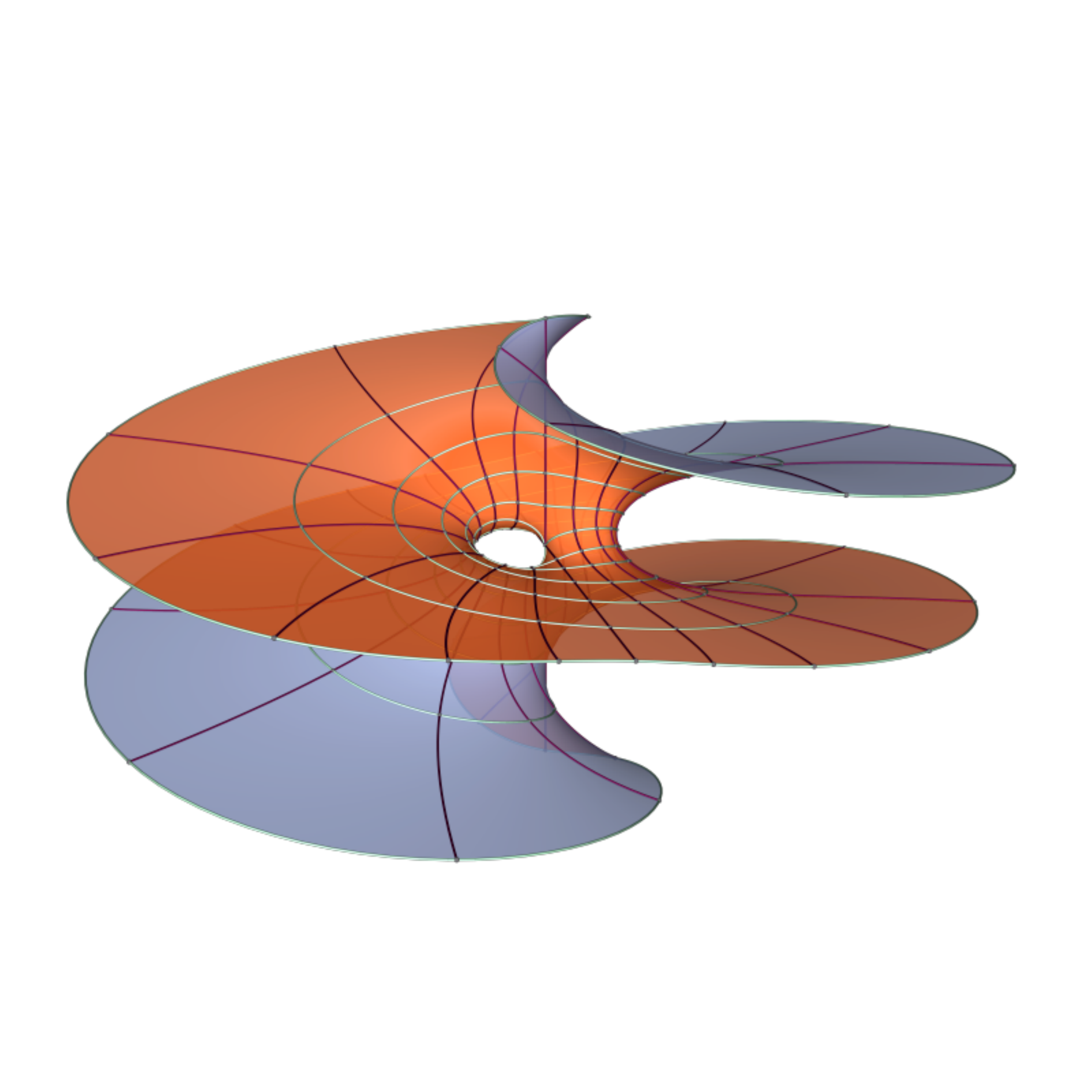}
    \caption{The genus one helicoid. }  
    \end{minipage}\begin{minipage}[t]{0.5\textwidth}
    \centering\includegraphics[totalheight=.3\textheight, width=1\textwidth]{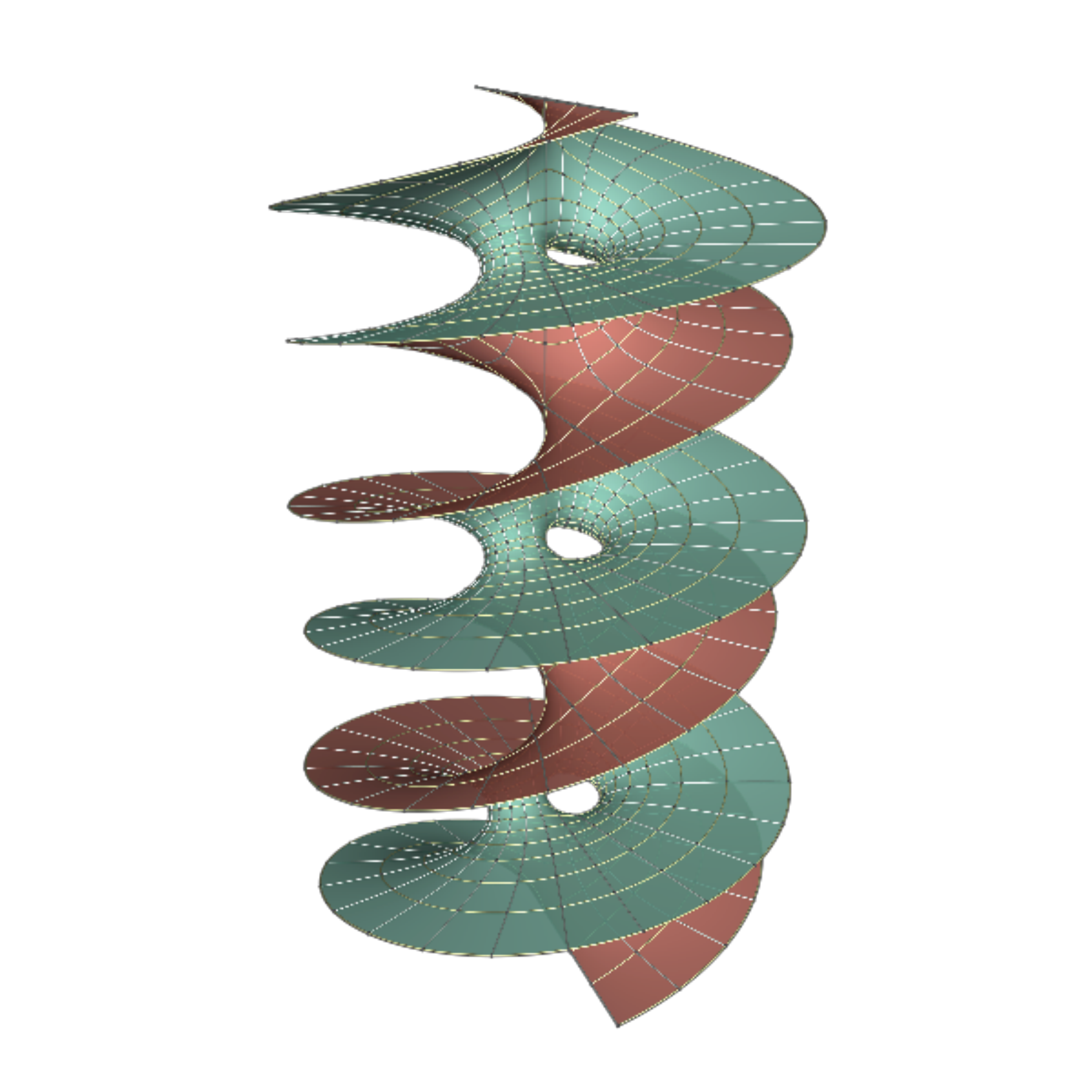}
    \caption{A periodic minimal surface asymptotic to the helicoid, whose fundamental domain has genus one.} 
    \end{minipage}
    Credit: Matthias Weber, www.indiana.edu/~minimal.
\end{figure}

\subsection{Second variation}

Minimal surfaces are critical points for the area functional, so it is natural to look at the second derivative of the area functional at a minimal surface.  This is called the second variation and it has played an important role in the subject since at least the work of Simons, \cite{Sim}, in 1968.

To make this precise,
let $\Sigma_0$ be a $2$-sided minimal hypersurface   in $\RR^{n+1}$, $\nn$ its unit normal, $f$ a function, and
define the normal variation
\begin{equation}
	\Sigma_s = \{ x +s \, f(x)\, \nn(x) \, | \, x \in \Sigma_0 \}  \notag \, .
\end{equation}
A calculation (see, e.g., \cite{CM14}) shows that
the second variation of area along this    one-parameter family of hypersurfaces 
is given by
\begin{equation}	\label{e:secvar1}
 \frac{d^2}{ds^2} \, \big|_{s=0} \, \, \Vol \, (\Sigma_s)  = \int_{\Sigma_0} |\nabla f|^2 - |A|^2 \, f^2  
  = - \int_{\Sigma_0} f \, (\Delta + |A|^2) \, f
\, , 
\end{equation}
where $A$ is the second fundamental form.

When $\Sigma_0$ is minimal in a Riemannian manifold $M$, the formula becomes
\begin{equation}
 \frac{d^2}{ds^2} \, \big|_{s=0} \, \, \Vol \, (\Sigma_s)  = \int_{\Sigma_0} |\nabla f|^2 - |A|^2 \, f^2 - \Ric_M (\nn , \nn) \, f^2   \, , 
 \notag
\end{equation}
where $\Ric_M$ is the Ricci curvature of $M$.

A minimal surface $\Sigma_0$ is {\emph{stable}} when it passes the second derivative test, i.e., when
\begin{equation}
 0 \leq \frac{d^2}{ds^2} \, \big|_{s=0} \, \, \Vol \, (\Sigma_s)  = - \int_{\Sigma_0} f \, (\Delta + |A|^2) \, f
\, , 
 \notag
\end{equation}
for {\emph{every}} compactly supported variation $\Sigma_s$.  Analytically, stability means that the  Jacobi operator $\Delta + |A|^2$ is non-negative.
 
There is a useful analytic criterion to determine stability:

\begin{Pro}	(Fischer-Colbrie and Schoen, \cite{FiSc})
A $2$-sided minimal hypersurface $\Sigma \subset \RR^3$ is stable
  if and only if there is a positive function $u$ with $\Delta \, u = - |A|^2 \, u$.
\end{Pro}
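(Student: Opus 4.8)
The plan is to establish the equivalence between stability (non-negativity of the quadratic form $Q(f) = \int_\Sigma |\nabla f|^2 - |A|^2 f^2$ over compactly supported $f$) and the existence of a positive solution $u$ of $\Delta u + |A|^2 u = 0$ on $\Sigma$. The ``$\Leftarrow$'' direction is the easy one and should come first: given such a $u>0$, one proves stability by a logarithmic-cutoff / Picone-type computation. For $f \in C_c^\infty(\Sigma)$ write $f = u\varphi$ with $\varphi = f/u$ (compactly supported). Expanding $|\nabla f|^2 = \varphi^2 |\nabla u|^2 + 2 u\varphi \langle \nabla u, \nabla\varphi\rangle + u^2 |\nabla\varphi|^2$ and integrating the cross term by parts against the identity $u\,\Delta u = -|A|^2 u^2$, all the $|A|^2 f^2$ and $|\nabla u|^2$ terms cancel, leaving $Q(f) = \int_\Sigma u^2 |\nabla \varphi|^2 \geq 0$. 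This shows stability, and in fact shows equality holds precisely when $\varphi$ is constant, i.e. $f$ is a multiple of $u$ — a bonus remark worth including.

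For the ``$\Rightarrow$'' direction, assume $\Sigma$ is stable and produce the positive function. The idea is to exhaust $\Sigma$ by a nested sequence of smooth bounded domains $\Omega_1 \Subset \Omega_2 \Subset \cdots$ with $\bigcup_j \Omega_j = \Sigma$, and on each $\Omega_j$ let $\lambda_j$ be the first Dirichlet eigenvalue of the Jacobi operator $L = \Delta + |A|^2$, with positive first eigenfunction $u_j > 0$ on $\Omega_j$, normalized by $u_j(p) = 1$ at a fixed basepoint $p \in \Omega_1$. Stability of $\Sigma$ forces $\lambda_j \geq 0$ for every $j$ (the Dirichlet form on $\Omega_j$ is dominated by $Q$), and the $\lambda_j$ are monotone decreasing in $j$ by domain monotonicity, so $\lambda_j \to \lambda_\infty \geq 0$. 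The functions $u_j$ satisfy $\Delta u_j = -(|A|^2 + \lambda_j) u_j$, a locally uniformly elliptic equation with locally bounded coefficients; the Harnack inequality applied on any fixed compact $K \subset \Sigma$ (for $j$ large enough that $K \subset \Omega_j$) gives $\sup_K u_j \leq C_K \inf_K u_j$, and the normalization $u_j(p)=1$ then yields uniform two-sided bounds $0 < c_K \leq u_j \leq C_K$ on $K$. Elliptic Schauder estimates upgrade this to local $C^{2,\alpha}$ bounds, so a diagonal subsequence converges in $C^2_{loc}$ to a limit $u \geq 0$ with $u(p) = 1$, solving $\Delta u = -(|A|^2 + \lambda_\infty) u$. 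Since $u(p)=1 > 0$ and $u \geq 0$, the strong maximum principle forces $u > 0$ everywhere on $\Sigma$.

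The remaining point — and the one I expect to be the main obstacle — is disposing of the eigenvalue shift $\lambda_\infty$: one wants a solution of $\Delta u + |A|^2 u = 0$, not $\Delta u + (|A|^2 + \lambda_\infty)u = 0$. The clean resolution is to note that stability is equivalent to $\lambda_\infty = 0$: if $\lambda_\infty > 0$ one could use a first eigenfunction on a large $\Omega_j$ (with $\lambda_j$ close to $\lambda_\infty > 0$) as a test function to... actually the cleaner statement is that stability on all of $\Sigma$ is equivalent to $\lambda_j \geq 0$ for all $j$, hence $\lambda_\infty \geq 0$; but to get $\lambda_\infty = 0$ one must work slightly harder, or alternatively run the Harnack argument on the equation $\Delta u_j + |A|^2 u_j = -\lambda_j u_j$ and argue that the limit operator annihilates $u$ because $\lambda_j u_j \to \lambda_\infty u$ and a positive supersolution of $L$ that is actually an $L$-eigenfunction with eigenvalue $-\lambda_\infty$ can only coexist with stability if $\lambda_\infty = 0$ (a positive eigenfunction being first, its eigenvalue is the bottom of the spectrum, which is $\geq 0$ by stability and $\leq 0$ since such a function tests $Q$ with small Dirichlet energy on large domains). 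I would present the argument in the form: run the exhaustion with the operator $L$ itself (no shift), extracting $u > 0$ with $L u = -\lambda_\infty u$, then invoke the characterization that the generalized bottom eigenvalue $\lambda_\infty$ of $L$ on $\Sigma$ equals $\inf Q(f)/\int f^2$ over $C_c^\infty$, which is $\geq 0$ by stability and $\leq 0$ because $\lambda_\infty$ is by construction a limit of Dirichlet eigenvalues realized by compactly supported approximations — forcing $\lambda_\infty = 0$ and hence $\Delta u + |A|^2 u = 0$.
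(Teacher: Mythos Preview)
The paper does not prove this proposition; it is quoted from \cite{FiSc} and immediately applied. So there is no paper argument to compare against, but your $\Rightarrow$ direction has a genuine gap.

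You take first Dirichlet eigenfunctions $u_j$ of $-L = -\Delta - |A|^2$ on an exhaustion $\Omega_j$, normalize at a basepoint, and pass to a limit $u>0$ solving $\Delta u + (|A|^2 + \lambda_\infty)u = 0$, where $\lambda_\infty = \lim \lambda_1(\Omega_j) \geq 0$. You then try to force $\lambda_\infty = 0$, but this is false in general: $\lambda_\infty$ is the bottom of the spectrum of $-L$, and for a \emph{strictly} stable $\Sigma$ it is positive. (Take $\Sigma$ a bounded piece of a plane: $|A|^2 = 0$, the desired equation $\Delta u = 0$ is solved by $u\equiv 1$, but your construction yields the first Dirichlet eigenfunction of $-\Delta$, satisfying $\Delta u + \lambda_\infty u = 0$ with $\lambda_\infty > 0$.) Your claim that $\lambda_\infty \leq 0$ because it is ``realized by compactly supported approximations'' is circular --- those approximate eigenvalues $\lambda_j$ are all $\geq 0$, which gives no upper bound. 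The standard repair, which is what Fischer-Colbrie and Schoen actually do, is to bypass eigenfunctions: strict domain monotonicity gives $\lambda_1(\Omega_j) > \lambda_1(\Omega_{j+1}) \geq 0$, so on each $\Omega_j$ the Dirichlet problem $Lv_j = 0$, $v_j|_{\partial\Omega_j} = 1$ is uniquely solvable, and a nodal-domain argument (any region where $v_j \leq 0$ would produce a proper subdomain with first eigenvalue $0$) shows $v_j > 0$. Now normalize $v_j(p)=1$ and run your Harnack-plus-Schauder extraction exactly as written; the limit solves $Lu=0$ on the nose, with no shift to remove. Your $\Leftarrow$ direction via the substitution $f = u\varphi$ is correct.
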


Since the normal part of a constant vector field automatically satisfies the Jacobi equation, we conclude that minimal graphs are stable.  The same argument implies that minimal multi-valued graphs 
are stable.{\footnote{A multi-valued graph is a surface that is locally a graph over a subset of the plane, but the projection down to the plane is not one to one.}}

Stability is a natural condition given the variational nature of minimal surfaces, but 
one of the reasons that stability is useful is the following curvature estimate of R. Schoen, \cite{Sc1}:

\begin{Thm}	\label{t:schoenstable}
(Schoen, \cite{Sc1})
If $\Sigma_0 \subset \RR^3$ is stable and $2$-sided and $\cB_R$ is a geodesic ball in $\Sigma_0$, then
\begin{equation}
	\sup_{\cB_{\frac{R}{2}}} \, \, |A| \leq \frac{C}{R} \, ,  \notag
\end{equation}
where $C$ is a fixed constant.
\end{Thm}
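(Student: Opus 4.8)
The plan is to prove Schoen's curvature estimate for stable 2-sided minimal surfaces in $\RR^3$ by combining the Fischer-Colbrie--Schoen criterion with a logarithmic cutoff test function and a consequence of the stability inequality that controls the total curvature; then to upgrade the integral bound to the pointwise bound via a standard contradiction/blow-up argument using the monotonicity formula. First I would use the previous proposition: since $\Sigma_0$ is stable and 2-sided, there is a positive $u$ with $\Delta u = -|A|^2 u$ on $\Sigma_0$. Equivalently, the stability inequality $\int_{\Sigma_0} |A|^2 \phi^2 \le \int_{\Sigma_0} |\nabla \phi|^2$ holds for all Lipschitz $\phi$ with compact support in $\cB_R$. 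The key algebraic input is the Simons-type inequality for minimal surfaces in $\RR^3$: writing the Gauss equation gives $K_{\Sigma} = -\tfrac12 |A|^2$ (so $|A|^2 = -2K$), and a computation shows $|A| \, \Delta |A| \ge -|A|^4 + \text{(gradient terms)}$; more usefully one has the improved Simons inequality $\Delta |A| \ge -|A|^3 + |\nabla |A||^2 / |A|$ away from zeros of $A$, which combined with stability yields an $L^p$ bound on $|A|$ beyond the borderline exponent.

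The main steps, in order, are: (1) Record the stability inequality and the fact (Gauss equation in $\RR^3$) that $|A|^2 = -2 K_{\Sigma}$, so that $\int_{\cB_r} |A|^2$ is essentially a total curvature and is controlled on slightly smaller balls by plugging the logarithmic cutoff $\phi$ (equal to $1$ on $\cB_{R/2}$, interpolating logarithmically to $0$ on $\partial \cB_R$) into stability; the point is that $\int |\nabla \phi|^2$ stays bounded independent of $R$ after rescaling, giving $\int_{\cB_{R/2}} |A|^2 \le C$. (2) Feed the Simons inequality into the stability inequality tested against $\phi = |A| \eta$ to bootstrap from the $L^2$ bound on $|A|$ to an $L^{2+\epsilon}$ or even pointwise-via-Moser bound; the clean route is to derive $\int_{\cB_{R/2}} |A|^{2+2\epsilon} \le C R^{-2\epsilon} \cdot (\int_{\cB_R}|A|^2)^{1+\epsilon}$ type estimate. (3) Run a contradiction argument: if the pointwise estimate failed, one would get a sequence of stable surfaces with $\sup_{\cB_{R_i/2}} R_i |A| \to \infty$; rescaling so that $|A|(p_i) = 1$ with points going to the boundary at a controlled rate (a standard "point selection"/Schoen--Simon--Yau trick), the rescaled surfaces have bounded $|A|$ on larger and larger balls and, by the $L^2$ curvature bound together with the monotonicity formula for minimal surfaces (Theorem~\ref{t:monomin}) bounding areas, converge smoothly to a complete stable minimal surface in $\RR^3$ with $|A|(0) = 1$ but with zero total curvature — forcing it to be a plane, hence $|A| \equiv 0$, a contradiction.

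I would present step (1) carefully since it is the conceptual heart — the logarithmic cutoff is exactly what makes the borderline $L^2$ bound on $|A|$ scale-invariant and is special to dimension $2$ (equivalently surfaces in $\RR^3$) — and then state steps (2)–(3) as the standard $\epsilon$-regularity plus blow-up machinery, citing \cite{Sc1} and \cite{CM14} for the details. The hard part, and the step I expect to be the main obstacle, is step (2)–(3): obtaining the honest pointwise bound from the integral bound requires the Simons inequality together with a Moser iteration (or Choi--Schoen type $\epsilon$-regularity), and the blow-up in step (3) requires knowing that local area bounds plus curvature bounds give smooth subconvergence and that a complete immersed stable minimal surface in $\RR^3$ with finite total curvature zero is flat — each of these is routine in the literature but not a one-line deduction from what is stated earlier in the excerpt, so in a survey of this kind I would sketch it and refer to \cite{Sc1,CM14} rather than reproduce the Moser iteration.
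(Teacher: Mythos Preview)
The paper does not actually prove this theorem --- it is a survey, and the result is stated with attribution to Schoen.  What the paper \emph{does} say, immediately after the statement, is that ``the key point for getting the curvature estimate is to establish uniform area bounds just using stability,'' and it records Theorem~\ref{c:fsapp}: a stable simply-connected $\cB_{r_0}$ satisfies $\Area(\cB_{r_0})\le \tfrac{4\pi}{3}r_0^2$.  So the paper's indicated route is area bound first, then curvature bound.

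Your step~(1) has a genuine gap, and it is exactly the step the paper flags as the crux.  Plugging a logarithmic cutoff into the stability inequality $\int |A|^2\phi^2\le \int|\nabla\phi|^2$ does \emph{not} by itself yield $\int_{\cB_{R/2}}|A|^2\le C$: the right-hand side is an integral over the surface, and with $\phi$ the log cutoff on the annulus $\cB_R\setminus\cB_{R/2}$ one gets $\int|\nabla\phi|^2\approx (\log 2)^{-2}\int_{R/2}^R L(\rho)\rho^{-2}\,d\rho$, which is uncontrolled without a bound on the length $L(\rho)$ of geodesic circles (equivalently, an area bound).  The log cutoff trick in do~Carmo--Peng and Fischer-Colbrie--Schoen works for the \emph{complete} Bernstein theorem because one sends the outer radius to infinity and uses quadratic area growth --- which itself must be proven from stability first.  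So your ``conceptual heart'' is missing its heart.

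Once you have the area bound (Theorem~\ref{c:fsapp}, proved via the stability inequality with $\phi=R-\rho$ combined with the Gauss--Bonnet ODE $L'(\rho)=2\pi+\tfrac12\int_{\cB_\rho}|A|^2$), the rest of your outline is sound: a linear cutoff then gives the scale-invariant $L^2$ bound on $|A|$, and either Choi--Schoen $\epsilon$-regularity or your blow-up in step~(3) finishes it.  In fact a cleaner organization is to skip steps~(1)--(2) and run the blow-up directly: point-selection plus stability passes to a complete stable minimal surface in $\RR^3$ with $|A|(0)=1$, which is flat by the Bernstein theorem --- but then the area bound is simply hidden inside the proof of Bernstein rather than avoided.
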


When $\Sigma_0$ is complete, we can let $R$ go to infinity and conclude that $\Sigma_0$ is a plane.  This  Bernstein theorem was proven independently by do Carmo-Peng, \cite{dCP}, and Fischer-Colbrie-Schoen, \cite{FiSc}.

 See \cite{CM15} for a different proof of Theorem \ref{t:schoenstable} and 
 a generalization    to surfaces that are stable for a parametric elliptic
  integrand.  The key point for getting the curvature estimate is to establish uniform area bounds just using stability:
  
  \begin{Thm}  (Colding-Minicozzi, \cite{CM15})    \label{c:fsapp}
If   $\Sigma^2 \subset \RR^3$ is stable and $2$-sided and 
$\cB_{r_0}$ is simply-connected, then
\begin{equation}
        \Area\, (\cB_{r_0}) \leq \frac{4\pi}{3} \, r_0^2 \, .
\end{equation}
\end{Thm}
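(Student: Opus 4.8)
The plan is to exploit the Fischer-Colbrie--Schoen criterion to produce a positive solution $u$ of the stability (Jacobi) equation $\Delta u = -|A|^2 u$ on $\cB_{r_0}$, and then use $u$ to run a comparison with a model area bound. The cleanest route is to pass to the universal cover (trivial here since $\cB_{r_0}$ is simply-connected) and use a conformal change of metric: on a stable two-sided surface $\Sigma^2 \subset \RR^3$ the Gauss equation gives $K_\Sigma = -\tfrac12 |A|^2$ (for a minimal surface in $\RR^3$ the two principal curvatures are $\pm\lambda$, so $K = -\lambda^2 = -\tfrac12|A|^2$), hence the Jacobi operator is $\Delta + |A|^2 = \Delta - 2K_\Sigma$. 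The positive solution $u$ of $\Delta u + |A|^2 u = 0$ then lets one write the curvature of the conformally changed metric $\tilde g = u^2 g$ and control it.

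First I would record the Gauss equation identity $|A|^2 = -2K_\Sigma$ and rewrite stability as the existence of $u>0$ with $\Delta u = 2K_\Sigma\, u$. Second, I would compute the Gauss curvature $\tilde K$ of the metric $\tilde g = u^2 g$ via the standard formula $\tilde K = u^{-2}\bigl(K_\Sigma - \Delta \log u\bigr)$, and observe that $\Delta \log u = u^{-1}\Delta u - |\nabla \log u|^2 = 2K_\Sigma - |\nabla\log u|^2$, so that $\tilde K = u^{-2}\bigl(-K_\Sigma + |\nabla \log u|^2\bigr) \geq -u^{-2} K_\Sigma = \tfrac12 u^{-2}|A|^2 \geq 0$. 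Thus $\tilde g$ is a metric of nonnegative curvature on the simply-connected surface $\cB_{r_0}$. Third, I would transfer this back to an area statement: the area element of $\tilde g$ is $u^2\, dA$, and one wants instead the original area, so one needs the comparison to go the right way. The efficient device is to combine the stability inequality with a well-chosen test function rather than to work with $\tilde g$ directly.

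So the core of the argument is really a test-function computation in the stability inequality $\int |\nabla f|^2 \geq \int |A|^2 f^2 = -2\int K_\Sigma f^2$, combined with the Gauss--Bonnet theorem on geodesic balls. Plugging in $f$ depending only on the geodesic distance $r$ to the center and using the coarea formula, the term $\int K_\Sigma f^2$ becomes an integral of $f^2$ against $\int_{\partial \cB_s} K_\Sigma$, which Gauss--Bonnet rewrites in terms of $L'(s)$ where $L(s) = \Length(\partial \cB_s)$; simple-connectivity gives $\int_{\cB_s} K_\Sigma = 2\pi - \int_{\partial \cB_s}\kappa_g$ and the first variation of length gives $\int_{\partial \cB_s}\kappa_g = L'(s)$ (where defined), so $\int_{\cB_s}K_\Sigma = 2\pi - L'(s)$. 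Feeding this into the stability inequality with $f(r) = (r_0 - r)_+$ or a similar linear cutoff yields a differential inequality for $L(s)$ forcing $L(s) \leq 2\pi s$, and then integrating $\Area(\cB_{r_0}) = \int_0^{r_0} L(s)\, ds$ would give $\pi r_0^2$; the stated constant $\tfrac{4\pi}{3} r_0^2$ suggests the actual test function produces a slightly lossy but clean bound of the form $L(s) \le 2\pi s$ only after an averaging, giving the weaker explicit constant. The main obstacle is handling the non-smoothness of the geodesic distance function past the cut locus — $L(s)$ need not be smooth and $\cB_s$ may fail to be a topological disk for large $s$ even though $\cB_{r_0}$ is simply-connected — so one must either argue that simple-connectivity of $\cB_{r_0}$ rules this out for $s < r_0$, or run the Gauss--Bonnet step in a barrier/distributional sense, which is the genuinely delicate point; everything else is a one-variable ODE comparison.
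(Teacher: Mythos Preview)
The paper does not actually include a proof of this theorem; it is a survey and simply cites \cite{CM15} for the result. So there is nothing in the paper to compare against directly. That said, your second approach --- the radial test function in the stability inequality combined with Gauss--Bonnet --- is exactly the standard argument (and is the one in \cite{CM15}). Your conformal--metric paragraph is a digression that you correctly abandon.

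Where your write-up goes wrong is in the endgame. The linear cutoff $f(r)=(r_0-r)_+$ does \emph{not} produce an intermediate bound $L(s)\le 2\pi s$, and the constant $\tfrac{4\pi}{3}$ is not ``lossy'': it is exactly what the linear cutoff yields. With $\eta(s)=r_0-s$, writing $L(s)=\Length(\partial\cB_s)$, $A=\Area(\cB_{r_0})=\int_0^{r_0}L$, and using $\int_{\cB_s}K_\Sigma=2\pi-L'(s)$ (so $\int_{\partial\cB_s}K_\Sigma=-L''(s)$), the stability inequality $-2\int K_\Sigma\,\eta^2\le\int(\eta')^2L$ becomes
\[
2\int_0^{r_0}(r_0-s)^2 L''(s)\,ds \ \le\ \int_0^{r_0} L(s)\,ds \ =\ A\, .
\]
Two integrations by parts on the left, using $L(0)=0$ and $L'(0)=2\pi$, give $-4\pi r_0^2+4A\le A$, i.e.\ $A\le\tfrac{4\pi}{3}r_0^2$. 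So you had all the ingredients; you just stopped before the two-line computation that closes the argument and then speculated incorrectly about where the constant comes from.

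The cut-locus concern you raise is legitimate but manageable: $r$ is Lipschitz with $|\nabla r|=1$ a.e., $L$ is absolutely continuous, and the Gauss--Bonnet/first-variation identity $\int_{\cB_s}K_\Sigma=2\pi-L'(s)$ holds for a.e.\ $s$ in the simply-connected ball; the integrations by parts then go through in the distributional sense. This is a technical point, not a gap in the strategy.
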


 \vskip2mm
 The corresponding result is not known in higher dimensions, although Schoen, Simon and Yau proved  
 curvature estimates assuming an area bound   in low dimensions; see \cite{ScSiY}.   The counter-examples to the Bernstein problem in dimensions seven and up show that such a bound can only hold in low dimensions.
 However, 
R. Schoen has conjectured that the Bernstein theorem and curvature estimate  should be true also for stable hypersurfaces in $\RR^4$:

\begin{Con}	(Schoen)
If $\Sigma^3 \subset \RR^4$ is a complete immersed $2$-sided stable minimal hypersurface, then $\Sigma$ is flat.
\end{Con}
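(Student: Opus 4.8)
\medskip
\noindent\textbf{Proof proposal.}
The plan is to run the same three-step template that settles the Bernstein problem in $\RR^3$ --- a Euclidean area bound, then a curvature estimate, then send the scale to infinity --- but to replace the first step by an argument intrinsic to dimension three. Recall that in $\RR^3$ one gets the area bound of Theorem \ref{c:fsapp} from the stability inequality together with Gauss--Bonnet, and then Schoen's estimate (Theorem \ref{t:schoenstable}) finishes off. In $\RR^4$ the Schoen--Simon--Yau curvature estimates apply to a stable minimal hypersurface $\Sigma^n$ with $n\le 5$ \emph{once one knows Euclidean volume growth} $\Vol(\cB_r)\le C\,r^n$ (see \cite{ScSiY}); so for $\Sigma^3\subset\RR^4$ everything reduces to proving that a complete two-sided stable minimal hypersurface has $\Vol(\cB_r)\le C\,r^3$. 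Granting that, Schoen--Simon--Yau gives $\sup_{\cB_{R/2}}|A|\le C/R$, and letting $R\to\infty$ forces $A\equiv 0$.

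So the real content is the volume bound, and the $\RR^3$ proof gives no guidance since Gauss--Bonnet is two-dimensional. The remaining tools are: the stability inequality $\int_\Sigma |A|^2\phi^2\le\int_\Sigma|\nabla\phi|^2$ for all $\phi\in C_c^\infty(\Sigma)$; the Gauss equation, which for a minimal hypersurface in flat $\RR^4$ gives $\Ric_\Sigma\le 0$ and, crucially, $\Scal_\Sigma=-|A|^2$, so that the negative part of the scalar curvature is \emph{exactly} the potential in the stability operator; and the Simons identity, which with a Kato improvement reads $|A|\,\Delta|A|\ge\tfrac23|\nabla|A||^2-|A|^4$. The idea is to feed the distance function $r=\dist(p,\cdot)$ into stability: use $\phi=\psi(r)\,w$ on geodesic annuli, with $\psi$ a logarithmic cutoff and $w$ obtained by solving a first-order Riccati-type ODE along minimizing geodesics, chosen so that the curvature integral produced by $\int|A|^2\phi^2$ is comparable to a derivative of $\mathcal{A}(r)=\Vol(\cB_r)$. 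Since $\Ric_\Sigma\le0$ rules out Bishop--Gromov, the stability inequality is made to play the role of a synthetic lower Ricci bound, producing a differential inequality for $\mathcal{A}(r)$ all of whose solutions have at most cubic growth. A convenient way to organize this is via Gromov's $\mu$-bubble technique: replace distance spheres by stable $\mu$-bubbles $\partial\Omega_t$; their second-variation inequality, combined with $\Scal_\Sigma=-|A|^2$ and with the stability of $\Sigma$, controls the two-dimensional geometry of $\partial\Omega_t$ (here Gauss--Bonnet reappears, but on the \emph{surface} $\partial\Omega_t$), and integrating this across $t$ bounds $\Vol(\cB_r)$.

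It is worth carrying a second viewpoint in parallel: by the Fischer--Colbrie--Schoen construction, stability gives a positive $u$ on $\Sigma$ with $\Delta_\Sigma u=-|A|^2u$, whence the warped product $N^4=\Sigma\times_u\RR$ has $\Scal_N\ge0$. Flatness of $\Sigma$ would follow from a sufficiently rigid structure theorem for complete noncompact $4$-manifolds of nonnegative scalar curvature that also remembers the warped splitting; but without an a priori volume or quasi-isometry bound no such statement is available, so this route funnels back to the same Euclidean-volume-growth estimate.

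I expect that estimate to be the main obstacle. Each ingredient above is individually standard; the difficulty is to choose the cutoff (or the $\mu$-bubble) and the comparison ODE precisely enough that the stability inequality --- which controls only $\int|A|^2\phi^2$, a \emph{scalar}-curvature quantity --- nonetheless pins down the full three-dimensional volume, with no help from any ambient projection, and to keep the geometry of the level sets of $r$ (or of $\partial\Omega_t$) under control while doing so. That is exactly the point where a genuinely new idea is needed, and it is why the conjecture in $\RR^4$ is harder than its $\RR^3$ counterpart (and why the counterexamples beginning in $\RR^8$ show that some low-dimensional miracle must be exploited).
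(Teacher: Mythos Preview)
The statement you are attempting to prove is labeled a \emph{Conjecture} in the paper, not a Theorem. The paper offers no proof whatsoever; on the contrary, immediately after stating it the authors write that ``Any progress on these conjectures would be enormously important for the theory of minimal hypersurfaces in $\RR^4$.'' So there is no paper proof to compare against, and your proposal is an attack on what was, at the time of writing, an open problem.

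That said, your outline is a reasonable and well-informed plan of attack. You have correctly identified the decisive reduction: by \cite{ScSiY} the curvature estimate (and hence the Bernstein theorem) would follow from a Euclidean volume-growth bound $\Vol(\cB_r)\le C r^3$, and you are right that no analogue of Theorem~\ref{c:fsapp} is available since the Gauss--Bonnet argument is intrinsically two-dimensional. You are also right that the stability inequality combined with $\Scal_\Sigma=-|A|^2$ is the natural substitute for a lower Ricci bound, and that Gromov's $\mu$-bubble technique is a promising way to exploit it (Gauss--Bonnet reappearing on the two-dimensional bubbles $\partial\Omega_t$). The Fischer--Colbrie--Schoen warped product $\Sigma\times_u\RR$ with $\Scal\ge 0$ is likewise a standard reformulation.

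However, your own final paragraph is an honest admission that the proposal is not a proof: you write that the volume estimate ``is exactly the point where a genuinely new idea is needed.'' That is correct, and it is precisely why the paper records this as a conjecture rather than a theorem. Everything you have written is strategy and reduction; the actual analytic content --- choosing the $\mu$-bubble weight, controlling the topology and diameter of the bubbles, and closing the differential inequality for $\Vol(\cB_r)$ --- is left entirely unspecified. So the proposal should be read as a (good) research program, not as a proof.
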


\begin{Con}	(Schoen)
If $\Sigma^3 \subset B_{r_0} = B_{r_0}(x) \subset M^4$ is an  immersed $2$-sided stable minimal hypersurface 
where $|K_M|\leq k^2$, $r_0<\rho_1 (\pi/k,k)$, and $\partial
\Sigma \subset \partial B_{r_0}$, then for some $C=C(k)$ and all
$0 < \sigma \leq r_0$,
\begin{equation}        \label{e:o11cs}
        \sup_{B_{r_0-\sigma}}  |A|^2
                \leq C \, \sigma^{-2}\, .
\end{equation}
\end{Con}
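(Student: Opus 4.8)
Since the statement is a conjecture, what follows is a plan of attack rather than a proof. The guiding idea is to mimic the two–dimensional strategy: first reduce the pointwise bound \eqr{e:o11cs} to a scale–invariant \emph{volume bound} $\Vol(\cB_s)\le C(k)\,s^3$ for intrinsic geodesic balls $\cB_s\subset\Sigma$, and then convert that volume bound into the curvature estimate. The second step is the (relatively) routine one: the Schoen--Simon--Yau estimates \cite{ScSiY} bound $|A|$ for a two–sided stable minimal hypersurface $\Sigma^n$ carrying a uniform density bound provided $n\le 5$, and here $n=\dim\Sigma=3$ is comfortably in range. Since the ambient space is $M^4$ rather than $\RR^4$, one needs a localized version: on scales below $\rho_1(\pi/k,k)$ the metric on $M^4$ is $C^2$–close, after rescaling, to the Euclidean one, the stability inequality and the Simons-type identity for $|A|$ acquire only lower–order error terms controlled by $k$, and the Schoen--Simon--Yau iteration goes through with $C=C(k)$. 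So everything hinges on the volume bound.

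The heart of the matter is therefore: \emph{does stability alone force} $\Vol(\cB_s)\le C(k)\,s^3$ for a two–sided stable minimal $\Sigma^3\subset B_{r_0}\subset M^4$? In the surface case (Theorem \ref{c:fsapp}) this succeeded because the Gauss equation on a minimal $\Sigma^2\subset\RR^3$ gives $|A|^2=-2K_\Sigma$, so the stability inequality $\int_\Sigma|A|^2\phi^2\le\int_\Sigma|\nabla\phi|^2$ is literally a bound on $\int_\Sigma(-K_\Sigma)\phi^2$, and Gauss--Bonnet on a simply–connected geodesic ball turns this into the area bound (for the conjecture one ultimately wants the volume bound with no topological hypothesis at all, which is even stronger). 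In dimension three one must replace both ingredients. For the first, use the higher–dimensional Fischer-Colbrie--Schoen principle: a two–sided stable $\Sigma^3$ carries a positive solution $u$ of the Jacobi equation $\Delta u+(|A|^2+\Ric_M(\nn,\nn))\,u=0$, and the Schoen--Yau conformal change $\bar g=u^{4/(n-2)}g$ of the induced metric (here the exponent is $4$) produces a metric on $\Sigma^3$ whose scalar curvature — or rather whose Yamabe-type energy — is bounded below in terms of $k$ alone. For the second, replace Gauss--Bonnet by the structure theory of three–manifolds with a lower scalar–curvature bound: such manifolds contain no long thin necks and have quantitatively controlled width and filling, and one would push this information back through the conformal factor to the Euclidean–scale volume bound for $\Sigma$.

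The main obstacle is precisely this last link. Passing from $\Vol_{\bar g}$ back to $\Vol_g(\cB_s)$ costs a power of $u$, so one needs a genuine Harnack inequality $\sup_{\cB_s}u\le C(k)\inf_{\cB_s}u$ with a constant independent of the \emph{geometry} of $\Sigma$, whereas all that is available a priori is the Sobolev/Poincar\'e inequality that stability and the minimal surface equation supply. Extracting the Harnack estimate from this without circularity is the crux, and it is essentially the reason the conjecture is still open. An alternative that sidesteps the conformal change is a direct Moser/De Giorgi iteration on the stability inequality with logarithmic cutoffs — in the spirit of the stable Bernstein arguments and of the iteration in \cite{ScSiY} — combined with the Michael--Simon Sobolev inequality on $\Sigma$; but this route, too, must somehow rule out the ``long and skinny'' regions that Gauss--Bonnet eliminates for free in dimension two, and no purely stability–based mechanism for doing so in dimension three is currently known.
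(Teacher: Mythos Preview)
The statement is a \emph{conjecture}, and the paper does not offer a proof of it; it is stated as an open problem, followed only by the remark that ``Any progress on these conjectures would be enormously important for the theory of minimal hypersurfaces in $\RR^4$.'' You have correctly recognized this and presented a plan of attack rather than a purported proof, so there is no discrepancy to report between your approach and the paper's.

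Your outline is a reasonable summary of the natural strategy and its known obstructions. The reduction to a scale-invariant volume bound via \cite{ScSiY} (valid for $n=3\le 5$) is standard, and you have correctly isolated the crux: in dimension two the Gauss equation and Gauss--Bonnet convert the stability inequality directly into an area bound (this is exactly Theorem \ref{c:fsapp}), but in dimension three no analogous topological mechanism is available to rule out ``long and thin'' pieces. Your discussion of the Fischer-Colbrie--Schoen positive Jacobi function and the Schoen--Yau conformal trick, together with the observation that the missing ingredient is a geometry-independent Harnack inequality for that function, accurately reflects where the difficulty lies. One small correction: the conformal exponent $u^{4/(n-2)}$ for $n=3$ is $u^4$, as you wrote, but the more directly useful rewriting in the Schoen--Yau framework is often $\bar g = u^{2/(n-1)}g$ (so $u$ itself for $n=3$), which makes the resulting scalar curvature computation cleaner; either normalization faces the same Harnack obstacle you identified.
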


Any progress on these conjectures would be enormously important for the theory of minimal hypersurfaces in $\RR^4$.

\section{Classification of embedded minimal surfaces}

One of the most fundamental questions about minimal surfaces is to classify or describe the space of all complete embedded minimal surfaces in $\RR^3$.  We have already seen three results of this type:
\begin{enumerate}
\item Bernstein showed that a complete minimal graph must be a plane.
\item Catalan showed that a ruled minimal surface is either a plane or a helicoid.
\item A minimal surface of revolution must be a plane or a catenoid.
\end{enumerate}
Each of these theorems makes a rather strong hypothesis on the class of surfaces.  It would be more useful to have classifications under weaker hypotheses, such as just as the topological type of the surface.

The last decade has seen enormous progress on the classification of embedded minimal surfaces in $\RR^3$
by their topology.  The surfaces are generally divided into three cases,  according to the topology:

\begin{itemize}
\item Disks.
\item Planar domains - i.e., genus zero.
\item Positive genus.
\end{itemize}

The classification of complete surfaces has relied heavily upon breakthroughs on the local descriptions of pieces of embedded minimal surfaces with finite genus.  

\subsection{The topology of minimal surfaces}

The topology of a compact connected oriented surface without boundary is described by a single non-negative number: the genus.  The sphere has genus zero, the torus has genus one, and the connected sum of $k$-tori has genus $k$.  

The genus of an oriented surface with boundary is defined to be the genus of the compact surface that you get by gluing in a disk along each boundary component.  Since an annulus with two disks glued in becomes a sphere, the annulus has genus zero.
Thus, the topology of a connected oriented surface with boundary is described by two numbers: the genus and the number of boundary components.  

The last topological notion that we will need is {\emph{properness}}.  An immersed submanifold $\Sigma \subset M$ is proper when the intersection of $\Sigma$ with any compact set in $M$ is compact.  Clearly, every compact submanifold is automatically proper.

There are two important monotonicity properties for the topology of minimal surfaces in $\RR^3$; one does not use minimality and one does.  

\begin{Lem}
If $\Sigma$ has genus $k$ and  $\Sigma_0 \subset \Sigma$, then the genus of $\Sigma_0$ is at most $k$.
\end{Lem}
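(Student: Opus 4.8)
The plan is to use a purely topological description of the genus and to observe that the data witnessing the genus of $\Sigma_0$ persists in the larger surface $\Sigma$. Recall that for a connected orientable surface $S$ (compact or not, with or without boundary) the genus is the largest $g$ for which one can find $g$ pairwise disjoint smoothly embedded circles $\gamma_1,\dots,\gamma_g$ in the interior of $S$ whose union does not separate $S$; equivalently, such that cutting $S$ along $\gamma_1\cup\dots\cup\gamma_g$ yields a connected surface. For a compact surface with boundary this agrees with the genus of the closed surface obtained by capping each boundary circle with a disk. (If the genus of $\Sigma_0$ is infinite one simply runs the argument below for every finite $m$.)

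First I would fix $m\le\mathrm{genus}(\Sigma_0)$ and choose pairwise disjoint embedded circles $\gamma_1,\dots,\gamma_m$ in the interior of $\Sigma_0$ with connected complement in $\Sigma_0$. Since $\Sigma_0\subset\Sigma$ is a subsurface, its interior is contained in the interior of $\Sigma$ (a point with a Euclidean neighborhood in $\Sigma_0$ has one in $\Sigma$, so it cannot lie on $\partial\Sigma$), so the $\gamma_i$ are embedded circles in the interior of $\Sigma$ and are disjoint from $\partial\Sigma_0$.

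Next I would check that cutting $\Sigma$ along $\gamma_1\cup\dots\cup\gamma_m$ is again connected. The point is that $\Sigma$ is built from $\Sigma_0$ and $\overline{\Sigma\setminus\Sigma_0}$ by gluing along (part of) $\partial\Sigma_0$, and the curves $\gamma_i$ meet neither $\overline{\Sigma\setminus\Sigma_0}$ nor the gluing locus; hence cutting $\Sigma$ along them is the same as attaching $\overline{\Sigma\setminus\Sigma_0}$ to the cut-open $\Sigma_0$ along the very same curves of $\partial\Sigma_0$. Because $\Sigma$ is connected, every component of $\overline{\Sigma\setminus\Sigma_0}$ meets $\partial\Sigma_0$, so attaching these pieces to the connected cut-open $\Sigma_0$ leaves the result connected. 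Thus $\gamma_1,\dots,\gamma_m$ exhibit $m$ disjoint non-separating circles in $\Sigma$, so $\mathrm{genus}(\Sigma)\ge m$; letting $m\uparrow\mathrm{genus}(\Sigma_0)$ gives the claim.

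I do not expect a genuine obstacle here: the only slightly delicate points are point-set-topological, namely justifying the genus characterization above (standard from the classification of surfaces, via the fact that cutting along a circle leaves the Euler characteristic unchanged, followed by a count of boundary components on the cut surface) and the bookkeeping that cutting $\Sigma$ along interior curves of $\Sigma_0$ decomposes as described. Note that, unlike the ``second monotonicity'' alluded to immediately afterwards, this lemma uses nothing about minimality. An alternative, homological route would be to use that for a compact orientable surface $S$ the genus is half the rank of the mod-$2$ (or rational) intersection form on $H_1(S)$; since inclusion-induced maps preserve intersection numbers, a maximal subspace of $H_1(\Sigma_0)$ on which the form is nondegenerate maps to a subspace of $H_1(\Sigma)$ with the same Gram matrix, so the rank cannot drop, and the non-compact case then follows by exhausting $\Sigma_0$ by compact subsurfaces.
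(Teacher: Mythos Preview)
Your argument is correct. The paper's own proof is a single sentence: ``This follows immediately from the definition of the genus and does not use minimality.'' Your proposal is precisely the standard way to cash this out, using the characterization of genus as the maximal number of disjoint simple closed curves with connected complement, together with the observation that such a system in $\Sigma_0$ persists in $\Sigma$. The homological alternative you sketch at the end is also fine. So you are not taking a different route so much as supplying the details the paper omits; your remark that minimality plays no role matches the paper exactly.
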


\begin{proof}
This follows immediately from the definition of the genus and does not use minimality.
\end{proof}

\begin{Lem}	\label{l:cvxhull}
If $\Sigma \subset \RR^3$ is a properly embedded minimal surface and 
$B_R (0) \cap \partial \Sigma = \emptyset$, then the inclusion of $B_R(0) \cap \Sigma$ into $\Sigma$ is an injection on the first homology group.
\end{Lem}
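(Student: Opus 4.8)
The plan is to argue by contradiction using the convex hull property together with the monotonicity of genus. Suppose the inclusion $B_R(0) \cap \Sigma \hookrightarrow \Sigma$ is \emph{not} injective on $H_1$. Then there is a closed curve $\gamma$ lying in $B_R(0) \cap \Sigma$ which is non-trivial in $H_1(B_R(0) \cap \Sigma)$ but bounds in $\Sigma$; that is, $\gamma = \partial S$ for some compact subsurface $S \subset \Sigma$ with $S \not\subset B_R(0)$. The first step is to locate such an $S$ cleanly: since $\Sigma$ is properly embedded and $B_R(0) \cap \partial \Sigma = \emptyset$, the intersection $B_R(0) \cap \Sigma$ is a compact surface with boundary contained in $\partial B_R(0)$, and I may take $\gamma$ to be (a component of) $\partial(B_R(0) \cap \Sigma)$, i.e. $\gamma \subset \partial B_R(0) \cap \Sigma$. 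The failure of injectivity means exactly that this boundary circle bounds a compact piece $S$ of $\Sigma$ that sticks outside $B_R(0)$.

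The heart of the argument is then the convex hull property. The subsurface $S$ is a compact minimal surface (possibly with several boundary components, all of which lie on $\partial B_R(0) \cap \Sigma$ — one regroups the boundary circles so that $\partial S \subset \partial B_R(0)$). By Proposition~\ref{p:cvx} (Osserman's convex hull property, applied to each component of $S$), $S$ is contained in the convex hull of $\partial S$. But $\partial S \subset \partial B_R(0)$, and more to the point $\partial S \subset \overline{B_R(0)}$, whose convex hull is $\overline{B_R(0)}$ itself. Hence $S \subset \overline{B_R(0)}$, contradicting the assumption that $S$ leaves $B_R(0)$. This contradiction shows that every boundary circle of $B_R(0) \cap \Sigma$ — equivalently every cycle in $B_R(0)\cap\Sigma$ — that could fail to survive in $\Sigma$ in fact cannot exist, so the inclusion is injective on $H_1$.

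The step I expect to be the main obstacle is the bookkeeping in the first paragraph: making precise the passage from ``the inclusion on $H_1$ has a kernel'' to ``there is a compact subsurface $S \subset \Sigma$ with $\partial S$ on the sphere $\partial B_R(0)$ and $S \not\subset B_R(0)$.'' One has to use that a cycle in the kernel can be represented by a $1$-cycle on the sphere $\partial B_R(0) \cap \Sigma$ (pushing it out to the boundary of the compact piece $B_R(0)\cap\Sigma$), that it bounds a $2$-chain in $\Sigma$, and — since $\Sigma$ is an embedded surface — that this $2$-chain can be taken to be an embedded compact subsurface; the relevant subsurface is the closure of one of the two components of $\Sigma \setminus (\partial B_R(0)\cap\Sigma)$ cut off by that circle, namely the one not contained in $B_R(0)$. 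Once $S$ is in hand, the convex hull property finishes the proof immediately, since the second fundamental-form and curvature estimates play no role here; minimality enters \emph{only} through Proposition~\ref{p:cvx}, which is exactly why this lemma (unlike the genus-monotonicity lemma preceding it) ``uses minimality.''
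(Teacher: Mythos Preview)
Your core idea is the same as the paper's: contradiction via the convex hull property applied to the bounding surface $S$ (the paper calls it $\Gamma$). But you have introduced an unnecessary detour that creates the very ``main obstacle'' you then worry about.

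The paper's proof simply keeps $\gamma$ wherever it sits inside $B_R(0)\cap\Sigma$. Since $\gamma$ is null-homologous in $\Sigma$, it bounds a compact $\Gamma\subset\Sigma$; since $\partial\Gamma=\gamma\subset B_R(0)$ and $B_R(0)$ is convex, Proposition~\ref{p:cvx} forces $\Gamma\subset B_R(0)$, hence $\Gamma\subset B_R(0)\cap\Sigma$, contradicting the nontriviality of $[\gamma]$ in $H_1(B_R(0)\cap\Sigma)$. That is the whole argument.

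Your attempt to push $\gamma$ out to a component of $\partial B_R(0)\cap\Sigma$ is not only unneeded, it is not justified: in a compact surface with boundary of genus $g$ and $n$ boundary circles, the boundary curves span only an $(n-1)$--dimensional subspace of $H_1$, while $H_1$ itself has rank $2g+n-1$. So there is no reason a class in the kernel of the inclusion should be representable by boundary circles. All of the ``bookkeeping'' in your last paragraph disappears once you drop this step and apply the convex hull property directly to $S$ with $\partial S=\gamma\subset B_R(0)$.
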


\begin{proof}
If not, then $B_R(0) \cap \Sigma$ contains a one-cycle $\gamma$ that does not bound a surface in 
$B_R(0) \cap \Sigma$ but does
bound a surface $\Gamma \subset \Sigma$.  However, $\Gamma$ is then a minimal surface that must leave $B_R(0)$ but with $\partial \Gamma \subset B_R(0)$, contradicting the convex hull property (Proposition \ref{p:cvx}).
\end{proof}

This has the following immediate corollary for disks:

\begin{Cor}
If $\Sigma$ is a properly embedded minimal disk, then each component of 
$B_R(0) \cap \Sigma$ is a disk.
\end{Cor}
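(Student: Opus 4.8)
The plan is to deduce this directly from Lemma \ref{l:cvxhull}, the genus monotonicity lemma above, and the classification of compact surfaces. Fix a component $\Sigma'$ of $B_R(0)\cap\Sigma$. The first step is to arrange, by Sard's theorem applied to the distance-to-the-origin function on $\Sigma$, that $\partial B_R$ meets $\Sigma$ transversally, so that $\overline{B_R}\cap\Sigma$ is a compact surface with boundary contained in $\partial B_R$; the statement is really only needed for such regular $R$, and the excluded radii form a measure-zero set one can reach by a limiting argument.

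Second, I would check that $\Sigma'$ has nonempty boundary. If some component $K$ of $\overline{B_R}\cap\Sigma$ were contained in the open ball $B_R$, then $K$ would be open in $\Sigma$, since every point of $K$ has a $\Sigma$-neighbourhood lying in $B_R\subset\overline{B_R}\cap\Sigma$ and hence in the component $K$, and $K$ is also closed in $\Sigma$; by connectedness this would force $K=\Sigma$, which is impossible since $\Sigma$ (a complete minimal disk, hence homeomorphic to $\RR^2$) is unbounded while $K$ is bounded. Thus every component of $\overline{B_R}\cap\Sigma$ meets $\partial B_R$; in particular $\Sigma'$ is a compact surface with nonempty boundary, not a closed surface.

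Third, I would apply Lemma \ref{l:cvxhull}: a minimal disk has empty boundary, so $B_R(0)\cap\partial\Sigma=\emptyset$ and the inclusion $B_R(0)\cap\Sigma\hookrightarrow\Sigma$ is injective on $H_1$. Since $H_1$ of a finite disjoint union is the direct sum of the homologies of the pieces and the inclusion respects this splitting, the restricted map $H_1(\Sigma')\to H_1(\Sigma)$ is injective; as $\Sigma$ is a disk, $H_1(\Sigma)=0$, hence $H_1(\Sigma')=0$. By the genus monotonicity lemma $\Sigma'$ has genus $0$, and a compact connected orientable genus-$0$ surface with $b\ge 1$ boundary components has $H_1\cong\ZZ^{b-1}$, so $H_1(\Sigma')=0$ forces $b=1$; therefore $\Sigma'$ is a disk.

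Essentially all the content sits in Lemma \ref{l:cvxhull}, which in turn rests on the convex hull property (Proposition \ref{p:cvx}). The only thing I expect to need care is the point-set bookkeeping: handling radii for which $\partial B_R$ fails to be transverse to $\Sigma$, and the routine identification of components of $B_R\cap\Sigma$ with components of $\overline{B_R}\cap\Sigma$. No real geometry beyond the two cited lemmas should be required.
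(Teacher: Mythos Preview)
Your proposal is correct and is precisely the natural unpacking of what the paper means by calling this an ``immediate'' corollary of Lemma~\ref{l:cvxhull}; the paper gives no explicit argument beyond that remark. One minor simplification: you do not actually need the genus monotonicity lemma separately, since a compact connected orientable surface with $b\geq 1$ boundary components and genus $g$ has $H_1\cong\ZZ^{2g+b-1}$, so $H_1(\Sigma')=0$ already forces $g=0$ and $b=1$ in one stroke.
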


\subsection{Multi-valued graphs}   \label{s:s1}

We will need the notion of a multi-valued graph
from \cite{CM6}--\cite{CM9}.  At a first approximation, a multi-valued graph is  locally a graph over a subset of the plane but the projection down is not one to one.  Thus, it shares many properties with minimal graphs, including stability, but includes new possibilities such as the helicoid minus the vertical axis.

To be precise, let
$D_r$ be the disk in the plane centered at the origin and of
radius $r$ and let $\cP$ be the universal cover of the punctured
plane $\CC\setminus \{0\}$ with global polar coordinates $(\rho,
\theta)$ so $\rho>0$ and $\theta\in \RR$. Given $0 \leq r\leq s$
and $\theta_1 \leq \theta_2$, define the ``rectangle''
$S_{r,s}^{\theta_1 , \theta_2} \subset \cP$ by
\begin{equation}
   S_{r,s}^{\theta_1 , \theta_2} = \{(\rho,\theta)\,|\,r\leq \rho\leq s\, ,\, \theta_1 \leq \theta\leq
\theta_2 \} \, .
\end{equation}
An $N$-valued graph of a function $u$ on the annulus $D_s\setminus
D_r$ is a single valued graph over  
\begin{equation}
   S_{r,s}^{-N\pi , N\pi} = \{(\rho,\theta)\,|\,r\leq \rho\leq s\, ,\, |\theta|\leq
N\,\pi\} \, .
\end{equation}
($\Sigma_{r,s}^{\theta_1 , \theta_2}$ will denote the subgraph of
$\Sigma$ over the smaller rectangle $S_{r,s}^{\theta_1 ,
\theta_2}$).
 The multi-valued graphs that we will
consider will never close up; in fact they will all be embedded.
Note that embedded corresponds to that the separation never
vanishes.  Here the separation $w$ is the difference in height
between consecutive sheets and is  therefore given by
\begin{equation}        \label{e:sepw}
w(\rho,\theta)=u(\rho,\theta+2\pi)-u(\rho,\theta)\, .
\end{equation}
In the case where $\Sigma$ is the helicoid [i.e., $\Sigma$ can be
parametrized by $(s\,\cos t,s\,\sin t,t)$ where $s,\,t\in \RR$],
then
\begin{equation}
    \Sigma\setminus x_3-\text{axis}=\Sigma_1\cup \Sigma_2 \, ,
\end{equation}
 where $\Sigma_1$, $\Sigma_2$ are $\infty$-valued graphs.
$\Sigma_1$ is the graph of the function $u_1(\rho,\theta)=\theta$
and $\Sigma_2$ is the graph of the function
$u_2(\rho,\theta)=\theta+\pi$. In either case the separation
$w=2\,\pi$.

\begin{figure}[htbp]
    \includegraphics[totalheight=.55\textheight, width=.85\textwidth]{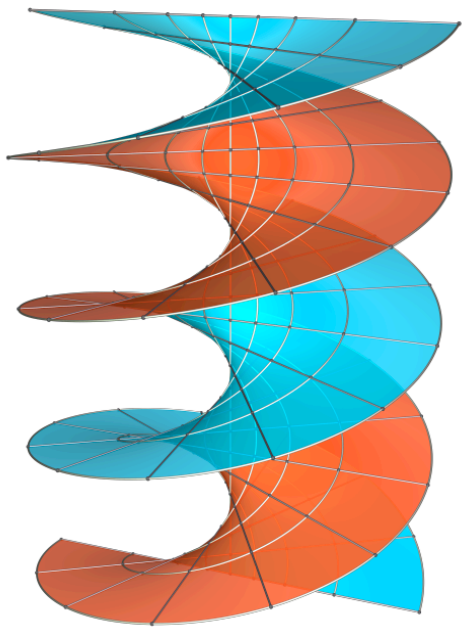}
    \caption{Multi--valued graphs.  The helicoid is obtained
    by gluing together two $\infty$--valued graphs along a line. 
    Credit: Matthias Weber, www.indiana.edu/~minimal.}   
\end{figure}

Note that for an embedded multi--valued graph, the sign of $w$
determines whether the multi--valued graph spirals in a
left--handed or right--handed manner, in other words, whether
upwards motion corresponds to turning in a clockwise direction or
in a counterclockwise direction.

\begin{figure}[htbp] \label{f:fsep}
    \centering\includegraphics[totalheight=.55\textheight, width=.95\textwidth]{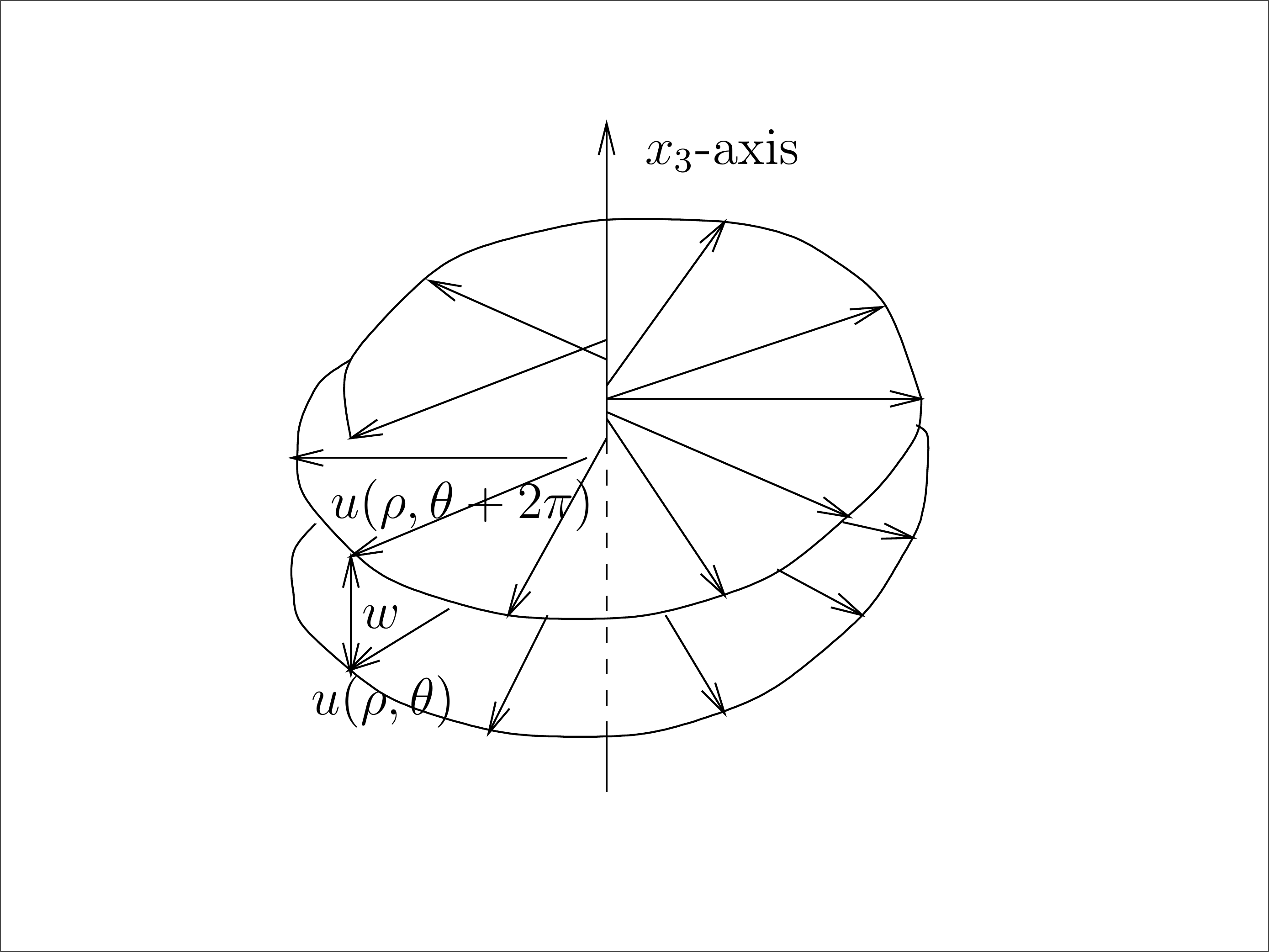}
    \caption{The separation $w$
for a multi--valued minimal graph.} 
\end{figure}

\subsection{Disks are double spiral staircases or graphs}

We will describe first the local classification of properly embedded minimal disks that follows from \cite{CM6}--\cite{CM9}.  This turns out to be the key step for understanding embedded minimal surfaces with finite genus since any of these can be decomposed into pieces that are either disks or pairs of pants.

There are two classical models for embedded minimal disks.  The first is a minimal graph over a simply-connected domain in $\RR^2$ (such as the plane itself), while the second is a double spiral staircase like the helicoid.  A double spiral staircase consists of two staircases
that spiral around one another so that two people
can pass each other without meeting.

  In \cite{CM6}--\cite{CM9} we showed that these are the only possibilities and, in fact, every
 embedded
minimal disk  is either
a minimal graph or can be approximated by a piece
of a rescaled helicoid.
It is graph when the curvature is small and is part of a helicoid when the curvature is above a certain threshold.

\begin{figure}[htbp]
\centering\includegraphics[totalheight=.55\textheight, width=.85\textwidth]{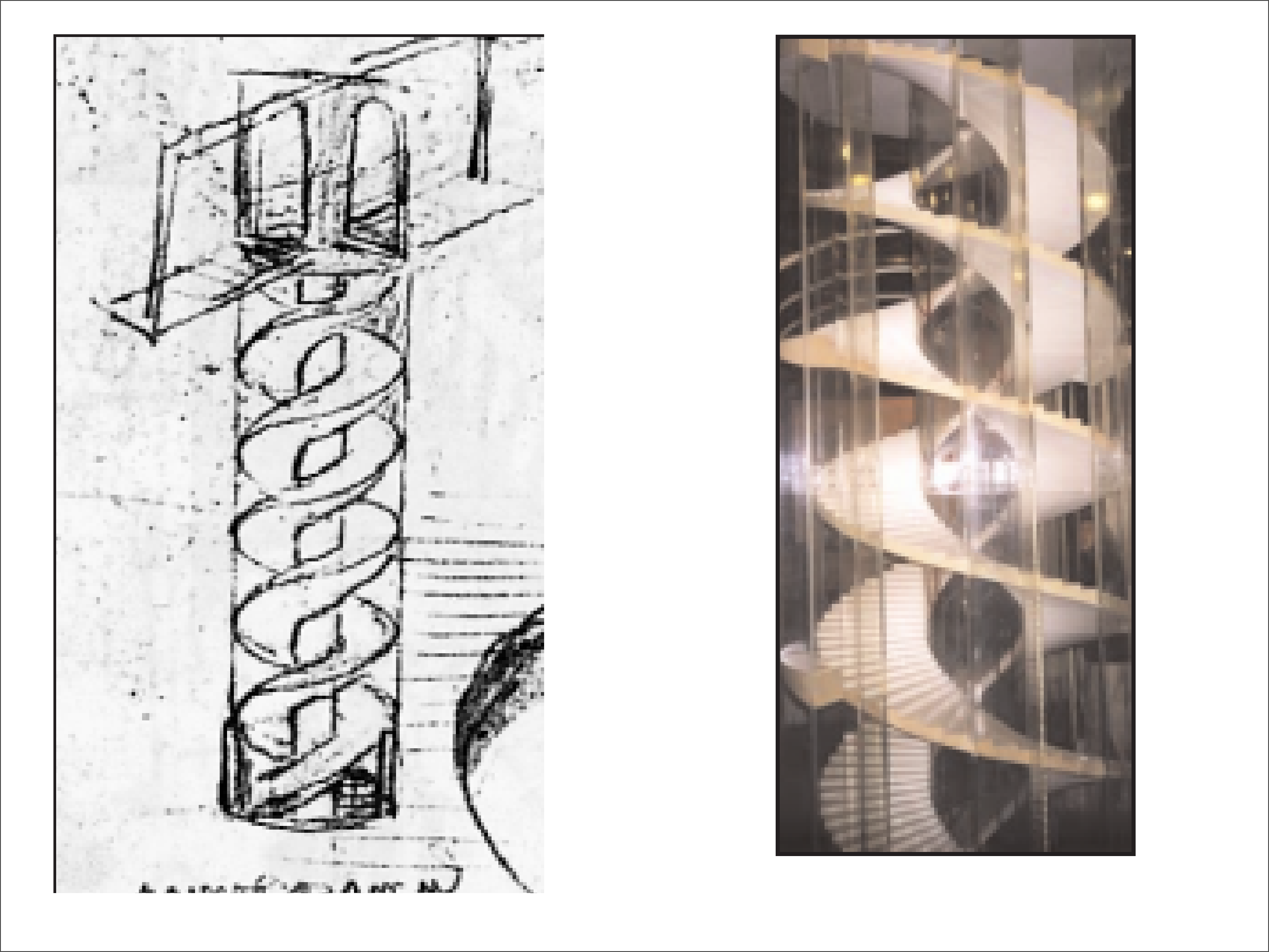}
\caption{Left: Drawing by Leonardo Da Vinci of a double spiral staircase from around 1490.
Right: Model of Da Vinci's double spiral staircase in Ch\^ateau de Chambord. Both figures reprinted from 
\cite{CM20}.}
\end{figure}

The main point in the proof is to show the double spiral staircase structure when the curvature is large.  The proof of this is long, but can be split into three main steps.

\centerline{\bf{Three main steps.}}

 \vskip1.5mm \noindent A. Fix an
integer $N$ (the ``large'' of the curvature in what follows will
depend on $N$). If an embedded minimal disk $\Sigma$ is not a
graph (or equivalently if the curvature is large at some point),
then it contains an $N$-valued minimal graph which initially is
shown to exist on the scale of $1/\max |A|$.  That is, the
$N$-valued graph is initially shown to be defined on an annulus
with both inner and outer radius inversely proportional to $\max
|A|$. 

\vskip1.5mm \noindent B. Such a potentially small $N$-valued
graph sitting inside $\Sigma$ can then be seen to extend as an
$N$-valued graph inside $\Sigma$ almost all the way to the
boundary.  That is, the small $N$-valued graph can be extended to
an $N$-valued graph defined on an annulus where the outer radius
of the annulus is proportional to $R$. Here $R$ is the radius of
the ball in $\RR^3$ that the boundary of $\Sigma$ is contained in.

\vskip1.5mm \noindent C.  The $N$-valued graph not only extends
horizontally (i.e., tangent to the initial sheets) but also
vertically (i.e., transversally to the sheets). That is, once
there are $N$ sheets there are many more and, in fact, the disk
$\Sigma$ consists of two multi-valued graphs glued together along
an axis.

 \begin{figure}[htbp]
\centering\includegraphics[totalheight=.4\textheight, width=.9\textwidth]{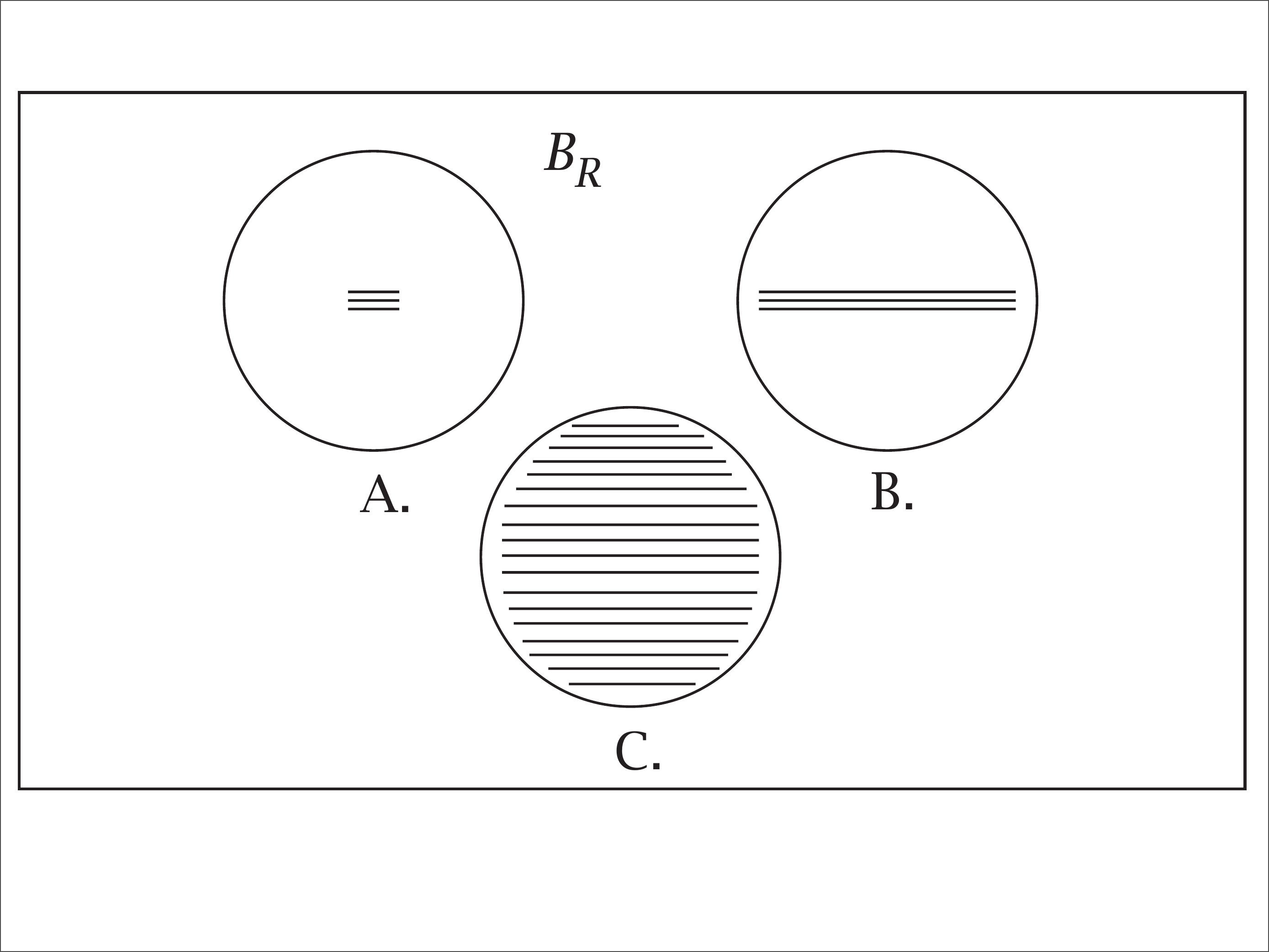}
\caption{Three main steps:  A. Finding a small $N$-valued graph in $\Sigma$.
B. Extending it in $\Sigma$ to a large $N$-valued graph.
C. Extend the number of sheets.}   
  \end{figure}

This general structure result for embedded minimal disks, and the methods used in its proof, give a compactness theorem for sequences of embedded minimal disks.  
This theorem is modelled on rescalings of the
helicoid and the precise statement is as follows (we state the
version for extrinsic balls; it was extended to intrinsic balls
in \cite{CM12}):

\begin{Thm} \label{t:t0.1}
(Theorem 0.1 in \cite{CM9}.)
 Let $\Sigma_i \subset B_{R_i}=B_{R_i}(0)\subset \RR^3$
be a sequence of embedded minimal {\underline{disks}} with
$\partial \Sigma_i\subset \partial B_{R_i}$ where $R_i\to \infty$.
If \begin{equation}
    \sup_{B_1\cap \Sigma_i}|A|^2\to \infty \, ,
    \end{equation}
 then there exists a subsequence, $\Sigma_j$, and
a Lipschitz curve $\cS:\RR\to \RR^3$ such that after a rotation of
$\RR^3$:
\begin{enumerate}
\item[\underline{1.}] $x_3(\cS(t))=t$.  (That is, $\cS$ is a graph
over the $x_3$-axis.)
\item[\underline{2.}]  Each $\Sigma_j$
consists of exactly two multi-valued graphs away from $\cS$ (which
spiral together).
\item[\underline{3.}] For each $1>\alpha>0$,
$\Sigma_j\setminus \cS$ converges in the $C^{\alpha}$-topology to
the foliation, $\cF=\{x_3=t\}_t$, of $\RR^3$.
\item[\underline{4.}] $\sup_{B_{r}(\cS (t))\cap
\Sigma_j}|A|^2\to\infty$ for all $r>0$, $t\in \RR$.  (The
curvatures blow up along $\cS$.)
\end{enumerate}
\end{Thm}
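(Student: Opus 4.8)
The plan is to run a blow-up argument driven by the local structure theory for embedded minimal disks --- the ``three main steps'' A, B, C above --- together with the one-sided curvature estimate of \cite{CM9}, and to recover the singular curve $\cS$ as precisely the set along which the second fundamental form blows up.

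\emph{Point selection and the local picture.} Since $\sup_{B_1\cap\Sigma_i}|A|^2\to\infty$, a standard point-picking argument produces points $x_i\in\Sigma_i$ with $|A|(x_i)\to\infty$ on which the curvature is almost locally maximal, say $|A|\le 2\,|A|(x_i)$ on the intrinsic ball of radius $1/|A|(x_i)$ about $x_i$. Fix $N$ large (the threshold for ``large curvature'' in Step A depends on $N$). Because $\Sigma_i$ is an embedded minimal disk which is very far from graphical near $x_i$, Step A produces an $N$-valued graph inside $\Sigma_i$ on an annulus with inner and outer radii comparable to $1/|A|(x_i)$; Step B extends it to an $N$-valued graph whose outer radius is comparable to $R_i\to\infty$; and Step C shows that once $N$ sheets are present there are arbitrarily many, so near its axis $\Sigma_i$ consists of exactly two multi-valued graphs glued along a curve. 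Translating $x_i$ to the origin and passing to a subsequence, this already gives conclusion 2; embeddedness (the separation $w$ never vanishes) guarantees the two graphs spiral together without closing up.

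\emph{Flatness, the foliation, and extracting $\cS$.} Next I would feed the multi-valued graphs into the one-sided curvature estimate: an embedded minimal disk contained in a halfspace that comes close to the bounding plane has uniformly bounded curvature away from that plane. Applied between consecutive sheets, it shows that $\Sigma_i$ is, away from its axis, locally a graph with curvature bounds, and that the sheets become asymptotically flat (gradients tending to $0$); hence on compact subsets of $\RR^3$ away from the limiting axis a subsequence converges in $C^\alpha$ for every $\alpha<1$, and the limit, being a foliation by parallel planes, can be rotated to $\cF=\{x_3=t\}$ --- this is conclusion 3. The same estimate prevents the axis from branching or ``doubling back'': along a subsequence the blow-up set is contained in a Lipschitz graph over a line, and a diagonal argument over a countable dense set of radii $r$ and heights $t$ selects one subsequence $\Sigma_j$ and one Lipschitz curve $\cS$ with $x_3(\cS(t))=t$ (conclusion 1) such that $\sup_{B_r(\cS(t))\cap\Sigma_j}|A|^2\to\infty$ for all $r>0$ and all $t$, with no curvature concentration off $\cS$ --- conclusion 4.

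\emph{Main obstacle.} Granting Steps A--C and the one-sided curvature estimate (which the excerpt permits us to cite), the delicate point is the \emph{global} control of the axis: the multi-valued graph, and hence the spine, is initially produced only near the chosen point $x_i$ and only where $|A|$ is large, so one must show that these local spines fuse into a single curve that is a graph over a fixed line, with nothing lost near $\partial B_{R_i}$ or at infinity. This is exactly where embeddedness and the one-sided estimate are indispensable, since they are what forbid a second sheet or a second spine from returning near a given sheet --- the mechanism that could otherwise make the blow-up set thick, disconnected, or non-graphical. Making this uniform in $i$, and in particular upgrading ``large curvature at one point'' to ``a whole curve of blow-up'', is the technical heart of the argument.
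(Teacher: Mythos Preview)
Your proposal is correct and follows essentially the same strategy the paper sketches: the survey does not give a detailed proof here but explains that the theorem is obtained from the three structural steps A--C (finding, extending horizontally, and then extending vertically the $N$-valued graph) together with the one-sided curvature estimate, which is exactly the route you outline. Your identification of the ``main obstacle'' --- consolidating the local spines into a single Lipschitz graph over a line and propagating blow-up along it --- is precisely the technical heart singled out in \cite{CM9}, and your use of the one-sided estimate between sheets to force flatness and rule out a second spine is the intended mechanism.
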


This theorem is sometimes referred to as {\emph{the lamination theorem}}.
Meeks  showed in \cite{Me2} that the Lipschitz curve $\cS$ is in fact a straight line perpendicular to the foliation.

\vskip2mm
The assumption that the radii $R_i$ go to infinity is used in several ways in the proof.  This guarantees that the leaves are planes (this uses the Bernstein theorem), but it also is used to show that the singularities are removable.  We will see in the next subsection that this is not always the case in the ``local case'' where the $R_i$ remain bounded.

\subsection{The local case}

In contrast to the global case of the previous subsection, there are local examples of sequence of minimal surfaces that do not converge to a foliation.  The first such example was constructed  in 
\cite{CM17}, where we constructed
 a sequence of embedded minimal disks in a unit ball in $\RR^3$ so that:
\begin{itemize}
\item Each contains the $x_3$-axis.
\item Each  is given by two multi-valued graphs over  $\{ x_3 =0 \} \setminus \{ 0 \}$.
\item The graphs spiral faster and faster near $\{ x_3 = 0 \}$.
\end{itemize}

The precise statement is:

\begin{Thm} (Colding-Minicozzi, \cite{CM17})  \label{t:tams}
There is  a sequence  of compact embedded minimal disks $0 \in
\Sigma_i \subset  B_1 \subset \RR^3$
 with $\partial \Sigma_i \subset
\partial B_1$ and
containing the vertical segment
$\{ (0,0,t) \, | \, |t|<1 \} \subset \Sigma_i$ so:
\begin{enumerate}
\item[(1)] $\lim_{i\to \infty} |A_{\Sigma_i}|^2 (0) = \infty$. 
\label{i:1}
\item[(2)]
$\sup_i \sup_{\Sigma_i \setminus B_{\delta} } |A_{\Sigma_i}|^2 < \infty$
for all $\delta > 0$.
\label{i:2}
\item[(3)]
$\Sigma_i \setminus \{ {\text{$x_3$-axis}} \} =
\Sigma_{1,i} \cup \Sigma_{2,i}$ for multi-valued graphs
$\Sigma_{1,i}$ and
$\Sigma_{2,i}$.
\label{i:3}
\item[(4)] 
$\Sigma_i  \setminus \{ x_3 = 0 \}$  converges to     two
embedded minimal disks $\Sigma^{\pm} \subset \{ \pm x_3 > 0 \}$
with $\overline{\Sigma^{\pm}} \setminus \Sigma^{\pm} = B_1 \cap \{
x_3 = 0\}$. Moreover, $\Sigma^{\pm} \setminus \{
{\text{$x_3$-axis}} \} = \Sigma_1^{\pm} \cup \Sigma_2^{\pm}$ for
multi-valued graphs $\Sigma_{1}^{\pm}$ and $\Sigma_{2}^{\pm}$ each of
which spirals into $\{ x_3 =
0 \}$; see fig. \ref{f:f2tams}.
\label{i:4}
\end{enumerate}
\end{Thm}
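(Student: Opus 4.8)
The plan is to write the disks $\Sigma_i$ down explicitly by the Weierstrass representation, tuned to be near-copies of a helicoid of variable pitch, and then to read off properties (1)--(4) from estimates on the Weierstrass data. Recall that a holomorphic ``Gauss map'' $g$ together with a holomorphic ``height differential'' $dh$ on a simply connected planar domain $\Omega$ determine a conformal minimal immersion
\begin{equation}
	\Phi (z) = \RE \int_{z_0}^z \left( \tfrac12 \left( g^{-1} - g \right) , \tfrac{i}{2} \left( g^{-1} + g \right) , 1 \right) \, dh \, , \notag
\end{equation}
with third coordinate $x_3 = \RE \int dh$, Gauss map the stereographic image of $g$, induced metric $\tfrac14 ( |g| + |g|^{-1} )^2 \, |dh|^2$, and second fundamental form $|A|^2 = c \, |g^{-1} \, dg/dh|^2 \, ( |g| + |g|^{-1} )^{-4}$ for a universal constant $c$; single-valuedness of $\Phi$ is automatic since $\Omega$ is simply connected, so no period obstructions arise. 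I would start from the helicoid's data $g = \e^{iz}$ (for which $|g| = \e^{-\imag z}$, $\arg g = \RE z$, the ``axis'' $\{|g| = 1\}$ is the real line, $\Phi$ maps $\{\imag z = 0\}$ into a vertical line, and $dh = dz$) and change only the height differential to $dh = \psi_i'(z)\,dz$, where $\psi_i$ is holomorphic on a thin strip $\Omega_i$ about a long interval of the real axis, real, odd and increasing there, with $\psi_i' > 0$ on $\Omega_i$ for each fixed $i$, and --- crucially --- flat at the origin with $\psi_i'(0) \to 0$. Then the surface climbs very little in $x_3$ while winding many times near the axis at heights close to $0$, which is the behavior asserted in the theorem; the remaining freedom in $\psi_i$ (its behavior away from $0$, the taper of $\Omega_i$) must be fixed with some care so that $|\psi_i'|$ is controlled wherever the surface leaves a fixed ball, so that the axis of $\Phi_i$ is normalized to the segment $\{(0,0,t) : |t| < 1\}$, and so that the $\psi_i$ have a nontrivial limit after recentering. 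I then set $\Sigma_i$ to be the closure of the component of $\Phi_i^{-1}(B_1)$ containing the axis, so that $\partial \Sigma_i \subset \partial B_1$.

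The curvature statements are then bookkeeping. Since $g^{-1}\,dg/dz \equiv i$, one has $|A_{\Sigma_i}|^2(z) = c\,|\psi_i'(z)|^{-2}\,(|g| + |g|^{-1})^{-4}$; at the origin $|g| = 1$, so $|A_{\Sigma_i}|^2(0) = (c/16)\,|\psi_i'(0)|^{-2} \to \infty$, giving (1), while away from a fixed ball $B_\delta$ one has either $|\imag z|$ bounded below --- so $(|g| + |g|^{-1})^4$ is large --- or else $|\psi_i'|$ bounded below uniformly in $i$ by the choice of $\psi_i$; either way $|A_{\Sigma_i}|^2$ is bounded independently of $i$, which is (2), once one checks that $\Phi_i^{-1}(B_\delta)$ sits inside a fixed region of $\Omega_i$.

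The real content is embeddedness, which is also what gives (3). The surface minus the axis is the image of $\Omega_i \setminus \{\imag z = 0\}$, whose two halves $\{\imag z > 0\}$, $\{\imag z < 0\}$ map --- exactly as for the helicoid --- to two multi-valued graphs $\Sigma_{1,i}, \Sigma_{2,i}$ over a punctured subset of $\{x_3 = 0\}$, whose heights over a common point interleave like the heights of the two spirals of a helicoid. To show $\Phi_i$ is an embedding I would estimate it sharply enough to establish (a) the separation $w(z) = x_3(z + 2\pi) - x_3(z)$ between consecutive sheets of each multi-valued graph stays away from zero with a fixed sign, so that neither graph meets itself, and (b) the interleaved heights of $\Sigma_{1,i}$ and $\Sigma_{2,i}$ over a common point of $\{x_3 = 0\}$ never coincide, so that the two graphs meet neither each other nor the axis. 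Both (a) and (b) reduce to sign and monotonicity estimates for the explicit oscillatory integrals giving $x_1, x_2, x_3$ in terms of $g = \e^{iz}$ and $\psi_i'$, and this is precisely where the monotonicity of $\psi_i$ and the control of its oscillation on each period $\{\RE z \in [2\pi k, 2\pi(k+1)]\}$ are used. I expect this step --- not the curvature formulas --- to be the main obstacle.

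Finally, for (4): by the uniform bound of (2), the $\Sigma_i$ subconverge smoothly away from the $x_3$-axis; equivalently, after recentering one passes to the limit in the Weierstrass data and obtains limiting height differentials $(\psi_\infty^\pm)'\,dz$ on the two half-strips. The resulting surfaces $\Sigma^+$ and $\Sigma^-$ lie in $\{x_3 > 0\}$ and $\{x_3 < 0\}$ respectively, still split into two multi-valued graphs off the axis, and --- because the heights accumulate at $0$ while the winding continues indefinitely --- spiral into $\{x_3 = 0\}$ with $\overline{\Sigma^\pm} \setminus \Sigma^\pm = B_1 \cap \{x_3 = 0\}$, which is (4). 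Each $\Sigma_i$ is a disk since $\Omega_i$ is simply connected and $\Phi_i$ is an embedding, completing the proof.
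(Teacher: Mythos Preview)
The survey paper does not actually prove Theorem~\ref{t:tams}; it only states the result and cites \cite{CM17} for the construction. So there is no in-paper proof to compare against. That said, your proposal is essentially the approach of \cite{CM17}: take the helicoid's Gauss map $g=\e^{iz}$ unchanged and replace the height differential by $dh=\psi_i'(z)\,dz$ with $\psi_i'$ positive, $\psi_i'(0)\to 0$, and $\psi_i'$ uniformly bounded below away from the origin. The curvature blow-up at $0$ and the uniform bound off $B_\delta$ then fall out of the Weierstrass formulas exactly as you describe, and the decomposition into two multi-valued graphs is inherited from the helicoid structure of $\{\imag z\gtrless 0\}$.

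Your identification of embeddedness as the main issue is correct, and your outline of how to handle it (monotonicity of the separation $w$, interleaving of the two sheets) is the right strategy. One point worth sharpening: in \cite{CM17} the functions $\psi_i$ are taken to be explicit (of the form $z\mapsto a\arctan(z/a)$ or similar), which makes the sign and monotonicity checks concrete computations rather than abstract estimates; you will likely want to do the same rather than leave $\psi_i$ generic. You should also be explicit about why the component of $\Phi_i^{-1}(B_1)$ through the axis is a disk with boundary in $\partial B_1$ --- this uses that the induced metric is complete enough on the relevant domain, i.e., that the boundary of $\Omega_i$ is mapped outside $B_1$, which constrains how $\Omega_i$ tapers.
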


It follows from (4)
that $\Sigma_i \setminus \{ 0 \}$ converges to a lamination
of $B_1 \setminus \{ 0 \}$    (with leaves $\Sigma^-$, $\Sigma^+$,
and $B_1 \cap \{ x_3 = 0 \} \setminus \{ 0 \}$) which
 does not extend to a lamination of $B_1$.
Namely, $0$ is not a removable singularity.

\begin{figure}[htbp] 
    \centering\includegraphics[totalheight=.35\textheight, width=.8\textwidth]{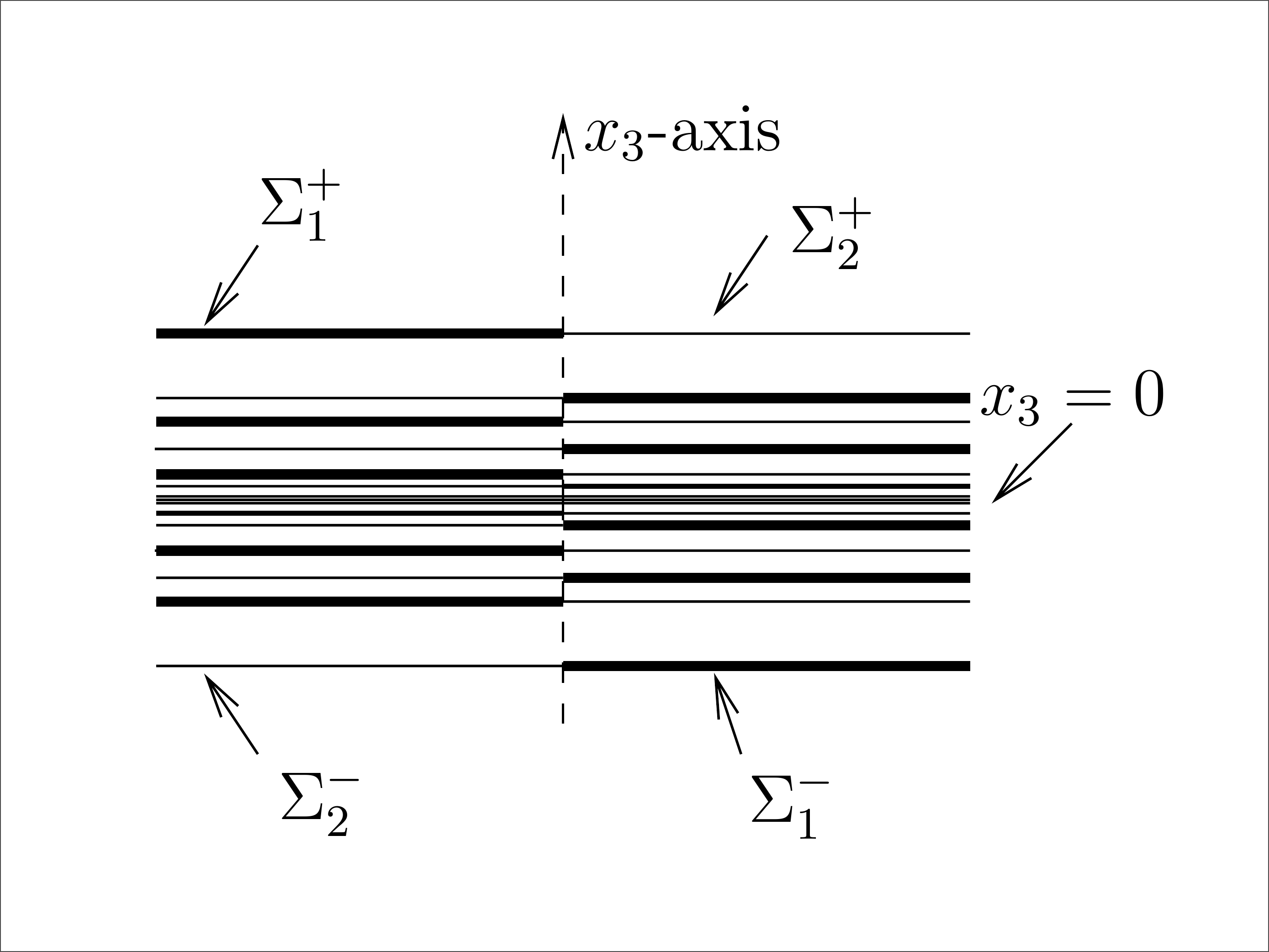}
    \caption{A schematic picture of the examples in Theorem \ref{t:tams}.  Removing the $x_3$-axis
    disconnects the surface into two multi-valued graphs; one of these is in bold.}  \label{f:f2tams}
\end{figure}

 There are now a number of 
 other interesting local examples of singular limit laminations:
 \begin{itemize}
 \item Meeks-Weber,  \cite{MeWe}: The Meeks-Weber bent helicoids are sequences of minimal surfaces defined in a tub about a curve that converge to a minimal foliation of the tube except along the curve itself and this curve is the singular set.
 \item Dean, \cite{De}: Dean generalized the construction of \cite{CM17} to get isolated singular points.
 \item Khan, \cite{Kh}: Khan used the Weierstrass representation to  construct
 sequences of spiraling multi-valued graphs where curvature blows up on half an interval.
 \item Hoffman-White, \cite{HoWh2}: They proved the definitive existence result for subsets of an axis by getting   an arbitrary closed subset as the singular set.  Their proof is variational, seizing on the fact that half of the helicoid is area-minimizing (and then using reflection to construct the other half).
 \item Kleene, \cite{Kl}: Gave different proof of Hoffman-White using the Weierstrass representation in the spirit of \cite{CM17}, \cite{De} and \cite{Kh}.
 \item Calle-Lee, \cite{CaL}: Constructed local helicoids in Riemannian manifolds.
 \end{itemize}

One of the most interesting questions is when does a minimal lamination have removable singularities? This is most interesting when for minimal limit laminations that arise from sequences of embedded minimal surfaces.   It is clear from these examples that this is a global question.  This question really has two separate cases depending on the topology of the leaves near the singularity.  When the leaves are simply connected, the only possibility is the spiraling and multi-valued graph structure proven in \cite{CM6}--\cite{CM9};
see \cite{CM18} for 
a flux argument to get removability in the global case. 
A different type of singularity occurs when the injectivity radius of the leaves goes to zero at a singularity; examples of this were constructed by Colding and De Lellis in \cite{CD}.
 The paper \cite{CM10} has a similar flux argument for the global case where the leaves are not simply-connected.

\subsection{The one-sided curvature estimate}

One of the key tools used to understand embedded 
minimal surfaces is the one-sided curvature estimate proven in \cite{CM9} using the structure theory developed in  \cite{CM6}--\cite{CM9}.  

The one-sided curvature estimate 
roughly states that an embedded minimal disk that lies on one-side of a plane, but comes close to the plane, has bounded curvature.  Alternatively, it says that if the curvature is large at the center of a ball, then the minimal disk propagates out in all directions so that it cannot be contained on one side of any plane that passes near the center of the ball.\index{one-sided curvature estimate}

\begin{Thm}	(Colding-Minicozzi, \cite{CM9}) \label{t:onesided}
There exists $\epsilon_0 > 0$ so that the following holds.
Let $y \in \RR^3$, $r_0 > 0 $ and
\begin{equation}    \label{e:two}
    \Sigma^2 \subset B_{2r_0}(y) \cap \{ x_3 > x_3(y) \}
        \subset \RR^3
\end{equation}
 be a compact embedded  minimal
disk with $\partial \Sigma \subset \partial  B_{2 \, r_0}(y)$.
 For any connected component $\Sigma'$ of $B_{ r_0}(y) \cap \Sigma$
with $B_{\epsilon_0 \, r_0 }(y) \cap \Sigma' \ne \emptyset$,
\begin{equation}    \label{e:osr}
    \sup_{\Sigma'}   |A_{\Sigma'}|^2
        \leq  r_0^{-2}  \, .
\end{equation}
\end{Thm}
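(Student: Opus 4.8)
The plan is to argue by contradiction, using the structure theory for embedded minimal disks summarized in the ``three main steps'' and in the compactness Theorem \ref{t:t0.1}. It is worth recording why the naive approach fails: the function $u = x_3 - x_3(y)$ is \emph{positive} on $\Sigma$ (since $\Sigma \subset \{x_3 > x_3(y)\}$) and \emph{harmonic} on $\Sigma$ (since $\Sigma$ is minimal), so if an embedded minimal disk satisfied a scale-invariant Harnack inequality the bound on $|A|$ would follow at once. No such inequality holds — a curvature bound of this type is otherwise available only under a \emph{stability} hypothesis (Theorem \ref{t:schoenstable}), and a general embedded minimal disk need not be stable. The content of the theorem is exactly to trade the missing stability for ``embedded disk on one side of a plane,'' and the vehicle for that trade is the structure theory. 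So I would suppose the estimate fails; after rescaling (absorbing the constant into $r_0$, translating so that $y=0$ and $r_0=1$, and — if the curvature merely fails to be bounded by $1$ without blowing up — selecting worse and worse points by a standard point-picking argument), one obtains a sequence of compact embedded minimal \underline{disks} $\Sigma_i \subset B_2(0)\cap\{x_3>0\}$ with $\partial\Sigma_i\subset\partial B_2(0)$, a connected component $\Sigma_i'$ of $B_1(0)\cap\Sigma_i$ meeting $B_{1/i}(0)$, and with $\sup_{\Sigma_i'}|A_{\Sigma_i'}|^2\to\infty$.

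Next I would feed this into the structure theory. Large curvature at a point $z_i\in\Sigma_i'$ forces (step A) a nearly flat $N$-valued minimal graph inside $\Sigma_i$ on the scale $1/|A|(z_i)$; since $z_i\in B_1(0)$ lies a definite distance from $\partial B_2(0)$, step B extends this to an $N$-valued graph over an annulus whose outer radius is a definite fraction of $1$; and step C turns the $N$ sheets into the full double-spiral-staircase picture, so that away from a Lipschitz axis $\mathcal S_i$ the disk $\Sigma_i$ is two multi-valued graphs winding around $\mathcal S_i$, with $\mathcal S_i$ (after a rotation, and by \cite{Me2}) essentially a line transverse to the sheets. Passing to a subsequence, $\Sigma_i$ converges, away from a limit axis $\mathcal S$, to a minimal lamination of $(B_1(0)\setminus\mathcal S)\cap\{x_3\geq 0\}$ with flat leaves (limits of the nearly flat sheets), and $\mathcal S\neq\emptyset$ because the curvature blew up.

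The contradiction should then come from the half-space hypothesis together with the fact that $\Sigma_i'$ reaches $B_{1/i}(0)$, which is very close to the bounding plane $\{x_3=0\}$. Using chord-arc control on embedded minimal disks, I would argue that the winding region — equivalently $\mathcal S_i$ — must descend to within $o(1)$ of $\{x_3=0\}$; concretely, some sheet of the multi-valued graph inside $\Sigma_i$ sits at height within $o(1)$ of $0$. But consecutive sheets differ in height by the separation $w$ of \eqr{e:sepw}, so the sheet one full turn below lies at height $\approx -w<0$ while still being a point of $\Sigma_i$, contradicting $\Sigma_i\subset\{x_3>0\}$. Equivalently, in the limit: the double multi-valued graph near any point of $\mathcal S$ takes $x_3$-values on both sides of that point's height, hence below $0$, which is impossible for a surface contained in $\{x_3\geq 0\}$. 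This contradiction proves the estimate.

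The hard part is the structure-theory input quoted above: producing the $N$-valued graph on the scale of $1/\max|A|$ (step A), extending it out to a fixed scale (step B), and promoting it to the double-spiral structure with nearly flat leaves (step C) — this is the bulk of \cite{CM6}--\cite{CM9}. An additional subtlety is that, in the ``local'' regime of this theorem (bounded radii, where one cannot invoke the Bernstein theorem to flatten the leaves or to remove the axis as a singularity), the one-sided estimate and the sheet-extension/regularity statements are genuinely interdependent, so the correct logical order is a simultaneous induction on scales and one must be careful not to argue circularly. The remaining technical point is the chord-arc comparison that links ``$\Sigma_i'$ reaches near $0$'' to ``a sheet of the graph sits near height $0$''; once that and the structure theory are in hand, the one-sided hypothesis closes the argument immediately.
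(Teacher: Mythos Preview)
The paper does not actually prove Theorem \ref{t:onesided}; this is a survey, and the theorem is simply stated with a reference to \cite{CM9}. The only hint given is the sentence ``the one-sided curvature estimate proven in \cite{CM9} using the structure theory developed in \cite{CM6}--\cite{CM9}.'' So there is no proof in the paper to compare against in detail, but I can comment on your proposal against what is actually done in \cite{CM9}.

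Your overall strategy --- argue by contradiction, produce large curvature near the plane, invoke the multi-valued graph structure, and conclude that sheets must spiral below $\{x_3=0\}$ --- is the correct intuition and is indeed the skeleton of the argument in \cite{CM9}. You also correctly flag the central subtlety: the one-sided estimate and the later structure/compactness results are interdependent, so one must not argue circularly.

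However, the proposal as written does argue circularly, and the circularity is not merely a caveat to be noted but a genuine gap. You invoke three ingredients each of which logically \emph{depends} on Theorem \ref{t:onesided}: the lamination/compactness Theorem \ref{t:t0.1} (its proof uses the one-sided estimate to control the regions between sheets and to get removability of singularities), Meeks' result \cite{Me2} that the axis is a line (this is a refinement of Theorem \ref{t:t0.1}), and the chord-arc bound of Theorem \ref{t:cy1} (explicitly proven in \cite{CM12} \emph{from} the one-sided estimate). None of these is available to you. What \emph{is} available, and what the actual proof in \cite{CM9} uses, is only steps A and B: the existence of a small $N$-valued graph near a point of large curvature (\cite{CM6}) and its extension outward to a definite scale (\cite{CM7}, \cite{CM8}). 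The contradiction is then obtained not via chord-arc or the full double-spiral picture, but by a more hands-on argument: once there is an $N$-valued graph, the region between consecutive sheets is stable, and a barrier/flux argument (using the catenoid foliation and the half-space constraint) forces the graph to be extremely flat, which in turn bounds the curvature --- contradiction. In short, replace your appeals to Theorem \ref{t:t0.1}, \cite{Me2}, and chord-arc by the direct stability-between-sheets mechanism; the rest of your outline is on target.
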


 \begin{figure}[htbp]
 \centering\includegraphics[totalheight=.35\textheight, width=.75\textwidth]{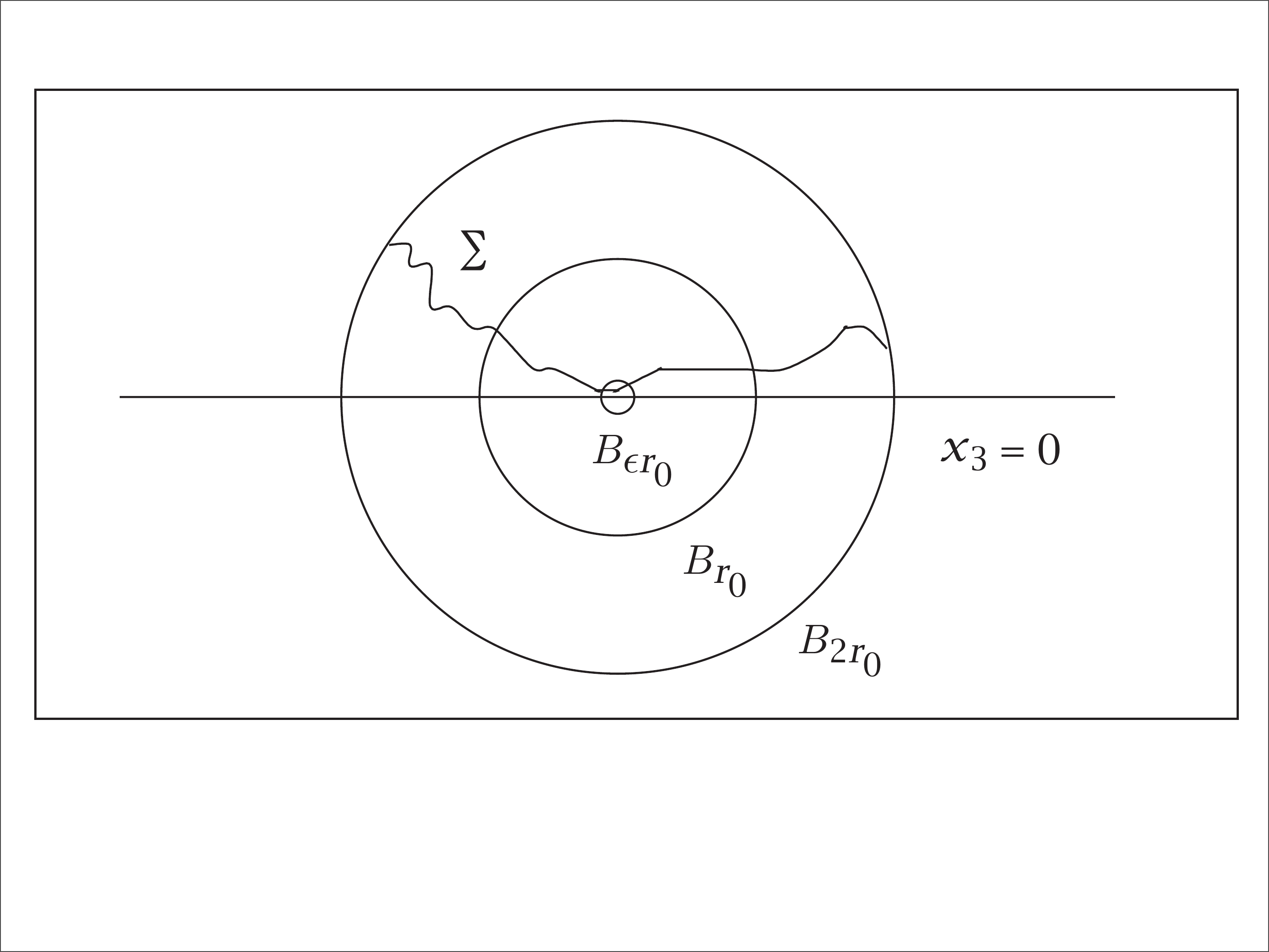}
    \caption{The one-sided curvature estimate.} 
\end{figure}

The example of a rescaled catenoid shows that simply-connected and embedded are both essential hypotheses for the one-sided curvature estimate.   More precisely, the height of the catenoid grows logarithmically in the distance to the axis of rotation.  In particular, the intersection of the catenoid with $B_{r_0}$ lies in a slab of thickness $\approx \log r_0$ and the ratio of 
\begin{equation}
	\frac{\log r_0}{r_0} \to 0 {\text{ as }} r_0 \to \infty \, .
\end{equation}
Thus, after dilating the catenoid by $\frac{1}{j}$,  we get a sequence of minimal surfaces in the unit ball that converges as sets to $\{ x_3 = 0\}$ as $j \to \infty$.  
  However, the catenoid is not flat, so these rescaled catenoids have $|A| \to \infty$ and \eqr{e:osr} does not apply for $j$ large.
  
   \begin{figure}[htbp]
\centering\includegraphics[totalheight=.35\textheight, width=.75\textwidth]{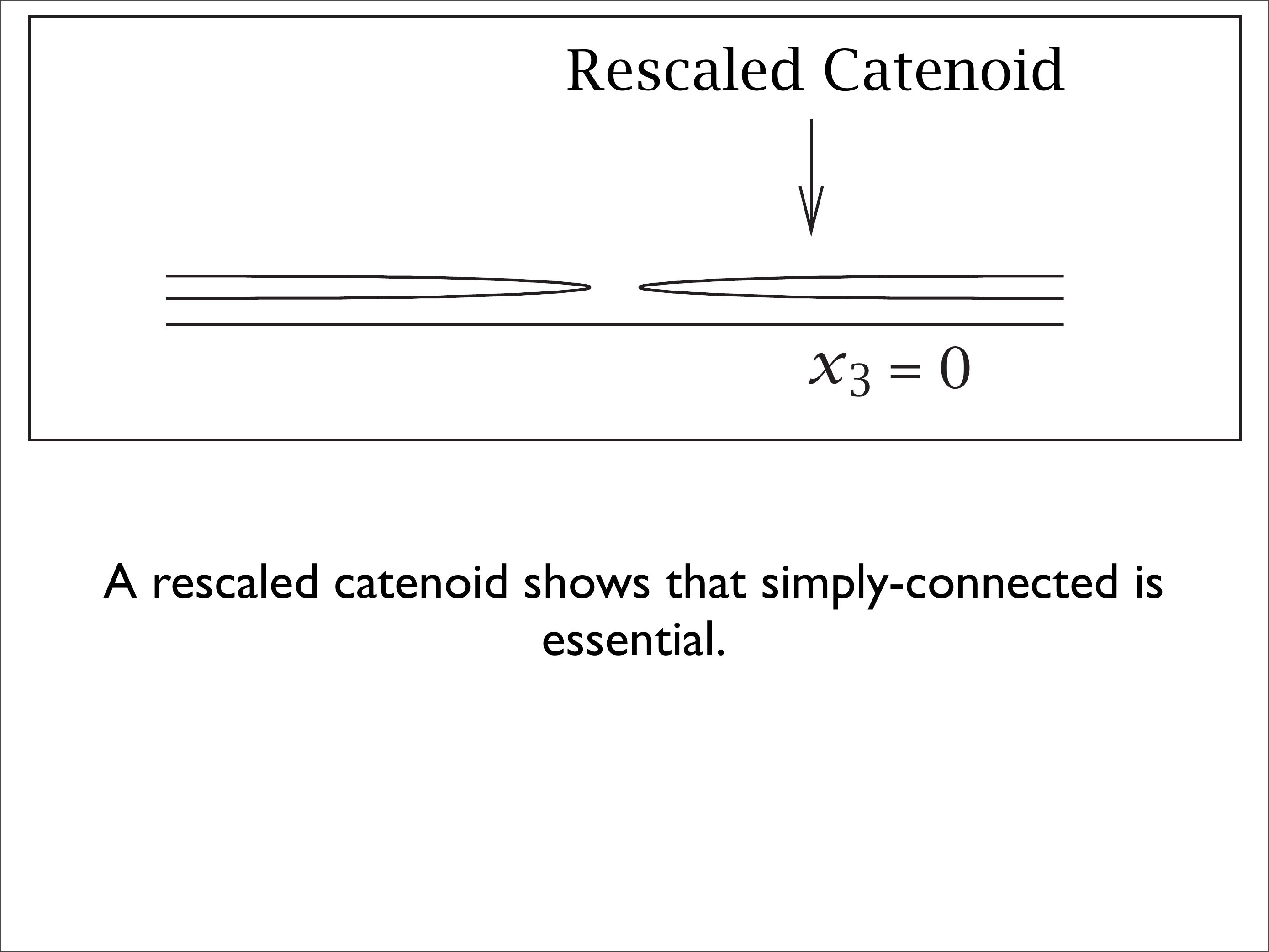}
    \caption{Rescaled catenoids.} 
\end{figure}

\subsection{Genus zero have pair of pants decomposition}

Thus far, we have concentrated on the case of embedded minimal disks.  We turn next to the general case of embedded minimal planar domains (i.e., where the surfaces have genus zero but may not be simply connected).  The new possibilities are illustrated by Riemann's family of examples where small necks connect large graphical regions that are asymptotic to planes.  Cutting along these small necks, one can decompose the Riemann examples into ``pairs of pants'' that are topologically disks with two sub-disks removed (think of the outer boundary as the waist and the two inner boundaries corresponding to the legs).

One of the main results from \cite{CM10} is that a general embedded minimal planar domain has a similar 
 pair of pants decomposition:

\begin{Thm}  
	\label{t:cm7c}
 Any nonsimply connected embedded minimal planar
domain with a small neck can be cut along a collection of short curves.
After the cutting, we are left with graphical pieces that are defined over
a disk with either one or two subdisks removed (a topological disk with
two subdisks removed is called a pair of pants).\index{pair of pants} Moreover, if for some
point the curvature is large, then all of the necks are very small.
\end{Thm}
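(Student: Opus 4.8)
The plan is to combine the structure theory for embedded minimal disks (Theorem \ref{t:t0.1} and its local refinements) with the one-sided curvature estimate (Theorem \ref{t:onesided}), using the catenoid as the local model near a neck and the plane or helicoid as the local model away from necks. First I would make precise the notion of a neck: a short simple closed noncontractible curve in $\Sigma$ which, inside a slightly larger extrinsic ball, separates $\Sigma$ into pieces that are not disks. The convex hull property (Proposition \ref{p:cvx}) together with Lemma \ref{l:cvxhull} shows that such a curve is essential in homology, so on a planar domain it genuinely separates the topology. The first technical goal is to show that near a sufficiently small neck the surface, rescaled so the neck has unit size, is graphically close to a standard catenoid; this is precisely where embeddedness and the genus-zero hypothesis enter, since they force exactly two sheets spiraling apart from a waist rather than a helicoidal double staircase.

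Next I would carry out the cutting. Having located the necks, each one contains a natural shortest closed geodesic — the waist of the catenoid model — and we cut $\Sigma$ along these geodesics. Their lengths are controlled by the neck size, which is the content of the ``short curves'' in the statement. After cutting along all the necks, every remaining component is a planar domain containing no neck, so by the local disk theory each such piece is either of bounded curvature, hence graphical, or contains a region of large curvature. Embeddedness together with genus zero and the convex hull property then bounds the number of boundary circles of each piece: a piece with graphical core over a domain with three or more holes would produce nested necks and contradict the maximality of the cutting collection, so each piece is a graph over a disk, an annulus, or a pair of pants.

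To prove the quantitative assertion — that if the curvature is large at some point then every neck is very small — I would argue by contradiction. Suppose $\sup|A|$ is large at a point $p$ but some neck $\gamma$ has size bounded below. Near $\gamma$ the catenoid model places $\Sigma$, on each side of a plane through a point close to $\gamma$, as two sheets coming close to that plane; the one-sided curvature estimate (Theorem \ref{t:onesided}) then forces bounded curvature on a definite extrinsic ball on each side of $\gamma$. One then propagates this bound: wherever $\Sigma$ is locally a multi-valued graph one reapplies Theorem \ref{t:onesided} to the region trapped between consecutive sheets, producing a chain of overlapping balls on which $|A|$ is uniformly bounded that eventually reaches $p$, contradicting $|A|(p)$ being large. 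Equivalently, large curvature at $p$ produces, via Theorem \ref{t:t0.1}, a double-spiral staircase emanating from $p$ that cannot be severed by a definite-size neck without violating embeddedness.

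The main obstacle, I expect, is exactly this propagation step: making rigorous and quantitative the claim that bounded curvature near one neck, or near a plane, spreads across the whole surface past the other necks. The one-sided estimate applies only to disks lying strictly on one side of a plane, so one must repeatedly pass to simply connected subpieces — between consecutive sheets of a multi-valued graph, or between consecutive necks — with uniform control on the geometry, and iterate without losing a definite fraction of the scale at each step. Controlling this iteration in the presence of many necks of a priori different sizes, which amounts to proving a uniform chord-arc type bound and establishing the dichotomy between uniformly-locally-simply-connected (helicoid-modeled) behavior and the non-uniformly-locally-simply-connected (Riemann-example-modeled) behavior, is the technical heart of the argument.
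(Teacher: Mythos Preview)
This survey paper does not contain a proof of Theorem~\ref{t:cm7c}; the result is quoted from \cite{CM10} and only the statement and its consequence (Corollary~\ref{t:cm7d}) are given here, with the surrounding discussion of the no-mixing theorem and the ULSC/non-ULSC dichotomy (Theorems~\ref{c:main}--\ref{t:t5.2a}) indicating the shape of the argument. So there is no in-paper proof to compare against, and your proposal should be read as an outline of what the full proof in \cite{CM10} might look like.

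As an outline your proposal is in the right spirit: the catenoid model near necks, the one-sided curvature estimate to propagate flatness, and the disk theory away from the necks are indeed the main ingredients, and you correctly identify the propagation step as the technical heart. Two points are worth flagging. First, your argument that a graphical piece cannot have three or more boundary circles (``nested necks \dots\ contradict the maximality of the cutting collection'') is too soft; nothing in a maximality assumption alone prevents a single graphical region over a triply-punctured disk, and the actual bound on the number of holes per piece comes from a more careful analysis of how the topology is distributed along level sets of a height function, together with the ordering of the planar ends. Second, you invoke Theorem~\ref{t:t0.1} for the double-spiral structure, but that theorem is stated for $R_i\to\infty$; near a neck of definite size you are in the local regime where, as the discussion of \cite{CM17} and the subsequent examples shows, singular limits are possible, so one needs the effective local versions (and ultimately the chord-arc bound of Theorem~\ref{t:cy1}) rather than the global lamination theorem. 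Your final paragraph essentially acknowledges this, and indeed making the ULSC/non-ULSC dichotomy quantitative at every scale is what occupies most of \cite{CM10}.
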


 \begin{figure}[htbp]
\centering\includegraphics[totalheight=.4\textheight, width=1\textwidth]{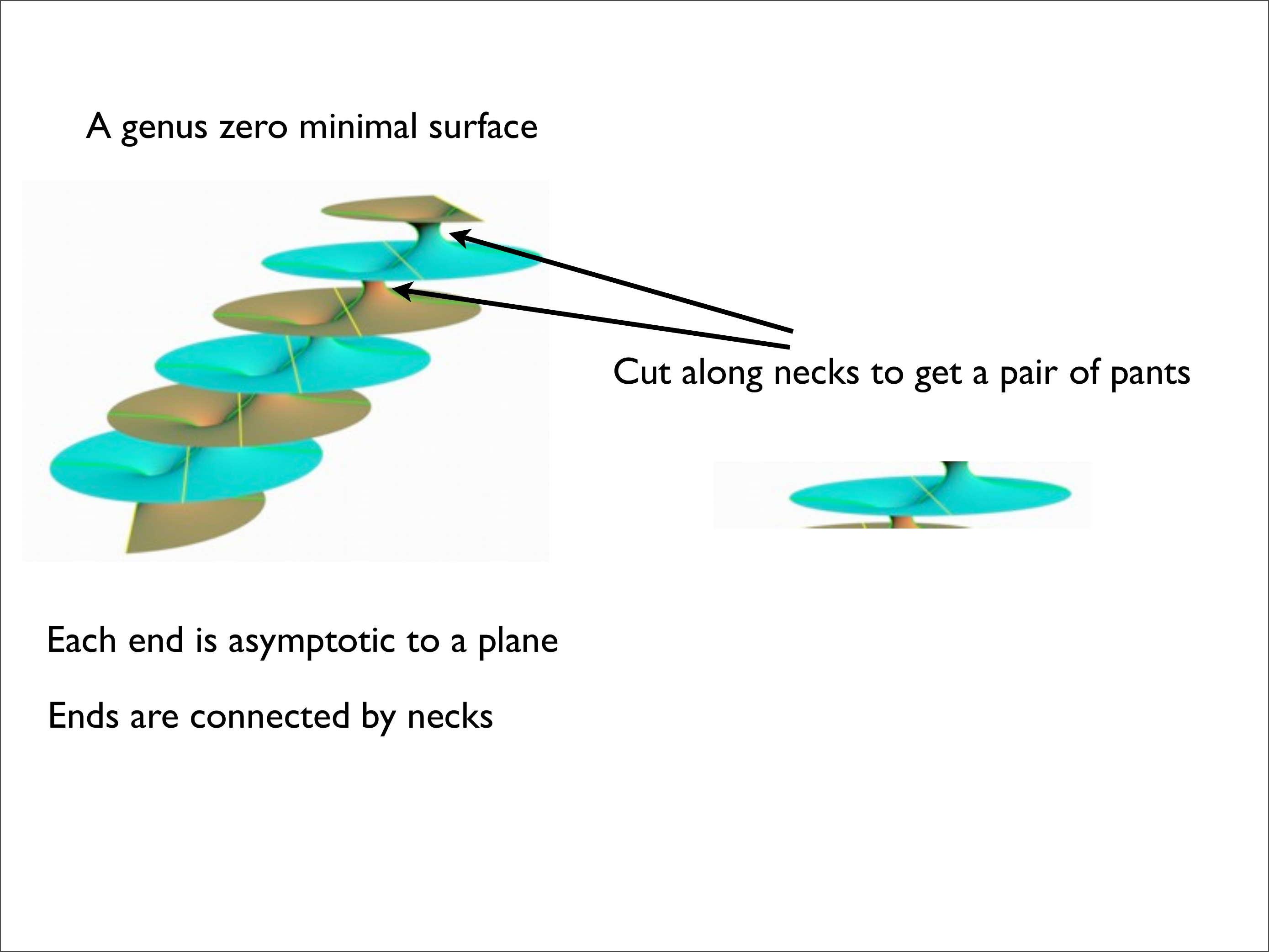}
\caption{The pair of pants decomposition. Credit: Matthias Weber, www.indiana.edu/~minimal.}   
  \end{figure}

The following compactness result is a consequence:

\begin{Cor}	(Colding-Minicozzi, \cite{CM10})	\label{t:cm7d}
 A sequence of embedded minimal planar domains that are
not ULSC, but with curvatures blowing up, has a subsequence that
converges to a collection of flat parallel planes.
\end{Cor}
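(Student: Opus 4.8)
The plan is to deduce the corollary from the pair of pants decomposition (Theorem \ref{t:cm7c}), the Bernstein theorem, and the one-sided curvature estimate; structurally it is the planar-domain analog of the lamination theorem (Theorem \ref{t:t0.1}). First I would fix the setup: $\Sigma_i \subset B_{R_i}(0) \subset \RR^3$ with $R_i \to \infty$, $\partial \Sigma_i \subset \partial B_{R_i}$, genus zero, curvatures blowing up (say $\sup_{B_1 \cap \Sigma_i} |A_{\Sigma_i}|^2 \to \infty$), and \emph{not} ULSC (= uniformly locally simply connected). The failure of the ULSC condition means that, after passing to a subsequence, there are points $p_i$ and radii $\ttau_i \to 0$ with $B_{\ttau_i}(p_i) \cap \Sigma_i$ not simply connected; since $\Sigma_i$ has genus zero, this is a \emph{neck} of diameter at most $\ttau_i$. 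Because the curvature also blows up, the ``moreover'' clause of Theorem \ref{t:cm7c} then upgrades this to the statement that \emph{every} neck of $\Sigma_i$ has diameter tending to $0$ as $i \to \infty$.

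Next I would cut and pass to the limit. Applying Theorem \ref{t:cm7c} to each $\Sigma_i$ produces a collection $\Gamma_i$ of short curves (lengths $\to 0$) so that $\Sigma_i \setminus \Gamma_i$ is a disjoint union of graphical pieces, each a graph over a planar domain that is a disk with at most two subdisks removed, the removed subdisks having radius $\to 0$. Since graphs are stable, Schoen's curvature estimate (Theorem \ref{t:schoenstable}) gives uniform curvature bounds on the portion of each piece a definite intrinsic distance from $\Gamma_i$; hence, on any fixed ball $B_R(0)$ and for $i$ large, $\Sigma_i$ has uniformly bounded curvature off a set of diameter $\to 0$. Using the compactness machinery of \cite{CM6}--\cite{CM10}, a subsequence converges to a minimal lamination $\cL$ of $\RR^3$ away from a (possibly nonempty) singular set; the flux argument of \cite{CM10} for the case of non-simply-connected leaves then shows these neck singularities are removable, so $\cL$ is a genuine minimal lamination of all of $\RR^3$ whose leaves, being limits of the graphical pieces over domains with holes shrinking to points and outer radius $R_i \to \infty$, are entire (single-valued) minimal graphs over $2$-planes.

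Finally I would conclude. By the Bernstein theorem (\cite{Be}, which holds for $n = 2$) each entire minimal graph is an affine plane, so every leaf of $\cL$ is flat. Distinct leaves of a lamination are disjoint embedded surfaces, while two non-parallel planes in $\RR^3$ intersect; hence all leaves of $\cL$ are mutually parallel planes, and the subsequence converges to a collection of flat parallel planes, as claimed. (Parallelism can alternatively be extracted from the one-sided curvature estimate, Theorem \ref{t:onesided}: a graphical piece lying close to one limit plane cannot tilt toward another without violating the estimate.)

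I expect the second step to be the main obstacle. On the one hand, one must genuinely control the curvature uniformly away from the necks and rule out ``hidden'' necks or collapsing of sheets; on the other hand, and more seriously, one must show that the shrinking-neck singularities of the limit lamination are removable. As the local examples of Theorem \ref{t:tams} (and those of Colding--De Lellis, \cite{CD}) show, such singularities need not be removable in general, so this step is essentially global and depends on the hypothesis $R_i \to \infty$; it is carried out via the flux argument of \cite{CM10}. By comparison, the remaining ingredients — the cutting from Theorem \ref{t:cm7c}, the Bernstein theorem, and disjointness of leaves — are comparatively formal.
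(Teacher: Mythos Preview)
Your approach is correct and is exactly what the paper intends: the corollary is stated immediately after Theorem~\ref{t:cm7c} as ``a consequence'' of the pair of pants decomposition, and your argument---cut along the short neck curves, observe that the remaining pieces are single-valued graphs over planar domains whose holes shrink to points and whose outer radius goes to infinity, then apply Bernstein and note that disjoint planes are parallel---is the natural way to flesh this out.

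One small comment: you are worrying more than necessary about removability in the second step. Because Theorem~\ref{t:cm7c} hands you \emph{single-valued} graphical pieces (not multi-valued graphs, and with the necks already excised), each piece converges to a minimal graph over $\RR^2$ minus finitely many points; a bounded minimal graph over a punctured disk extends smoothly across the puncture by a standard removable-singularity result for the minimal surface equation (or, if you prefer, the limit is a stable minimal surface with an isolated singular point, which is removable). The delicate global flux arguments of \cite{CM18}, \cite{CM10} and the cautionary examples of \cite{CD} concern situations where the leaves near the singularity are \emph{not} graphical---multi-valued spiraling or genuine catenoidal necks in the limit lamination---and those phenomena are already ruled out here by the graphical conclusion of Theorem~\ref{t:cm7c}. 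So the ``main obstacle'' you flag is in fact rather mild in this particular corollary.
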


In the next section, we will turn to finer structure and compactness theorems for sequences of planar domains.

\subsection{Compactness theorems for planar domains}

In order to describe the results for sequence of planar domains, it will be useful to 
divide things into two cases depending on whether or not the topology is concentrating a points.
  To distinguish between these cases, we
will say that a sequence of surfaces $\Sigma_i^2\subset \RR^3$ is
{\it{uniformly locally simply connected}} (or ULSC)  if for each
compact subset $K$ of $\RR^3$, there exists a constant $r_0 > 0$
(depending on $K$) so that for every $x \in K$, all $r \leq
r_0$, and every surface
$\Sigma_i$
\begin{equation}    \label{e:ulsc2}
 {\text{each connected component of }} B_{r}(x) \cap \Sigma_i {\text{ is
 a disk.}}
\end{equation}
  For instance, a sequence of rescaled catenoids
   where the necks shrink to zero is not ULSC, whereas a
sequence of rescaled helicoids is.

Another way of locally distinguishing sequences where the topology
does not concentrate from sequences where it does comes from
analyzing the singular set. The singular set $\cS$ is defined to
be the set of points where the curvature is blowing up.  That is,
a point $y$ in $\RR^3$ is in $\cS$ for a sequence $\Sigma_i$ if
\begin{equation}
    \sup_{B_{r}(y)\cap
\Sigma_i}|A|^2\to\infty {\text{ as $i \to \infty$ for all $r>0$}}
.
\end{equation}
We will show that for embedded minimal surfaces $\cS$ consists of
two types of points. The first type is roughly modelled on
rescaled helicoids
 and the second on
rescaled catenoids:
\begin{itemize}
\item
A point $y$ in $\RR^3$ is in $\cSu$ if the curvature for the
sequence $\Sigma_i$ blows up at $y$ and the sequence is ULSC in a
neighborhood of $y$.
\item
A point $y$ in $\RR^3$ is in $\cSt$ if the sequence is not ULSC in
any neighborhood of $y$. In this case, a sequence of closed
non-contractible curves $\gamma_i \subset \Sigma_i$ converges to
$y$.
\end{itemize}
The sets $\cSt$ and $\cSu$ are obviously disjoint and
  the curvature blows up at both, so   $\cSt \cup \cSu \subset \cS$.   An easy
argument will later show that, after passing to a subsequence, we
can assume that
\begin{equation}    \label{e:}
    \cS=  \cSt \cup \cSu \, .
\end{equation}
Note that $\cSt = \emptyset$ is equivalent to that the sequence is
ULSC as is the case for sequences of rescaled helicoids.  On the
other hand, $\cSu = \emptyset$ for sequences of rescaled
catenoids.   

We will show that every sequence $\Sigma_i$ has a subsequence that
is either    ULSC or for which $\cSu$ is empty. This is the
 next ``no mixing'' theorem.  We will see later that these two different cases give
 two very different structures.

\begin{Thm}   (Colding-Minicozzi, \cite{CM10})	  \label{c:main}
If $\Sigma_i \subset B_{R_i}=B_{R_i}(0)\subset \RR^3$ is a
sequence
 of compact embedded minimal planar domains
  with $\partial
\Sigma_i\subset
\partial B_{R_i}$ where $R_i\to \infty$, then there is a subsequence with either
$\cSu = \emptyset$ or $\cSt = \emptyset$.
\end{Thm}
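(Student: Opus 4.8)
The plan is to argue by contradiction: suppose that along \emph{every} subsequence both $\cSu$ and $\cSt$ are nonempty. I would first fix points $y_u \in \cSu$ and $y_t \in \cSt$ (passing to a subsequence so that these are genuine limit points of the singular sets), and then derive a contradiction by playing the two local structures against each other near a point on a path joining $y_u$ to $y_t$. The heart of the matter is that a neighborhood of a point in $\cSu$ looks like a rescaled helicoid (the surfaces are ULSC there, so the disk structure theory of \cite{CM6}--\cite{CM9} and the one-sided curvature estimate, Theorem~\ref{t:onesided}, apply), while a neighborhood of a point in $\cSt$ has shrinking non-contractible curves and hence a catenoid-type neck, so the surfaces there are \emph{not} ULSC and in particular are not locally disks. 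These two descriptions are incompatible in any region where both would have to hold, and the strategy is to show that the ``helicoid'' region forced by $y_u$ must in fact reach $y_t$.

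Concretely, I would proceed as follows. First, near $y_u$ the ULSC hypothesis plus blow-up of curvature means each $\Sigma_i$ contains, after rescaling, a multi-valued graph, and by the lamination theorem (Theorem~\ref{t:t0.1}) — valid because $R_i \to \infty$, which forces the leaves to be planes and the singularities to be removable — the surfaces converge on compact subsets of a neighborhood of $y_u$, minus a Lipschitz curve (in fact a line, by Meeks) through $y_u$, to a foliation by parallel planes. Second, the key propagation step: I would show that this foliation-with-an-axis picture spreads. Along the axis $\cS$ emanating from $y_u$, the curvature blows up (part 4 of Theorem~\ref{t:t0.1}), so every point of that axis lies in the singular set; the one-sided curvature estimate then prevents the surface from ever being trapped on one side of the limit planes, which is what would be needed for a neck to form. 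Hence no point of the axis can be in $\cSt$. Third, I would use a connectedness/continuation argument on the set of ``good'' points — those where the ULSC, helicoid-type picture holds — to show it is both open and closed along a suitable region, and that it must therefore engulf $y_t$; but $y_t \in \cSt$ means the surfaces there have non-contractible curves collapsing, contradicting ULSC. This yields the dichotomy: for a subsequence either $\cSt = \emptyset$ (everything is ULSC) or $\cSu = \emptyset$.

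I also need the preliminary reduction \eqref{e:} that, after passing to a subsequence, $\cS = \cSu \cup \cSt$ — i.e.\ every singular point is of one of the two types. This follows because at any $y \in \cS$, either the sequence is ULSC in some neighborhood of $y$ (so $y \in \cSu$) or it is not ULSC in every neighborhood, and in the latter case a diagonal argument extracts a sequence of non-contractible curves $\gamma_i \subset \Sigma_i$ shrinking to $y$, i.e.\ $y \in \cSt$; one then passes to a subsequence to make these choices simultaneous over a countable dense set of singular points and takes closures.

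The main obstacle is the propagation/continuation step: turning the \emph{local} helicoid picture near $y_u$ into a statement that rules out necks globally, i.e.\ all the way out to $y_t$. This is exactly where the one-sided curvature estimate is indispensable — it is what converts ``large curvature at a point of the axis'' into ``the surface cannot lie on one side of a nearby plane,'' which in turn is incompatible with the two-sheeted-neck geometry that a point of $\cSt$ demands. Making the region over which this continuation runs precise (and checking the ULSC constant does not degenerate along the way) is the delicate part; everything else is an application of the structure theory of disks and planar domains already developed in \cite{CM6}--\cite{CM10}.
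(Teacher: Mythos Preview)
The survey itself does not prove Theorem~\ref{c:main}; it is quoted from \cite{CM10}. But the surrounding context --- in particular Theorem~\ref{t:cm7c} and Corollary~\ref{t:cm7d}, which are stated just \emph{before} the no-mixing theorem --- indicates the intended route, and it runs in the opposite direction from yours. The argument starts from a point of $\cSt$, not from $\cSu$: if $\cSt\ne\emptyset$ then the surfaces contain arbitrarily small necks, and the pair-of-pants decomposition (Theorem~\ref{t:cm7c}) cuts each $\Sigma_i$ into pieces that are \emph{graphs} over planar domains. Graphs have uniformly bounded curvature, so $|A|$ can blow up only where the graphical description fails, namely near the short neck curves. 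But near a shrinking neck the surface contains non-contractible loops of length going to zero, so that point is in $\cSt$, not in $\cSu$. Hence $\cSt\ne\emptyset$ forces $\cSu=\emptyset$. Corollary~\ref{t:cm7d} is exactly the compactness statement that packages this.

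Your proposal has a genuine gap in the ``propagation'' step, and it is more serious than a matter of care with constants. You want the one-sided curvature estimate (Theorem~\ref{t:onesided}) to rule out necks along the way from $y_u$ to $y_t$, but that estimate is a theorem about embedded \emph{disks}; the rescaled-catenoid example recalled immediately after Theorem~\ref{t:onesided} shows it fails outright for annuli. A neck is precisely an annulus squeezed into a slab, so the one-sided estimate has nothing to say there. Relatedly, you invoke the disk lamination theorem (Theorem~\ref{t:t0.1}) ``because $R_i\to\infty$'', but that theorem needs the $\Sigma_i$ themselves to be disks in $B_{R_i}$; ULSC only gives you disks in balls of some fixed radius $r_0$ near $y_u$, so the global conclusions of Theorem~\ref{t:t0.1} are not available. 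The results for ULSC planar domains that would play this role (Theorem~\ref{t:t5.1}) are stated \emph{after} Theorem~\ref{c:main} and presuppose the dichotomy you are trying to prove. In short, the right lever here is the pair-of-pants structure forced by a neck, not a continuation of the helicoidal picture through regions where the disk tools no longer apply.
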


In view of Theorem \ref{c:main} and the earlier results for disks,
it is natural to first analyze sequences that are ULSC, so where
$\cSt=\emptyset$, and second analyze sequences where $\cSu$ is
empty.  We will do this next.

Common for both the ULSC case and the case where $\cSu$ is empty
is that the limits are always  laminations by flat parallel planes
and  the singular sets are always  closed subsets contained in the
union of the planes.  This is the content of the next theorem:

\begin{Thm} (Colding-Minicozzi, \cite{CM10})	  \label{t:tab}
 Let $\Sigma_i \subset B_{R_i}=B_{R_i}(0)\subset \RR^3$
be a sequence of compact embedded minimal planar
domains  with $\partial
\Sigma_i\subset \partial B_{R_i}$ where $R_i\to \infty$. If
\begin{equation}
\sup_{B_1\cap \Sigma_i}|A|^2\to \infty \, ,
 \end{equation}
   then there exists a subsequence $\Sigma_j$,
     a lamination $\cL=\{x_3=t\}_{ \{ t \in \cI \} }$ of $\RR^3$
 by parallel
 planes (where $\cI \subset \RR$ is a closed set), and a closed nonempty set
 $\cS$ in the union of the leaves of $\cL$ such that
 after a rotation of $\RR^3$:
 \begin{enumerate}
\item[(A)] For each $1>\alpha>0$, $\Sigma_j\setminus \cS$
converges in the $C^{\alpha}$-topology to the lamination $\cL
\setminus \cS$.
\item[(B)]  $\sup_{B_{r}(x)\cap \Sigma_j}|A|^2\to\infty$ as $j \to \infty$ for
all $r>0$ and $x\in \cS$.  (The curvatures blow up along $\cS$.)
\end{enumerate}
\end{Thm}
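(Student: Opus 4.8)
The plan is to reduce, via the no-mixing theorem (Theorem~\ref{c:main}), to two cases and to feed each into a structure theory that is already available: the pair-of-pants decomposition (Theorem~\ref{t:cm7c} and Corollary~\ref{t:cm7d}) when $\cSu=\emptyset$, and the disk structure theory of \cite{CM6}--\cite{CM9} together with the one-sided curvature estimate (Theorem~\ref{t:onesided}) in the ULSC case. First I would set up the singular set $\cS$ of points $y$ with $\sup_{B_r(y)\cap\Sigma_i}|A|^2\to\infty$ for every $r>0$, pass to a subsequence (diagonalizing over a countable exhaustion of $\RR^3$) so that $|A_{\Sigma_i}|$ is locally bounded on $\RR^3\setminus\cS$ and $\cS=\cSu\cup\cSt$, cf.~\eqr{e:}, and note that picking $y_i\in\overline{B_1}\cap\Sigma_i$ with $|A|(y_i)\to\infty$ and passing to a limit point $y$ forces $y\in\cS$, so $\cS\neq\emptyset$. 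By Theorem~\ref{c:main} a further subsequence has either $\cSt=\emptyset$ or $\cSu=\emptyset$.

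In either case, on $\RR^3\setminus\cS$ the curvature is locally bounded, so elliptic estimates and a standard compactness argument give a subsequence for which $\Sigma_j\setminus\cS$ converges in $C^\alpha$ (indeed smoothly on compact subsets) to a minimal lamination of $\RR^3\setminus\cS$, with complete leaves since $R_i\to\infty$; this is assertion (A) once the lamination is identified, and assertion (B) is immediate from the definition of $\cS$ and the equalities $\cS=\cSu$ or $\cS=\cSt$. So the real content is to show that $\cS$ is a nonempty closed subset of a union of parallel planes and that the leaves are exactly those planes.

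When $\cSu=\emptyset$ I would take $\cS=\cSt$: near each of its points the sequence fails to be ULSC, so non-contractible curves $\gamma_j\subset\Sigma_j$ converge to these points and in particular necks become arbitrarily small. The pair-of-pants decomposition (Theorem~\ref{t:cm7c}) then cuts each $\Sigma_j$ along short curves into graphical pieces over a disk with one or two subdisks removed; such pieces are stable, so (using completeness from $R_i\to\infty$ and the Bernstein theorem) their limits are planes — this is precisely the content of Corollary~\ref{t:cm7d} — and the pinching necks force $\cSt$ into the union of these planes. When instead $\cSt=\emptyset$ I would take $\cS=\cSu$ (nonempty by the limit-point argument above, since ULSC forces that limit point into $\cSu$): near each point of $\cSu$ the sequence is locally built from embedded minimal disks, so the local (and intrinsic) form of the disk structure theory of \cite{CM6}--\cite{CM9}, cf.~Theorem~\ref{t:t0.1} and \cite{CM12}, applies and, after a rotation, exhibits $\Sigma_j$ locally as two multi-valued graphs spiraling around a Lipschitz curve that converge off the curve to a foliation by parallel planes. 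The one-sided curvature estimate (Theorem~\ref{t:onesided}) then rules out any leaf of the limit lying to one side of such a plane while carrying large curvature, which together with completeness of the leaves forces every leaf to be one of the parallel planes and $\cSu$ to lie in their union, the spiraling axes being orthogonal to the planes.

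I expect the ULSC case to be the main obstacle. There the whole conclusion rests on promoting the purely local double-spiral-staircase picture — itself the output of the long structure theory of \cite{CM6}--\cite{CM9} — to a global statement: that the plane direction is the same throughout $\RR^3\setminus\cS$, that every leaf is genuinely flat rather than merely a limit of disks, and that $\cSu\subset\bigcup_t\{x_3=t\}$. The one-sided curvature estimate is exactly the tool that excludes the configurations (sheets accumulating against a plane with curvature blowing up away from the common leaves) which would otherwise break this, so the delicate part is organizing that exclusion globally and patching the local rotations consistently.
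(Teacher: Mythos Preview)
The paper is a survey and does not prove Theorem~\ref{t:tab}; it merely states the result and attributes it to \cite{CM10}. There is therefore no proof in this paper to compare your proposal against.

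That said, one structural concern with your outline is worth flagging. You reduce to the two cases via the no-mixing theorem (Theorem~\ref{c:main}) and then invoke the refined structure results in each case. But the survey presents Theorem~\ref{t:tab} as the \emph{common} underlying result---the sentence introducing it reads ``Common for both the ULSC case and the case where $\cSu$ is empty is that the limits are always laminations by flat parallel planes\ldots''---and the subsequent Theorems~\ref{t:t5.1} and~\ref{t:t5.2a} explicitly take the lamination $\cL$ and singular set $\cS$ of Theorem~\ref{t:tab} as their starting hypotheses. So in \cite{CM10} the logical flow is more likely that Theorem~\ref{t:tab} is established first (uniformly, from curvature blow-up plus local stability/graphical structure and a Bernstein-type argument for the leaves), and the no-mixing dichotomy and the finer case analyses sit on top of it. Your route risks circularity, or at least inverts the dependency order, by assuming Theorem~\ref{c:main} and the case-specific corollaries to derive what is meant to be their common foundation. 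Whether this is a genuine gap depends on the internal architecture of \cite{CM10}, which the survey does not spell out.
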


Loosely speaking, our next result  shows that when the sequence is
ULSC (but not simply connected), a subsequence converges to a
foliation by parallel planes away from two lines $\cS_1$ and
$\cS_2$. The lines $\cS_1$ and $\cS_2$ are
disjoint and orthogonal to the leaves of the foliation and the two
lines are precisely the points where the curvature is blowing up.
This is similar to the case of disks, except that we get two
singular curves for non-disks as opposed to just one singular
curve for disks.

\begin{Thm} (Colding-Minicozzi, \cite{CM10})	 \label{t:t5.1}
 Let  a sequence $\Sigma_i$, limit lamination $\cL$, and singular
 set $\cS$
be  as in Theorem \ref{t:tab}.  Suppose that each
$B_R(0) \cap \Sigma_i$ is not simply-connected. If every $\Sigma_i$ is ULSC and
\begin{equation}
\sup_{B_1\cap \Sigma_i}|A|^2\to \infty \, ,
 \end{equation}
 then the limit lamination $\cL$ is the foliation $\cF = \{ x_3 = t\}_t$
  and the singular set $\cS$ is the union of two  disjoint
lines $\cS_1$ and $\cS_2$
 such that:
 \begin{enumerate}
\item[($C_{ulsc}$)]
Away from $\cS_1 \cup \cS_2$, each $\Sigma_j$ consists of exactly
two multi-valued graphs spiraling together.
 Near $\cS_1$ and $\cS_2$, the pair of multi-valued graphs form double
spiral staircases
with opposite orientations at $\cS_1$ and $\cS_2$. Thus, circling
only $\cS_1$ or only $\cS_2$  results in going either up or down,
while a path circling both $\cS_1$ and $\cS_2$ closes up.
\item[($D_{ulsc}$)] $\cS_1$ and $\cS_2$
are  orthogonal to the leaves of the foliation.
\end{enumerate}
\end{Thm}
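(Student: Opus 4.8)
The plan is to start from Theorem~\ref{t:tab}, which by hypothesis already applies to our sequence, and to localize its conclusions using the disk theory. Since the sequence is ULSC we have $\cSt=\emptyset$ by definition, so the curvature blows up only at points of $\cSu$. Theorem~\ref{t:tab} thus provides, after passing to a subsequence, a lamination $\cL=\{x_3=t\}_{t\in\cI}$ of $\RR^3$ by parallel planes with $\cI$ closed, a closed nonempty singular set $\cS$ contained in the union of the leaves, $C^\alpha$-convergence of $\Sigma_j\setminus\cS$ to $\cL\setminus\cS$ for every $\alpha<1$, and curvature blow-up precisely along $\cS$. What remains is to show $\cI=\RR$ (so $\cL=\cF$), that $\cS$ is the union of exactly two disjoint lines orthogonal to the leaves, and to read off the spiral structure $(C_{ulsc})$ and $(D_{ulsc})$.

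The first step is a local analysis at a point $y\in\cS$. ULSC means that for $j$ large every connected component of a small ball about a point near $y$ meets $\Sigma_j$ in \emph{disks}, the curvature blows up there, and by the previous paragraph the limit leaves near $y$ are flat. I would feed this flatness directly into the proof of the disk lamination theorem (Theorem~\ref{t:t0.1}) — in particular into the vertical-extension step (step~C of the three main steps) — and use the one-sided curvature estimate (Theorem~\ref{t:onesided}) to prevent a component of $B_r(y)\cap\Sigma_j$ from hiding on one side of a plane near $y$; the conclusion is that on a fixed ball about $y$ each $\Sigma_j$ is, away from a single Lipschitz curve $\cS_y$ through $y$, exactly two multi-valued graphs spiraling together, and by Meeks' refinement \cite{Me2} the curve $\cS_y$ is a straight line orthogonal to the planes. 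Two consequences follow: the spiraling fills every height near $y_3$, so $\cI$ contains an interval about $y_3$; and the vertical-extension mechanism shows $\cS_y$ cannot terminate, so $\cS_y$ is a full vertical line and $\cI=\RR$, i.e.\ $\cL=\cF$. In particular $\cS$ is a disjoint union of lines orthogonal to the foliation, which gives $(D_{ulsc})$ once the count is known.

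The heart of the matter is to show there are exactly two such lines. That there is at least one is the hypothesis that the curvature blows up in $B_1$. There cannot be exactly one: if $\cS$ were a single vertical line, the local description above would be valid near every point, forcing the relevant component of $B_R(0)\cap\Sigma_j$ to be a single double spiral staircase and hence simply connected for $j$ large, contradicting the hypothesis that $B_R(0)\cap\Sigma_i$ is not simply connected. Ruling out three or more — and, with it, showing $\cS$ has only finitely many components — is the delicate point: following \cite{CM10}, I would combine embeddedness (two spiraling sheets cannot cross) with the one-sided curvature estimate applied to the region lying between two consecutive axes, where the sheets spun off the neighbouring axes are becoming flat, to see that a third axis squeezed there would have bounded curvature, contradicting the blow-up; the genus-zero topology enters to pin down that the global two-sheeted structure can carry nontrivial monodromy at no more than two centers. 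I expect this step — together with making the global patching of the local two-sheeted descriptions rigorous — to be the main obstacle; the rest is a localization of machinery already in hand.

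Granting $\cS=\cS_1\cup\cS_2$ with $\cS_1,\cS_2$ disjoint vertical lines, the last step assembles the global picture. Away from $\cS_1\cup\cS_2$, the $C^\alpha$-convergence to $\cF$ together with the local graphical descriptions patches up to show $\Sigma_j\setminus\cS$ is exactly two multi-valued graphs spiraling together, which near each $\cS_k$ form a double spiral staircase; this is the first half of $(C_{ulsc})$. For the orientations, observe that over an annular region enclosing both feet the pair of sheets is genuinely two-valued, hence has trivial total height-monodromy around that annulus; so the height gained in circling $\cS_1$ is exactly undone by the height gained in circling $\cS_2$. Therefore the two staircases spiral with opposite orientations: a loop around $\cS_1$ alone, or around $\cS_2$ alone, goes strictly up or down, while a loop around both closes up. This completes $(C_{ulsc})$, and $(D_{ulsc})$ was established in the first step.
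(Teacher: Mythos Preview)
The paper you were given is a survey; it \emph{states} Theorem~\ref{t:t5.1} as a result of \cite{CM10} but does not prove it. There is therefore no proof in this paper to compare your proposal against.

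As to the proposal itself: what you have written is a plausible strategic outline rather than a proof, and you are candid about this. The architecture is the expected one --- localize via ULSC to reduce to the disk theory of \cite{CM6}--\cite{CM9}, use the one-sided curvature estimate to control the structure near singular points, identify each component of $\cS$ as a line orthogonal to the foliation, and then count the lines using the non-simply-connected hypothesis and the planar-domain topology. Your argument that a single line is impossible (it would force the surfaces to be locally simply connected in $B_R(0)$) is correct in spirit, and your monodromy argument for the opposite orientations is the right idea.

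The genuine gap is exactly where you flag it: ruling out three or more singular lines. Your sketch (``a third axis squeezed between two others would have bounded curvature by the one-sided estimate'') is not quite right as stated --- the one-sided curvature estimate requires the surface to lie on one side of a plane, not merely between two nearly-planar sheets, and the third axis is by hypothesis a point of curvature blow-up, so the surfaces there are \emph{not} on one side of anything. The actual mechanism in \cite{CM10} for bounding the number of singular curves is considerably more involved and uses the genus-zero assumption in an essential way to control how the multi-valued graphs emanating from distinct axes can fit together without creating handles. You have also glossed over why the local disk description (which in Theorem~\ref{t:t0.1} genuinely uses $R_i\to\infty$) transfers to balls of fixed radius here; ``feed the flatness into the proof'' is a gesture, not an argument, and the local examples in Theorem~\ref{t:tams} show that something global must be invoked.
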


\begin{Thm} (Colding-Minicozzi, \cite{CM10})	 \label{t:t5.2a}
 Let  a sequence $\Sigma_i$, limit lamination $\cL$, and singular
 set $\cS$
be  as in Theorem \ref{t:tab}. If
$\cSu = \emptyset$ and
\begin{equation}
\sup_{B_1\cap \Sigma_i}|A|^2\to \infty \, ,
 \end{equation}
   then $\cS=\cSt$ by \eqr{e:} and
 \begin{enumerate}
\item[($C_{neck}$)]
Each point $y$ in $\cS$ comes with a sequence of
{\underline{graphs}} in $\Sigma_j$ that converge  to the plane $\{
x_3 = x_3 (y) \}$. The convergence is in the $C^{\infty}$ topology
away from the point $y$ and possibly also one other point  in $\{
x_3 = x_3 (y) \} \cap \cS$.  If the convergence is   away from one
point, then these graphs are defined over annuli; if the
convergence is away from two points, then the graphs are defined
over disks with two subdisks removed.
\end{enumerate}
\end{Thm}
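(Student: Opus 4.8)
The plan is to upgrade the flat lamination picture of Theorem~\ref{t:tab} in the special case $\cSu=\emptyset$ by using the pair of pants decomposition (Theorem~\ref{t:cm7c}) to pin down the topology near each singular point.

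First I would record the identity $\cS=\cSt$: after passing to a subsequence one has $\cS=\cSt\cup\cSu$ by \eqr{e:}, and $\cSu=\emptyset$ forces $\cS=\cSt$. Hence every $y\in\cS$ is a non-ULSC point, so there are closed non-contractible curves $\gamma_j\subset\Sigma_j$ with $\gamma_j\to y$. Fix such a $y$ and pick $s>0$ so small that $\overline{B_s(y)}\cap\cS$ lies in the single plane $\{x_3=x_3(y)\}$; this is possible since $\cS$ is closed. Because $\sup_{B_1\cap\Sigma_j}|A|^2\to\infty$, the ``moreover'' conclusion of Theorem~\ref{t:cm7c} applies, so for $j$ large every neck of $\Sigma_j$ meeting $B_s(y)$ is as small as we wish, and cutting $\Sigma_j\cap B_s(y)$ along these short curves leaves graphical pieces, each of which is a graph with small gradient over a disk with one or two subdisks removed.

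Next I would analyze the limit of a graphical piece $G_j$ one of whose boundary necks converges to $y$. Since $G_j$ is a minimal graph with small gradient over a planar region $\Omega_j$ (a disk with one or two subdisks removed), interior elliptic estimates for the minimal graph equation give uniform curvature bounds on $G_j$ away from the one or two necks; equivalently, one uses that $G_j$ is stable and applies the curvature estimate. Passing to a subsequence, $G_j$ converges smoothly away from the necks to a minimal graph over $\Omega_\infty$, where $\Omega_\infty$ is a disk with the one or two limiting neck positions deleted. By Theorem~\ref{t:tab} this limit is a leaf of the flat lamination $\cL$, hence a plane, and since $y$ lies in its closure the plane is $\{x_3=x_3(y)\}$; in particular the limit extends smoothly over the deleted points. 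Each collapsing neck is a closed non-contractible curve converging to a point of $\cS$, and because $G_j$ is nearly flat at height tending to $x_3(y)$, that point lies in $\{x_3=x_3(y)\}\cap\cS$, with $y$ among them. This is precisely ($C_{neck}$): the graphs $G_j$ converge to $\{x_3=x_3(y)\}$, in $C^\infty$ away from one or two points of $\{x_3=x_3(y)\}\cap\cS$; when $\Omega_j$ is a disk with one subdisk removed the convergence is away from the single point $y$ and $G_j$ is defined over an annulus, and when $\Omega_j$ is a disk with two subdisks removed (a pair of pants) the convergence is away from $y$ and one other point.

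I expect the main obstacle to be the bookkeeping that connects the combinatorial output of Theorem~\ref{t:cm7c} to this analytic limit: one must check that each removed subdisk really does collapse to a single point of $\cS$ (and not to an arc, nor escapes to $\partial B_s(y)$), that every graphical piece near $y$ lies at height tending to $x_3(y)$, and that such a piece carries at most two necks. These are handled by tracking necks through the cutting procedure using the uniform smallness supplied by the ``moreover'' clause of Theorem~\ref{t:cm7c}, together with the fact that $R_j\to\infty$ pushes $\partial\Sigma_j$ off every fixed ball, so that inside $B_s(y)$ the boundary of each graphical piece consists solely of necks; the one-sided curvature estimate (Theorem~\ref{t:onesided}) prevents the separate graphical sheets from interfering. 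Flatness of the limit, by contrast, is automatic here since it is already supplied by Theorem~\ref{t:tab}.
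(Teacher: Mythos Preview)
The paper is a survey and does not itself prove Theorem~\ref{t:t5.2a}; the result is quoted from \cite{CM10} without argument, so there is no proof here to compare against. That said, your outline is the natural strategy and is the one underlying \cite{CM10}: combine the flat-lamination picture of Theorem~\ref{t:tab} with the pair of pants decomposition (Theorem~\ref{t:cm7c}) to see that near each neck-type singular point the surfaces consist of graphical sheets over annuli or pairs of pants, and then read off $(C_{neck})$ from the convergence.

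One small point: the sentence ``pick $s>0$ so small that $\overline{B_s(y)}\cap\cS$ lies in the single plane $\{x_3=x_3(y)\}$; this is possible since $\cS$ is closed'' is not justified as written. Closedness of $\cS$ does not prevent singular points at heights accumulating to $x_3(y)$. Fortunately you do not actually need this: the correct mechanism, which you give two paragraphs later, is that a single graphical piece $G_j$ is nearly horizontal, so all of its collapsing necks must limit to points at the common height $x_3(y)$. That is what forces the ``one other point'' to lie in $\{x_3=x_3(y)\}\cap\cS$, not any a priori local structure of $\cS$. If you drop the unjustified claim about $s$ and rely solely on the flatness of $G_j$ for this conclusion, the sketch is sound.
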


\subsection{Uniqueness of complete examples: Catenoid}

The catenoid is the only (non-flat) minimal surface of revolution, but there are a number of other ways to uniquely characterize it.  We will discuss this next.
In this subsection, $\Sigma$ will always be {\bf{complete}}, {\bf{minimal}}, and {\bf{embedded}} in $\RR^3$.  

\vskip2mm
The first modern results assumed that $\Sigma$ has finite total curvature{\footnote{A minimal surface $\Sigma$ has finite total curvature
if $\int_{\Sigma} |A|^2 < \infty$.}}:

\begin{Thm}
(Schoen, \cite{Sc2}) The catenoid is the unique $\Sigma$ with finite total curvature and two ends.
\end{Thm}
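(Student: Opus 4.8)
The plan is to follow R.~Schoen's method of moving planes, with the asymptotic input supplied by Osserman's structure theory for finite total curvature. First I would recall that a complete minimal $\Sigma\subset\RR^3$ with $\int_\Sigma|A|^2<\infty$ is conformally a closed Riemann surface with two points removed (one per end), the Gauss map extends holomorphically across the punctures, and near each puncture the end is a graph over the exterior of a disk in some plane $P_i$, the graphing function having an expansion $u_i(x)=a_i\log|x|+b_i+O(|x|^{-1})$ (catenoidal if $a_i\neq0$, planar if $a_i=0$). Since $\Sigma$ is embedded and has exactly two ends, the two asymptotic planes $P_1,P_2$ must be parallel --- transverse asymptotic planes would force the ends to intersect --- so after a rotation both are horizontal and, outside a large ball, $\Sigma$ is the disjoint union $\mathrm{graph}(u_1)\sqcup\mathrm{graph}(u_2)$. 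Because each coordinate function is harmonic on a minimal surface, $x_3$ is harmonic on $\Sigma$, so the flux $\int_\gamma\langle\nabla_\Sigma x_3,\nu\rangle$ around a cycle $\gamma$ generating $H_1$ of this annular end-region vanishes; evaluating it on each end gives $a_1+a_2=0$. The case $a_1=a_2=0$ (two parallel planar ends) is excluded, since the maximum principle applied to $x_3$ then forces $\Sigma$ to be a plane, which has a single end. Hence $a:=a_1=-a_2\neq0$, and up to a reflection $a>0$: one end opens upward, the other downward.

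Next I would run the moving-plane (Alexandrov reflection) argument to produce a horizontal plane of symmetry. Slide the horizontal plane $H_t=\{x_3=t\}$ downward from $t=+\infty$. For $t$ large, $\Sigma\cap\{x_3\ge t\}$ is a graph over $H_t$ --- a piece of the upper end --- and reflecting it through $H_t$ yields a surface $\Sigma_t^*$ lying below $H_t$; embeddedness lets us compare $\Sigma_t^*$ with $\Sigma$ from one side. The point requiring care is the behavior at infinity: the reflected upper end has logarithmic growth $-a$, which exactly matches the growth of the lower end, so their vertical separation tends to $2t-(b_1+b_2)$, strictly positive once $t>(b_1+b_2)/2$; this prevents a first contact ``at infinity'' and ensures that as $t$ decreases the reflection stays admissible down to a finite first contact value $t_0$. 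At $t_0$, either $\Sigma_{t_0}^*$ touches $\Sigma$ at an interior point from one side, or it becomes tangent to $\Sigma$ along $H_{t_0}$; in either case the interior maximum principle (respectively the Hopf boundary point lemma) for the minimal surface equation forces $\Sigma_{t_0}^*\subset\Sigma$, i.e.\ $\Sigma$ is symmetric under reflection across $H_{t_0}$.

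Finally I would upgrade reflection symmetry to rotational symmetry. Apply the moving-plane argument again, this time to the upper half $\Sigma^+=\Sigma\cap\{x_3\ge t_0\}$ --- a graph asymptotic to a half-catenoid end with boundary on the symmetry plane --- sweeping now with vertical planes. Since a catenoid end is itself symmetric across every vertical plane through its axis, the asymptotic catenoid serves as the barrier controlling the configuration at infinity, and one concludes that $\Sigma^+$, hence $\Sigma$, is symmetric under reflection across every vertical plane through a single fixed vertical line. A surface invariant under all such reflections is a surface of revolution, and a minimal surface of revolution is, by integrating the profile ODE, either a plane or a catenoid; the plane is ruled out because $\Sigma$ has two ends. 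Therefore $\Sigma$ is a catenoid.

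The main obstacle, as flagged above, is controlling the moving-plane family \emph{at infinity}: one must rule out the possibility that the reflected piece escapes to infinity and first ``touches there'' before generating a finite contact point. This is precisely where the sharp asymptotic expansion coming from finite total curvature, together with the flux identity $a_1=-a_2$, is indispensable. A secondary technical point is setting up the vertical moving planes for the rotational symmetry, since the ends are nearly horizontal and hence not graphs over vertical planes, so the catenoidal asymptotics must be invoked as the comparison barrier; and embeddedness is used throughout, both to keep the reflected surface on one side and to run the maximum-principle comparisons.
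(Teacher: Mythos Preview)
The paper does not prove this theorem; it merely states it with a citation to Schoen \cite{Sc2} and moves on to the Lopez--Ros and Collin results. So there is no ``paper's own proof'' to compare against. Your outline is a faithful sketch of Schoen's original moving-planes argument, and the overall strategy --- Osserman asymptotics, flux balance $a_1+a_2=0$, horizontal Alexandrov reflection to get a plane of symmetry, then vertical reflections to get rotational symmetry --- is correct.

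Two small points worth tightening. First, your flux sentence is slightly garbled: the cycle $\gamma$ around one end does \emph{not} bound in $\Sigma$, so you cannot say its flux vanishes directly; rather, the two end-cycles together bound a compact piece of $\Sigma$ (once you also cap off any genus handles), and it is the \emph{sum} of the two end-fluxes that vanishes, yielding $a_1+a_2=0$. Second, your exclusion of the case $a_1=a_2=0$ should invoke the conformal type: a finite-total-curvature surface is conformally a punctured closed surface, hence parabolic, so the bounded harmonic function $x_3$ is constant; the bare phrase ``maximum principle'' is a little loose on a noncompact surface. Neither of these is a real gap, and the identification of the infinity-control issue as the crux of the moving-planes step is exactly right.
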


It follows from Schoen's result that an embedded finite total curvature minimal surface with two ends cannot have positive genus.  

\begin{Thm}
(Lopez and Ros, \cite{LRo}) The catenoid is the unique (non-flat) $\Sigma$ with finite total curvature and genus zero.
\end{Thm}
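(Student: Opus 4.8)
The plan is to use the Weierstrass representation together with the Gauss map to pin down the conformal and analytic data of $\Sigma$. Since $\Sigma$ has finite total curvature, a classical theorem of Osserman says that $\Sigma$ is conformally a compact Riemann surface $\overline{\Sigma}$ with finitely many punctures (the ends), the Gauss map $g$ extends meromorphically to $\overline{\Sigma}$, and the height differential $\omega = dx_3$ extends to a meromorphic one-form. The genus zero hypothesis means $\overline{\Sigma} = \SS^2 = \CC \cup \{\infty\}$, so $g$ is a rational function and $\omega$ a rational one-form. First I would record the consequences of embeddedness at the ends: by the structure theory of finite-total-curvature ends, each end is asymptotic to either a plane or a half-catenoid, and embeddedness forces the ends to be parallel (say horizontal) and, by the asymptotics, there are at most finitely many of them with the Gauss map taking the value $0$ or $\infty$ at each end to the order matching the flux.

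The heart of the argument is the \emph{López–Ros deformation} (this is the trick the theorem is named after): given the Weierstrass data $(g,\omega)$ on $\SS^2$, for any $\lambda > 0$ the data $(\lambda g, \lambda^{-1}\omega)$ also satisfies the period conditions and defines a new complete minimal surface $\Sigma_\lambda$ of the same genus with the same topology, because scaling $g$ by a positive constant does not change the periods of the relevant forms (the residues controlling the real periods are preserved). One then studies the flux of $\Sigma_\lambda$ at the ends as a function of $\lambda$ and uses the embeddedness/properness of the limiting configurations as $\lambda \to 0$ and $\lambda \to \infty$. The key structural input is the \emph{convex hull property} (Proposition \ref{p:cvx}) and its consequence that an embedded minimal surface cannot have a flux configuration that would force two ends to intersect: if $\Sigma$ had $\geq 3$ ends, or $2$ ends with the wrong flux, the deformed surfaces $\Sigma_\lambda$ would have to cross themselves for $\lambda$ near an endpoint of the parameter range, contradicting a continuity/openness argument for embeddedness along the deformation. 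Combined with Schoen's theorem (the catenoid is the unique such $\Sigma$ with two ends), this reduces the problem to ruling out genus zero surfaces with one end or with three or more ends; a one-ended finite-total-curvature embedded end is asymptotic to a plane and forces $\Sigma$ to be a plane (excluded as non-flat), so the only surviving case is two ends, which is Schoen's.

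Concretely the steps are: (1) set up the Weierstrass representation on $\SS^2$ and list the possible end behaviors using finite total curvature plus embeddedness; (2) dispose of the one-end case via the planar-end asymptotics and the maximum principle; (3) introduce the López–Ros deformation $(g,\omega)\mapsto(\lambda g,\lambda^{-1}\omega)$, verifying it preserves completeness, genus, and the period conditions; (4) analyze the flux vectors of the ends under the deformation and show that for $n \geq 3$ ends some $\Sigma_\lambda$ must fail to be embedded, using the convex hull property to detect the forced intersection; (5) conclude $n = 2$ and invoke Schoen's theorem. The main obstacle is step (4): making rigorous the claim that the deformation must destroy embeddedness unless $n=2$. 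This requires a careful openness-plus-limit argument — embeddedness is open along the one-parameter family, and one must control the behavior at the two ends of the $\lambda$-interval, where the surface degenerates (ends flatten to planes or separate), to derive the contradiction. This is exactly where the interplay between the flux formula, the asymptotic geometry of the ends, and the convex hull property has to be orchestrated precisely.
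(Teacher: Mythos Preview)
The paper does not supply a proof of this theorem; it is a survey and merely states the result with citation, adding only the one-line gloss that ``the main point of the L\'opez--Ros theorem is to show that a (finite total curvature) genus zero minimal surface with more than two ends cannot be embedded.'' So there is no in-paper argument to compare your proposal against.

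That said, your outline does track the genuine L\'opez--Ros strategy, and your identification of the deformation $(g,\omega)\mapsto(\lambda g,\lambda^{-1}\omega)$ as the core mechanism is correct. Two points deserve tightening. First, your justification for why the deformation preserves the period conditions is not right as written: the forms $\phi_1=\tfrac{1}{2}(1-g^2)\omega$ and $\phi_2=\tfrac{i}{2}(1+g^2)\omega$ do change under the scaling, so it is not true that ``scaling $g$ by a positive constant does not change the periods.'' What makes the deformation well-defined is the combination of genus zero (so all cycles are loops around punctures and periods are residues) together with the normalization that all ends are horizontal (so $g(p_j)\in\{0,\infty\}$), which forces the residues of $\omega$ and of $g^2\omega$ to be real \emph{separately}. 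That is a short but essential computation you should include. Second, the contradiction in step~(4) is obtained not through the convex hull property but via the maximum principle: one shows embeddedness is preserved along the deformation (an open--closed argument in $\lambda$), then lets $\lambda\to 0$ or $\lambda\to\infty$ so that the surface degenerates to a stack of horizontal planes and catenoids, and derives a contradiction from the ordering of the ends when there are at least three of them. Your instinct that this is the delicate step is correct, but the tool is the maximum principle between sheets, not Proposition~\ref{p:cvx}.
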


The main point of the  Lopez-Ros theorem is to show that a
(finite total curvature) genus zero minimal surface with more than two ends cannot be embedded.

\vskip2mm
A major breakthrough came in 1997 with Collin's proof of the generalized Nittsche conjecture:

\begin{Thm}	\label{t:gnc}
(Collin, \cite{Co}) If $\Sigma$ is proper, has finite topology and at least two ends, then it has finite total curvature.
\end{Thm}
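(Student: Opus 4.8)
The plan is to localize to a single end and then confine it with catenoid barriers. Since $\Sigma$ has finite topology, there is $R_0$ so that $\Sigma\setminus\overline{B_{R_0}}$ is a disjoint union of annular ends $E_1,\dots,E_k$, each a properly embedded minimal annulus with one compact boundary circle, and $k\ge 2$ by hypothesis; it is enough to show each $E_j$ has finite total curvature, since finite topology then gives $\int_\Sigma|A|^2<\infty$. I would reduce this to the metric assertion that $\Sigma$ has at most quadratic area growth, $\Area(B_r\cap\Sigma)\le C\,r^2$ (equivalently, each $E_j$ does). Indeed, under this bound the monotonicity formula (Theorem~\ref{t:monomin}) shows $\Area(B_r\cap\Sigma)/r^2$ converges as $r\to\infty$; combined with embeddedness, Allard-type regularity far from the origin makes each $E_j$ a graph over an exterior domain in a plane, so its Gauss map omits an open subset of $\SS^2$, hence extends meromorphically across the puncture of the conformal compactification, and a minimal annular end with meromorphically extending Gauss map has finite total curvature. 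So the problem becomes: \emph{bound the area growth of $\Sigma$}.

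The hypothesis of at least two ends is used precisely here, and it is essential: the helicoid is properly embedded with one end and infinite total curvature, and a single properly embedded minimal annular end need not even lie on one side of a plane. When $k\ge 2$ there is extra room. Using the Hoffman--Meeks ordering of the ends together with the maximum principle applied against horizontal planes, one shows that after a rotation each $E_j$ is, outside a compact set, contained in a closed halfspace $\{x_3\ge c_j\}$ or $\{x_3\le c_j\}$ (the extreme ends escape upward or downward, and any intermediate end is trapped in a slab between two consecutive planes). I would take ``each end lies to one side of a plane'' as the first substantive step.

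The heart is then a catenoid barrier comparison in the style of Collin. Fix an end $E\subset\{x_3\ge 0\}$ with $\partial E$ bounded, and consider a suitably positioned one-parameter family of catenoids with vertical axis and large waist radius; these are minimal and, for the relevant parameters, disjoint from $\partial E$. Sliding such a catenoid toward $E$ from infinity, the strong maximum principle forbids a first interior point of contact unless $E$ already agrees with a piece of that catenoid (in which case we are done), so $E$ must stay outside the solid region swept by the family. Since the graph of a catenoid over its base plane grows only like the logarithm of the distance to the axis, this pins $E$, outside a compact set, into a region of the form $\{\,|x_3|\le A+B\log(1+|(x_1,x_2)|)\,\}$ (the ``planar'' case) or between two fixed catenoids (the ``catenoidal'' case). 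In either case $E$ projects onto an eventual graph over the exterior of a disk in $\{x_3=0\}$ whose height grows at most logarithmically in the radius, so $\Area(B_r\cap E)=O(r^2)$. Combined with the first paragraph this gives $\int_\Sigma|A|^2<\infty$, and indeed the sharper statement that every end is asymptotic to a plane or to a half-catenoid.

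I expect the main obstacle to be making the catenoid barrier argument rigorous: the delicate case is when the sliding catenoid meets $E$ not at a finite interior point but only ``at infinity'', and ruling this out (or analyzing it) is the genuinely hard part, requiring an auxiliary input such as the half-space theorem for properly immersed minimal surfaces or a quantitative form of the maximum principle at infinity. A secondary difficulty is the passage ``trapped in a logarithmically thin region $\Rightarrow$ bounded curvature $\Rightarrow$ finite total curvature'' for an annular rather than simply connected end; once $E$ is known to sit near a plane this can be dealt with via Schoen's curvature estimate or via the Colding--Minicozzi one-sided curvature estimate (Theorem~\ref{t:onesided}) applied on the universal cover of the end, followed by the standard removable-singularity argument for the Gauss map.
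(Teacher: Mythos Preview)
The paper does not give a proof of this theorem: it is a survey, and Theorem~\ref{t:gnc} is simply stated with a citation to Collin~\cite{Co}, followed by the remark that \cite{CM11} gives an alternative proof via the one-sided curvature estimate. So there is no ``paper's own proof'' to compare against.

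That said, your outline is a faithful sketch of Collin's original strategy: reduce to annular ends, use the Hoffman--Meeks ordering of ends to put each end on one side of a plane, then slide catenoid barriers to trap the end in a logarithmically thin region and conclude planar or catenoidal asymptotics. You have also correctly identified the genuinely hard step (ruling out contact ``at infinity'' in the barrier argument), which is where most of the work in \cite{Co} goes. One caution: your suggested shortcut of applying Theorem~\ref{t:onesided} ``on the universal cover of the end'' is not directly legitimate, since the one-sided estimate requires the disk to be \emph{embedded} in $\RR^3$, and passing to the universal cover of an annulus destroys embeddedness. The alternative proof in \cite{CM11} does use the one-sided curvature estimate, but not in this naive way; it works extrinsically with the embedded annulus itself and uses additional structure to get around the non-simply-connected topology. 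If you want to invoke the Colding--Minicozzi machinery rather than Collin's barriers, you should follow the actual argument of \cite{CM11} (or the local annulus theorem stated just after in the paper) rather than lift to a cover.
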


 \cite{CM11} gives an alternative proof of Collin's theorem using the one-sided curvature estimate.  The assumption that $\Sigma$ has at least two ends rules out the possibility of the helicoid (which has infinite total curvature)

\vskip2mm
Finally, in 2008, Colding-Minicozzi, \cite{CM12}, showed that embeddedness and finite topology together imply properness, thus removing the assumption of properness.  The final result is:

\begin{Thm}
(Schoen, Lopez-Ros, Collin, Colding-Minicozzi)
The catenoid is only complete embedded minimal surface with finite topology and either:
\begin{itemize}
\item Exactly two ends, or
\item Genus zero and more than one end.
\end{itemize}
\end{Thm}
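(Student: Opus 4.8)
The plan is to assemble the four ingredients already recorded above — the properness theorem of \cite{CM12}, Collin's theorem (Theorem \ref{t:gnc}), Schoen's theorem \cite{Sc2}, and the Lopez--Ros theorem \cite{LRo} — into a single chain, taking care that the hypotheses propagate. So let $\Sigma \subset \RR^3$ be complete, embedded, minimal, of finite topology, and suppose that either $\Sigma$ has exactly two ends or $\Sigma$ has genus zero and more than one end. The one observation to make at the outset is that in \emph{both} cases $\Sigma$ has at least two ends; this is all that distinguishes the two cases until the very last step.

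First I would upgrade completeness to properness: by \cite{CM12}, a complete embedded minimal surface of finite topology in $\RR^3$ is automatically proper. This is precisely the step that lets one drop ``proper'' from the hypothesis list; it rests on the one-sided curvature estimate, Theorem \ref{t:onesided}, and is the deepest and most recent ingredient. Next I would invoke Collin's theorem, Theorem \ref{t:gnc}: $\Sigma$ is now proper, has finite topology, and has at least two ends, hence $\int_{\Sigma} |A|^2 < \infty$, i.e. $\Sigma$ has finite total curvature.

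Finally the two cases split. If $\Sigma$ has exactly two ends, Schoen's theorem \cite{Sc2} applies verbatim and identifies $\Sigma$ as the catenoid (and, as a byproduct, forces the genus to be zero). If instead $\Sigma$ has genus zero and more than one end, the Lopez--Ros theorem \cite{LRo} says the only non-flat complete embedded minimal surface of finite total curvature and genus zero is the catenoid; since $\Sigma$ is not a plane (a plane has one end), $\Sigma$ is again the catenoid. The only genuine care in the assembly is the bookkeeping of ends: both hypotheses force $\geq 2$ ends so that Collin applies, and Lopez--Ros in fact constrains the genus-zero case to have at most two ends, so the two cases dovetail with no gap. If one wanted a self-contained argument, the hard part would be re-proving the properness result \cite{CM12}; granting the theorems already stated in this survey, the proof is exactly this chaining.
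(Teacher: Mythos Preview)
Your proposal is correct and matches exactly the argument the paper intends: the theorem is stated as ``the final result'' after listing precisely the four ingredients you chain together (Schoen \cite{Sc2}, Lopez--Ros \cite{LRo}, Collin's Theorem~\ref{t:gnc}, and the properness result \cite{CM12}), and no further proof is given because the assembly is immediate. Your bookkeeping on the number of ends --- that both hypotheses force $\geq 2$ ends so Collin applies, and that a plane has one end so Lopez--Ros gives the catenoid rather than a plane --- is exactly the small care required.
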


 There  are local versions of these global uniqueness results for the catenoid.  The starting point is a  local version of Collin's result that follows from the argument of \cite{CM11}, using the one-sided curvature estimate, although it is not recorded there.  This was done in \cite{CM16}, where the following local version was proven:
 
 \begin{Thm}
 (Colding-Minicozzi, \cite{CM16})
 There exist $\epsilon > 0$ and $C_1 , C_2 , C_3 > 1$ so that:  If $\Sigma \subset B_R \subset \RR^3$ is an embedded minimal annulus with $\partial \Sigma \subset \partial B_R$ and $\pi_1 (B_{\epsilon R}\cap \Sigma) \ne 0$, then there is a simple closed geodesic $\gamma \subset \Sigma$ of length $\ell$ so that:
 \begin{itemize}
 \item The curve $\gamma$ splits the connected component of $B_{R/C_1} \cap \Sigma$ containing it into annuli $\Sigma^+$ and $\Sigma^-$, each with $\int |A|^2 \leq 5 \pi$.
 \item Each of $\Sigma^{\pm} \setminus \cT_{C_2 \ell}(\gamma)$ is a graph with gradient $\leq 1$.
 \item $\ell \, \log (R/\ell) \leq C_3 \, h$ where the separation $h$ is given by
 $$
 h \equiv \min \, \left\{ |x^+ - x^-| \, \big| \, x^{\pm} \in \partial B_{R/C_1} \cap \Sigma^{\pm} \right\} \, .
 $$
 \end{itemize}
 \end{Thm}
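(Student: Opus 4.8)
\emph{Proof proposal.} This is a local, effective form of Collin's theorem (Theorem \ref{t:gnc}), with a piece of a catenoid as the model, so the plan is to (i) produce the geodesic $\gamma$ by minimizing in the nontrivial free homotopy class, (ii) prove the graphical structure away from $\gamma$ using the one--sided curvature estimate together with the compactness theory for embedded minimal disks, (iii) get the total curvature bound by Gauss--Bonnet, and (iv) get the logarithmic separation by a flux computation. For Step (i): since $\pi_1(B_{\epsilon R}\cap\Sigma)\neq 0$, some component of $B_{\epsilon R}\cap\Sigma$ contains a loop $\gamma_0$ generating $H_1$ of the annulus $\Sigma$; minimizing length in its free homotopy class inside $\overline{\Sigma}$ yields a simple closed geodesic $\gamma$, which lies in the interior of $\Sigma$ because $\partial B_R$ is convex and $\Sigma\subset B_R$. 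One has $\ell:=\Length(\gamma)\le\Length(\gamma_0)$, and (choosing $\gamma_0$ efficiently, and using the monotonicity formula to keep the minimizer from wandering) $\gamma\subset B_{2\epsilon R}\subset B_{R/C_1}$ once $C_1<1/(2\epsilon)$.

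\textbf{Step (ii): graphical structure away from $\gamma$ --- the main point.} Let $\Sigma'$ be the component of $B_{R/C_1}\cap\Sigma$ containing $\gamma$; by the convex hull property (Proposition \ref{p:cvx}) and Lemma \ref{l:cvxhull}, for a suitable $C_1$ the set $\Sigma'$ is itself an annulus with $[\gamma]$ generating $H_1(\Sigma')$, so cutting along $\gamma$ splits it into two annuli $\Sigma^+,\Sigma^-$. The claim is $|A|(x)\le C/\dist(x,\gamma)$ on $\Sigma'$. If not, there are $x_i$ with $\dist(x_i,\gamma)\,|A|(x_i)\to\infty$ and $\dist(x_i,\gamma)\gg\ell$; since there is no topology away from $\gamma$, a fixed fraction of $B_{\dist(x_i,\gamma)}(x_i)$ meets $\Sigma$ in disks, so after rescaling by $|A|(x_i)$ the compactness theorem for embedded minimal disks (Theorem \ref{t:t0.1}) produces in the limit a double spiral staircase with arbitrarily large, nearly flat multi--valued--graph sheets near $x_i$. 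Such a sheet traps a further piece of $\Sigma$ on one side of a nearby plane; the one--sided curvature estimate (Theorem \ref{t:onesided}) then bounds that piece's curvature, and iterating this with embeddedness forces the annulus $\Sigma'$ to self--intersect, a contradiction. (Equivalently, one can argue by contradiction: a sequence of normalized counterexamples has $R_i/\ell_i\to\infty$ with a neck in shrinking balls, hence $0\in\cSt$; the no--mixing theorem (Theorem \ref{c:main}) forces $\cSu=\emptyset$, and Theorems \ref{t:tab} and \ref{t:t5.2a} then supply exactly the two converging graphs.) With $|A|$ controlled, elliptic estimates give that $\Sigma^\pm\setminus\cT_{C_2\ell}(\gamma)$ is a multi--valued graph of gradient $\le 1$ over an annular piece of a plane, and embeddedness of $\Sigma'$ forces it to be single--valued; this is the second bullet. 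I expect this step to carry essentially the whole difficulty --- ruling out a \emph{second} singular curve is exactly the mechanism behind the proof of Theorem \ref{t:gnc} in \cite{CM11}, and it is there that the one--sided curvature estimate and the compactness theory are used in a delicate, essential way; the remaining steps are comparatively soft.

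\textbf{Step (iii): the total curvature bound.} Now $\Sigma^\pm$ is an annulus, graphical with gradient $\le 1$ outside $\cT_{C_2\ell}(\gamma)$ and (from Step (ii)) with $|A|\le C/\ell$ on the neck, where moreover it is close to a scaled catenoid. Since $\chi(\Sigma^\pm)=0$, Gauss--Bonnet gives
\begin{equation}
\int_{\Sigma^\pm}K \;=\; -\int_\gamma k_g \;-\;\int_{\partial B_{R/C_1}\cap\Sigma^\pm}k_g \;=\; -\int_{\partial B_{R/C_1}\cap\Sigma^\pm}k_g\,, \notag
\end{equation}
because $\gamma$ is a geodesic; the outer boundary is a small--gradient graph over a near--circle, so its geodesic curvature integrates to a quantity close to $2\pi$, and since $\Sigma^\pm$ is minimal ($K\le 0$ and $|A|^2=-2K$) we conclude $\int_{\Sigma^\pm}|A|^2$ is close to $4\pi$, hence $\le 5\pi$ after fixing the constants.

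\textbf{Step (iv): the logarithmic separation estimate.} The coordinate function $x_3$ is harmonic on $\Sigma^\pm$, and its conjugate has period around $\gamma$ equal to the vertical flux $\Phi=\int_\gamma\langle\nn,e_3\rangle$ of the outer conormal $\nn$; since the neck is nearly that of a vertical catenoid of neck length $\approx\ell$ (whose waist conormal is exactly $\pm e_3$), $\tfrac12\ell\le|\Phi|\le\ell$. The graphical annulus $\Sigma^+$, being a nearly flat graph between radii $\approx\ell$ and $\approx R$, has conformal modulus $\approx\tfrac1{2\pi}\log(R/\ell)$; in conformal cylinder coordinates $s+i\theta$ with $s\in[0,T]$, $T\approx\log(R/\ell)$, harmonicity and the flux condition make the $\theta$--average of $x_3$ affine in $s$ with slope $\approx\Phi/(2\pi)$, so $x_3$ changes by $\gtrsim\ell\log(R/\ell)$ between the two ends of $\Sigma^+$, and oppositely on $\Sigma^-$. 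Since the two graphs sit on opposite sides near $\partial B_{R/C_1}$ (embeddedness of the neck), this forces $h\gtrsim\ell\log(R/\ell)$, the third bullet. A catenoid--barrier comparison via the maximum principle gives the same bound, with the positioning of the barrier catenoid and the control of its outer boundary term being the points requiring care.
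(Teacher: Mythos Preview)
The paper does not actually prove this theorem: it is only \emph{stated} here as a result from \cite{CM16}, with the remark that it ``follows from the argument of \cite{CM11}, using the one-sided curvature estimate.'' So there is no proof in the paper to compare against. That said, your outline is in the right spirit and matches the paper's one-sentence description: the heart of the matter is indeed Step~(ii), and the one-sided curvature estimate together with the disk theory of \cite{CM6}--\cite{CM9} is exactly what drives the argument in \cite{CM11} and \cite{CM16}.

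A few places in your sketch are thinner than they look. In Step~(i), you need more than convexity of $\partial B_R$ to pin $\gamma$ inside $B_{2\epsilon R}$: a length-minimizing loop could a priori slide out to a larger radius while staying short (think of a nearly cylindrical annulus). The actual localization of the neck uses the structure coming out of Step~(ii), so the logic is somewhat circular as written; in practice one argues by contradiction and rescaling from the start. In Step~(ii), your first contradiction argument (``forces the annulus $\Sigma'$ to self-intersect'') is the right intuition but not a proof; the mechanism in \cite{CM11} is more specific---one shows the multi-valued graph produced by blow-up would have to link the neck, contradicting that $\Sigma^{\pm}$ is an annulus with only one nontrivial cycle. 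Your alternative route via Theorems~\ref{c:main}, \ref{t:tab}, \ref{t:t5.2a} is cleaner but note those theorems are stated for $R_i\to\infty$, so you must normalize so that $\ell_i\to 0$ with $R$ fixed, and check the hypotheses (planar domain, boundary on $\partial B_{R_i}$) still hold after rescaling. In Step~(iv), the flux argument is correct in outline, but you have silently fixed the $x_3$-axis to be the axis of the approximate catenoid; this has to be \emph{deduced} from Step~(ii) (the two graphical sheets determine the plane), and the comparison $\tfrac12\ell\le|\Phi|\le\ell$ needs the quantitative closeness to a catenoid on the neck that comes from the curvature bound there. Step~(iii) is fine.
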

 
 Here $\cT_s (S)$ denotes the (intrinsic in $\Sigma$) tubular neighborhood of radius $s$ about the set $S \subset \Sigma$.

\subsection{Uniqueness of complete examples: Helicoid}

In this subsection, $\Sigma$ will always be {\bf{complete}}, {\bf{minimal}}, and {\bf{embedded}} in $\RR^3$.

Using the lamination theorem and one-sided curvature estimate from \cite{CM6}--\cite{CM9},  
Meeks-Rosenberg proved the uniqueness of the helicoid in 2005:

\begin{Thm}	\label{t:MR}
(Meeks-Rosenberg, \cite{MeR2})
The helicoid is the unique (non-flat) proper, simply-connected $\Sigma$.
\end{Thm}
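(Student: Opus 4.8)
The plan is to exploit the lamination theorem (Theorem \ref{t:t0.1}) together with the one-sided curvature estimate (Theorem \ref{t:onesided}) to control the geometry of $\Sigma$ at infinity, and then to promote that control to a rigidity statement via the Weierstrass representation. First I would fix a point $p \in \Sigma$ and consider the rescaled surfaces $\Sigma_i = \lambda_i (\Sigma - p)$ for a sequence $\lambda_i \to 0$; since $\Sigma$ is complete, simply connected, proper, and non-flat, these are embedded minimal disks in balls $B_{R_i}$ with $R_i \to \infty$. The key dichotomy is whether $\sup_{B_1 \cap \Sigma_i} |A|^2$ stays bounded or blows up. If the curvature of $\Sigma$ does not decay quadratically, one can arrange a blow-up sequence to which Theorem \ref{t:t0.1} applies, producing a limit that is (after rotation) the foliation of $\RR^3$ by horizontal planes with a Lipschitz singular curve $\cS$; by Meeks' refinement, $\cS$ is a straight line orthogonal to the foliation. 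This says that on large scales $\Sigma$ looks like a helicoid near a vertical axis.

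The next step is to use this asymptotic picture to analyze the Gauss map and the conformal structure of $\Sigma$. Because $\Sigma$ is simply connected, it is conformally either $\CC$ or the disk; the blow-up analysis above, together with properness, should rule out the disk (a ``hyperbolic'' conformal type would force too much area concentration / would contradict the sublinear spacing of the sheets forced by the lamination picture), so $\Sigma$ is conformally $\CC$. One then studies the holomorphic data $(g, dh)$ of the Weierstrass representation, where $g$ is the stereographically-projected Gauss map and $dh = dx_3 + i\, d x_3^*$ is the height differential. The lamination theorem forces the surface to consist of exactly two multi-valued graphs spiraling around the axis and converging in $C^\alpha$ to the horizontal foliation away from $\cS$; this pins down the behavior of $g$ and $dh$ at the single end. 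In particular $dh$ has no zeros and no period, so $h : \Sigma \to \CC$ is a well-defined holomorphic function, and one argues that $h$ is a proper biholomorphism onto $\CC$, so $\Sigma = \CC$ with coordinate $z = h$. Similarly $g$ has no zeros or poles (these would be branch points of the Gauss map, impossible on an embedded minimal surface that is locally graphical in the limit), so $g = e^{az+b}$ for constants $a, b$; the one-sided curvature estimate and the ``spiraling, not closing up'' structure force $a \neq 0$, and after a rigid motion and dilation $g = e^{iz}$, $dh = dz$, which is exactly the Weierstrass data of the helicoid. Uniqueness of the surface determined by its Weierstrass data then finishes the proof.

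The main obstacle, and the genuinely hard part, is establishing that $\Sigma$ is conformally $\CC$ with $h$ a proper holomorphic coordinate --- equivalently, ruling out the ``hyperbolic'' case and controlling the single end well enough that the height differential is globally defined with no residual period. This is where the one-sided curvature estimate and the full strength of the lamination theorem are indispensable: one needs that any sequence of intrinsic balls in $\Sigma$ with curvature blowing up looks like a piece of a rescaled helicoid (so the topology is genuinely that of a single end with two intertwined spiral graphs), and one needs quantitative control on how the two sheets separate in order to see that $dx_3$ integrates to a proper map. Secondary technical points --- that the Gauss map omits $0$ and $\infty$, that there are no branch points, that the limit normal at the end is vertical --- follow more routinely from embeddedness plus the convergence to the horizontal foliation. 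Once the conformal type and the structure of the end are nailed down, the identification of the Weierstrass data is essentially forced and the theorem follows.
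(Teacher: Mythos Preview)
The paper does not give its own proof of this theorem: it is a survey, and Theorem~\ref{t:MR} is simply quoted from \cite{MeR2} with the one-line remark that Meeks and Rosenberg proved it ``using the lamination theorem and one-sided curvature estimate from \cite{CM6}--\cite{CM9}.''  So there is no detailed argument here to compare against.

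That said, your outline is consistent with that remark and is indeed a fair sketch of the Meeks--Rosenberg strategy: blow down, apply Theorem~\ref{t:t0.1} to see the helicoidal asymptotics and the vertical singular axis, use this together with the one-sided curvature estimate to pin down the conformal type and the behavior of the end, and then identify the Weierstrass data.  You have also correctly located the crux: showing that $\Sigma$ is conformally $\CC$ and that $x_3 + i x_3^*$ is a proper holomorphic coordinate is the substantial analytic step, and in the actual proof it takes considerably more than what you have written.

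One point to tighten: your justification that $g$ has no zeros or poles (``these would be branch points of the Gauss map, impossible on an embedded minimal surface'') is not right.  Zeros and poles of $g$ are simply points where the normal is vertical, not branch points, and embeddedness alone does not forbid them.  In the Meeks--Rosenberg argument this is extracted from the asymptotic structure coming out of the lamination theorem (the surface is a pair of multi-valued graphs over horizontal planes away from the axis, and along the axis the normal is horizontal), not from a general principle about embedded minimal surfaces.  Similarly, the step ``after a rigid motion and dilation $g = e^{iz}$'' hides the work of showing that $\log g$ is linear in the conformal coordinate, which again uses the quantitative control on the spiraling coming from Theorem~\ref{t:t0.1} and Theorem~\ref{t:onesided}.
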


By  \cite{CM12}, the assumption of properness can be removed in Theorem \ref{t:MR}.

\vskip2mm
Again using \cite{CM6}--\cite{CM9},  Meeks-Rosenberg and Bernstein-Breiner studied the ends of finite genus embedded minimal surfaces, showing that these are asymptotic to helicoids.  The Bernstein-Breiner theorem gives:

\begin{Thm}
(Bernstein-Breiner, \cite{BB2})
 Any (non-flat) finite genus $\Sigma$ with one end is asymptotic to a helicoid.
\end{Thm}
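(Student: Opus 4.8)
The plan is: first reduce to the proper case; then --- the heart of the matter, and where the structure theory of \cite{CM6}--\cite{CM9} enters --- show that $\Sigma$ is conformally a once-punctured compact Riemann surface of \emph{finite type}, so that its single end is conformally a punctured disk on which the Weierstrass data is controlled exactly as for a helicoid; and finally read off the helicoidal asymptotics from that data. This generalizes the uniqueness of the helicoid among genus-zero examples (Theorem \ref{t:MR}), where one in fact concludes that $\Sigma$ \emph{is} a helicoid; the new feature in positive genus is the compact handle carrying the genus.

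\textbf{Step 1 (properness; unbounded curvature).} Finite genus together with a single end is the same as finite topology, so by Colding--Minicozzi \cite{CM12} the surface $\Sigma$ is automatically proper. Collin's theorem (Theorem \ref{t:gnc}) does not apply, as it requires at least two ends; in fact $\Sigma$ cannot have finite total curvature at all. Indeed, a complete embedded minimal surface with exactly one end and finite total curvature is a plane: its end must be planar (a catenoidal end would give nonzero flux, impossible when there is a single end), the harmonic function $x_3$ is then bounded on $\Sigma$ and hence constant since $\Sigma$ is conformally parabolic, so $\Sigma$ is flat --- contrary to hypothesis. Therefore $\int_\Sigma |A|^2 = \infty$; the curvature is unbounded, and unbounded on the end, since the (finite) genus is carried by a compact piece $K \subset \Sigma$ on which $|A|$ is bounded. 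Thus $E := \Sigma \setminus K$ is a properly embedded minimal \emph{annulus} with $|A|$ unbounded on it.

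\textbf{Step 2 (finite type --- the main obstacle).} Away from $K$ the surface is genus zero, and near its end the disk-type structure theory of \cite{CM6}--\cite{CM9} applies --- the compact handle does not obstruct it, since under a blow-up at infinity the genus is dilated away. Concretely, the lamination theorem (Theorem \ref{t:t0.1}) together with the one-sided curvature estimate (Theorem \ref{t:onesided}) give that, after a rotation of $\RR^3$, the end $E$ is a double spiral staircase --- two multi-valued graphs spiralling around a single vertical axis, with $|A|$ blowing up exactly along that axis and $E$ converging on compact subsets away from the axis to the foliation $\{x_3 = t\}$; in particular $E \cap B_R$ contains an embedded $N$-valued graph for every $N$ and every large $R$. (The single end and the ULSC property of $E$ rule out the other planar-domain limit configurations --- two singular axes, or several parallel planes --- of Theorems \ref{t:t5.1} and \ref{t:t5.2a}.) The hard part is to promote this real-geometric description to the conformal statement that $E$ is conformally a punctured disk --- with a coordinate behaving like $\e^{-(x_3 + i\theta)}$ --- on which the Weierstrass data $(g, dx_3)$ is controlled at the puncture exactly as for a helicoid, the Gauss map $g$ carrying there an essential singularity of the form $\e^{z}$ up to lower-order corrections; together with the finiteness of the genus this says that $\Sigma$ is conformally $\overline{\Sigma} \setminus \{p\}$ and of finite type. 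Establishing this --- using properness and the one-sided curvature estimate to control the conformal behaviour at the puncture tightly enough to exclude anything wilder than a helicoid --- is the technical core, and the step I expect to be the main obstacle.

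\textbf{Step 3 (helicoidal asymptotics).} Once $\Sigma$ is known to be of finite type, the admissible Weierstrass data at the puncture of a one-ended embedded example is essentially forced to be the helicoid's: modulo lower-order, genus-dependent corrections, $g \sim \e^{z}$ and $dx_3 \sim dz$. A direct comparison then shows that, after a fixed rotation of $\RR^3$, outside larger and larger balls $\Sigma$ is $C^{k}$-close to a helicoid for every $k$. That is, $\Sigma$ is asymptotic to a helicoid, which is the assertion.
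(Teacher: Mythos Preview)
The paper does not prove this theorem; it is stated as a result of Bernstein--Breiner \cite{BB2} and cited without proof. The only hint the paper gives is the sentence immediately following the statement: the proof uses \cite{CM6}--\cite{CM9}, the surface is conformal to a once-punctured compact Riemann surface, and the Weierstrass data has an essential singularity at the puncture ``of the same type that the helicoid does.'' Your sketch is entirely consistent with this description --- you correctly use \cite{CM12} to reduce to the proper case, you invoke the lamination theorem and one-sided curvature estimate from \cite{CM6}--\cite{CM9} to get the double-spiral-staircase picture on the end, and you correctly identify the passage from this geometric picture to the conformal finite-type statement (Step 2) as the technical heart. There is nothing further in the paper to compare against.

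One small remark on your Step 1: the argument that a one-ended finite-total-curvature surface is a plane is fine, but you could also simply invoke the halfspace theorem (Theorem \ref{t:home} in the paper) once you know the end is planar, since the surface then lies in a slab.
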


In particular, any finite genus  embedded minimal surface with one end must be conformal to a punctured Riemann surface and one gets rather good control on the Weierstrass data (it has an essential singularity at the puncture, but of the same type that the helicoid does).  It would be very interesting to get a finer description of the moduli space of such examples.  One natural result in this direction is a recent theorem of Bernstein and Breiner (that proves a conjecture of Bobenko, \cite{Bo}):

\begin{Thm}
(Bernstein-Breiner, \cite{BB3})
Let $\Sigma$ be an embedded genus one helicoid in $\RR^3$.  Then there is a line $\ell$ so that rotation by $180$ degrees about $\ell$ is an orientation preserving isometry of $\Sigma$.
\end{Thm}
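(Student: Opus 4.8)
The plan is to reduce the statement, via the Weierstrass representation, to a single identity on the conformal compactification, and then to extract that identity from Abel's theorem together with the precise asymptotic structure of $\Sigma$ at its end.

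\textbf{Setup and reduction.} By the Bernstein--Breiner asymptotics theorem (\cite{BB2}) stated above, $\Sigma$ is conformally $\overline\Sigma\setminus\{p\}$ with $\overline\Sigma$ a compact genus-one Riemann surface, i.e.\ an elliptic curve $\CC/\Lambda$; after a rigid motion and a rescaling $\Sigma$ is asymptotic to a fixed standard helicoid $H$; and the Weierstrass data $(g,dh)$ extends so that $dh$ is a meromorphic one-form on $\overline\Sigma$ with a double pole at $p$, hence (as $\deg K_{\overline\Sigma}=0$) with two further zeros, which I take to be simple zeros $q_1,q_2$ (the degenerate case of one double zero is ruled out below), while the Gauss map $g$ is meromorphic on $\Sigma$, has an essential singularity of ``helicoid type'' at $p$, $g=C\,\e^{\,i\lambda/\zeta}(1+O(\zeta))$ in the flat coordinate $\zeta$ of $\CC/\Lambda$ centered at $p$, and — using smoothness of the induced metric at the zeros of $dh$ and that $\sum\mathrm{Res}\,\frac{dg}{g}=0$ on $\overline\Sigma$ — has a simple zero at one of $q_1,q_2$ and a simple pole at the other. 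Now if $I$ is a conformal automorphism of $\overline\Sigma$ with $I(p)=p$, then $X\circ I$ (with $X$ the conformal minimal immersion) parametrizes the \emph{same} surface $\Sigma$ and has Weierstrass data $(g\circ I,\,I^{\ast}dh)$; a direct computation shows that if in addition $g\circ I=\mu/g$ for a unit-modulus constant $\mu$ and $I^{\ast}dh=-dh$, then $X\circ I$ is $x\mapsto Ax+b$ applied to $X$, where $A$ is a $180^\circ$ rotation about a horizontal direction with $\det A=1$, so $\rho:x\mapsto Ax+b$ is a $180^\circ$ rotation about a line $\ell$, is orientation preserving, and (being a reparametrization of $X$) satisfies $\rho(\Sigma)=\Sigma$. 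Thus it suffices to produce such an $I$.

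\textbf{The involution.} The only candidate is the unique conformal involution $I_p$ of $\overline\Sigma$ fixing $p$, namely $z\mapsto 2p-z$, i.e.\ $\zeta\mapsto-\zeta$ in the flat coordinate. Two of the three required properties are automatic. First, $I_p^{\ast}dh=-dh$: the form $I_p^{\ast}dh+dh$ has no pole at $p$ (the double-pole terms cancel because $dh$ has no residue there and $I_p$ is $\zeta\mapsto-\zeta$), so it lies in the one-dimensional space of holomorphic one-forms on $\overline\Sigma$, on which $I_p^{\ast}$ acts by $-1$; being $I_p^{\ast}$-invariant, it vanishes. Second, $I_p(q_1)=q_2$ and $g\cdot(g\circ I_p)$ is a nonzero constant $\mu$: since $\mathrm{div}(dh)=q_1+q_2-2p$, Abel's theorem on the elliptic curve gives $q_1+q_2\equiv 2p$, i.e.\ $I_p(q_1)=q_2$, so $g\cdot(g\circ I_p)$ has cancelling zero/pole at $q_1$ and at $q_2$, and at $p$ the essential singularities cancel since $g(\zeta)\,g(-\zeta)=C^2(1+O(\zeta))$; hence it extends holomorphically and nowhere-zero to the compact $\overline\Sigma$ and equals the constant $\mu=C^2$. (If $dh$ had a double zero at some $q$, Abel would force $q$ to be a fixed point of $I_p$ and then $g\cdot(g\circ I_p)=g^2$ would vanish there, contradicting constancy — so that case cannot occur.) The theorem is now reduced to the single assertion $|\mu|=1$, equivalently $|C|=1$: the leading coefficient of the essential singularity of $g$ at $p$ is unimodular.

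\textbf{The crux.} It remains to prove $|C|=1$, and I expect this to be the genuinely hard step: rescalings and rotations about the $x_3$-axis are already spent, so $|C|$ is intrinsic. The intended route is to pin it down from the precise ``helicoid type'' control on the Weierstrass data near $p$: since $\Sigma\to H$ near its end in $C^k$ with quantitative error and the Gauss map of $H$ has unimodular leading coefficient in the appropriate coordinate, $C$ must be unimodular too; the work is in relating the normalizing coordinate of the end to the flat coordinate $\zeta$ (a change $w=a_1\zeta+\cdots$ only rescales $\lambda$ and multiplies $C$ by the \emph{unimodular} factor $\e^{-i\lambda a_2/a_1^2}$, so it is harmless) and in making the convergence strong enough to capture the leading exponential coefficient. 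Should the soft asymptotics fall short, the fallback is that $|C|\ne 1$ would make the matrix $A$ above non-real, whence $X\circ I_p$ would mix in the conjugate (associate) surface of $\Sigma$; but $X\circ I_p$ is literally a reparametrization of $X$, single-valued with image exactly $\Sigma$, which forces the imaginary part of $A$ to annihilate the (nonzero) conjugate period of $\Sigma$ over the handle, and one then uses embeddedness — via the one-sided curvature estimate (Theorem~\ref{t:onesided}) and the lamination picture (Theorem~\ref{t:t0.1}) controlling $\Sigma$ near $p$ — to exclude this. Either way $|\mu|=1$, and the reduction of the first paragraph delivers the orientation-preserving $180^\circ$ rotational symmetry about $\ell$. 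The main obstacle, as noted, is precisely this last step: converting the asymptotic matching at the end into the exact unimodularity of $C$, while along the way dispatching the possible degeneracies of $\mathrm{div}(dh)$ and of the zero/pole structure of $g$.
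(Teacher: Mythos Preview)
The survey does not contain a proof of this theorem; it simply quotes the result from \cite{BB3}. So there is no ``paper's own proof'' to compare against here, and your proposal has to stand on its own.

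Your framework is the right one and is essentially the Bernstein--Breiner strategy: compactify to an elliptic curve, use Abel's theorem on $\mathrm{div}(dh)=q_1+q_2-2p$ to see that the canonical involution $I_p:\zeta\mapsto-\zeta$ swaps $q_1,q_2$, verify $I_p^*dh=-dh$ and that $g\cdot(g\circ I_p)\equiv\mu$ is a nonzero constant, and then show $|\mu|=1$ so that the induced map on $\RR^3$ is an honest $180^\circ$ rotation. Your arguments for $I_p^*dh=-dh$ (via the $(-1)$-action of $I_p^*$ on holomorphic one-forms) and for the constancy of $g\cdot(g\circ I_p)$ (cancellation of divisors at $q_1,q_2$ and of the essential singularity at $p$) are correct, as is the Weierstrass computation showing that $|\mu|=1$ yields $I_p^*\Phi=A\Phi$ with $A\in SO(3)$, $A^2=\mathrm{id}$, and horizontal axis.

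The gap is exactly where you place it, and neither of your two sketches closes it. For the first route (asymptotic matching): you need \emph{separate} control of $g$ and $dh$ near $p$, not just $C^k$ convergence of the immersion to the helicoid, to pin down $|C|$; this is precisely the content of the strengthened conformal description in \cite{BB2}, which you cite but do not actually invoke with enough precision to extract the normalization. For the second route: your identification that for $|\mu|\ne1$ one gets $X\circ I_p=\RE(M)X-\imag(M)X^*+\mathrm{const}$ with $X^*$ the conjugate immersion is correct, and single-valuedness of $X\circ I_p$ does force $\imag(M)$ to kill the periods of $X^*$; but the leap from there to $\imag(M)=0$ via embeddedness and Theorems~\ref{t:onesided}/\ref{t:t0.1} is only a gesture, and you have not supplied the mechanism. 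In \cite{BB3} the step is completed through the sharp normalization of the Weierstrass data at the end coming from \cite{BB2}, so your first route is the one to make rigorous; as written, the argument stops short of a proof at precisely the point you flag.
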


\subsection{Uniqueness of complete examples: Riemann examples}

Using  \cite{CM6}--\cite{CM10} and Colding-De Lellis-Minicozzi, \cite{CDM}, Meeks-Perez-Ros recently showed that
the Riemann examples are the unique $\Sigma$'s with genus zero and infinitely many ends.

\begin{Thm}	(Meeks-Perez-Ros, \cite{MePRs4})
The Riemann examples are the unique complete properly embedded minimal planar domains with infinitely many ends.
\end{Thm}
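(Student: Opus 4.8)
The plan is to funnel the problem, through the structure and compactness theory of the previous subsections, into the situation already classified by Riemann, via three reductions: bounded curvature, quasiperiodicity, and vanishing of the Shiffman Jacobi field. \emph{Step 1: bounded curvature.} First I would show that a complete properly embedded minimal planar domain $\Sigma\subset\RR^3$ with infinitely many ends has $\sup_\Sigma|A|<\infty$. If not, choose $p_n\in\Sigma$ with $|A|(p_n)\to\infty$ and pass to a limit of the rescalings $|A|(p_n)\,(\Sigma-p_n)$; these are properly embedded minimal planar domains, so by the no-mixing theorem (Theorem~\ref{c:main}) a subsequence is either ULSC or has $\cSu=\emptyset$, and by Theorem~\ref{t:tab} the limit is a lamination of $\RR^3$ by parallel planes with curvature blowing up along a nonempty closed set. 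In the ULSC case Theorem~\ref{t:t5.1} forces a double-spiral-staircase (helicoidal) local model, incompatible with $\Sigma$ having only planar-type ends; in the non-ULSC case Theorem~\ref{t:t5.2a} together with the pair of pants decomposition (Theorem~\ref{t:cm7c}) produces necks along $\Sigma$ shrinking to zero, and one rules this out in a single proper planar domain using properness and the first-homology injectivity of Lemma~\ref{l:cvxhull}.

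\emph{Step 2: quasiperiodicity.} With $|A|$ bounded and $\Sigma$ proper, translates $\Sigma-p_n$ along any divergent $p_n\in\Sigma$ subconverge smoothly, with multiplicity one by embeddedness, to a complete properly embedded minimal surface $\Sigma_\infty$ of genus zero. I would catalog the candidates for $\Sigma_\infty$ (plane, catenoid, helicoid, or again a planar domain with infinitely many ends), and, analyzing the ends of $\Sigma$ --- all of planar type, organized along a translational period, via the pair of pants decomposition and the classification of the possible ends --- exclude the helicoid and conclude that $\Sigma$ is \emph{quasiperiodic}: there is a period vector $T$ and every limit of translates lies in the closure of the orbit $\{\Sigma-kT:k\in\ZZ\}$.

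\emph{Step 3: the Shiffman function.} Using the graphical / pair of pants structure, identify the conformal type of $\Sigma$ (a cylinder or torus with a discrete set of punctures) and its Weierstrass data, and introduce the canonical Jacobi field $S_\Sigma$ adapted to horizontal planar foliations --- the Shiffman function --- which vanishes identically exactly when $\Sigma$ is foliated by circles and lines in parallel planes, i.e., by Riemann's classification, exactly when $\Sigma$ is a plane, a catenoid, or a Riemann example. Quasiperiodicity and the curvature bound make $S_\Sigma$ a \emph{bounded} Jacobi field; the crux, following Colding--De Lellis--Minicozzi \cite{CDM} and Meeks--Perez--Ros, is to show $S_\Sigma$ is \emph{integrable}, i.e., that it generates an honest one-parameter deformation of $\Sigma$ through properly embedded minimal planar domains, because the Shiffman deformation is the real part of a flow in a KdV-type integrable hierarchy whose quasiperiodic finite-complexity orbits form a finite-dimensional family. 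Placing $\Sigma$ in that finite-dimensional moduli space forces $S_\Sigma\equiv 0$, so $\Sigma$ is a plane, a catenoid, or a Riemann example; having infinitely many ends, it is a Riemann example.

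The main obstacle is Step 3. The curvature bound and quasiperiodicity are within reach of the structure and compactness theorems above, but proving that the bounded Shiffman Jacobi field is integrable --- ruling out that $\Sigma$ could be some genuinely almost-periodic embedded planar domain outside the Riemann family --- requires the integrable-systems machinery identifying the Shiffman flow with a KdV flow and classifying its quasiperiodic orbits; that is the genuinely new input and the heart of the Meeks--Perez--Ros argument.
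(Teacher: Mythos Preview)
The paper itself gives no proof of this theorem; it is a survey and the result is simply stated with a citation to \cite{MePRs4}, noting only that the proof uses \cite{CM6}--\cite{CM10} and \cite{CDM}. So there is no ``paper's own proof'' to compare against. That said, your three-step outline --- curvature bound, quasiperiodicity, integrability of the Shiffman Jacobi field via the KdV hierarchy --- is indeed the architecture of the actual Meeks--Perez--Ros argument, and you correctly identify Step~3 as the heart of the matter and the place where \cite{CDM} enters.

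There is, however, a genuine confusion in your Step~1. You rescale by $|A|(p_n)$, so the new surfaces $\Sigma_n=|A|(p_n)\,(\Sigma-p_n)$ have $|A_{\Sigma_n}|(0)=1$; the curvature of this sequence is \emph{not} blowing up at the origin. You then invoke Theorem~\ref{t:tab}, whose conclusion is a singular lamination with curvature blowing up along a nonempty set $\cS$ --- but that theorem has the hypothesis $\sup_{B_1\cap\Sigma_i}|A|^2\to\infty$, which your normalized sequence does not satisfy at the origin, and its conclusion is incompatible with the limit being smooth there. You are conflating two distinct mechanisms: either one does \emph{not} rescale, applies the lamination/no-mixing theorems to the translated sequence with curvature blowing up, and reads off a contradiction from the resulting planar lamination structure; or one \emph{does} rescale (with point-picking to control curvature near the origin), obtains a smooth non-flat limit with bounded curvature, and argues that this limit must be a helicoid or catenoid or Riemann example and then derives a contradiction from the original surface's end structure. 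The actual MPR route to bounded curvature is closer to the first of these and is more delicate than your sketch suggests; in particular, ruling out shrinking necks on a single properly embedded planar domain is not an immediate consequence of Lemma~\ref{l:cvxhull}. Your Steps~2 and~3 are accurate in spirit, though ``quasiperiodic'' for MPR has a precise meaning (limits of translates are again in the same moduli space) that takes real work to establish.
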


This theorem completes the classification of the genus zero properly embedded minimal surfaces.  Remarkably, it turned out that the classical examples discovered in the 1700's and 1800's were the only ones.  A number of central questions remain, including the structure of the moduli space of finite genus properly embedded minimal surfaces and the systematic construction of examples.

\subsection{Calabi-Yau conjectures}		 
 
 Recall that an immersed submanifold in $\RR^n$  is proper if the pre-image of any
compact subset of $\RR^n$ is compact in the surface. This property has played an
important role in the theory of minimal submanifolds and many of the classical
theorems in the subject assume that the submanifold is proper.
It is easy to see that any compact submanifold is automatically proper. On the
other hand, there is no reason to expect a general immersion  to
be proper. For example, the non-compact curve parametrized in polar coordinates
by
\begin{align}
	\rho (t) &= \pi + \arctan (t) \, , \notag \\
	  \theta (t) &= t \notag
\end{align}
spirals infinitely between the circles of radius $\pi/2$ and $3 \pi /2$. However, it was long
thought that a minimal immersion (or embedding) should be better behaved. This
principle was captured by the Calabi-Yau conjectures. Their original form
was given in 1965 in \cite{Ce} where E. Calabi made the following two conjectures about
minimal surfaces (see also S.S. Chern, page 212 of \cite{Cs} and S.T. Yau's 1982 problem
list, \cite{Ya3}):

\begin{Con} \label{con:1}
``Prove that a complete  minimal hypersurface in $\RR^n$ must be
unbounded.''
\end{Con}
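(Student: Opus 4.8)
The plan is to exploit the harmonicity of the ambient coordinate functions on a minimal $\Sigma$ (shown above) together with the identity $\Delta_\Sigma |x|^2 = 2n$ valid for a minimal $\Sigma^n \subset \RR^{n+1}$. Suppose, for contradiction, that a complete minimal hypersurface $\Sigma$ is bounded, say $\Sigma \subset B_R(0)$. Then $|x|^2$ is a \emph{bounded} function on the complete manifold $\Sigma$ that is \emph{strictly} subharmonic, $\Delta_\Sigma |x|^2 \equiv 2n > 0$. If $\Sigma$ were compact this would already be absurd, since $\int_\Sigma \Delta_\Sigma |x|^2 = 0$; equivalently, a compact manifold carries no nonconstant subharmonic function. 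So the entire difficulty is that a complete Riemannian manifold sitting inside a bounded region of $\RR^{n+1}$ need not be compact: the inclusion need not be proper. The reduction I would aim for is therefore to show that $\Sigma$ is \emph{parabolic} --- that every bounded subharmonic function on $\Sigma$ is constant --- which together with $\Delta_\Sigma |x|^2 = 2n \neq 0$ would finish the proof.

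First I would try to get parabolicity from volume growth: for a complete surface (the case $\Sigma^2 \subset \RR^3$), quadratic intrinsic area growth $\Area(B_r^{\mathrm{int}}) \leq C r^2$ forces parabolicity. Unhappily, the only area estimate at hand is the \emph{lower} bound from the monotonicity formula (Theorem \ref{t:monomin}): $\Vol(B_r \cap \Sigma)/r^n$ is nondecreasing, so $\Vol(B_r \cap \Sigma) \geq c\, r^n$ --- the wrong direction. Worse, since $\Sigma$ is bounded the extrinsic radius is capped at $R$, so extrinsic monotonicity says nothing about large \emph{intrinsic} balls, whose volume can in principle grow arbitrarily fast while the image stays inside $B_R(0)$. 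Obtaining any a priori control of intrinsic volume growth, or directly of the capacity of the ends of $\Sigma$, is where I expect the argument to stall: minimality controls only the mean curvature $H$, not $|A|$, so there is no second-order estimate in sight that sees the intrinsic conformal type. This is the main obstacle.

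Given that obstacle, what I would actually carry out is the statement under the extra hypothesis of \emph{properness} (the condition the discussion above singles out as classically assumed). Then boundedness plus properness forces $\Sigma$ to be compact; or, more directly, one argues exactly as in the convex hull property (Proposition \ref{p:cvx}) with empty boundary: each coordinate function $x_i$ is harmonic on $\Sigma$, by properness its supremum over $\Sigma$ is attained at an interior point, and the strong maximum principle makes $x_i$ constant --- running over all $i$ collapses $\Sigma$ to a point, contradicting completeness. To keep the conclusion while dropping properness one would instead need embeddedness and the full structure theory (the one-sided curvature estimate, Theorem \ref{t:onesided}, and the planar-domain results such as Theorem \ref{t:cm7c}), a substantially heavier route; and with \emph{neither} properness nor embeddedness the statement as literally posed would, I expect, require a genuinely new idea about the conformal type of an immersed complete bounded minimal surface, and I would not be surprised if it fails at that level of generality.
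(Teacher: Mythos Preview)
Your suspicion at the very end is exactly right, and in fact the paper does not prove this statement at all: it is recorded as a \emph{conjecture} (Calabi's), and the paper immediately goes on to say that the immersed version is \emph{false}. Nadirashvili, \cite{Na}, constructed a complete minimal immersion of a disk into the unit ball in $\RR^3$, so a complete bounded minimal surface in $\RR^3$ does exist. Thus no argument along the lines you sketch --- trying to force parabolicity from minimality alone --- can possibly succeed, because the conclusion is simply not true at that level of generality. The obstacle you correctly isolated (no control on intrinsic volume growth or conformal type from $H=0$ alone) is not a technical gap but a genuine one: Nadirashvili's surface is precisely a complete, bounded, non-parabolic minimal immersion.

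What the paper does establish is your side remarks. Under properness the argument you give (coordinate functions are harmonic, the maximum is attained, strong maximum principle) is exactly the classical one and is correct; this is essentially the convex hull property with empty boundary. For \emph{embedded} surfaces of finite topology, the paper cites \cite{CM12} (Theorems \ref{c:cy1} and \ref{c:cy2}): one first proves a chord-arc bound (Theorem \ref{t:cy1}) using the full structure theory of \cite{CM6}--\cite{CM9}, deduces properness, and then the halfspace theorem and the easy proper case finish it. So your assessment of the landscape --- trivial if proper, heavy machinery if merely embedded with finite topology, and likely false otherwise --- matches the paper exactly; the only correction is that ``likely false'' should be upgraded to ``known to be false'' by Nadirashvili's example.
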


Calabi continued:
``It is known that there are no compact minimal submanifolds of
$\RR^n$ (or of any simply connected complete Riemannian manifold
with sectional curvature $\leq 0$). A more ambitious conjecture
is'': 

\begin{Con} \label{con:2}
``A complete [non-flat] minimal hypersurface in $\RR^n$ has an
unbounded projection in every $(n-2)$--dimensional flat
subspace.''
\end{Con}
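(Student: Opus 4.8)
The plan is to recast the conclusion as a confinement statement and then contradict it with a function on $\Sigma$ that is bounded and strictly subharmonic --- something a complete minimal hypersurface cannot carry. After a rotation we may take the $(n-2)$--dimensional flat subspace to be $V = \{x_{n-1} = x_n = 0\}$, so the claim is that $g = x_1^2 + \cdots + x_{n-2}^2$ is unbounded on $\Sigma$; suppose instead that $g \le R^2$ on $\Sigma$ for some $R$. By the Lemma identifying minimality with harmonicity of the coordinate functions each $x_j$ is harmonic on $\Sigma$, and since the $\RR^n$ Hessian of $g$ is twice orthogonal projection onto $V$, formula \eqr{e:lapla} with $H = 0$ gives
\begin{equation}
 \Delta_\Sigma g \;=\; 2\sum_{j=1}^{n-2}\bigl|\nabla_\Sigma x_j\bigr|^2 \;=\; 2\Bigl((n-2) - \sum_{j=1}^{n-2}\langle \nn , \partial_j\rangle^2\Bigr) \;\ge\; 2\,(n-3)\,,  \notag
\end{equation}
the inequality being $\sum_{j \le n-2}\langle \nn , \partial_j\rangle^2 \le |\nn|^2 = 1$. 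Hence $g$ is bounded on $\Sigma$ while $\Delta_\Sigma g \ge 2(n-3)$, which is \emph{strictly} positive once $n \ge 4$.

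For $n \ge 4$ I would finish with a maximum principle at infinity. If $\Sigma$ has Ricci curvature bounded below, the Omori--Yau maximum principle produces points $q_k \in \Sigma$ with $g(q_k) \to \sup_\Sigma g$ and $\limsup_k \Delta_\Sigma g(q_k) \le 0$, contradicting $\Delta_\Sigma g \ge 2(n-3) > 0$. For a minimal hypersurface the Gauss equation reduces, since $H = 0$, to $\Ric_\Sigma (v,v) \ge -|A|^2 |v|^2$, so a uniform bound on the second fundamental form is exactly what powers this argument; under that bound the conjecture follows for $n \ge 4$. Alternatively, if $\Sigma$ is parabolic then the bounded subharmonic function $g$ must be constant, whence $\Delta_\Sigma g = 2\sum_{j \le n-2}|\nabla_\Sigma x_j|^2 = 0$ forces each $x_j$ ($j \le n-2$) to be constant, placing $\Sigma$ in a $2$-plane --- impossible for dimension reasons when $n \ge 4$, and forcing $\Sigma$ to be a flat plane when $n = 3$ --- so parabolicity settles all $n$ at once.

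The hard part is removing these side conditions, and it is instructive that some such hypothesis is unavoidable: there are complete \emph{immersed} minimal surfaces confined to a ball in $\RR^3$, which by the displayed identity must have $\sup_\Sigma |A| = \infty$ and be non-parabolic, so the conjecture as literally quoted fails for immersions and should be understood for \emph{embedded} $\Sigma$. Even then one must handle the general case with no curvature or parabolicity hypothesis, in particular $n = 3$, where the identity only gives $\Delta_\Sigma g \ge 0$ and no function of $x_1$ alone can be made strictly subharmonic on $\Sigma$. In the embedded, finite topology case in $\RR^3$ the route I would take is to feed the confinement into the structure theory of \cite{CM6}--\cite{CM9}: a surface trapped in a slab lies on one side of a plane, so either the one-sided curvature estimate (Theorem~\ref{t:onesided}) already forces bounded curvature, returning us to the argument above, or --- invoking that embedded finite topology minimal surfaces are proper (\cite{CM12}) together with the half-space theorem --- $\Sigma$ must be a flat plane, contradicting non-flatness. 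Making this unconditional, and in higher dimensions where neither the one-sided estimate nor \cite{CM12} has a known analogue, is the principal obstacle, and it appears to be open.
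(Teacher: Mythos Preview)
There is no proof in the paper to compare against: the statement is Calabi's second \emph{conjecture}, not a theorem, and the paper does not prove it. Indeed, immediately after stating it the paper records that the immersed versions of both Calabi conjectures are \emph{false} --- Jorge and Xavier confined a complete minimal surface between two parallel planes, and Nadirashvili put one inside a ball --- so no unconditional proof can exist. You recognize this yourself in your third paragraph, which is the right conclusion; but the framing of the proposal as a ``proof'' of the conjecture is therefore misplaced.

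That said, your partial arguments are sound and track the history well. The computation $\Delta_\Sigma g = 2\sum_{j\le n-2}|\nabla_\Sigma x_j|^2 \ge 2(n-3)$ is correct, and the Omori--Yau step under bounded $|A|$ is exactly the classical route (this is essentially the Jorge--Xavier/Xavier argument the paper cites). Your observation that Nadirashvili's examples must have unbounded $|A|$ and be non-parabolic is also correct and explains precisely why those side hypotheses cannot be dropped. For the embedded finite-topology case in $\RR^3$, the paper's actual result is Theorem~\ref{c:cy1} (from \cite{CM12}): a complete embedded minimal surface of finite topology in a halfspace is a plane. That already dispatches your slab/strip situation in one stroke, so you need not route through the one-sided estimate separately --- and note that Theorem~\ref{t:onesided} as stated is for \emph{disks}, so invoking it directly for general finite topology would itself require the intrinsic-ball extension in Corollary~\ref{t:one-sided}, which in turn rests on \cite{CM12}. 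In higher dimensions and without topological control the embedded question is, as you say, open.
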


The  immersed  versions of these conjectures were shown to be false
by examples of  Jorge and Xavier, \cite{JXa2},  and
 N. Nadirashvili, \cite{Na}.  The latter constructed a complete immersion of a minimal disk into the unit ball in $\RR^3$, showing
that Conjecture \ref{con:1} also failed for immersed surfaces; cf.
  \cite{MaMo1}, \cite{LMaMo1}, \cite{LMaMo2}.

It is clear from the definition of proper that a proper minimal surface in $\RR^3$
must be unbounded, so the examples of Nadirashvili are not proper. 

The strong halfspace theorem of D.
Hoffman and W. Meeks shows that properness also prevented a minimal surface from being contained in a slab, 
or even a half-space:

\begin{Thm}	(Hoffman-Meeks, \cite{HoMe})  \label{t:home}
A complete connected properly immersed minimal surface
contained in $\{ x_3 > 0 \} \subset \RR^3$  must be a horizontal plane 
$\{ x_3 = {\text{Constant}} \}$.
\end{Thm}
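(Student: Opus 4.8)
The plan is to argue by contradiction. Suppose $\Sigma$ is complete, connected, properly immersed, minimal, $\Sigma\subset\{x_3>0\}$, and $\Sigma$ is \emph{not} a horizontal plane; I will derive a contradiction using two ingredients: the strong maximum principle for the minimal surface equation, and a family of catenoids used as barriers. Since $\Sigma$ is the image of a proper map into the locally compact space $\RR^3$, it is a closed subset of $\RR^3$, and it is boundaryless. Put $a:=\inf_\Sigma x_3\ge 0$, so $\Sigma\subset\{x_3\ge a\}$; the argument splits according to whether or not $a$ is attained on $\Sigma$.

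If $a$ is attained, say at $p\in\Sigma$, I would use the fact recalled earlier that the coordinate function $x_3$ is harmonic on the minimal surface $\Sigma$. It attains its minimum $a$ at the interior point $p$, so by the strong minimum principle for harmonic functions it is constant, $x_3\equiv a$ on the connected surface $\Sigma$. Then the image of $\Sigma$ lies in the plane $\{x_3=a\}$; since the immersion is a local diffeomorphism onto this $2$-plane, the image is open in it, and it is also closed and nonempty, hence equals the whole plane. So $\Sigma$ is a horizontal plane, contrary to hypothesis, and this case does not occur.

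The substantive case is when $a$ is \emph{not} attained, and here properness is essential. Choosing $p_k\in\Sigma$ with $x_3(p_k)\to a$: a bounded subsequence would converge to a point of $\overline\Sigma=\Sigma$ of height $a$, so $|p_k|\to\infty$, hence the horizontal radii $\sqrt{(p_k)_1^2+(p_k)_2^2}\to\infty$ --- the surface runs off to spatial infinity while descending toward the plane $\{x_3=a\}$. Now I would use catenoid barriers. For $t>0$ let $\mathcal{K}_t^+$ be the ``upper half'' of a catenoid of revolution about a vertical axis, scaled to waist radius $t$ and positioned with its waist circle in the plane $\{x_3=a\}$: this is a minimal annulus lying in $\{x_3\ge a\}$ whose only boundary is that waist circle, which is disjoint from $\Sigma$ because $\Sigma\subset\{x_3>a\}$. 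As $t\downarrow 0$ the barriers $\mathcal{K}_t^+$ collapse (away from the axis) onto the plane $\{x_3=a\}$, and for large $t$ they recede to spatial infinity. Allowing also horizontal translations of the axis and lowering the waist below height $a$ to gain enough freedom, I would then construct within this family a continuous deformation from a barrier disjoint from $\Sigma$ to one that necessarily meets $\Sigma$ --- the latter forced by the fact that $\Sigma$ comes down near $\{x_3=a\}$ arbitrarily far out --- and use properness to extract a \emph{first} point of contact $q$, which is interior to the barrier since the waist circles avoid $\Sigma$. At $q$ the surface $\Sigma$ lies locally on one side of the catenoid, so the two are tangent at $q$; writing both as graphs over their common tangent plane, the difference of the graph functions is a nonnegative solution of a linear elliptic equation vanishing at $q$, hence vanishes identically near $q$ by the Hopf lemma, and this propagates by real-analyticity of minimal surfaces. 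Therefore the image of $\Sigma$ is an entire catenoid --- but a catenoid is unbounded below in the $x_3$-direction, contradicting $\Sigma\subset\{x_3>0\}$.

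The main obstacle is the barrier construction in this last case: arranging the catenoid family --- the position of the (vertical) axis, the height of the waist, and the range of the waist radius $t$ --- so that simultaneously (i) some member is disjoint from $\Sigma$, (ii) the family can be deformed so as to force an intersection with $\Sigma$, and (iii) the first contact is interior rather than on a waist circle. Item (iii) is handled once the waist circles are kept inside $\{x_3=a\}$, which $\Sigma$ avoids; but (i) and (ii) genuinely require properness, and must, since the statement fails for merely complete immersions: Nadirashvili's complete minimal immersion of a disk into a ball lies in a half-space and is not flat.
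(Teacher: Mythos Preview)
The paper does not prove this theorem; it is only stated and attributed to Hoffman--Meeks. So the comparison is to the original Hoffman--Meeks argument, and your overall strategy---maximum principle if the infimum is attained, catenoid barriers plus a first-contact maximum-principle argument otherwise---is exactly theirs.

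The gap is precisely where you flag it: the barrier construction. Your choice of \emph{upper} half-catenoids $\mathcal{K}_t^+$ with waist on (or below) the limiting plane $\{x_3=a\}$ does not work as written. These surfaces extend to $x_3=+\infty$, so $\mathcal{K}_t^+\cap\{x_3>a\}$ is noncompact, and even with $\Sigma$ proper there is no reason the intersection $\Sigma\cap\mathcal{K}_t^+$ is compact. Without that compactness you cannot extract a first contact: as you deform the family, touching points can run off to infinity. Moreover, you never actually produce a member of the family disjoint from $\Sigma$---``for large $t$ they recede to spatial infinity'' only says the barrier is far from the axis, not far from $\Sigma$, which is itself unbounded. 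So items (i) and (ii) in your own list are genuinely unresolved, not merely unwritten.

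The Hoffman--Meeks fix is to flip the barrier: use the \emph{lower} half of a vertical catenoid, with waist circle at height $s\ge a$, and slide $s$ upward from $a$. The point is that a downward-opening half-catenoid meets the slab $\{x_3>a\}$ in a \emph{compact} annulus. At $s=a$ it lies in $\{x_3\le a\}$ and is disjoint from $\Sigma$; properness of $\Sigma$ plus this compactness then gives a genuine first $s_0$ at which contact occurs. Choosing the waist radius small enough (using that $\Sigma\cap B_1(0)$ is compact and bounded away from $\{x_3=a\}$) keeps the waist circle away from $\Sigma$, so the first contact is interior and the tangency/unique-continuation endgame goes through. One still needs to show $s_0<\infty$: if all translates miss $\Sigma$, then $\Sigma$ avoids the solid cylinder swept out, and the convex hull property forces $\Sigma$ to be empty.
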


In \cite{CM12}, it was shown that the  Calabi-Yau Conjectures were true for  embedded surfaces.  
We will describe this more precisely below.

The main result of \cite{CM12} is an effective version of properness for disks, giving a
chord-arc bound. Obviously, intrinsic distances are larger than extrinsic distances,
so the significance of a chord-arc bound is the reverse inequality, i.e., a bound on
intrinsic distances from above by extrinsic distances. This is accomplished in the
next theorem:

\begin{Thm}    (Colding-Minicozzi, \cite{CM12})  \label{t:cy1}
There exists a  constant  $C > 0$  so that if $\Sigma \subset
\RR^3$ is an embedded minimal disk, $B^{\Sigma}_{2R}=B^{\Sigma}_{2R}(0)$ is an
intrinsic ball 
 in $\Sigma \setminus
\partial \Sigma$ of radius $2R$, and if
$ \sup_{B^{\Sigma}_{r_0}}|A|^2>r_0^{-2}$ where $R>  r_0$,
 then for $x \in B^{\Sigma}_R$
 \begin{equation}   \label{e:t1cy1}
    C \, \dist_{\Sigma}(x,0)<   |x| + r_0   \, .
 \end{equation}
\end{Thm}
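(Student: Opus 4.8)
The plan is to read off the chord--arc bound from the structure theory for embedded minimal disks in \cite{CM6}--\cite{CM9}: where the curvature is large the disk is, after a rotation of $\RR^3$, a \emph{double spiral staircase}, i.e. two multi-valued graphs with small gradient glued along a Lipschitz curve $\cS$ that lies in a narrow cone about a fixed line. One half of \eqr{e:t1cy1} is free — a curve in $\Sigma$ projects to a curve of no greater length in $\RR^3$, so $|x|\le\dist_\Sigma(x,0)$ — so the whole content is the upper bound on intrinsic distance.

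First I would normalize by a point-picking argument: replace $0$ by a point $y$ with $\dist_\Sigma(y,0)\le C r_0$ at which $|A|^2(y)\gtrsim r_0^{-2}$ but $\sup_{B^\Sigma_{\sigma_0 r_0}(y)}|A|^2\lesssim r_0^{-2}$ for a fixed $\sigma_0$, so that $y$ is an almost-maximal curvature point at scale $r_0$; since $|y|\le\dist_\Sigma(y,0)\le C r_0$ it then suffices to bound $\dist_\Sigma(x,y)$ by $C(|x|+r_0)$. The hypothesis that $B^\Sigma_{2R}$ avoids $\partial\Sigma$ leaves the room needed to run the three main steps of \cite{CM6}--\cite{CM9} around $y$: Step A produces an $N$-valued graph on an annulus of inner radius $\sim r_0$; Step B extends it out to an $N$-valued graph of outer radius $\sim R$; and Step C makes it extend vertically, so that on scales between $\sim r_0$ and $\sim R$ the component of $\Sigma$ containing $y$ is a double spiral staircase about a Lipschitz axis $\cS$ passing near $y$. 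The one-sided curvature estimate (Theorem~\ref{t:onesided}) is the tool that keeps this staircase trapped in a cone about $\cS$ and keeps $\cS$ itself nearly vertical.

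With the staircase structure in hand the distance estimate is assembled exactly as on the helicoid, where one connects $0$ to $x$ by running up the axis $\cS$ (here literally the $x_3$-axis) to height $x_3(x)$ and then outward along a ruling, a path of length $\lesssim |x_3(x)|+\dist(x,\cS)\lesssim|x|$. In general, for $x\in B^\Sigma_R$ with $|x|\ge C_1 r_0$: the point $x$ sits on a sheet of one of the two multi-valued graphs, and moving almost radially inward within that sheet — permissible because the graph has small gradient — connects $x$ to a point $x'$ at extrinsic radius $\sim r_0$ near $\cS$ along a path of length $\le C|x|$. One then travels in the tubular neighborhood of $\cS$ from $x'$ down to the height of $y$; because $\cS$ is nearly vertical the height to be gained is $\le |x_3(x)|+C r_0\le |x|+C r_0$, and the controlled geometry near the axis makes the cost of that vertical travel comparable, $\le C(|x|+r_0)$. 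This reaches a point within distance $\lesssim r_0$ of $y$. The remaining case $|x|\le C_1 r_0$, together with this last short jump, is handled directly by the scale-$r_0$ description: the component of $B_{C_1 r_0}(y)\cap\Sigma$ through $y$ is a bounded graphical or helicoidal piece and hence has intrinsic diameter $\le C r_0$. Adding up gives $\dist_\Sigma(x,y)\le C(|x|+r_0)$, hence \eqr{e:t1cy1}.

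I expect the main obstacle to be the part of \cite{CM6}--\cite{CM9} invoked in the middle step: extending the $N$-valued graph from the initial scale $\sim r_0$ out to the macroscopic scale $\sim R$ — and vertically — while keeping \emph{every} constant independent of $\Sigma$, $R$ and $r_0$. This is where one must rule out the multi-valued graph spiraling shut, folding back over itself, or the two sheets of the staircase colliding, and it rests on the one-sided curvature estimate together with delicate barrier and flux arguments. A secondary difficulty is the path that runs alongside $\cS$: since $\cS$ is only Lipschitz and the gluing of the two graphs along it is subtle, one must show that near-axis vertical travel costs intrinsic length comparable to the height gained, uniformly in scale, and that the innermost core never forces a long detour. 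Granting those inputs, the estimate reduces to the elementary ``axis plus ruling'' path decomposition above.
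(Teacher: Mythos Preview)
The paper itself does not prove Theorem~\ref{t:cy1}; this is a survey, and the theorem is simply quoted from \cite{CM12} with a brief discussion of its consequences (properness, the intrinsic one-sided estimate). So there is no ``paper's own proof'' to compare your proposal against.

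That said, your outline is broadly in the spirit of the actual argument in \cite{CM12}: one does use the structure theory of \cite{CM6}--\cite{CM9} to obtain the double spiral staircase picture around a point of definite curvature, and the chord--arc bound is then extracted by building a controlled path that follows the axis and then a sheet, much as on the helicoid. You have also correctly flagged the genuinely hard inputs --- the uniform propagation of the multi-valued graph structure from scale $r_0$ out to scale $R$, and the control on intrinsic travel near the Lipschitz axis $\cS$. One point to be careful about: the structure results in \cite{CM6}--\cite{CM9} are stated for disks in \emph{extrinsic} balls with boundary on $\partial B_R$, while your hypothesis is intrinsic ($B^\Sigma_{2R}\subset\Sigma\setminus\partial\Sigma$); bridging that gap --- showing the intrinsic hypothesis buys enough extrinsic room to run Steps A--C --- is itself part of the work in \cite{CM12} and is not automatic from what is recorded in this survey. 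As written, your proposal is a reasonable high-level plan but not a proof; the substance lies entirely in the two obstacles you name.
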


The assumption of a lower curvature bound,
$\sup_{B^{\Sigma}_{r_0}}|A|^2>r_0^{-2}$, in the theorem is a necessary
normalization for a chord-arc bound.  This can easily be seen by
rescaling and translating the helicoid.

 Properness of a complete embedded minimal disk   is an immediate
consequence of Theorem \ref{t:cy1}.  Namely, by \eqr{e:t1cy1}, as
intrinsic distances go to infinity, so do extrinsic distances.
Precisely, if $\Sigma$ is flat, and hence a plane, then obviously
$\Sigma$ is proper and if it is non-flat, then
$\sup_{B^{\Sigma}_{r_0}}|A|^2>r_0^{-2}$ for some $r_0>0$ and hence
$\Sigma$ is proper by \eqr{e:t1cy1}.

A  consequence of Theorem \ref{t:cy1} together with
the one-sided curvature estimate   is the following  version of that estimate
for intrinsic balls:

\begin{Cor} (Colding-Minicozzi, \cite{CM12})   \label{t:one-sided}
There exists $\epsilon>0$, so that if
\begin{equation}
    \Sigma \subset \{x_3>0\} \subset \RR^3
\end{equation}
 is an embedded
minimal disk,   $B^{\Sigma}_{2R} (x) \subset \Sigma
\setminus
\partial \Sigma$, and $|x|<\epsilon\,R$,  then
\begin{equation}        \label{e:graph}
\sup_{ B^{\Sigma}_{R}(x) } |A_{\Sigma}|^2 \leq R^{-2} \, .
\end{equation}
\end{Cor}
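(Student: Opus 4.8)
The plan is to deduce this intrinsic one‑sided estimate from the extrinsic one‑sided curvature estimate (Theorem~\ref{t:onesided}), using the chord‑arc bound (Theorem~\ref{t:cy1}) to pass from the intrinsic hypothesis to the extrinsic hypothesis of that theorem. The flat case is trivial, so assume $\Sigma$ is not flat, and let $r_0<R$ be a scale with $\sup_{B^{\Sigma}_{r_0}(x)}|A|^2>r_0^{-2}$ (if no such scale $\le R$ exists then $|A|\le R^{-1}$ on $B^{\Sigma}_R(x)$ and we are done); then Theorem~\ref{t:cy1} applies at $B^{\Sigma}_{2R}(x)$, comparing intrinsic and extrinsic distances there. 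The point of this comparison is that the connected component $\Gamma$ of an extrinsic ball $B_{\sigma}(0)\cap\Sigma$ that contains $x$ is trapped inside the intrinsic ball $B^{\Sigma}_R(x)$, and hence inside $\Sigma\setminus\partial\Sigma$, once $\sigma$ is a suitable fraction of $R$: a path in $\Gamma$ from $x$ reaching intrinsic distance $R$ would have a point $w$ with $|w|<\sigma$ yet $\dist_\Sigma(w,x)$ arbitrarily close to $R$, contradicting the chord‑arc inequality as soon as $\sigma+|x|+r_0<CR$. Since $|z|^2$ is subharmonic on the minimal surface $\Sigma$, every loop in $\Gamma$ bounds a disk in $\Sigma$ that stays in $\{|z|<\sigma\}\cap\Sigma$, hence in $\Gamma$; so $\Gamma$ is a compact embedded minimal \emph{disk} with $\partial\Gamma\subset\partial B_{\sigma}(0)$, and it lies in $\{x_3>0\}=\{x_3>x_3(0)\}$.

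Theorem~\ref{t:onesided}, applied to $\Gamma$ with $y=0$, then bounds $|A|$ on every component of $B_{\sigma/2}(0)\cap\Gamma$ that meets $B_{\epsilon_0\sigma/2}(0)$; choosing $\epsilon$ small ensures $|x|<\epsilon R<\epsilon_0\sigma/2$, so the component through $x$ is controlled, and in fact (because one‑sidedness and embeddedness force the curvature bound to propagate along the whole component) this already controls $|A|$ on a definite piece of $B^{\Sigma}_R(x)$ around $x$. To reach the remaining points of $B^{\Sigma}_R(x)$---in particular those far from the plane $\{x_3=0\}$---one repeats the argument from a plane sitting just below the point in question, or, for points near the edge of $B^{\Sigma}_R(x)$, re‑centers the chord‑arc bound (which is legitimate because $B^{\Sigma}_R(p)\subset B^{\Sigma}_{2R}(x)$ for $p\in B^{\Sigma}_R(x)$). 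This produces a bound of the form $\sup_{B^{\Sigma}_R(x)}|A|^2\le C_1R^{-2}$.

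The main obstacle is the bookkeeping of scales together with the sharp constant. The chord‑arc bound controls intrinsic distances only out to half the radius of the ball it is applied on; this is exactly why the hypothesis is framed with $B^{\Sigma}_{2R}(x)$, and this factor of $2$ must be spent carefully---so carefully that the naive reduction above only gives $|A|^2\le C_1R^{-2}$ rather than $|A|^2\le R^{-2}$. Recovering the sharp constant requires re‑running, in the intrinsic setting, the harmonic‑height‑function bootstrap underlying the proof of Theorem~\ref{t:onesided} itself, feeding in the chord‑arc comparison at each stage; this is where the real work lies. A secondary difficulty is the case in which the curvature near $x$ is large only at scales comparable to $R$, so that the scale $r_0$ above is not small and the chord‑arc inequality $\sigma+|x|+r_0<CR$ cannot be met; this case is handled by contradiction and compactness, a rescaled sequence of counterexamples subconverging either smoothly to a minimal disk meeting $\{x_3=0\}$ from one side (hence flat, by the strong maximum principle) or, after a blow‑up at the point of worst curvature, to a complete embedded minimal surface of bounded curvature lying in a half‑space (hence a plane, by the half‑space theorem, Theorem~\ref{t:home}), either way contradicting the assumed curvature blow‑up.
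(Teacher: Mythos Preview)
Your overall plan---use the chord-arc bound (Theorem~\ref{t:cy1}) to manufacture a compact embedded minimal disk in an extrinsic ball, then feed that into the extrinsic one-sided estimate (Theorem~\ref{t:onesided})---is exactly what the paper intends, and your first paragraph carries this out correctly.

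The real gap is in the second paragraph, where you extend the bound from the piece $\Gamma'$ near $x$ to all of $B^{\Sigma}_R(x)$. The phrase ``repeat the argument from a plane sitting just below the point in question'' cannot be right: the only half-space hypothesis you have is $\Sigma\subset\{x_3>0\}$, and a point $p\in B^{\Sigma}_R(x)$ may sit at height as large as $(1+\epsilon)R$ above that plane, with no other barrier in sight. Re-centering the chord-arc bound at $p$ is fine, but to invoke Theorem~\ref{t:onesided} again you still need the new center to lie within $\epsilon_0$ of $\{x_3=0\}$, and $p$ need not. What actually propagates the estimate is the \emph{conclusion} of Theorem~\ref{t:onesided}: the component $\Gamma'$ through $x$ is not just curvature-bounded but is a graph with small gradient over $\{x_3=0\}$, so every point of $\Gamma'$---in particular the point where an intrinsic geodesic from $x$ to $p$ first exits $\Gamma'$---has height $o(R)$. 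That exit point \emph{is} close to the plane and has $B^{\Sigma}_R$ around it inside $B^{\Sigma}_{2R}(x)$, so you can iterate; a bounded number of steps (depending only on the chord-arc constant) covers $B^{\Sigma}_R(x)$.

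Your third paragraph over-engineers the rest. The constant $1$ does not require re-running the harmonic bootstrap inside the proof of Theorem~\ref{t:onesided}: that theorem is already quantitative (smaller $\epsilon_0$ gives a flatter graph, hence smaller $|A|$), so shrinking $\epsilon$ in the corollary drives the final constant below $1$. The compactness argument you sketch for the ``$r_0$ comparable to $R$'' case is both unnecessary and, as written, does not work: if no $r_0<R$ satisfies $\sup_{B^{\Sigma}_{r_0}(x)}|A|^2>r_0^{-2}$ then letting $r_0\nearrow R$ already gives the conclusion, while your proposed blow-up at the point of worst curvature sends the plane $\{x_3=0\}$ off to infinity, so the limit is no longer in a half-space and Theorem~\ref{t:home} does not apply.
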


 As a corollary of this intrinsic one-sided curvature estimate we get that
the second, and ``more ambitious'', of Calabi's conjectures is
also true for {\underline{embedded}} minimal disks.   

In fact, \cite{CM12} proved both of Calabi's conjectures and properness also for embedded surfaces
with finite topology:

\begin{Thm}   (Colding-Minicozzi, \cite{CM12})     \label{c:cy1}
The plane is the only complete embedded minimal surface with finite topology in $\RR^3$
 in a halfspace.
\end{Thm}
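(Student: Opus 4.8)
The plan is to reduce Theorem~\ref{c:cy1} to two inputs: first, that every complete embedded minimal surface of finite topology in $\RR^3$ is proper; and second, the Hoffman--Meeks strong halfspace theorem (Theorem~\ref{t:home}), according to which a connected complete properly immersed minimal surface contained in a halfspace is a horizontal plane. Granting properness, the corollary follows at once: after a rigid motion we may assume the surface $\Sigma$ lies in $\{x_3\ge 0\}$; if $\Sigma$ meets $\{x_3=0\}$ then, since $x_3$ is harmonic on $\Sigma$, the strong maximum principle forces $x_3\equiv 0$ and $\Sigma$ is the plane $\{x_3=0\}$, while if $\Sigma\subset\{x_3>0\}$ then Theorem~\ref{t:home} shows $\Sigma$ is a horizontal plane. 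So the real content is the properness assertion.

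First I would treat the case that $\Sigma$ is a disk, where properness is immediate from the chord-arc bound of Theorem~\ref{t:cy1}. If $\Sigma$ is not flat then $|A|$ is nonzero at some point, which we take as the base point $0\in\Sigma$, so $\sup_{B^{\Sigma}_{r_0}}|A|^2>r_0^{-2}$ for all small $r_0$, and \eqr{e:t1cy1} gives $|x|\ge C\,\dist_{\Sigma}(x,0)-r_0$ for every $x\in\Sigma$; letting $\dist_{\Sigma}(x,0)\to\infty$ (legitimate since $\Sigma$ is complete and boundaryless, hence of infinite intrinsic diameter) forces $|x|\to\infty$, which is exactly properness. In the halfspace situation one can even bypass Hoffman--Meeks here: for a complete embedded minimal disk in $\{x_3>0\}$ and any fixed $x\in\Sigma$, applying the intrinsic one-sided curvature estimate (Corollary~\ref{t:one-sided}) to $B^{\Sigma}_{2R}(x)=\Sigma$ for $R$ large gives $|A|^2(x)\le R^{-2}\to 0$, so $\Sigma$ is already flat.

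For general finite topology I would decompose $\Sigma$ into a compact piece $\Sigma_0$, whose boundary is a finite union of circles, together with finitely many annular ends $E_1,\dots,E_k$; it then suffices to prove each $E_i$ is proper, i.e.\ that points running out to infinity along the end are unbounded in $\RR^3$. The difficulty is that $E_i$ is not simply connected, so neither the chord-arc bound nor the one-sided estimate applies to it as stated, and one must rule out an embedded minimal annular end that spirals and accumulates in a bounded set. My plan would be to reduce back to the disk case using the local structure theory of embedded minimal surfaces of finite genus from \cite{CM6}--\cite{CM10}: where the curvature of the end is large one gets, from the lamination and one-sided machinery, large embedded multi-valued graphs inside $E_i$, to which Theorem~\ref{t:cy1} applies and which therefore escape to infinity; where the curvature is controlled one cuts $E_i$ along the short non-contractible curves produced by the structure theory (cf.\ Theorem~\ref{t:cm7c} and Corollary~\ref{t:cm7d}) into graphical disk-type pieces, again handled by Theorem~\ref{t:cy1}. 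Patching these pieces gives properness of $E_i$, hence of $\Sigma$, and Theorem~\ref{t:home} then finishes the proof.

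I expect the annular-end step to be the main obstacle. The disk tools are genuinely restricted to the simply connected setting, embeddedness is essential throughout (the rescaled catenoids show why), and passing to the universal cover of an end destroys embeddedness, so the reduction to disks must be carried out inside $\Sigma$ itself and uniformly as one travels out along the end. This is precisely the extra argument that \cite{CM12} supplies on top of the chord-arc estimate, and it is the substantive part of any proof of Theorem~\ref{c:cy1}.
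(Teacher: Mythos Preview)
Your proposal is correct and tracks the paper's own discussion closely. The survey does not give a self-contained proof of Theorem~\ref{c:cy1}: for disks it indicates that the intrinsic one-sided curvature estimate (Corollary~\ref{t:one-sided}) directly forces $|A|\equiv 0$ in a halfspace---precisely your second argument in the disk case---and for general finite topology it simply cites \cite{CM12}. Your primary route, namely properness (Theorem~\ref{c:cy2}) followed by Hoffman--Meeks (Theorem~\ref{t:home}) with the maximum principle handling the boundary of the halfspace, is an equally valid deduction and indeed the cleanest one once Theorem~\ref{c:cy2} is available; the survey lists the two theorems side by side without making that implication explicit. You have also correctly located the real content: extending the chord-arc/properness argument from disks to annular ends is the substantive step, and it is supplied in \cite{CM12} rather than anywhere in this survey.
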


\begin{Thm}   (Colding-Minicozzi, \cite{CM12})   \label{c:cy2}
A complete embedded minimal surface with finite topology in $\RR^3$
 must be proper.
\end{Thm}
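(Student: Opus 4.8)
The plan is to bootstrap from the disk case, which is settled by the chord-arc bound, to arbitrary finite topology by analysing the ends of $\Sigma$. Finite topology means that, outside a compact subdomain $\Sigma_0\subset\Sigma$ with smooth boundary, $\Sigma$ is a disjoint union of finitely many annular ends $E_1,\dots,E_m$, each diffeomorphic to $\SS^1\times[0,\infty)$; since each end is genus zero, the genus of $\Sigma$ is confined to $\Sigma_0$ and the planar-domain structure theory of \cite{CM10} is available on the ends even if $\Sigma$ has positive genus. Because $\Sigma_0$ is compact, $\Sigma$ is proper if and only if $\bar B_\rho(0)\cap E_i$ is compact for every $\rho$ and every $i$ — equivalently, no $E_i$ carries a sequence $p_j$ with $\dist_\Sigma(p_j,\partial E_i)\to\infty$ while $\{p_j\}$ stays in a fixed ball of $\RR^3$. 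If $\Sigma$ is itself a disk it has a single end and the statement is immediate from Theorem \ref{t:cy1}: a flat $\Sigma$ is a plane, hence proper, and otherwise $\sup_{B^\Sigma_{r_0}}|A|^2>r_0^{-2}$ for some $r_0>0$, so $C\,\dist_\Sigma(x,0)<|x|+r_0$ on every intrinsic ball about $0$ and extrinsic distance diverges with intrinsic distance.

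So fix $R$ with $\Sigma_0\subset B_R(0)$ and an end $E=E_i$, and suppose, toward a contradiction, that $E$ is not proper: take $p_j\in E$ with $L_j:=\dist_\Sigma(p_j,\partial E)\to\infty$ and, along a subsequence, $p_j\to p_\infty$ with $|p_\infty|\le R$. The natural approach is to cut $E$ along a properly embedded ray $\delta$, making $E\setminus\delta$ a simply-connected embedded minimal surface to which the chord-arc bound Theorem \ref{t:cy1} and the one-sided curvature estimate Theorem \ref{t:onesided} apply; as long as the geometry of the end does not force $\delta$ to crowd the divergent points $p_j$ — which is the case when $E$ is uniformly locally simply connected at infinity, i.e.\ $B^\Sigma_{\rho_0}(q)$ is a disk for some fixed $\rho_0>0$ and all $q\in E$ far enough out — this produces the required intrinsic-to-extrinsic distance comparison along $E$, which joined with the path of length $L_j\to\infty$ from $p_j$ to $\partial E\subset B_R(0)$ forces $|p_j|\to\infty$, a contradiction. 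Hence a non-proper end cannot be ULSC at infinity.

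The remaining case — $E$ \emph{not} ULSC at infinity — is the crux, and where essentially all the work lies. Then there are noncontractible loops $\gamma_j\subset E$ of length $\to0$ that eventually exit every compact subset of $E$. Consecutive loops $\gamma_j,\gamma_{j+1}$ are homologous in the annulus $E$, hence cobound a compact minimal subannulus $A_j\subset E$, and by the convex hull property (Proposition \ref{p:cvx}) $A_j$ lies in the convex hull of $\gamma_j\cup\gamma_{j+1}$. Passing to a subsequence along which the shrinking loops converge to a single point $p_\infty$, the entire sub-end of $E$ beyond $\gamma_J$ is then trapped in a ball $B_{\delta_J}(p_\infty)$ with $\delta_J\to0$: an intrinsically complete, embedded minimal annular end confined to an arbitrarily small extrinsic ball. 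This is precisely the pathology the theorem forbids — the analogue for a single surface of the examples of Theorem \ref{t:tams} — so ruling it out is the main obstacle. The plan here is to rescale the nested sub-ends and feed them into the no-mixing theorem (Theorem \ref{c:main}) and the neck structure theorem (Theorem \ref{t:t5.2a}), which force the necks to be modeled on catenoids stacked between parallel planes; a nested chain of such catenoidal necks collapsing to $p_\infty$ is then incompatible both with $\Sigma$ having only finitely many ends and with the uniform bound on the first Betti number of $B_\rho(0)\cap\Sigma$ coming from Lemma \ref{l:cvxhull}. Once this case is excluded every end is proper, and hence so is $\Sigma$.
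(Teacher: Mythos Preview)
The paper does not actually prove this theorem; it records it as a result of \cite{CM12}, noting only that the disk case follows directly from the chord-arc bound (Theorem~\ref{t:cy1}). So there is no proof here to compare against, and I can only assess your reconstruction on its own merits.

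Your disk case and the decomposition into a compact core plus annular ends are fine. But both branches of the end analysis have genuine gaps. In the ULSC case, after cutting along a ray $\delta$ you need large intrinsic balls about the $p_j$ in $E\setminus\delta$ that stay away from the new boundary $\delta\cup\partial E$, and you need a curvature-concentration point $\sup_{B^\Sigma_{r_0}}|A|^2>r_0^{-2}$ near the basepoint to invoke Theorem~\ref{t:cy1}; neither is justified. (The subcase where $|A|$ stays bounded on the end should be peeled off first and handled by the Bessa--Jorge--Oliveira-Filho and Rosenberg results the paper cites.) More seriously, in the non-ULSC case your convex hull step does not establish what you claim: Proposition~\ref{p:cvx} traps each compact sub-annulus $A_j$ in the convex hull of $\gamma_j\cup\gamma_{j+1}$, but it gives no control whatsoever on the extrinsic distance \emph{between} consecutive loops, so there is no reason the $\gamma_j$ converge to a single point $p_\infty$ --- they could march off to infinity in $\RR^3$ with long thin tubes between them. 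Your closing plan then compounds the problem: Theorems~\ref{c:main} and~\ref{t:t5.2a} are stated for sequences of planar domains in balls $B_{R_i}$ with $R_i\to\infty$, not for rescalings of a single non-proper end, and Lemma~\ref{l:cvxhull} explicitly assumes $\Sigma$ is \emph{properly} embedded, so invoking it here is circular.
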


There have been several properness results for Riemannian three-manifolds.  
W. Meeks and H. Rosenberg, \cite{MeR3}, generalized this to get a local version in Riemannian three-manifolds;
they also extended it to  embedded minimal surfaces with finite genus and
positive injectivity radius
in $\RR^3$.  In \cite{Cb}, B. Coskunuzer  proved properness for area minimizing disks in hyperbolic three-space, assuming that there is at least one $C^1$ point in the boundary at infinity.  Finally, Daniel, Meeks and Rosenberg proved a version for Lie Groups in \cite{DMR}.

There has been extensive  work on both properness   and the halfspace property    assuming various {\underline{curvature}}
{\underline{bounds}}. Jorge and Xavier, \cite{JXa1} and
\cite{JXa2}, showed that there cannot exist a complete immersed
minimal surface with {\underline{bounded}} {\underline{curvature}}
in $\cap_i \{ x_i > 0 \}$; later Xavier proved that the plane is
the only such surface in a halfspace, \cite{Xa}. Recently, G.P.
Bessa, Jorge and G. Oliveira-Filho, \cite{BJO}, and H. Rosenberg,
\cite{Ro}, have shown that if complete  embedded minimal surface
has bounded curvature, then it must be proper. This properness was
extended to embedded minimal surfaces with locally bounded
curvature and finite topology by Meeks and Rosenberg in
\cite{MeR2}; finite topology was subsequently replaced by finite
genus in \cite{MePRs1} by Meeks, J. Perez and A. Ros.

Inspired by Nadirashvili's examples, F. Martin and S. Morales
 constructed  in \cite{MaMo2} a complete bounded minimal
immersion which is proper in the (open) unit ball.  That is, the
preimages of compact subsets of the (open) unit ball are compact
in the surface and the image of the surface accumulates on the
boundary of the unit ball. They extended this  in
 \cite{MaMo3} to show that any convex, possibly noncompact or
nonsmooth, region of $\RR^3$ admits a proper complete minimal
immersion of the unit disk.   There are a number of interesting related results,
including \cite{AFM}, \cite{MaMeNa}, and \cite{ANa}.

\section{Mean curvature flow}  

We will now turn to the gradient flow for volume, i.e., mean curvature flow (or MCF).  This is the higher dimensional analog of the curve shortening flow.   In this section, we will give a rapid overview of the subject.

A one-parameter family of hypersurfaces $\Sigma_t \subset \RR^{n+1}$ flows by mean curvature if
\begin{equation}
	\partial_t \, x = - H \, \nn \, , \notag 
\end{equation}
where $\nn$ is the unit normal and $H$ is the mean curvature.  
Since the first variation formula gives that
\begin{equation}
 \frac{d}{dt} \,   \, \Vol \, (\Sigma_t)  = \int_{\Sigma_t}  \langle \partial_t \, x , H \, \nn \rangle  \, , 
 \notag
\end{equation}
we see that mean curvature flow is the (negative) gradient flow for volume and
\begin{equation}
	\frac{d}{dt} \,   \Vol (\Sigma_t) = - \int_{\Sigma_t} \, H^2  \, . \notag
\end{equation}
Minimal surfaces (where $H=0$) are fixed points for this flow.  The next simplest example is given by concentric round $n$-dimensional spheres of radius $\sqrt{-2nt}$ for $t< 0$.

 \begin{figure}[htbp]
\centering\includegraphics[totalheight=.4\textheight, width=1\textwidth]{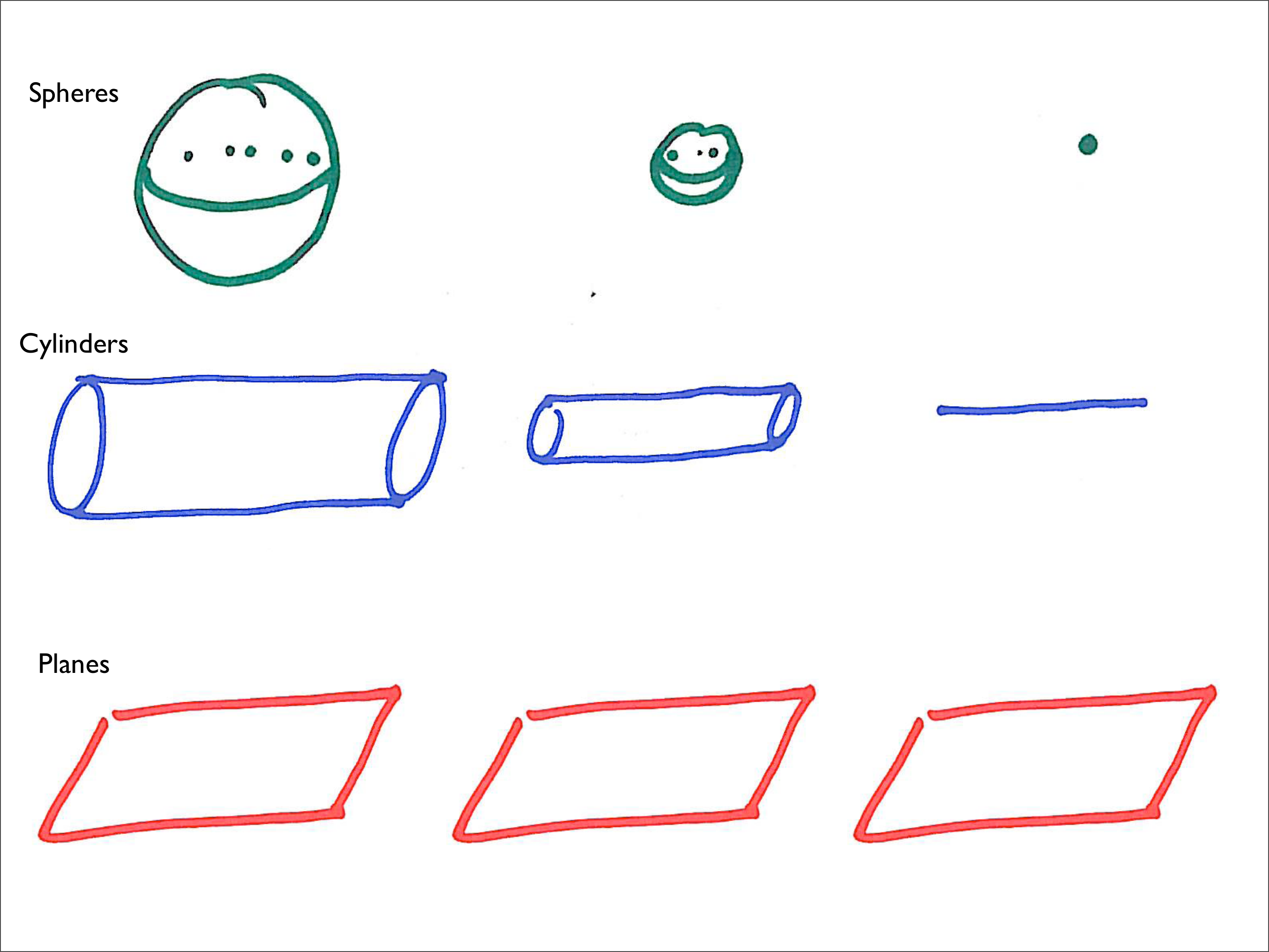}
\caption{Cylinders, spheres and planes are self-similar solutions of the mean curvature flow.
The shape is preserved, but the scale changes with time.}   
  \end{figure}

\subsection{MCF of graphs}

We saw earlier that the mean curvature of the graph of a function $u: \RR^n \to \RR$ is given by
\begin{equation}
	H   =  - \dv_{\RR^n} \, \, \left(  \frac{ \nabla_{\RR^n} u }{ \sqrt{1 + \left| \nabla_{\RR^n} u  \right|^2 } } \right)  \, .  \notag 
\end{equation}  	
Thus, the graph of $u(x,t)$ flows by MCF if  
\begin{equation}
	 \frac{\partial u}{\partial t} =  \left( 1 + \left| \nabla_{\RR^n} u  \right|^2 \right)^{\frac{1}{2} }  \, 
	 \dv_{\RR^n} \, \, \left(  \frac{ \nabla_{\RR^n} u }{ \sqrt{1 + \left| \nabla_{\RR^n} u  \right|^2 } } \right)   \, .  \notag 
\end{equation}  
The factor $\left( 1 + \left| \nabla_{\RR^n} u  \right|^2 \right)^{\frac{1}{2} } $ on the right compensates for the fact that the $x_{n+1}$ direction is not normal to the graph.

We already saw that Calabi's grim reaper 
   $
u(x,\,t)=t-\log\sin x  $ gives an example of graphs flowing by mean curvature; see \cite{AW} and \cite{CSS} for similar examples and stability results.

\subsection{Self-similar shrinkers}

Let $\Sigma_t \subset \RR^{n+1}$ be a  one-parameter family of 
hypersurfaces flowing by MCF for $t< 0$.
$\Sigma_t$ is said to be a self-similar shrinker if $$\Sigma_t = \sqrt{-t} \, \Sigma_{-1}$$ for all $t< 0$.
We saw that spheres of radius $\sqrt{-2nt}$ give such a solution.
Unlike the case of curves, numerical evidence suggests that a complete classification of embedded self-shrinkers 
is impossible.  In spite of all the numerical evidence, there are very few rigorous examples of self-shrinkers.

In 1992, 
 Angenent, \cite{A}, constructed a self-similar shrinking donut in $\RR^3$.
The shrinking donut was given by rotating a simple closed curve around an axis.  See
Kleene and M\"oller, \cite{KlMo}, for a classification of self-shrinkers of rotation (including ones with boundary).
 
 \begin{figure}[htbp]
\centering\includegraphics[totalheight=.4\textheight, width=.75\textwidth]{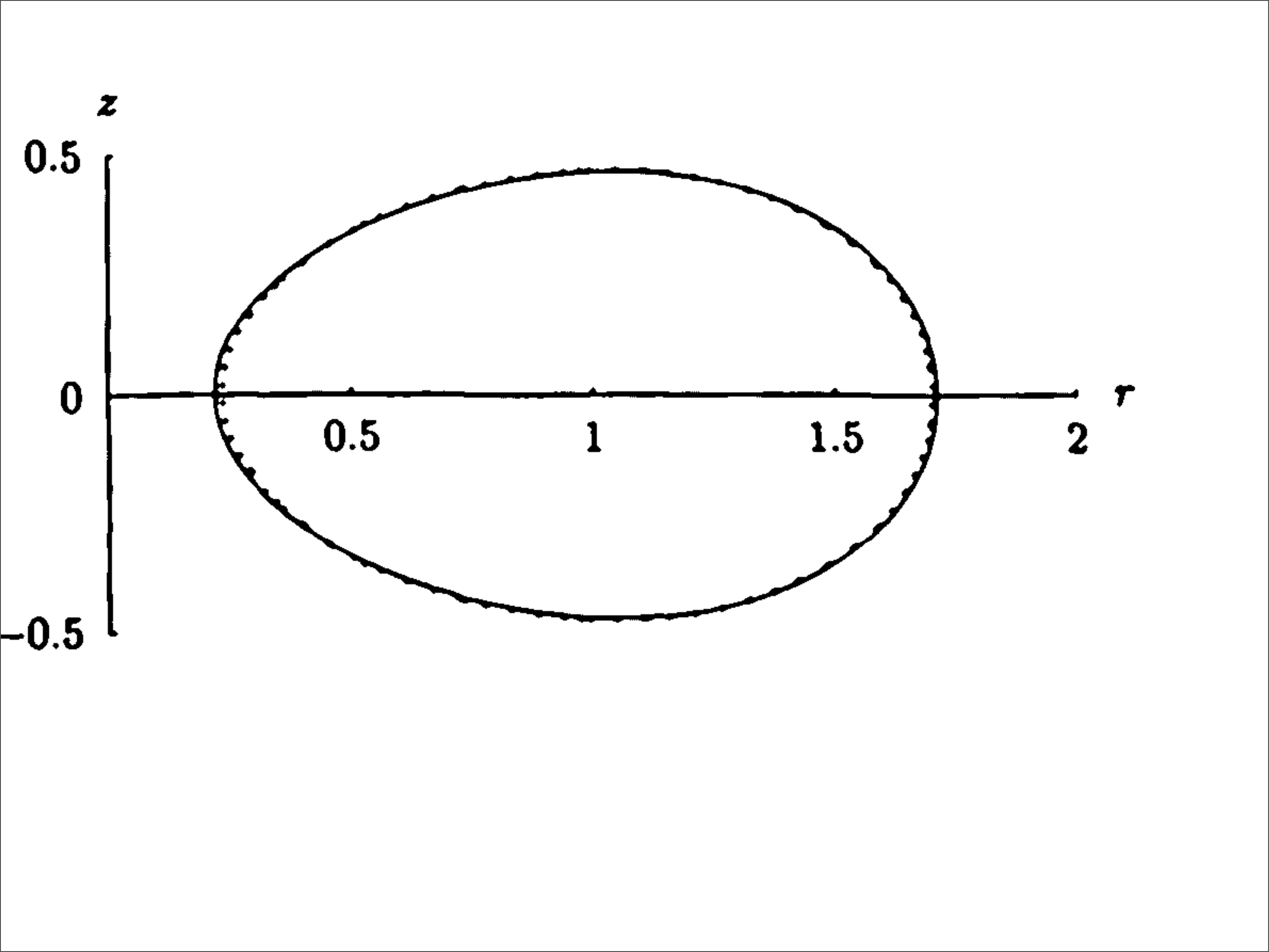}
\caption{Angenent's shrinking donut from numerical simulations of D. Chopp, \cite{Ch}.  The vertical $z$-axis is the axis of rotation
and the horizontal $r$-axis is a line of reflection symmetry.}   
  \end{figure}

\subsection{The maximum principle}

The parabolic maximum principle plays an important role in mean curvature flow.  
For instance, it is used   to prove the following key facts:
\begin{enumerate}
\item If two closed hypersurfaces are disjoint, then they remain disjoint under MCF.
\item A closed embedded hypersurface remains embedded under MCF.
\item If a closed hypersurface is convex, then it remains convex under MCF.
\item Likewise, mean convexity (i.e, $H>0$) is preserved under MCF.
\end{enumerate}

In 1989, Grayson, \cite{G2}, showed that his result for curves does not extend to surfaces.  In particular, he showed that 
a dumbbell with a sufficiently long and narrow bar will develop a pinching singularity before extinction.  A later proof was given by Angenent, \cite{A}, using the shrinking donut and the avoidance property (1).
Figures \ref{f:figdb1} to \ref{f:figdb2} show $8$ snapshots in time of the evolution of a dumbbell; the figures were created by computer simulation by U. Mayer.


\begin{figure}[htbp]
    \begin{minipage}[t]{0.5\textwidth}
    \centering\includegraphics[totalheight=.12\textheight, width=.95\textwidth]{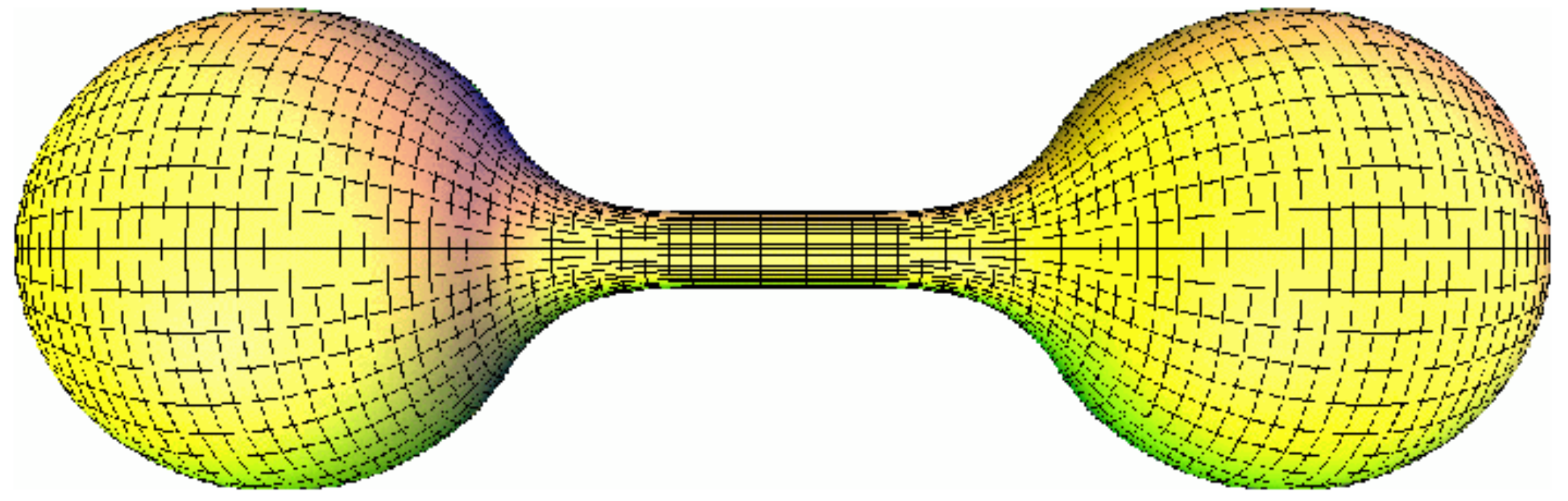}
    \end{minipage}\begin{minipage}[t]{0.5\textwidth}
    \centering\includegraphics[totalheight=.12\textheight, width=.95\textwidth]{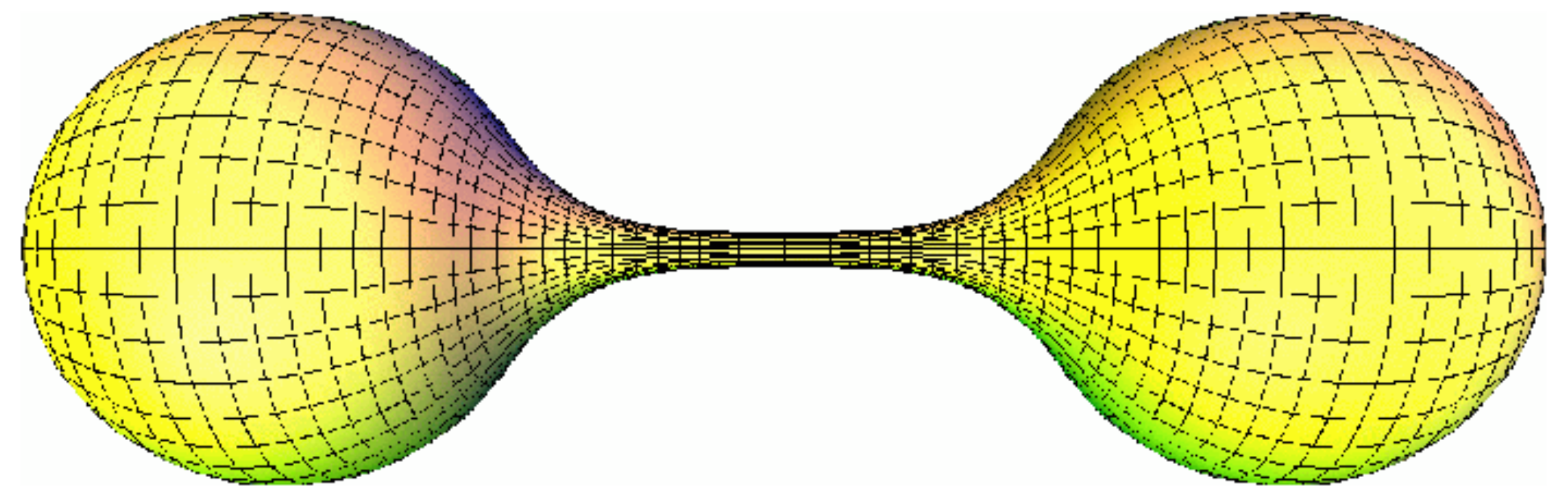}
    \end{minipage}
    \caption{Grayson's dumbbell; initial surface and step 1.}   \label{f:figdb1}
\end{figure}

\begin{figure}[htbp]
    \begin{minipage}[t]{0.5\textwidth}
    \centering\includegraphics[totalheight=.12\textheight, width=.95\textwidth]{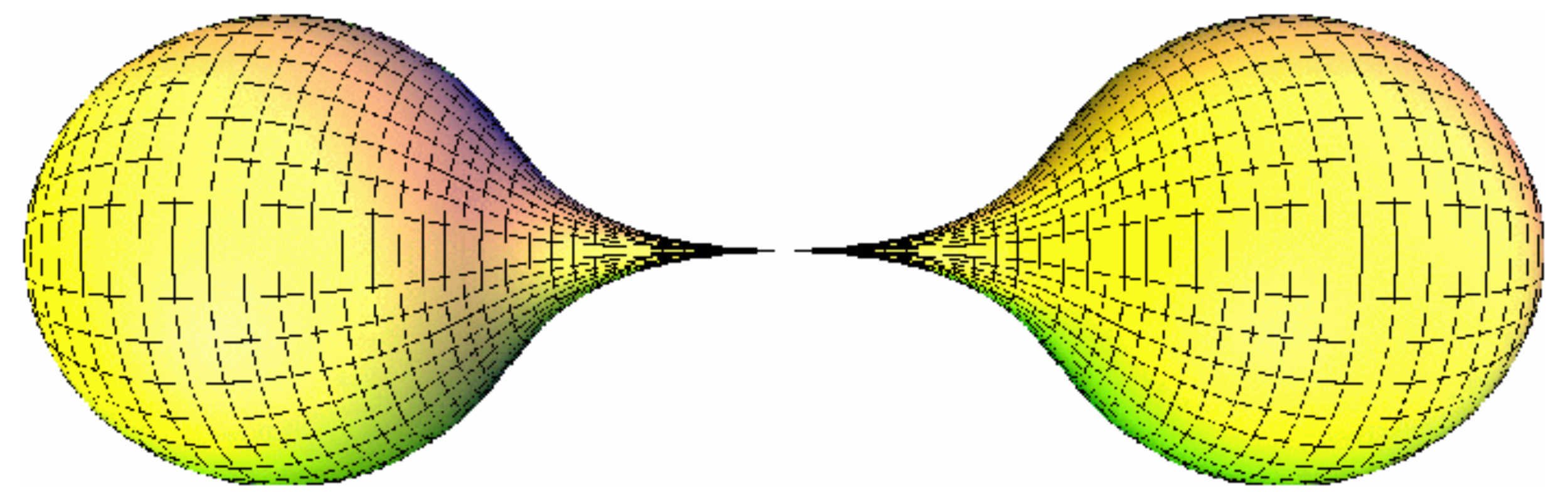}
    \end{minipage}\begin{minipage}[t]{0.5\textwidth}
    \centering\includegraphics[totalheight=.12\textheight, width=.95\textwidth]{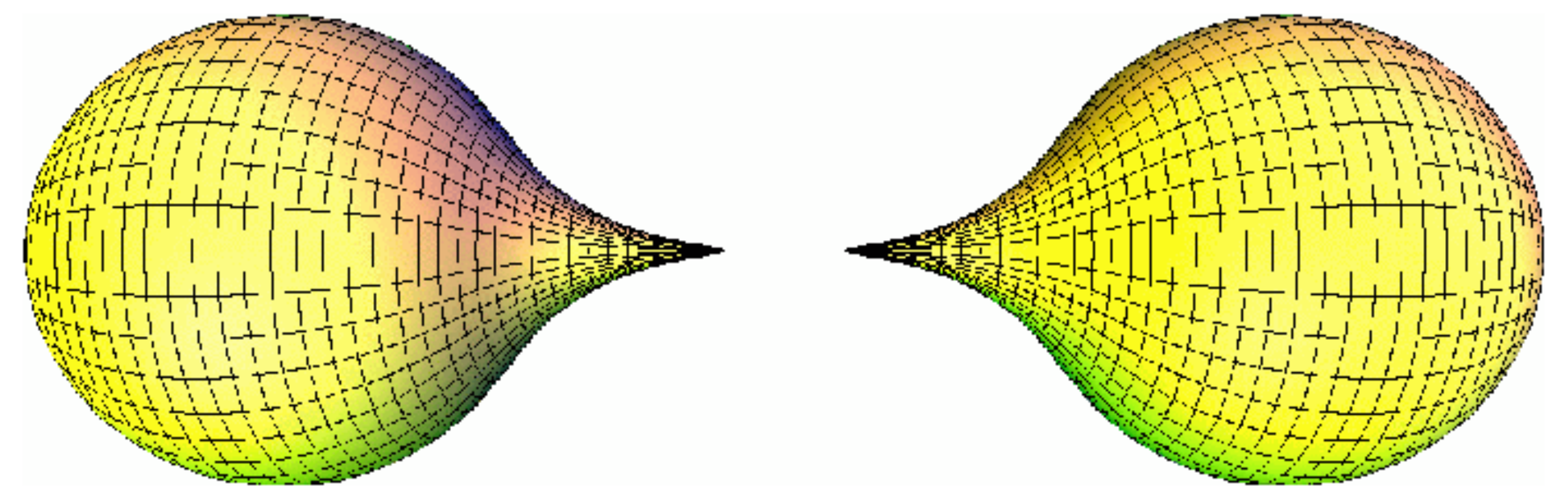}
    \end{minipage}
    \caption{The dumbbell; steps 2 and 3.}   
\end{figure}

\begin{figure}[htbp]
    \begin{minipage}[t]{0.5\textwidth}
    \centering\includegraphics[totalheight=.12\textheight, width=.95\textwidth]{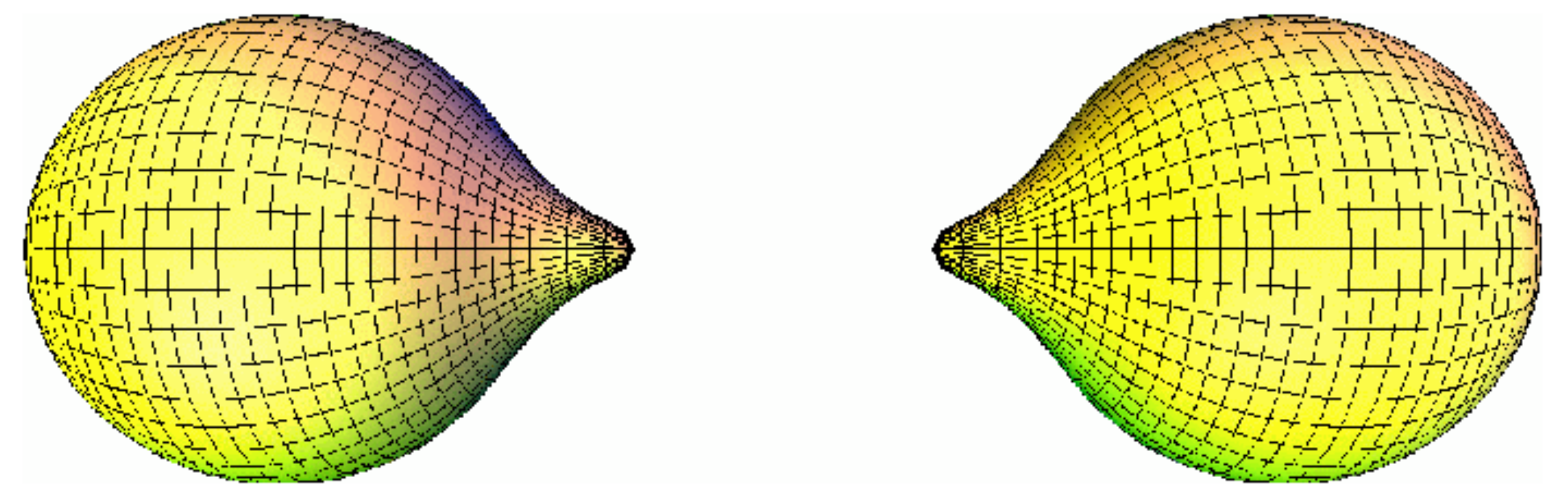}
    \end{minipage}\begin{minipage}[t]{0.5\textwidth}
    \centering\includegraphics[totalheight=.12\textheight, width=.95\textwidth]{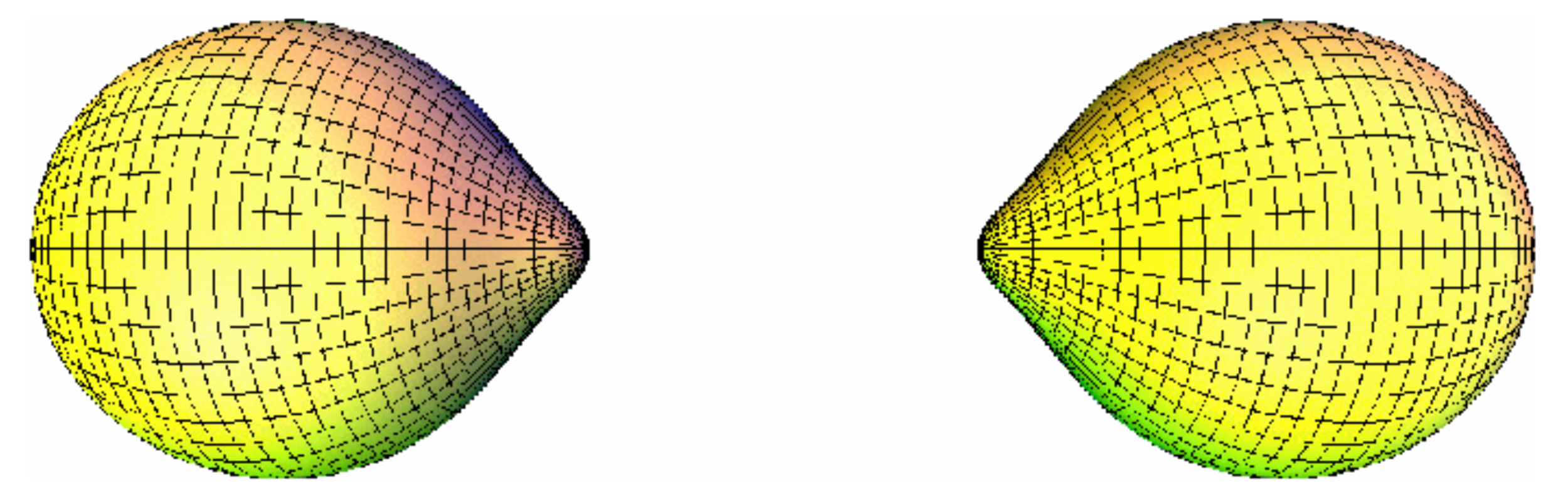}
    \end{minipage}
    \caption{The dumbbell; steps 4 and 5.}  
\end{figure}

\begin{figure}[htbp]
    \begin{minipage}[t]{0.5\textwidth}
    \centering\includegraphics[totalheight=.12\textheight, width=.95\textwidth]{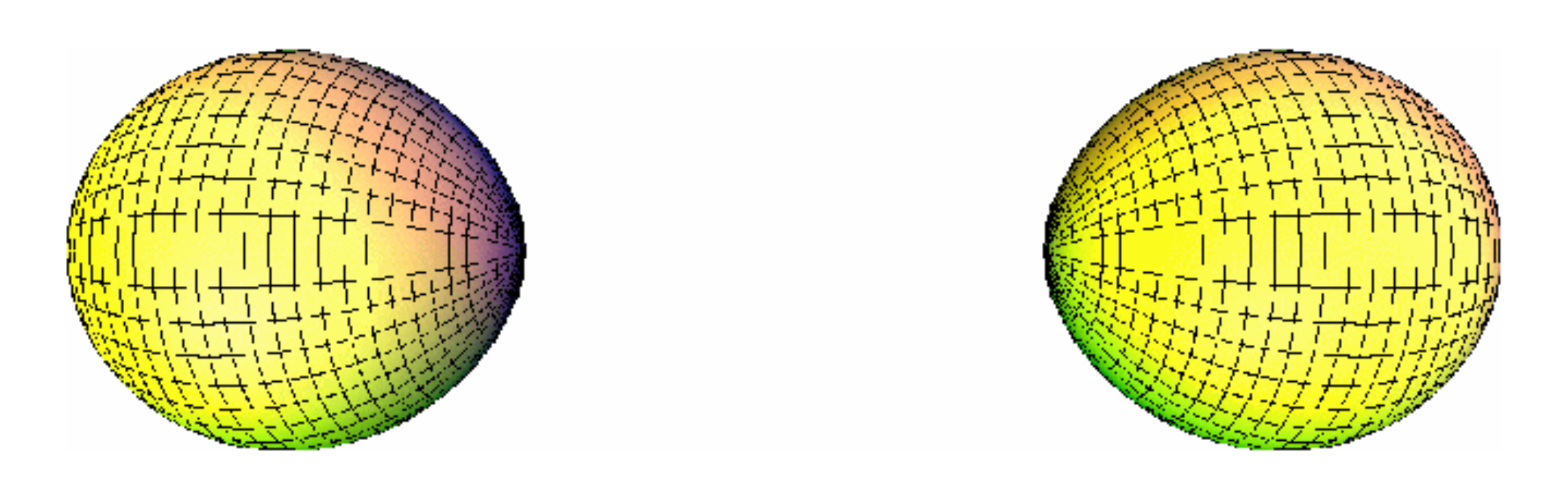}
    \end{minipage}\begin{minipage}[t]{0.5\textwidth}
    \centering\includegraphics[totalheight=.12\textheight, width=.95\textwidth]{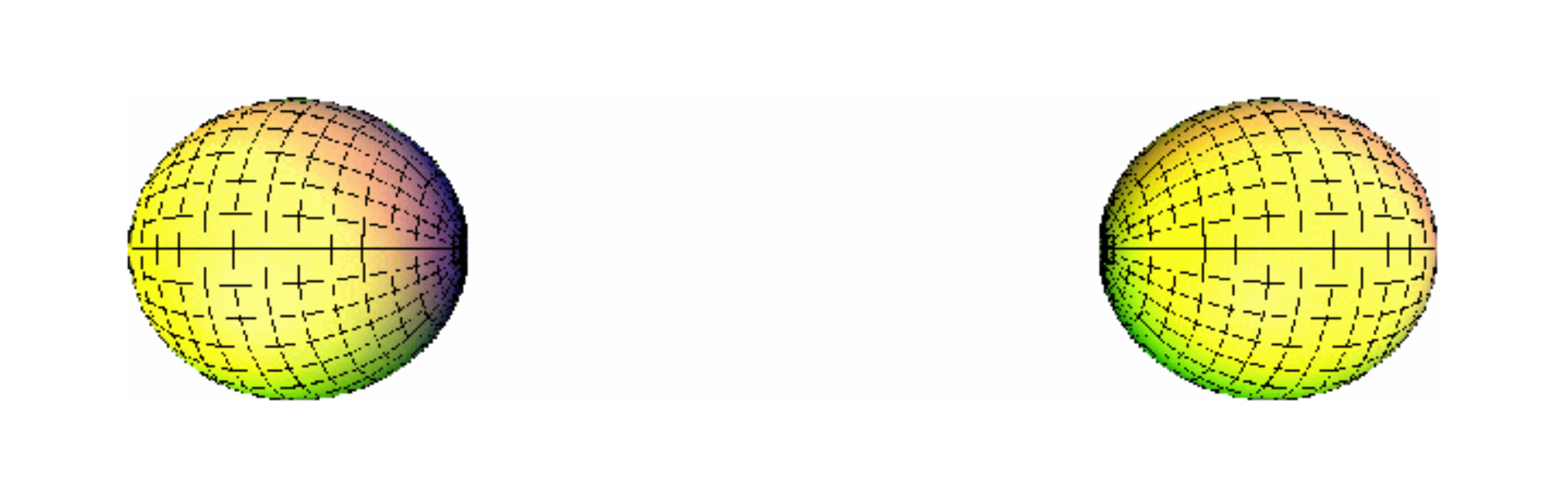}
    \end{minipage}
    \caption{The dumbbell; steps 6 and 7.}   \label{f:figdb2}
\end{figure}

\subsection{The self-shrinker equation}


A MCF $M_t$ is   a self-similar shrinker if $M_{t}=\sqrt{-t}M_{-1}$ for $t<0$.  
This is equivalent to that
$\Sigma=M_{-1}$ satisfies the equation{\footnote{This equation differs by a factor of two from Huisken's definition of a self-shrinker; this is because Huisken works with the time $-1/2$ slice.}}
\begin{equation}  
H=\frac{\langle x,\nn \rangle}{2}\, .   \notag
\end{equation}
That is:  $M_t=\sqrt{-t}M_{-1}$  $\iff$  $M_{-1}$ satisfies $H=\frac{\langle x,\nn \rangle}{2}$.

\begin{figure}[htbp]
    \begin{minipage}[t]{0.5\textwidth}
    \centering\includegraphics[totalheight=.3\textheight, width=1\textwidth]{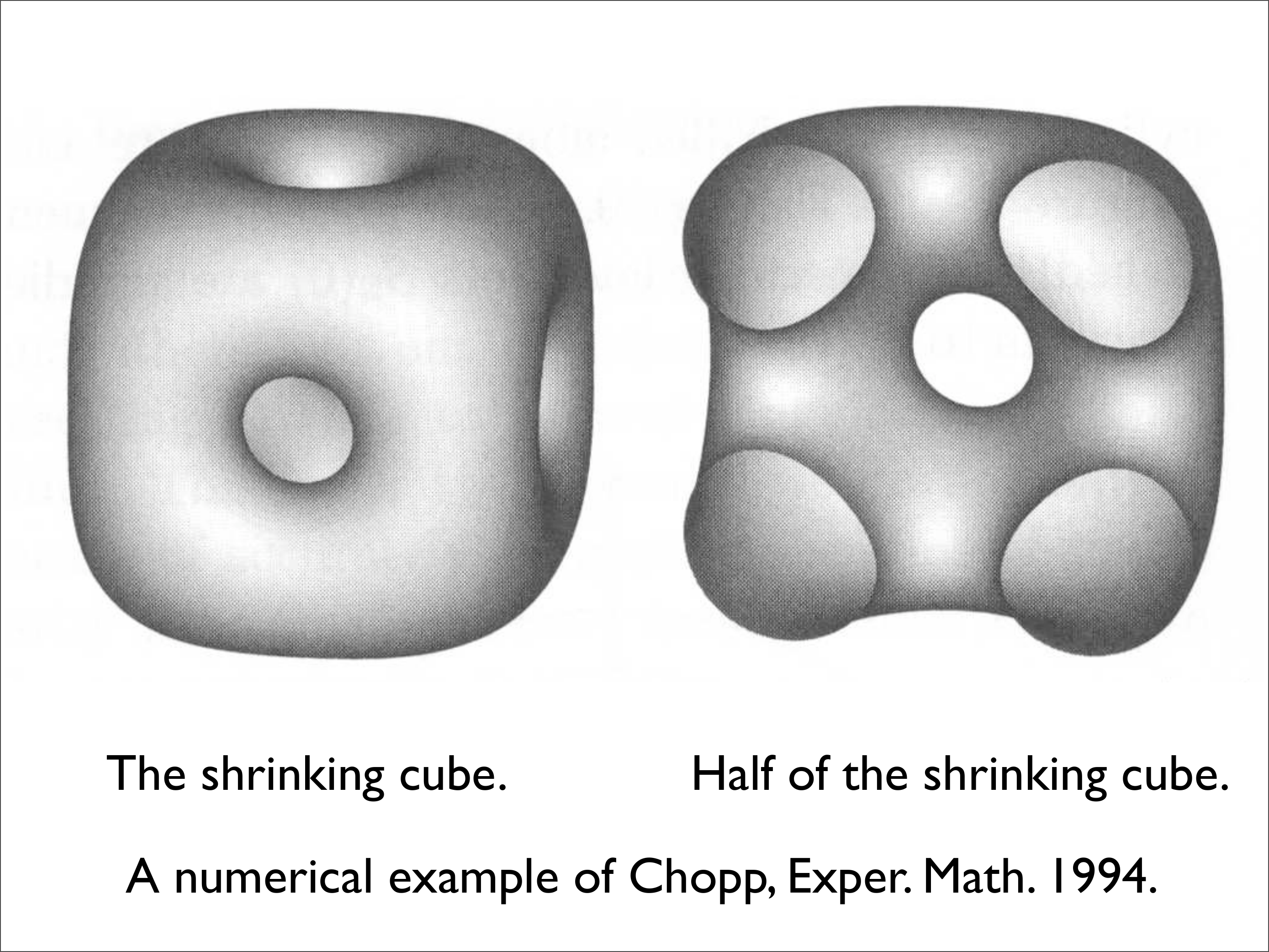}
    \caption{A numerical example of a closed shrinker from Chopp, \cite{Ch}.}  
    \end{minipage}\begin{minipage}[t]{0.5\textwidth}
    \centering\includegraphics[totalheight=.3\textheight, width=1\textwidth]{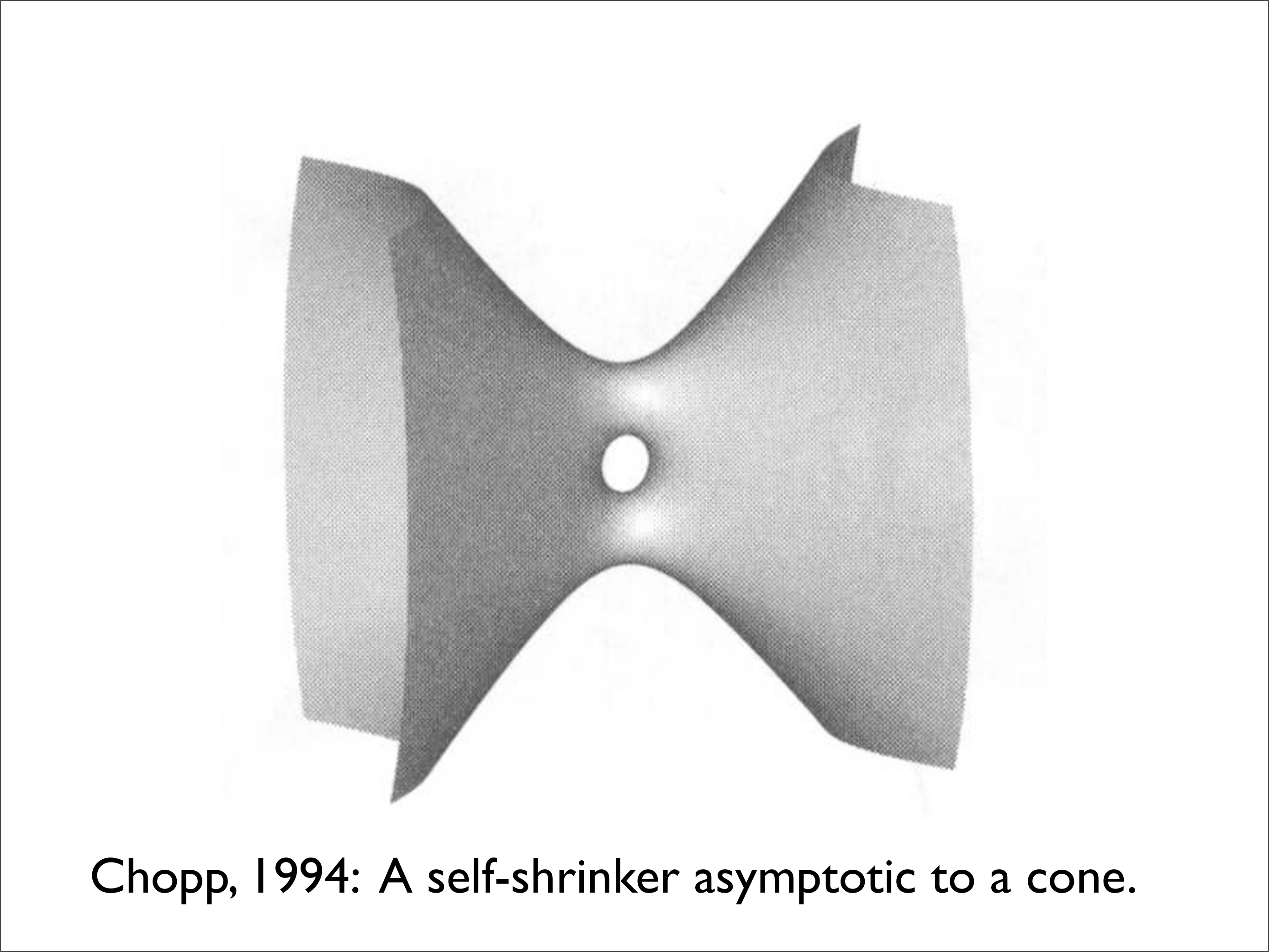}
    \caption{A non-compact numerical example from Chopp, \cite{Ch}.} 
    \end{minipage}
\end{figure}

The self-shrinker equation arises variationally in two closely related ways: as minimal surfaces for a conformally changed metric and as critical points for a weighted area functional.  We return to the second later, but state the first now:

\begin{Lem}
  $\Sigma$ is a self-shrinker
 $\iff$
 $\Sigma$ is a minimal surface in the metric 
 \begin{equation}
 	g_{ij}=\e^{-\frac{|x|^2}{2n}}\,\delta_{ij} \, .  \notag
\end{equation}
\end{Lem}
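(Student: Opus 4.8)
The plan is to recognize that a hypersurface is minimal in the conformal metric $g_{ij} = \e^{2\phi}\,\delta_{ij}$, with $\phi = -\frac{|x|^2}{4n}$, exactly when it is a critical point of the weighted area functional $\Sigma \mapsto \int_{\Sigma} \e^{n\phi}$, and then to compute the first variation of this functional using the first variation of area established earlier in the survey.

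First I would note that the $g$-induced metric on any $n$-dimensional hypersurface $\Sigma \subset \RR^{n+1}$ is $\e^{2\phi}$ times the Euclidean induced metric, so the $g$-volume form is $\e^{n\phi}$ times the Euclidean one; hence $\Vol_g(\Sigma) = \int_{\Sigma} \e^{n\phi}$, where on the right the area element is Euclidean. Thus $\Sigma$ is minimal in $g$ if and only if it is a critical point of $F(\Sigma) = \int_{\Sigma} \e^{n\phi}$. Tangential variations do not change $F$, so it suffices to test normal variations $\partial_s x = f\,\nn$.

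Next I would compute $\frac{d}{ds}\big|_{s=0} F(\Sigma_s)$ for such a variation. There are two contributions: the Euclidean area element varies as $Hf$ times itself, by the first variation formula (with the paper's convention $H = \dv_{\Sigma}\nn$), while the integrand varies as $\frac{d}{ds}\e^{n\phi} = n\,\e^{n\phi}\,\langle \nabla\phi, f\,\nn\rangle$. Adding these,
\begin{equation}
	\frac{d}{ds}\Big|_{s=0} \int_{\Sigma_s} \e^{n\phi} = \int_{\Sigma} f\,\bigl( H + n\,\langle \nabla\phi, \nn\rangle \bigr)\, \e^{n\phi}\, , \notag
\end{equation}
so $\Sigma$ is a critical point (for all $f$) if and only if $H = -n\,\langle\nabla\phi,\nn\rangle$.

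Finally I would substitute $\phi = -\frac{|x|^2}{4n}$, so that $\nabla\phi = -\frac{x}{2n}$ and $-n\,\langle\nabla\phi,\nn\rangle = \frac{\langle x,\nn\rangle}{2}$, which is precisely the self-shrinker equation $H = \frac{\langle x,\nn\rangle}{2}$. The computation itself is routine; the only point that needs care is the bookkeeping of constants — matching the trace normalization $H = \dv_{\Sigma}\nn$ against the factor $2n$ in the exponent so that the coefficient $\frac{1}{2}$ comes out correctly. (Equivalently, one may invoke the conformal transformation law $H_g = \e^{-\phi}\bigl(H + n\,\langle\nabla\phi,\nn\rangle\bigr)$ for the mean curvature under $g = \e^{2\phi}\,\delta$, with the same sign convention, and substitute $\phi$ as above; this reaches the same conclusion but requires more preliminary computation than the first-variation route.)
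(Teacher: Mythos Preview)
Your argument is correct and is exactly the first-variation computation the paper alludes to when it says ``the proof follows immediately from the first variation.'' The bookkeeping of constants is right: with $\phi=-|x|^2/(4n)$ the weighted area is $\int_\Sigma \e^{n\phi}=\int_\Sigma \e^{-|x|^2/4}$, and the critical-point equation $H=-n\langle\nabla\phi,\nn\rangle$ reduces to $H=\tfrac{1}{2}\langle x,\nn\rangle$ as you show.
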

 
The proof follows immediately from the first variation.
Unfortunately, this metric is not complete (the distance to infinity is finite) and
 the curvature blows up exponentially.

\subsection{Huisken's theorem about MCF of convex hypersurfaces}

In 1984, Huisken, \cite{H1}, showed that convexity is preserved under MCF and showed that the surfaces become round:

\begin{Thm}
(Huisken, \cite{H1})  Under MCF, every closed convex hypersurface in $\RR^{n+1}$  remains convex and eventually becomes extinct in a ``round point''.
\end{Thm}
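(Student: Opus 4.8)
The plan is to follow Huisken's original argument: derive the evolution equations of the basic geometric quantities along the flow, use the scalar and tensor maximum principles to show that convexity and a quantitative pinching of the principal curvatures are preserved, conclude that the flow becomes extinct at a single point in finite time, and finally rescale to obtain smooth convergence to a round sphere.

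First I would record the evolution equations. With $h_{ij}$ the second fundamental form, $g_{ij}$ the induced metric, $H = g^{ij}h_{ij}$ the mean curvature, and $|A|^2 = g^{ik}g^{j\ell}h_{ij}h_{k\ell}$, along $\partial_t x = -H\nn$ one has $\partial_t g_{ij} = -2H h_{ij}$, $\partial_t H = \Delta H + |A|^2 H$, and the Simons-type identity $\partial_t h_{ij} = \Delta h_{ij} - 2H\, h_{ik}g^{k\ell}h_{\ell j} + |A|^2 h_{ij}$. Short-time existence for smooth closed initial data is standard. From the equation for $H$ and the scalar maximum principle, $\min_{\Sigma_t} H$ is nondecreasing, so $H$ stays positive; comparing $\Sigma_t$ with a shrinking sphere enclosing $\Sigma_0$ (using the avoidance principle) shows the maximal existence time $T$ is finite and the flow stays in a fixed ball.

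Next come convexity and pinching. Hamilton's maximum principle for symmetric $2$-tensors applied to the equation for $h_{ij}$ shows that $h_{ij}\ge 0$ is preserved, and the strong maximum principle promotes this to $h_{ij}>0$ for $t>0$; by compactness one then has $h_{ij}\ge\varepsilon H g_{ij}$ for some $\varepsilon>0$, a condition the same tensor maximum principle shows is preserved for all $t<T$. The heart of the matter is to upgrade this to asymptotic roundness, namely that the scale-invariant quantity $|\mathring A|^2/H^2$ tends to $0$, where $\mathring A = A - \tfrac1n H g$ is the trace-free part. Using the evolution equations one derives a differential inequality of the form
\[
\partial_t \frac{|\mathring A|^2}{H^2} \le \Delta \frac{|\mathring A|^2}{H^2} + \frac{2}{H}\left\langle \nabla H, \nabla \frac{|\mathring A|^2}{H^2}\right\rangle - \frac{2}{H^4}\left|H\,\nabla_i h_{jk} - \nabla_i H\, h_{jk}\right|^2 + \text{(lower order)} \, ,
\]
whose reaction terms the maximum principle cannot absorb directly; instead one proves $|A|^2 - \tfrac1n H^2 \le C\, H^{2-\delta}$ for some small $\delta>0$ by a Stampacchia iteration on $L^p$ norms of the excess curvature, combined with interpolation-type gradient estimates such as $|\nabla A|^2 \le C\, H^3$. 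This pinching estimate, together with controlling the higher derivatives of $A$, is the main obstacle and the most technical part of the proof.

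Finally I would rescale. As $t\to T$ the diameter of $\Sigma_t$ tends to $0$, so $\Sigma_t$ converges to a single point $p\in\RR^{n+1}$. Passing to the normalized flow $\tilde\Sigma_\tau$ obtained by dilating about $p$ so that the enclosed volume (equivalently, the total area) is kept fixed and reparametrizing time by some $\tau=\tau(t)\to\infty$, the scale-invariant pinching estimates persist, forcing $|\mathring{\tilde A}|^2\to 0$ and hence uniform curvature and higher-derivative bounds; this yields smooth subconvergence of $\tilde\Sigma_\tau$, and a further argument (linearizing the normalized flow at the sphere, using that the pinching decays exponentially and bootstrapping, together with the fact that a closed totally umbilic hypersurface is a round sphere) upgrades it to convergence, in fact exponentially fast, to the round unit sphere centered at the origin. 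Undoing the dilation shows that $\Sigma_t$ becomes extinct at $p$ in a round point. Convexity having been shown preserved on $[0,T)$, this completes the statement.
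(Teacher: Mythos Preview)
The paper does not actually prove this theorem; it merely states it with attribution to Huisken's original paper \cite{H1} and adds a one-sentence description of the strategy (``he shows that the hypersurfaces become closer and closer to being umbilic and that the limiting shapes are umbilic''). Your outline is a correct and faithful sketch of Huisken's original argument --- evolution equations, tensor maximum principle for convexity and pinching, the Stampacchia-iteration estimate $|A|^2 - \tfrac{1}{n}H^2 \le C H^{2-\delta}$, and the normalized-flow convergence to a sphere --- and is entirely consistent with the paper's brief remark about umbilicity, as well as with the observation there that the argument requires $n>1$.
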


 This is exactly analogous to the result of 
Gage-Hamilton for convex curves, but it's interesting to note that 
Huisken's proof works only for $n>1$.   Namely, he shows that the hypersurfaces become closer and closer to being umbilic and that the limiting shapes are umbilic.  A hypersurface is umbilic if all of the eigenvalues of the second fundamental form are the same; this characterizes the sphere when there are at least two eigenvalues, but is meaningless for curves.

We mention a few related results.  First, Schulze showed a generalization of this for flows by other powers of mean curvature in \cite{Sf1} and \cite{Sf2}.  Second, Sesum showed that the rescaled mean curvature flow converges exponentially to a round sphere in \cite{Se}.

\vskip2mm
Convexity means that every eigenvalue of $A$ has the right sign.  There are weaker conditions that are also preserved under mean curvature flow and where significant results have been obtained.  The first of these is mean convexity, where the hypersurfaces have positive mean curvature.  {\bf{Mean convex}} flows have been analyzed by   Huisken-Sinestrari, \cite{HS1} and \cite{HS2}, and 
  White, \cite{W2} and \cite{W3}.
A related class of hypersurfaces are the {\bf{$2$-convex}} ones, where the sum of any pair of principal curvatures is positive (it is not hard to see that this implies mean convexity); this case has been studied by Huisken-Sinestrari, \cite{HS3}.  In addition, 
 Smoczyk, \cite{Sm1}, showed that {\bf{Star-shaped}} hypersurfaces remain star-shaped under MCF.  Finally, 
Ecker-Huisken, \cite{EH1} and \cite{EH2}, showed that being graphical is also preserved under MCF
and proved estimates for graphical mean curvature flow.  In each of these cases, the maximum principle is used to show that the condition is preserved under MCF.

\section{Width and mean curvature flow}

We saw previously that every closed hypersurface must become extinct under MCF in a finite amount of time.  It is interesting then to estimate this extinction time.  On obvious estimate is in terms of the diameter since the hypersurface must become extinct before a ball that encloses it.  However, there are cases where this estimate is far from sharp.  In this section, we will prove another extinction time estimate in terms of the geometric invariant called the width that was previously introduced.
  
\subsection{Sweepouts and one-dimensional width}

Let $M$ be a smooth closed convex surface in $\RR^3$.{\footnote{This works for convex hypersurfaces in $\RR^{n+1}$ too with the obvious modifications.}}  Convexity implies that $M$ is diffeomorphic to $\SS^2$ and, thus, we can fix  a map $\sigma: \SS^1 \times [0,1 ] \to M$ that maps $\SS^1 \times \{ 0 \}$ and $\SS^1 \times \{ 1 \}$ to points and that  is topologically a degree one map from $\SS^2$ to $\SS^2$.  Let $\Omega_{\sigma}$ denote the homotopy class of such maps.
 \begin{figure}[htbp]
\centering\includegraphics[totalheight=.55\textheight, width=.85\textwidth]{sweepfig}
\caption{A sweepout.}
\end{figure}

Given this homotopy class, the 
 width $W = W(\sigma)$ was defined in \eqr{e:w} to be
\begin{equation}
	W = \inf_{\hat{\sigma} \in \Omega_{\sigma}} \, \, \max_{s \in [0,1]} \,
	\Energy \, (\hat{\sigma} ( \cdot , s)) \, ,  \notag
\end{equation}
where the energy is given by
\begin{equation}
	\Energy \, (\hat{\sigma} ( \cdot , s))  = \frac{1}{2} \, \int_{\SS^1} \, \left| \partial_x \, 
			\hat{\sigma} ( x , s) \right|^2 \, dx \, .  \notag
\end{equation}

It is not hard to see that 
the width is continuous in the metric, though the curve realizing it may not be.

\begin{figure}[htbp]
\centering\includegraphics[totalheight=.35\textheight, width=1\textwidth]{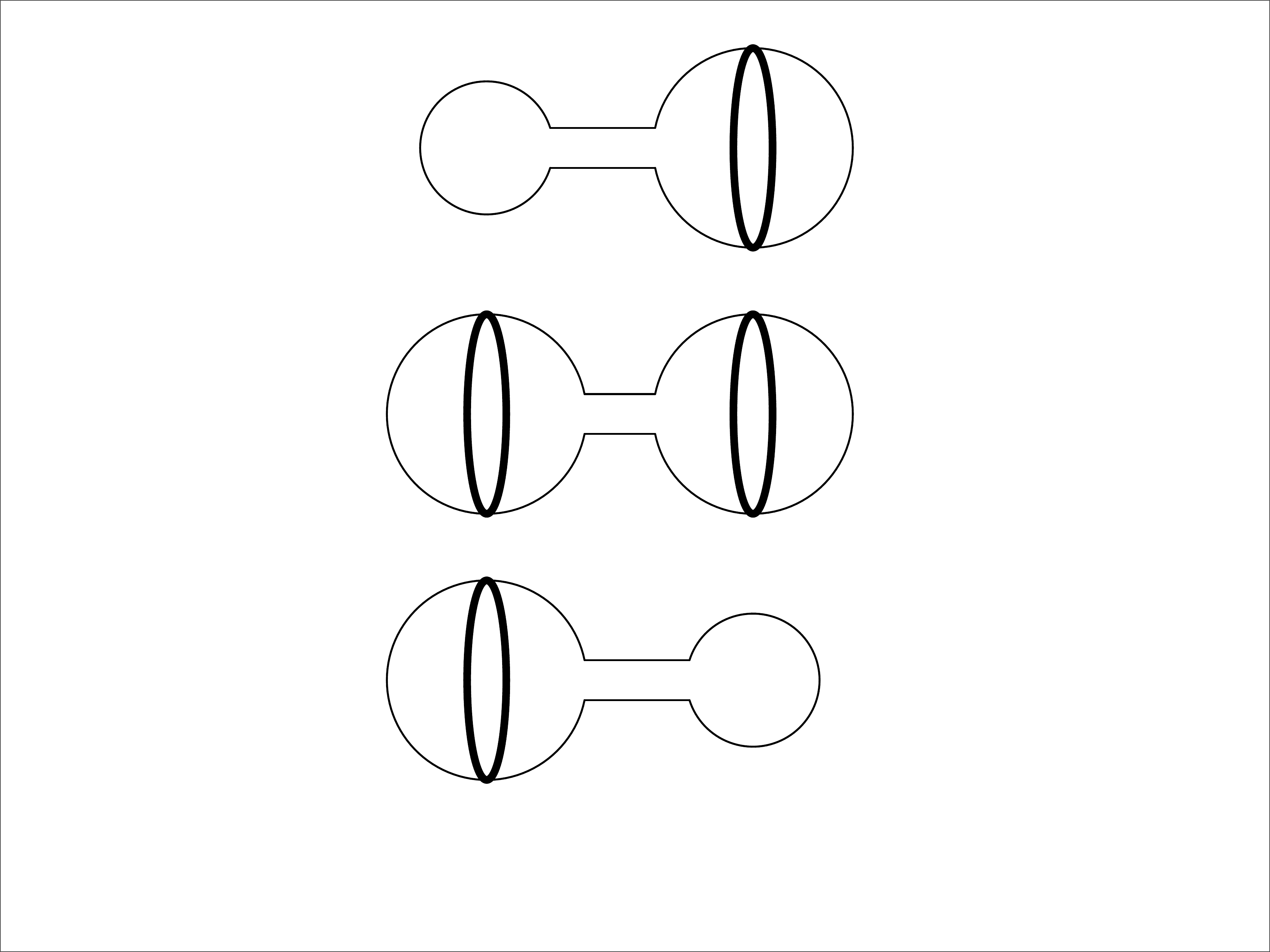}
\caption{Three snapshots of a one-parameter family of ``dumbbell'' metrics.
The geodesic realizing the width jumps from one bell to the other.
The jump occurs in the middle picture where the geodesic is not unique.}
\end{figure}

\subsection{Estimates for rate of change of width under the mean curvature flow}

In \cite{CM2}, we proved the following estimate for the rate of change of the width under MCF:

\begin{Thm}	(Colding-Minicozzi, \cite{CM2})
If 
$M_t$ is a MCF  of closed convex hypersurfaces and 
$W(t)$ is the width of $M_t$, then
\begin{equation}	\label{e:ddf}
	\frac{d}{dt} \, W \leq - 2 \pi  
\end{equation}
in the sense of limsup of forward difference quotients.
\end{Thm}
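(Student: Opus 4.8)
\emph{Strategy.} The plan is to bound $W(t+h)$ from above by flowing a nearly optimal sweepout of $M_t$ forward by the flow and estimating the energy of the resulting slices. Fix $t$ before the extinction time and let $\Phi_{t\to\tau}\colon M_t\to M_\tau$ be the diffeomorphisms of the mean curvature flow; these are smooth up to extinction, since convexity (hence smoothness) is preserved by Huisken's theorem on convex hypersurfaces. If $\gamma$ is a sweepout of $M_t$, then $\Phi_{t\to\tau}\circ\gamma$ is a sweepout of $M_\tau$ in the corresponding homotopy class, so $W(\tau)\le\max_s\Energy(\Phi_{t\to\tau}\circ\gamma(\cdot,s))$. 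It therefore suffices to (i) compute the rate of change of the energy of one slice moving with the ambient velocity $-H\,\nn$, and (ii) invoke Theorem \ref{t:mm} to arrange that the slices carrying the maximal energy are as close as we wish to closed geodesics, where the estimate from (i) is essentially sharp.

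\emph{The infinitesimal estimate.} Let $\sigma\subset M$ be a closed geodesic on a convex surface $M\subset\RR^3$, parametrized with constant speed $v\equiv L/(2\pi)$ over $\SS^1$, where $L=\Length(\sigma)$, and let $T$ be its unit tangent. Moving $\sigma$ with velocity $-H\,\nn$ and integrating by parts in the parameter (using that $T\perp\nn$, so only the curvature vector $\vec\kappa=\nabla_TT$ of $\sigma$ in $\RR^3$ survives),
\[
\frac{d}{d\tau}\Big|_{\tau=t}\Energy(\Phi_{t\to\tau}\circ\sigma)=\int_{\SS^1}\langle\partial_x\sigma,\partial_x(-H\,\nn)\rangle\,dx=\int_{\SS^1}v^2\,H\,\langle\vec\kappa,\nn\rangle\,dx .
\]
Writing $A$ for the second fundamental form of $M$ with respect to the outward normal, $\langle\vec\kappa,\nn\rangle=-A(T,T)$, so the right side equals $-\frac{L}{2\pi}\int_\sigma H\,A(T,T)\,ds$. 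Convexity forces $0\le A(T,T)\le H$ pointwise (both principal curvatures are $\ge 0$), hence $H\,A(T,T)\ge A(T,T)^2$; Cauchy--Schwarz gives $\int_\sigma A(T,T)^2\,ds\ge L^{-1}\big(\int_\sigma A(T,T)\,ds\big)^2$; and since $\sigma$ is a geodesic its geodesic curvature vanishes, so $|\vec\kappa|=A(T,T)$ and $\int_\sigma A(T,T)\,ds=\int_\sigma|\vec\kappa|\,ds\ge 2\pi$ by Fenchel's theorem. Therefore $\int_\sigma H\,A(T,T)\,ds\ge 4\pi^2/L$ and
\[
\frac{d}{d\tau}\Big|_{\tau=t}\Energy(\Phi_{t\to\tau}\circ\sigma)\le-\frac{L}{2\pi}\cdot\frac{4\pi^2}{L}=-2\pi .
\]
By continuity of the quantities involved, a slice that is $\epsilon$-close to a closed geodesic decreases energy at rate at most $-2\pi+o_\epsilon(1)$.

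\emph{Assembling the global estimate.} Given $\epsilon,\eta>0$, apply Theorem \ref{t:mm} to $M_t$ to get a tightened sweepout $\gamma$ with constant-speed slices, $\max_s\Energy(\gamma(\cdot,s))<W(t)+\eta$, and such that every slice of energy exceeding $W(t)-\delta$ is $\epsilon$-close to the set of closed geodesics. Under the flow, slices of $\gamma$ of energy $\le W(t)-\delta$ stay below $W(t)-\delta/2$ for $h$ small, since the energy of any slice changes at a rate bounded by $C=\sup_{\tau\approx t}\big(\sup_{M_\tau}|H|\,\sup_{M_\tau}|A|\big)\cdot\Energy$; hence they never realize the maximum, while the remaining slices lose energy at rate $\le-2\pi+o_\epsilon(1)$ by Step 2. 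Thus
\[
W(t+h)\le\max_s\Energy(\Phi_{t\to t+h}\circ\gamma(\cdot,s))\le W(t)+\eta+\big(-2\pi+o_\epsilon(1)\big)h+o(h) .
\]
Choosing, for each $h>0$, the sub-optimality $\eta=\eta(h)$ with $\eta(h)/h\to 0$ and a corresponding $\epsilon=\epsilon(h)\to 0$ (a diagonal choice, legitimate since for any prescribed $\eta,\epsilon$ such a tightened sweepout exists for $j$ large) yields $\limsup_{h\to 0^+}\big(W(t+h)-W(t)\big)/h\le-2\pi$, which is the assertion.

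\emph{Main obstacle.} The delicate point is the bookkeeping in Step 3: one must control, uniformly as the approximating sweepout varies with $h$, both the $o(h)$ Taylor remainder for the flow of a fixed slice and the $o_\epsilon(1)$ defect incurred in replacing a near-geodesic by a genuine geodesic in Step 2 --- in particular, one must check that ``near a closed geodesic'' in the min-max sense of Theorem \ref{t:mm} is strong enough to control $A(T,T)$ and the Fenchel defect $\int_\sigma|\vec\kappa|\,ds-\int_\sigma A(T,T)\,ds$, which may require upgrading $C^{0}$ closeness to $C^{1}$ (or $C^{2}$) closeness for the relevant slices via an additional curve-shortening/regularity step. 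The infinitesimal estimate in Step 2 is robust, and is exactly where convexity of $M_t$ enters: it guarantees $M_t\cong\SS^2$ (so the width is attached to a nontrivial class) and forces $0\le A(T,T)\le H$.
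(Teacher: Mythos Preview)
The paper itself does not give a proof of this theorem; it merely states the result and cites \cite{CM2}. Your proposal follows the same overall strategy as that reference: push a tightened sweepout forward by the flow, compute the first variation of the energy of a slice, and use that near-maximal slices are close to geodesics. The infinitesimal computation in your Step~2 is correct and is the heart of the matter --- in particular, your chain $H\,A(T,T)\ge A(T,T)^2$, Cauchy--Schwarz, and Fenchel (via $|\vec\kappa|=A(T,T)$ for a geodesic on a convex surface) is exactly the mechanism that produces the constant $-2\pi$, and convexity is used precisely where you say.

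The gap you flag in your ``Main obstacle'' paragraph is real and is not cosmetic. Theorem~\ref{t:mm} gives closeness to the set $G$ of closed geodesics only in a weak (essentially $W^{1,2}$) sense, whereas your estimate needs control of $\int_\sigma A(T,T)\,ds$, which is a first-derivative quantity, and your ``$o_\epsilon(1)$'' implicitly assumes $C^1$ closeness. As written, your Step~3 does not close: a $W^{1,2}$-small perturbation of a geodesic can have $\int_\sigma|\vec\kappa|\,ds - \int_\sigma A(T,T)\,ds$ of order one. The fix in \cite{CM2} is not to upgrade the closeness a posteriori, but to arrange a priori that the slices of the tightened sweepout are already regular enough: the Birkhoff curve shortening process outputs piecewise geodesics with uniformly controlled breaks, so the slices themselves are $C^{1,1}$ with quantitative bounds, and the first-variation computation can be run directly on them (the corner contributions are controlled by the same $\delta$ that measures failure to be a genuine geodesic). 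You gestured at this with ``an additional curve-shortening/regularity step,'' and that is exactly what is needed; but you should be aware that this is where the work lies, not a throwaway remark.

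One small bookkeeping point: in Step~3 you choose $\eta=\eta(h)$ and $\epsilon=\epsilon(h)$ depending on $h$, which means the sweepout $\gamma$ varies with $h$. Your ``$o(h)$'' Taylor remainder for the flow of a fixed slice then has hidden dependence on the (varying) slice, and you need a uniform bound on $\sup_s\|\gamma(\cdot,s)\|_{C^2}$ over the family of sweepouts to make this honest. Again, the BCSP construction supplies exactly such a bound, but it should be stated rather than absorbed into $o(h)$.
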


Note that $W(t)$ is continuous but may not be differentiable in $t$,  so \eqr{e:ddf} may not hold in the classical sense.  Still, the fact that it holds in the sense of limsup of forward difference quotients is enough to integrate and get that
\begin{equation}
	W(t) \leq W(0) - 2 \, \pi \, t \, .   
\end{equation}
Since the width is obviously positive until $M_t$ becomes extinct, we see that 
 $M_t$ becomes extinct by time $\frac{W(0)}{2\pi}$.

 Finally, observe that   capping off a long thin cylinder gives convex surfaces with a fixed bound on the width (coming from the radius of the cylinder) but with arbitrarily large diameter.  For these surfaces, the estimate on the extinction time coming from the width is much better than what one would get from the diameter.

\section{Singularities for MCF}

We will  now leave convex hypersurfaces and go to the general case.  As Grayson's dumbbell showed, there is no higher dimensional analog of Grayson's theorem for curves.
The key for analyzing singularities is a blow up (or rescaling) analysis similar to the tangent cone analysis for minimal surfaces.
As for minimal surfaces, the starting point is a monotonicity formula that gives uniform control over the rescalings.

\subsection{Huisken's monotonicity}

We will need to recall Huisken's monotonicity formula (see \cite{H3}, \cite{E1},
\cite{E2}).
To do this, first
define the non-negative function $\Phi$ on $\RR^{n+1} \times (-\infty,0)$ by
 \begin{equation}
\Phi  (x,t)  = [-4\pi t]^{-\frac{n}{2}}
\,\e^{\frac{|x|^2}{4t}}\, ,
\end{equation}
and then set
$\Phi_{(x_0,t_0)} (x,t) = \Phi (x-x_0, t-t_0)$.  In 1990, G. Huisken proved the following monotonicity formula for mean curvature flow, \cite{H3}:

\begin{Thm}	\label{t:huiskenmon}
(Huisken, \cite{H3})
  If $M_t$ is a solution to
the MCF and $u$ is a $C^2$ function, then
\begin{equation}    \label{e:huisken}
\frac{d}{dt} \int_{M_t} u \, \Phi_{(x_0,t_0)}=
-\int_{M_t} \left| H\nn- \frac{(x-x_0)^{\perp}
}{2\,(t_0-t)}\right|^2 \,u\,\Phi_{(x_0,t_0)} + \int_{M_t} \left[
u_t - \Delta u \right] \, \Phi
 \, .
\end{equation}
\end{Thm}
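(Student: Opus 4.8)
The plan is to reduce, by a translation in space, to the case $x_0=0$ and to write $\tau=t_0-t>0$, so that $\Phi\equiv\Phi_{(0,t_0)}(\cdot,t)$ becomes the (suitably normalized, $n$-dimensional) backward heat kernel $(4\pi\tau)^{-n/2}\,\e^{-|x|^2/(4\tau)}$ on $\RR^{n+1}$; throughout I assume $M_t$ is closed (or decays fast enough) so that every integration by parts below has no boundary term. First I would record the first variation of the weighted measure: since $M_t$ moves with velocity $\partial_t x=-H\nn$ and $\frac{d}{dt}\,d\mu_t=(\dv_{M_t}(-H\nn))\,d\mu_t=-H^2\,d\mu_t$, for any function $f$ along the flow
\[
\frac{d}{dt}\int_{M_t}f=\int_{M_t}\left(\frac{df}{dt}-H^2 f\right),
\]
where $\frac{df}{dt}$ is the derivative following the flow. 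Applying this with $f=u\,\Phi$ and using $\frac{d}{dt}(u\Phi)=u_t\,\Phi+u\,\frac{d\Phi}{dt}$ — here $u_t=\frac{du}{dt}$ along the flow and $\frac{d\Phi}{dt}=\partial_t\Phi-H\langle\nabla\Phi,\nn\rangle$ because $\Phi$ is an ambient function — everything is reduced to computing $\frac{d\Phi}{dt}$ along $M_t$.

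The heart of the matter is the pointwise identity
\[
\frac{d\Phi}{dt}=\partial_t\Phi-H\langle\nabla\Phi,\nn\rangle=-\Delta_{M_t}\Phi+H^2\,\Phi-\left|H\nn-\frac{x^{\perp}}{2\tau}\right|^2\Phi .
\]
To prove it I would compute by hand: $\nabla\Phi=-\tfrac{x}{2\tau}\Phi$, hence $\nabla^2\Phi=-\tfrac{1}{2\tau}\Phi\,I+\tfrac{1}{4\tau^2}\Phi\,x\otimes x$ and the $\RR^{n+1}$-Laplacian is $\big(\tfrac{|x|^2}{4\tau^2}-\tfrac{n+1}{2\tau}\big)\Phi$, while differentiating in $t$ gives $\partial_t\Phi=\big(\tfrac{n}{2\tau}-\tfrac{|x|^2}{4\tau^2}\big)\Phi$. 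Feeding $\Phi$ into \eqr{e:lapla} (with $\Sigma=M_t$), namely $\Delta_{M_t}\Phi=\sum_i \nabla^2\Phi(e_i,e_i)-H\langle\nabla\Phi,\nn\rangle=\Delta_{\RR^{n+1}}\Phi-\nabla^2\Phi(\nn,\nn)-H\langle\nabla\Phi,\nn\rangle$, and using $\nabla^2\Phi(\nn,\nn)=-\tfrac{1}{2\tau}\Phi+\tfrac{|x^{\perp}|^2}{4\tau^2}\Phi$, one collects all terms; then $|x|^2-|x^{\perp}|^2=|x^{T}|^2$ together with $|H\nn-\tfrac{x^{\perp}}{2\tau}|^2=H^2-\tfrac{H\langle x,\nn\rangle}{\tau}+\tfrac{|x^{\perp}|^2}{4\tau^2}$ makes all the heat-kernel terms collapse into the asserted perfect square. (Note the role of the $n$-normalization: the $-\tfrac{n+1}{2\tau}$ from $\Delta_{\RR^{n+1}}\Phi$ and the $+\tfrac{n}{2\tau}$ from $\partial_t\Phi$ differ by $-\tfrac{1}{2\tau}$, which is exactly absorbed by the $+\tfrac{1}{2\tau}$ in $-\nabla^2\Phi(\nn,\nn)$.)

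Substituting this identity into the first step, the $uH^2\Phi$ produced by the identity cancels the $-H^2u\Phi$ coming from $\frac{d}{dt}\,d\mu_t$, leaving
\[
\frac{d}{dt}\int_{M_t}u\,\Phi=\int_{M_t}\left(u_t\,\Phi-u\,\Delta_{M_t}\Phi\right)-\int_{M_t}\left|H\nn-\frac{x^{\perp}}{2\tau}\right|^2 u\,\Phi .
\]
Since $M_t$ is closed, Green's identity gives $\int_{M_t}u\,\Delta_{M_t}\Phi=\int_{M_t}(\Delta_{M_t}u)\,\Phi$; inserting this and restoring $x_0,t_0$ (so $\tau=t_0-t$ and $x^{\perp}=(x-x_0)^{\perp}$) produces exactly the stated formula, with $\Delta u=\Delta_{M_t}u$ and $u_t$ the derivative of $u$ along the flow.

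The main obstacle is the computation in the middle paragraph: keeping track of signs and of the powers of $\tau$ through $\nabla^2\Phi$, $\partial_t\Phi$, and \eqr{e:lapla} so that the remainder reassembles into $-|H\nn-x^{\perp}/(2\tau)|^2\Phi$ rather than into an unsigned expression — this ``completing the square'' is Huisken's observation and the reason for the precise choice of $\Phi$. I would also be careful to state the convention that $u_t$ is the derivative following the flow (equivalently, take $u$ defined on a spacetime neighborhood and evaluate $u_t-\Delta u$ along $M_t$), since for non-constant $u$ the identity is convention-dependent; for $u\equiv 1$ it specializes to the usual monotonicity $\frac{d}{dt}\int_{M_t}\Phi_{(x_0,t_0)}=-\int_{M_t}\big|H\nn-\tfrac{(x-x_0)^{\perp}}{2(t_0-t)}\big|^2\,\Phi_{(x_0,t_0)}\le 0$.
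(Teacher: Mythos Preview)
Your proof is correct and complete. The paper itself does not give a proof of this theorem; it simply states it with a citation to Huisken, \cite{H3}, and immediately specializes to $u\equiv 1$ to extract the monotonicity \eqr{e:huisken2}. Your argument is the standard one (essentially Huisken's own): differentiate the weighted integral, compute the material derivative of the backward heat kernel via the identity \eqr{e:lapla}, complete the square to exhibit the perfect-square term, and move the Laplacian from $\Phi$ to $u$ by Green's identity. The bookkeeping in your middle paragraph checks out line by line, including the remark about the $n$-dimensional normalization of $\Phi$ being what makes the $-\tfrac{n+1}{2\tau}$, $+\tfrac{n}{2\tau}$, and $+\tfrac{1}{2\tau}$ terms cancel.
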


When $u$ is identically one, we get the monotonicity formula
\begin{equation}    \label{e:huisken2}
\frac{d}{dt} \int_{M_t}   \Phi_{(x_0,t_0)}=
-\int_{M_t} \left| H\nn-\frac{(x-x_0)^{\perp}
}{2\,(t_0-t)}\right|^2 \,\Phi_{(x_0,t_0)}
 \, .
\end{equation}
Huisken's density is the limit of $\int_{M_t} \Phi_{x_0,t_0}$ as $t\to t_0$.
That is,
\begin{equation}
\Theta_{x_0,t_0}=\lim_{t\to t_0} \int_{M_t}\Phi_{x_0,t_0}\, ;
\end{equation}
this limit exists by the monotonicity \eqr{e:huisken2} and the density is non-negative as the integrand $\Phi_{x_0,t_0}$ is non-negative.

It is also interesting to note that 
 Huisken's Gaussian volume is constant in time if and only if   $M_t$ is a self-similar shrinker with
 $$
 M_t = \sqrt{-t} \, \left( M_{-1} \right) \, .
 $$

One drawback to Huisken's formula is that it requires one to integrate over all of space.  In 2001, K. Ecker discovered a local monotonicity formula where the integral is over bounded sets.  This formula is modeled on
  Watson's mean-value formula for the linear heat equation; see \cite{E2} for details.

\subsection{Tangent flows}

If $M_t$ is a MCF, then so is the parabolic scaling for any constant $\lambda > 0$
\begin{equation}
	\tilde{M}_t = \lambda \, M_{\lambda^{-2} \, t}  \, .  \notag
\end{equation}
When $\lambda$ is large, this magnifies a small neighborhood of the origin in space-time.

If we now take a sequence $\lambda_i \to \infty$ and let $M^i_t = \lambda_i \, M_{\lambda_i^{-2} \, t}$, 
then
Huisken's monotonicity gives uniform Gaussian area bounds on the rescaled sequence. Combining this with Brakke's weak compactness theorem for mean curvature flow, \cite{B}, it follows that 
a subsequence of the $M_t^i$ converges to a limiting flow $M^{\infty}_t$  (cf., for instance, page 675--676 of \cite{W2} and chapter 7 of \cite{I2}).  Moreover, Huisken's monotonicity implies that the Gaussian area (centered at the origin) is now constant in time, so we conclude that $M^{\infty}_t$ is  a self-similar shrinker.
This $M^{\infty}_t$ is called a {\emph{tangent flow}} at the origin.  The same construction can be done at any point of space-time.

We will return to this point of view later when we describe results from \cite{CM1} classifying the generic tangent flows.

\subsection{Gaussian integrals and the F functionals}

For $t_0>0$ and $x_0\in \RR^{n+1}$,   define   $F_{x_0,t_0}$  by
\begin{align}
F_{x_0 , t_0} (\Sigma) &= (4\pi t_0)^{-n/2} \, \int_{\Sigma} \, \e^{-\frac{|x- x_0|^2}{4t_0}} \, d\mu\notag\\ &= \int_{\Sigma}\Phi_{x_0,t_0}(\cdot,0)\, .   \notag
\end{align}
We will think of $x_0$ as being the point in space that we focus on and $t_0$ as being the scale.  By convention, we set
$F=F_{0,1}$.

We will next compute the first variation of the $F$ functionals, but we will allow variations in all three parameters: the hypersurface $\Sigma_0$, the center $x_0$, and the scale $t_0$.  Namely, 
fix a hypersurface $\Sigma_0 \subset \RR^{n+1}$ with unit normal $\nn$, a function $f$, vectors $x_0 , y \in \RR^{n+1}$ and constants
$t_0 , h \in \RR$ with $t_0 > 0$.
Define variations (i.e., one-parameter families) by
\begin{align}
	\Sigma_s &= \{ x + s \, f(x) \, \nn (x) \, | \, x \in \Sigma_0 \} \, , 	\notag \\
	x_s &= x_0 + s \, y \, , \notag  \\
	t_s &= t_0 + s \, h \, . \notag
\end{align}
The first variation is given by:

\begin{Lem}	(Colding-Minicozzi, \cite{CM1})  \label{l:varl0}
If $\Sigma_s$, $x_s$ and $t_s$ are variations as above, then 
  $\frac{\partial }{\partial s} \,  \, \left( F_{ x_s , t_s} (\Sigma_s) \right)$ is
\begin{equation}	 
	  (4\pi \, t_0)^{-\frac{n}{2}} \,  \int_{\Sigma}  \left[ f \, \left( H - \frac{ \langle x-x_0 ,  \nn \rangle}{2t_0}
		\right)
		+ h \, \left( \frac{|x-x_0|^2}{4t_0^2} - \frac{n}{2t_0} \right)  +
	  \frac{  \langle x-x_0 , y \rangle }{2t_0}    \right] \, \e^{\frac{-|x-x_0|^2}{4t_0}} \, d\mu \, . \notag
\end{equation}
\end{Lem}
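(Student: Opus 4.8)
The plan is to regard $F_{x_s,t_s}(\Sigma_s)$ as a function of three independent quantities --- the hypersurface $\Sigma$, the center $x_0$, and the scale $t_0$ --- and to compute $\frac{\partial}{\partial s}\big|_{s=0}$ as the sum of the three partial derivatives obtained by varying one quantity at a time, the remaining two being held at their $s=0$ values. Since $F_{x_0,t_0}(\Sigma)=(4\pi t_0)^{-n/2}\int_{\Sigma}\e^{-|x-x_0|^2/(4t_0)}\,d\mu$ depends smoothly on all three parameters and $f$ has compact support, differentiating under the integral sign and using the chain rule is justified, and the total $s$-derivative at $0$ is exactly this sum.

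First I would vary only the scale, $t_s=t_0+sh$ with $\Sigma$ and $x_0$ fixed: differentiating the prefactor gives $-\frac{n}{2t_0}(4\pi t_0)^{-n/2}$ and differentiating the exponent gives $\frac{|x-x_0|^2}{4t_0^2}$, so the product rule yields $h\left(\frac{|x-x_0|^2}{4t_0^2}-\frac{n}{2t_0}\right)$ times $(4\pi t_0)^{-n/2}\e^{-|x-x_0|^2/(4t_0)}$, integrated over $\Sigma$. Next I would vary only the center, $x_s=x_0+sy$: only the exponent depends on $s$, and $\frac{d}{ds}\big|_{s=0}|x-x_0-sy|^2=-2\langle x-x_0,y\rangle$, so this contributes $\frac{\langle x-x_0,y\rangle}{2t_0}$ times $(4\pi t_0)^{-n/2}\e^{-|x-x_0|^2/(4t_0)}$, integrated over $\Sigma$.

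The only step with geometric content, and the one I expect to require the most care, is the variation of $\Sigma$ alone, $\Sigma_s=\{x+sf(x)\,\nn(x)\}$ with $x_0,t_0$ fixed. Here two effects must be kept separate. The weighted area element moves by the first variation of area formula established earlier: with $V=f\nn$, Stokes kills the tangential divergence and $\dv_{\Sigma}(f\nn)=f\,\dv_{\Sigma}\nn=fH$ since $\langle\nn,e_i\rangle=0$, so $\frac{\partial}{\partial s}\big|_{s=0}d\mu_s=fH\,d\mu$. Simultaneously the Gaussian is evaluated at the moving point $x+sf\nn$, and $\frac{d}{ds}\big|_{s=0}|x+sf\nn-x_0|^2=2f\langle x-x_0,\nn\rangle$ gives $\frac{d}{ds}\big|_{s=0}\e^{-|x+sf\nn-x_0|^2/(4t_0)}=-\frac{f\langle x-x_0,\nn\rangle}{2t_0}\e^{-|x-x_0|^2/(4t_0)}$. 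Adding these two and multiplying by the constant prefactor $(4\pi t_0)^{-n/2}$ produces $f\left(H-\frac{\langle x-x_0,\nn\rangle}{2t_0}\right)$ times $(4\pi t_0)^{-n/2}\e^{-|x-x_0|^2/(4t_0)}$, integrated over $\Sigma$. Summing the three contributions gives the asserted formula; the sign bookkeeping (that $f$ is the normal speed and $H=\dv_{\Sigma}\nn$ in the paper's convention) is the one place to be vigilant, and everything else is routine.
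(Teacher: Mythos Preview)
Your proposal is correct and follows essentially the same approach as the paper: both proofs combine the first variation of area formula $(d\mu)' = fH\,d\mu$ with direct differentiation of the Gaussian weight with respect to the surface position, the center $x_s$, and the scale $t_s$. The only difference is organizational---the paper groups all three contributions to the log-derivative of the weight into a single expression and then combines with $(d\mu)'$, whereas you separate the computation by varying one parameter at a time and sum; the content is the same.
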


\begin{proof}
 From the first variation formula (for area), we know that
 \begin{equation}	\label{e:fhdm}
 	(d\mu)' = f \, H \, d \mu \, .
 \end{equation}
 The $s$ derivative of the weight  $\e^{-|x-x_s|^2/(4t_s)}$ will have three separate terms coming from the variation of the surface, the variation of $x_s$, and the variation of $t_s$.  Using, respectively, that $\nabla  |x-x_s|^2  = 2\, (x-x_s)$,
 \begin{equation}
	\partial_{t_s}  \log \left[ (4\pi t_s)^{-n/2} \,   \e^{-\frac{|x-x_s|^2}{4t_s}} \right] = \frac{-n}{2t_s} + \frac{|x-x_s|^2}{4t_s^2}
\end{equation}
and  $\partial_{x_s}  |x-x_s|^2  = 2\, (x_s-x)$, we get that
the derivative of  $$\log \, \left[ (4\pi \, t_s)^{-\frac{n}{2}} \, \e^{-|x-x_s|^2/(4t_s)} \right]$$ at $s=0$ is given by
 \begin{equation}	\label{e:logder}
 	  - \frac{ f}{2t_0} \, \langle x-x_0 , \nn \rangle  + h \, \left( \frac{|x-x_0|^2}{4t_0^2} - \frac{n}{2t_0} \right) +
	  \frac{ 1}{2t_0} \, \langle x-x_0 , y \rangle \, .
 \end{equation}
 Combining this with \eqr{e:fhdm} gives the lemma.
\end{proof}

 We will say that $\Sigma$ is a critical point for $F_{x_0 , t_0} $ if it is simultaneously critical with respect to variations in all three parameters, i.e.,   variations in $\Sigma$ and all variations in $x_0$  and   $t_0$.  Strictly speaking, it is the triplet $(\Sigma, x_0 , t_0)$ that is a critical point of $F$, but we will refer   to $\Sigma$ as a critical point of $F_{x_0 , t_0}$.
 The next proposition shows that $\Sigma$ is a critical point for $F_{x_0 , t_0} $ if and only if it is the time $-t_0$ slice of a self-shrinking solution of the mean curvature flow that becomes extinct at the point $x_0$ and time $0$.

 \begin{Pro}	\label{p:critall}
 (Colding-Minicozzi, \cite{CM1})
$\Sigma$ is a critical point for $F_{x_0 , t_0} $ if and only if  $H = \frac{ \langle x-x_0 ,  \nn \rangle}{2t_0}$.
\end{Pro}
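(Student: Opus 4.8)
The plan is to reduce everything to the first-variation formula of Lemma~\ref{l:varl0}. Write $\rho = \e^{-|x-x_0|^2/(4t_0)}$ for the Gaussian weight; after translating we may assume $x_0 = 0$, keeping the scale $t_0$ general. By Lemma~\ref{l:varl0}, criticality of $\Sigma$ for $F_{x_0,t_0}$ means that
\[
  \int_{\Sigma} \left[ f\left(H - \tfrac{\langle x,\nn\rangle}{2t_0}\right) + h\left(\tfrac{|x|^2}{4t_0^2} - \tfrac{n}{2t_0}\right) + \tfrac{\langle x, y\rangle}{2t_0} \right] \rho \, d\mu = 0
\]
for all functions $f$ and all pairs $(y,h)\in\RR^{n+1}\times\RR$. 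First I would extract the Euler--Lagrange equation: restricting to variations with $y=0$ and $h=0$ and letting $f$ range over all compactly supported functions forces the surface term to vanish pointwise, i.e.\ $H = \frac{\langle x,\nn\rangle}{2t_0}$. This is the ($\Rightarrow$) direction and it is immediate.

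For ($\Leftarrow$), assume $H = \frac{\langle x,\nn\rangle}{2t_0}$. Then the $f$-term drops out for every $f$, so what must be shown is that the self-shrinker equation automatically annihilates the translation term (the $y$-integral) and the dilation term (the $h$-integral). The hard part — really the only substantive point — is a pair of weighted integral identities coming from the drift operator $\mathcal{L}f = \Delta_{\Sigma} f - \frac{1}{2t_0}\langle x, \nabla_{\Sigma} f\rangle$, which satisfies $\rho\,\mathcal{L}f = \dv_{\Sigma}(\rho\,\nabla_{\Sigma}f)$ and is therefore formally self-adjoint for $\rho\,d\mu$; in particular $\int_{\Sigma}(\mathcal{L}f)\,\rho\,d\mu = 0$ whenever $f$ has at most polynomial growth, which is legitimate here since self-shrinkers have polynomial volume growth (cf.~\cite{CM1}), or trivially when $\Sigma$ is closed.

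Next I would apply this identity twice. Taking $f = x_i$ and using $\Delta_{\Sigma}x_i = -\langle\partial_i,\nn\rangle H$ from \eqr{e:lapla} together with the shrinker equation gives $\mathcal{L}x_i = -\frac{x_i}{2t_0}$, hence $\int_{\Sigma}x_i\,\rho\,d\mu = 0$ for each $i$; summing against $y$ kills the $\langle x,y\rangle$ term for every $y$. Taking $f = |x|^2$ and using $\Delta_{\Sigma}|x|^2 = 2n - 2\langle x,\nn\rangle H$ together with the shrinker equation gives $\mathcal{L}|x|^2 = 2n - \frac{|x|^2}{t_0}$, hence $\int_{\Sigma}|x|^2\,\rho\,d\mu = 2n\,t_0\int_{\Sigma}\rho\,d\mu$; substituting this into $\int_{\Sigma}\big(\frac{|x|^2}{4t_0^2} - \frac{n}{2t_0}\big)\rho\,d\mu$ shows it vanishes. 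Thus all three terms in Lemma~\ref{l:varl0} are zero and $\Sigma$ is critical.

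In summary, the approach is: (i) read off the shrinker equation from the $f$-variations; (ii) recognize that the remaining (finite-dimensional) directions are controlled by the Gaussian moments $\int x_i\,\rho$ and $\int |x|^2\,\rho$; (iii) compute these moments by feeding $x_i$ and $|x|^2$ into the self-adjoint drift operator and using the formulas for $\Delta_\Sigma x_i$ and $\Delta_\Sigma|x|^2$ already recorded above. The main obstacle is step (ii)/(iii)---noticing that the shrinker equation is exactly what makes $\mathcal{L}x_i$ and $\mathcal{L}|x|^2$ collapse to multiples of $x_i$ and of $1$ and $|x|^2$---while the only analytic subtlety is justifying the integration by parts, handled by the polynomial volume growth of self-shrinkers.
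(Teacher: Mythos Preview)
Your proof is correct, but it takes a different route from the paper's. The paper's argument is geometric: varying the center $x_0$ by $y$ has the same effect on $F$ (to first order) as translating $\Sigma$ by $-y$, which is a particular normal variation with $f=-\langle y,\nn\rangle$; likewise, varying the scale $t_0$ corresponds to dilating $\Sigma$, whose normal part is a multiple of $\langle x,\nn\rangle$. Hence once $\Sigma$ is critical for all normal variations (i.e., once the shrinker equation holds), criticality in $x_0$ and $t_0$ is automatic. Your approach instead computes the two remaining integrals directly via the drift Laplacian identities $\cL x_i=-x_i/(2t_0)$ and $\cL |x|^2=2n-|x|^2/t_0$, which you derive from \eqr{e:lapla} and the shrinker equation, together with the self-adjointness of $\cL$ in the weighted space. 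This is exactly the computation the paper records later (see \eqr{e:cLcoord}, \eqr{e:cLx2}, and the paragraph that follows), so you are effectively previewing that material. The paper's argument is slicker and avoids any integrability issues; yours is more explicit and yields the moment identities as a byproduct.

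One small correction: polynomial volume growth is an \emph{assumption} in \cite{CM1}, not a theorem, so you should phrase it as a standing hypothesis (as the paper does just before Theorem~\ref{t:secvar}) rather than cite it as a result. In the closed case, of course, no justification is needed, and the paper's geometric argument also sidesteps this point entirely.
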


By Lemma \ref{l:varl0}, $\Sigma$ is critical if and only if
\begin{equation}	 
	   \int_{\Sigma_0}  \left[ f \, \left( H - \frac{ \langle x-x_0 ,  \nn \rangle}{2t_0}
		\right)
		+ h \, \left( \frac{|x-x_0|^2}{4t_0^2} - \frac{n}{2t_0} \right)  +
	  \frac{  \langle x-x_0 , y \rangle }{2t_0}    \right] \, \e^{\frac{-|x-x_0|^2}{4t_0}}  = 0   \notag
\end{equation}
{\bf{for any}} choice of $f$, $y$ and $h$.

It is clear why this holds for $f$, but why $h$ and $y$?
The answer is that $h$ and $y$ correspond to scalings and translations, respectively, and
these can equivalently be achieved by appropriate choices of $f$.

A self-shrinker $\Sigma$ satisfies $H = \frac{ \langle x , \nn \rangle }{2}$, so it is a critical point of the $F= F_{0,1}$ functional.
We will use these functionals to understand dynamical stability of self-shrinkers.
The first step is to compute the Hessian, or second variation, of this functional.
Before doing this, it will be useful to introduce some of the operators that will arise.

\subsection{Weighted inner products and the drift Laplacian}

Let $f$ and $g$ be functions.
It is natural to look at  the weighted $L^2$ inner product
\begin{equation}
	 \int_{\Sigma} fg \, 
	 \e^{ -\frac{|x|^2}{4} } \, . \notag 
\end{equation}
Similarly, we have the weighted inner product for gradients:
\begin{equation}
	 \int_{\Sigma} \langle \nabla^T f , \nabla^T g \rangle \, 
	 \e^{ -\frac{|x|^2}{4} } \,  .  \notag 
\end{equation}
Since
\begin{equation}
 \dv_{\Sigma}  \, \left( \e^{ -\frac{|x|^2}{4} } \, \nabla^T f \right) = \left( 
 \Delta_{\Sigma} f - \frac{1}{2} \, \langle x , \nabla^{T} f \rangle \,  \right) \, \e^{ -\frac{|x|^2}{4} }
 \, ,  \notag 
 \end{equation}
 the divergence theorem applied to  $\left( g \, \e^{ -\frac{|x|^2}{4} } \, \nabla^T f \right)$ gives
\begin{equation}
	 \int_{\Sigma} \langle \nabla^T f , 
	 \nabla^T  g \rangle \, 
	 \e^{ -\frac{|x|^2}{4} }  =  - \int_{\Sigma} g \, 
	 \left( 
 \Delta_{\Sigma} f - \frac{1}{2} \, \langle x , \nabla^{T} f \rangle \,  \right)
	  \, 
	 \e^{ -\frac{|x|^2}{4} }
	 \, . \notag 
\end{equation}
We call this operator the ``drift Laplacian'' $\cL$
\begin{equation}
	\cL \, f = \Delta_{\Sigma} f - \frac{1}{2} \, \langle x , \nabla^{T} f \rangle 
	 \notag  \, .
\end{equation}
It follows that $\cL$ is symmetric in the weighted space.

The $\cL$ operator plays a similar role for self-shrinkers that the Laplacian did for minimal surfaces.
To see this, 
recall that 
given $f: \RR^{n+1} \to \RR$, the Laplacian on $\Sigma$ applied to $f$ is
\begin{equation}
	\Delta_{\Sigma} f  
	= \sum_{i=1} \Hess_f (e_i , e_i) - \langle \nabla f , \nn \rangle \, H \, ,  \notag
\end{equation}
where $e_i$ is a frame for $\Sigma$ and $\Hess_f$ is the $\RR^{n+1}$ Hessian of $f$.
Therefore, when  $\Sigma$ is a self-shrinker, it follows that
\begin{align}
	\cL x_i = - \frac{1}{2} \, x_i \, , \label{e:cLcoord} \\
	\cL |x|^2 = 2n - |x|^2 \, . \label{e:cLx2}
\end{align}
The second formula is closely related to the fact that self-shrinkers are critical points for variations in all three parameters of the $F_{0,1}$ functional.  Namely, we
 saw earlier that on any self-shrinker $\Sigma$ we must have
\begin{equation}	 
	   \int_{\Sigma}   \left( \frac{|x|^2}{4} - \frac{n}{2} \right)   \, \e^{\frac{-|x|^2}{4}}  = 0   \notag
\end{equation}
This can also be seen by using  $\cL$ operator.  To do this, 
use the symmetry of $\cL$ to get for any $u$ and $v$ (that do not grow too quickly)
\begin{equation}
	  \int_{\Sigma} \left( u \cL v \right) \, 
	 \e^{ -\frac{|x|^2}{4} }   = -
	 \int_{\Sigma} \langle \nabla^T u , 
	 \nabla^T  v \rangle \, 
	 \e^{ -\frac{|x|^2}{4} }  
	 \, . \notag 
\end{equation}
Applying this with $u=1$ and $v = |x|^2$ and using \eqr{e:cLx2} gives
\begin{equation}
	  \int_{\Sigma} \left( 2n -  |x|^2 \right) \, 
	 \e^{ -\frac{|x|^2}{4} }    = 0 
	 \, . \notag 
\end{equation}

We will need another second order operator $L$ which differs from $\cL$ by a zero-th order term:
\begin{equation}
	L = \cL + |A|^2 + \frac{1}{2} \, , 
	\notag
\end{equation}
where the drift Laplacian $\cL$ is given by 
\begin{equation}
	\cL \, f = \Delta_{\Sigma} f - \frac{1}{2} \, \langle x , \nabla^{T} f \rangle 
	  \notag  \, .
\end{equation}
Clearly, $L$ is also symmetric with respect to the weighted inner product.
The operator $L$ plays the role of the second variation operator $\Delta + |A|^2 + \Ric (\nn , \nn)$ for minimal surfaces.

\subsection{Second variation}

Let $\Sigma_0 \subset \RR^{n+1}$ be a self-shrinker with unit normal $\nn$, a function $f$,  a vector $y \in \RR^{n+1}$ and a constant
$h \in \RR$.  We will assume that $\Sigma_0$ is complete, $\partial \Sigma_0 = \emptyset$,  and
 $\Sigma_0$ has polynomial volume growth (so that all of our Gaussian integrals converge).
As before, define variations  
\begin{align}
	\Sigma_s &= \{ x + s \, f(x) \, \nn (x) \, | \, x \in \Sigma_0 \} \, , 	\notag \\
	x_s &=  s \, y \, , \notag \\
	t_s &= 1 + s \, h \, . \notag
\end{align}

 \begin{Thm}	\label{t:secvar}
 (Colding-Minicozzi, \cite{CM1})
If we set $F'' = \partial_{ss} \big|_{s=0} \, \left( F_{x_s , t_s} (\Sigma_s) \right)$, then
 \begin{equation}	\label{e:secvar2}
	F''=    (4\pi)^{-n/2} \, \int_{\Sigma}
	 \left( - f \,     Lf
		  + 2f\,h\,H   -   h^2 \, H^2
		+ f \, \langle y , \nn \rangle  - \frac{\langle y , \nn \rangle^2}{2}
		\right)
		 \, \e^{\frac{-|x|^2}{4}}  \, d\mu  \,  .
\end{equation}
\end{Thm}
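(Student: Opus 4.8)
The plan is to compute $F'' = \partial_{ss}\big|_{s=0}\bigl(F_{x_s,t_s}(\Sigma_s)\bigr)$ by pulling everything back to the fixed self-shrinker $\Sigma = \Sigma_0$, differentiating twice under the integral sign, and then simplifying the resulting integrand using $H = \frac{\langle x,\nn\rangle}{2}$ and the symmetry of $\cL$ in the weighted space. Throughout, completeness, $\partial\Sigma_0 = \emptyset$ and polynomial volume growth make every Gaussian integral converge and every weighted integration by parts legitimate.

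Write $\Phi_s(x) = x + s\,f(x)\,\nn(x)$ and let $J_s$ be the Jacobian of $\Phi_s$, so $d\mu_s = J_s\,d\mu$ on $\Sigma$ and
\begin{equation}
	F_{x_s,t_s}(\Sigma_s) = \int_{\Sigma} P(s,x)\,d\mu \, , \qquad P(s,x) = (4\pi t_s)^{-\frac n2}\,\e^{-\frac{|\Phi_s(x) - x_s|^2}{4 t_s}}\,J_s(x) \, .
\end{equation}
Since $\Phi_s$, $x_s = sy$ and $t_s = 1+sh$ are affine in $s$ there are no acceleration terms, so $F'' = \int_\Sigma \partial_{ss}P\big|_{s=0}\,d\mu$. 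Using $\partial_{ss}P = P\bigl[(\partial_s\log P)^2 + \partial_{ss}\log P\bigr]$, it suffices to expand the \emph{sum} $\log P = -\frac n2\log(4\pi t_s) - \frac{|\Phi_s - x_s|^2}{4 t_s} + \log J_s$ to order $s^2$. The one geometric input is the area element: from the induced metric $g_s(e_i,e_j) = \delta_{ij} - 2sf\,A_{ij} + s^2 f^2 (A^2)_{ij} + s^2\,\partial_i f\,\partial_j f$ one finds $J_s = 1 + s\,fH + \frac{s^2}{2}\bigl(|\nabla^T f|^2 - |A|^2 f^2 + f^2 H^2\bigr) + O(s^3)$, so $\partial_s\log J_s|_0 = fH$ while $\partial_{ss}\log J_s|_0 = |\nabla^T f|^2 - |A|^2 f^2$ (the $f^2H^2$ cancels inside the logarithm). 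Together with the elementary expansions of $(4\pi t_s)^{-n/2}$ and $\e^{-|\Phi_s - x_s|^2/(4 t_s)}$ already carried out in the proof of Lemma \ref{l:varl0}, this gives $\partial_s\log P|_0 = \mathcal I$, the first-variation integrand, and, collecting the three sources of $s$-dependence,
\begin{equation}
	\partial_{ss}\log P|_0 = \mathcal J := \frac{nh^2}{2} + |\nabla^T f|^2 - |A|^2 f^2 - \frac{f^2}{2} + f\langle y,\nn\rangle - \frac{|y|^2}{2} + 2fhH - h\langle x,y\rangle - \frac{|x|^2 h^2}{2} \, ,
\end{equation}
where I have already used $H = \frac{\langle x,\nn\rangle}{2}$ (and $x_0=0$, $t_0=1$), which also reduces $\mathcal I$ to $h\bigl(\frac{|x|^2}{4} - \frac n2\bigr) + \frac{\langle x,y\rangle}{2}$.

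Next I would sort $\mathcal I^2 + \mathcal J$ by type of term; up to the overall factor $(4\pi)^{-n/2}$, $F''$ is the integral of this against $\e^{-|x|^2/4}$. The $f$-dependent part is $|\nabla^T f|^2 - |A|^2 f^2 - \frac{f^2}{2} + 2fhH + f\langle y,\nn\rangle$, and integrating while using the identity $\int_\Sigma \langle \nabla^T f,\nabla^T g\rangle\,\e^{-|x|^2/4} = -\int_\Sigma g\,\cL f\,\e^{-|x|^2/4}$ proved earlier turns $\int(|\nabla^T f|^2 - |A|^2 f^2 - \frac{f^2}{2})\e^{-|x|^2/4}$ into $-\int_\Sigma f\,Lf\,\e^{-|x|^2/4}$ with $L = \cL + |A|^2 + \frac12$. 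This produces the terms $-fLf$, $2fhH$ and $f\langle y,\nn\rangle$ of \eqref{e:secvar2}.

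The remaining pieces of $\mathcal I^2 + \mathcal J$ are purely in the parameters — the pure $h^2$ term $h^2\bigl[\frac n2 - \frac{|x|^2}{2} + (\frac{|x|^2}{4} - \frac n2)^2\bigr]$, the $hy$ term $h\langle x,y\rangle\bigl(\frac{|x|^2}{4} - \frac{n+2}{2}\bigr)$, and the $|y|^2$ term $\frac14\langle x,y\rangle^2 - \frac12|y|^2$ — and showing that these integrate against $\e^{-|x|^2/4}$ to exactly $-h^2 H^2 - \frac12\langle y,\nn\rangle^2$ is the crux and the main obstacle. This is pure integration by parts on $\Sigma$: I would use that a self-shrinker is critical for $F$ under translations and dilations (Proposition \ref{p:critall} together with Lemma \ref{l:varl0}), so $\int_\Sigma \langle x,y\rangle\,\e^{-|x|^2/4} = 0$ and $\int_\Sigma\bigl(\frac{|x|^2}{4} - \frac n2\bigr)\e^{-|x|^2/4} = 0$, together with the symmetry of $\cL$ applied to the pairs $(\langle x,y\rangle,\langle x,y\rangle)$, $(\langle x,y\rangle,|x|^2)$ and $(|x|^2,|x|^2)$, invoking $\cL\langle x,y\rangle = -\frac12\langle x,y\rangle$ (from \eqref{e:cLcoord}) and $\cL|x|^2 = 2n-|x|^2$ (from \eqref{e:cLx2}). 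For instance, pairing $|x|^2$ with $|x|^2$ gives $\int_\Sigma(2n - |x|^2)|x|^2\,\e^{-|x|^2/4} = -\int_\Sigma 4|x^T|^2\,\e^{-|x|^2/4}$, and since $|x^T|^2 = |x|^2 - 4H^2$ this combines with the two vanishing integrals to yield $\int_\Sigma\bigl(H^2 + (\frac{|x|^2}{4} - \frac n2)^2\bigr)\e^{-|x|^2/4} = \frac n2\int_\Sigma \e^{-|x|^2/4}$, exactly the relation that collapses the $h^2$ terms to $-h^2\int_\Sigma H^2\,\e^{-|x|^2/4}$; the $hy$ and $|y|^2$ terms fall out the same way from $\int_\Sigma\langle x,y\rangle\,|x|^2\,\e^{-|x|^2/4} = 0$ and $\int_\Sigma\langle x,y\rangle^2\,\e^{-|x|^2/4} = 2\int_\Sigma|y^T|^2\,\e^{-|x|^2/4}$ (with $|y|^2 = |y^T|^2 + \langle y,\nn\rangle^2$). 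Assembling the pieces gives \eqref{e:secvar2}. The genuine difficulty is entirely organizational: keeping the $s^2$-expansion of the area element honest — in particular not discarding the $f^2H^2$ term before passing to the logarithm — and being systematic about the many cross terms; there is no conceptual obstacle once the first variation and these $\cL$-identities are in hand.
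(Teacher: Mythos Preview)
Your computation is correct. The survey itself does not supply a proof of this theorem (it is quoted from \cite{CM1}), so there is nothing line-by-line to compare against here; but your argument is exactly the kind of direct computation one would expect, and it matches the framework the paper sets up: first variation from Lemma~\ref{l:varl0}, the drift Laplacian $\cL$ and its symmetry in the weighted space, the identities \eqref{e:cLcoord} and \eqref{e:cLx2}, and the definition $L = \cL + |A|^2 + \tfrac12$. I checked your expansion of $\log J_s$ (the cancellation of the $f^2H^2$ term inside the logarithm is handled correctly), your formula for $\mathcal J$, and each of the three ``pure parameter'' reductions---the $h^2$, $hy$, and $|y|^2$ pieces all collapse exactly as you claim once one uses $\int (2n-|x|^2)\,\e^{-|x|^2/4}=0$, $\int \langle x,y\rangle\,\e^{-|x|^2/4}=0$, and the pairings of $\cL$ with $|x|^2$ and $\langle x,y\rangle$. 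One minor remark: you might flag explicitly that the integration-by-parts identities for $\cL$ are being applied to functions like $|x|^2$ and $\langle x,y\rangle$ that grow, and that this is justified by polynomial volume growth against the Gaussian weight; you allude to this at the start but it is worth repeating at the point of use.
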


 {\bf{First observation}}: All (compact) critical points are unstable in the usual sense.
Namely, if we set $h=0$, $y=0$ and $f\equiv 1$ so that
\begin{equation}
	L \,  1 = \cL \,  1 + |A|^2 + \frac{1}{2} = |A|^2 + \frac{1}{2} \, ,
	\notag
\end{equation}
then we see that
\begin{equation}
	\frac{\partial^2 }{\partial s^2} \, \big|_{s=0} \,  \, \left( F_{ 0 , 1} (\Sigma_s) \right)	
	= (4\pi )^{-\frac{n}{2}} \, \int_{\Sigma_0} \left( - |A|^2 - \frac{1}{2} \right) < 0 \, . 
	\notag
\end{equation}
This instability explains why there are very few examples of embedded self-shrinkers that have been proven to exist.  Namely, they are difficult to construct variationally since 
they tend to be highly unstable critical points of $F_{0,1}$.

In fact, we get the same instability for non-compact self-shrinkers, at least when the volume growth is under control:

\begin{Thm}	\label{t:spectral0}
(Colding-Minicozzi, \cite{CM1})
If $\Sigma \subset \RR^{n+1}$ is a smooth complete self-shrinker without boundary and with polynomial volume growth, then there exists a function $u$ with compact support so that
\begin{equation}
 	-\int (u \, L \, u) \, \e^{ - \frac{|x|^2}{4} } < 0 \, .
\end{equation}
\end{Thm}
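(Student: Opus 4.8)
The plan is to reduce the inequality to a Rayleigh-quotient estimate and then exploit the very fast decay of the Gaussian weight $\e^{-|x|^2/4}$, which is exactly what allows the compact case (take $u\equiv 1$ and use $L\,1=|A|^2+\tfrac12$) to be localized to compact support.

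\textbf{Step 1: the Rayleigh-quotient form.} For any compactly supported $u$, the symmetry of the drift Laplacian $\cL$ in the weighted space gives $\int_\Sigma|\nabla^T u|^2\,\e^{-|x|^2/4}=-\int_\Sigma u\,\cL u\,\e^{-|x|^2/4}$, with no boundary contribution since $u$ has compact support. Since $L=\cL+|A|^2+\tfrac12$, this yields
\[
	-\int_\Sigma u\,L u\,\e^{-\frac{|x|^2}{4}}=\int_\Sigma|\nabla^T u|^2\,\e^{-\frac{|x|^2}{4}}-\int_\Sigma\Bigl(|A|^2+\tfrac12\Bigr)u^2\,\e^{-\frac{|x|^2}{4}}\,.
\]
So it suffices to produce a compactly supported $u$ making the right-hand side strictly negative.

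\textbf{Step 2: a cutoff.} Let $\eta_R$ be a cutoff with $\eta_R\equiv1$ on $B_R(0)\cap\Sigma$, $\eta_R\equiv0$ outside $B_{2R}(0)$, and $|\nabla^T\eta_R|\le 2/R$ (smooth it if one insists on $C^\infty$; the estimates are unaffected). Then the zeroth-order term is bounded below by $\tfrac12\int_{B_R(0)\cap\Sigma}\e^{-|x|^2/4}$, which increases to $\tfrac12\int_\Sigma\e^{-|x|^2/4}$ as $R\to\infty$, while the gradient term satisfies $\int_\Sigma|\nabla^T\eta_R|^2\,\e^{-|x|^2/4}\le\tfrac{4}{R^2}\int_{(B_{2R}(0)\setminus B_R(0))\cap\Sigma}\e^{-|x|^2/4}$.

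\textbf{Step 3: the volume-growth estimate and conclusion.} Here is where, and why, the polynomial volume growth hypothesis enters, and it is the only point requiring genuine care. Slicing $\Sigma$ into the annular pieces $(B_{k+1}(0)\setminus B_k(0))\cap\Sigma$, whose volumes grow only polynomially in $k$ while $\e^{-k^2/4}$ decays, shows that $V\equiv\int_\Sigma\e^{-|x|^2/4}$ is finite (and clearly positive); in particular the tails $\int_{\Sigma\setminus B_R(0)}\e^{-|x|^2/4}$ tend to $0$ as $R\to\infty$. Hence in the identity of Step 1 with $u=\eta_R$, the gradient term tends to $0$, while for $R$ large the zeroth-order term is at least $\tfrac14 V$. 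Choosing $R$ large enough that the gradient term is also less than $\tfrac14 V$ makes the right-hand side strictly negative, so $u=\eta_R$ is the desired function. Thus the substance of the argument is entirely the elementary Gaussian-integral/volume-growth bookkeeping; the remainder is the same localization that makes the compact case immediate.
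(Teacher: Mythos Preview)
Your proof is correct and matches the approach the paper indicates: the paper does not spell out a proof of this theorem (it cites \cite{CM1}), but immediately before it gives the compact case by taking $f\equiv 1$ and using $L\,1=|A|^2+\tfrac12$, then remarks that the same instability holds for non-compact self-shrinkers with polynomial volume growth. Your cutoff argument is precisely the standard localization of that compact-case computation, with the polynomial volume growth used exactly where it must be, to make the Gaussian tail integrals finite so that the gradient term vanishes in the limit while the zeroth-order term stays bounded below.
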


\subsection{The L operator applied to H and translations}

The link between the mean curvature $H$ of a self-shrinker $\Sigma$ and the second variation is that 
 $H$ is an eigenfunction for $L$ with eigenvalue $-1$; moreover, if $y$ is a constant vector, 
 then
  $\langle y , \nn \rangle$ is also an eigenfunction for $L$.

 \begin{Thm}	\label{t:spectral}
 (Colding-Minicozzi, \cite{CM1})
The mean curvature $H$ and the normal part $\langle v , \nn \rangle$ of a constant vector field $v $  are eigenfunctions of $L$ with
\begin{equation}	\label{e:spec1}
	LH =  H  {\text{ and }} L \langle v , \nn \rangle =  \frac{1}{2} \,  \langle v , \nn \rangle \, .
\end{equation}
\end{Thm}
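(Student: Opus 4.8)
The plan is to establish both identities by direct computation, in each case reducing the question to the action of the drift Laplacian $\cL$ on a ``support-type'' function $\langle w,\nn\rangle$ and then accounting for the zeroth order term in $L=\cL+|A|^2+\frac{1}{2}$. The two functions of interest arise as the cases $w=v$ (a fixed vector) and $w=x$ (the position vector); here the self-shrinker equation is exactly $\langle x,\nn\rangle=2H$, so $H$ is a function of this type. Hence it suffices to prove
\begin{equation}
	\cL\,\langle v,\nn\rangle = -|A|^2\,\langle v,\nn\rangle \quad\text{and}\quad \cL H = \tfrac{1}{2}\,H - |A|^2\,H , \notag
\end{equation}
since then $L\langle v,\nn\rangle = \cL\langle v,\nn\rangle + |A|^2\langle v,\nn\rangle + \frac{1}{2}\langle v,\nn\rangle = \frac{1}{2}\langle v,\nn\rangle$ and $LH = \cL H + |A|^2 H + \frac{1}{2}H = H$, which are \eqr{e:spec1}.

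First I would set up a local orthonormal frame $e_i$ for $\Sigma$ with $A_{ij}=\langle\nabla_{e_i}\nn,e_j\rangle$ and $H=\sum_i A_{ii}$, so that the Weingarten relation reads $\nabla_{e_i}\nn=\sum_j A_{ij}e_j$. For $w$ equal to either $v$ or $x$ the ambient derivative $\nabla_{e_i}w$ is tangent to $\Sigma$ (it is $0$ when $w=v$ and $e_i$ when $w=x$), so $e_i\langle w,\nn\rangle = \sum_j A_{ij}\langle w,e_j\rangle$ and the tangential gradient has components $(\nabla^T\langle w,\nn\rangle)_i = \sum_j A_{ij}\,w^T_j$, where $w^T$ denotes the tangential part of $w$. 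Differentiating once more, using the contracted Codazzi equation $\sum_i\nabla_{e_i}A_{ij}=\nabla_{e_j}H$ (valid since the ambient space is flat) together with $\nabla^T_{e_i}w^T = (\nabla_{e_i}w)^T-\langle w,\nn\rangle\sum_j A_{ij}e_j$, I get the general identity
\begin{equation}
	\Delta_\Sigma\langle w,\nn\rangle = \langle w^T,\nabla^T H\rangle + \sum_{i,j}A_{ij}\langle\nabla_{e_i}w,e_j\rangle - |A|^2\langle w,\nn\rangle . \notag
\end{equation}
The middle term vanishes for $w=v$ and equals $\sum_i A_{ii}=H$ for $w=x$.

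The last step combines this with the drift term and the self-shrinker equation. Since $\nabla^T\langle w,\nn\rangle$ is tangent, $\langle x,\nabla^T\langle w,\nn\rangle\rangle = \langle x^T,\nabla^T\langle w,\nn\rangle\rangle = \sum_{i,j}A_{ij}\,x^T_i\,w^T_j$; on the other hand $\nabla^T H = \frac{1}{2}\nabla^T\langle x,\nn\rangle$ on a self-shrinker, so $\langle w^T,\nabla^T H\rangle = \frac{1}{2}\sum_{i,j}A_{ij}\,w^T_i\,x^T_j$. By the symmetry of $A$ these two terms cancel in $\cL\langle w,\nn\rangle = \Delta_\Sigma\langle w,\nn\rangle - \frac{1}{2}\langle x,\nabla^T\langle w,\nn\rangle\rangle$, leaving $\cL\langle v,\nn\rangle = -|A|^2\langle v,\nn\rangle$ and $\cL\langle x,\nn\rangle = H - |A|^2\langle x,\nn\rangle$. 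Substituting $\langle x,\nn\rangle=2H$ in the latter gives $\cL H = \frac{1}{2}H - |A|^2 H$, and adding the zeroth order term $|A|^2+\frac{1}{2}$ produces \eqr{e:spec1}.

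The computation is otherwise routine, and the one point that deserves care — and the conceptual content of the statement — is the cancellation in the last paragraph: the non-tensorial term $\langle w^T,\nabla^T H\rangle$ thrown up by Codazzi is exactly matched by the drift term $-\frac{1}{2}\langle x,\nabla^T\langle w,\nn\rangle\rangle$, precisely because of the self-shrinker equation $2H=\langle x,\nn\rangle$. This is the reason $\cL$, rather than $\Delta_\Sigma$, is the natural operator on self-shrinkers, in parallel with the way $\Delta_\Sigma$ annihilates the coordinate functions on a minimal surface; and since $v$ was arbitrary, this simultaneously produces the full $(n+1)$-dimensional space of ``translation'' eigenfunctions $\langle v,\nn\rangle$ of $L$ with eigenvalue $\frac{1}{2}$.
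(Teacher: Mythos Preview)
Your proof is correct. The survey itself does not supply a proof of this theorem; it simply records the statement and attributes it to \cite{CM1}, so there is no ``paper's own proof'' to compare against here. Your argument is essentially the one in \cite{CM1}: compute $\Delta_\Sigma\langle w,\nn\rangle$ using the Weingarten map and the contracted Codazzi identity, then observe that on a self-shrinker the drift term $-\tfrac{1}{2}\langle x^T,\nabla^T\langle w,\nn\rangle\rangle$ exactly cancels the $\langle w^T,\nabla^T H\rangle$ contribution. The unified treatment via $w\in\{v,x\}$ and the identification $2H=\langle x,\nn\rangle$ is a clean way to organize the two cases.
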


This will be important later, but it is worth noting that this explains an odd fact in the second variation formula.  Namely, since 
$L$ is a symmetric operator (in the weighted $L^2$ space) and $H$ and $\langle v , \nn \rangle$ are eigenfunctions with different eigenvalues, it follows that 
   $H$ and $\langle v , \nn \rangle$ are orthogonal.
This explains why there was no $H \, \langle y , \nn \rangle$ term in 
the second variation, Theorem \ref{t:secvar}.

\begin{Rem}
Interestingly, there is an analogous situation for Ricci flow.  In this case, Cao, Hamilton and Ilmanen, 
\cite{CaHI},
 computed the second variation formula for Perelman's shrinker entropy and discovered an analog of the $L$ operator.
Moreover, Cao and Zhu showed in \cite{CaZ} that the Ricci tensor is an eigenvector for this operator.
\end{Rem}

\section{Smooth compactness theorem for self-shrinkers}

In \cite{CM3}, we proved the following smooth compactness theorem for self-shrinkers in $\RR^3$:

\begin{Thm}	\label{c:cpt}
(Colding-Minicozzi, \cite{CM3})
 Given an integer $g \geq 0$ and a  constant $V>0$,
the space  of smooth complete embedded self-shrinkers $\Sigma \subset \RR^3$ with
\begin{itemize}
\item genus at most $g$,
\item $\partial \Sigma = \emptyset$,
\item $\Area \, \left( B_{R}(x_0) \cap \Sigma \right) \leq V \, R^2$  for all $x_0 \in \RR^3$ and all $R > 0$
 \end{itemize}
 is compact.

 Namely, any sequence of these has a subsequence that converges in the
 topology of  $C^m$ convergence on compact subsets for any $m \geq 2$.
\end{Thm}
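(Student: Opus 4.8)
The plan is to reduce the theorem to an a priori interior curvature estimate
\[
	\sup_{B_R(0)\cap\Sigma}|A|^2 \le C(R,V,g)
\]
for every self-shrinker $\Sigma$ in the given class, and then to upgrade this estimate to $C^m_{loc}$ subconvergence by elliptic regularity. The second step is standard: self-shrinkers are exactly the minimal surfaces for the conformal metric $g_{ij}=\e^{-|x|^2/4}\delta_{ij}$ (equivalently, they solve a quasilinear elliptic equation), so a pointwise bound on $|A|$ together with the area bound presents each $\Sigma$ locally as a uniformly bounded number of graphs with uniformly bounded geometry; Schauder estimates and an Arzel\`a--Ascoli/diagonal argument then produce a subsequence converging in $C^m_{loc}$. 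The limit again solves the shrinker equation, has empty boundary, inherits the area bound, and has genus at most $g$ because a compact subsurface of the limit is diffeomorphic to one of $\Sigma_i$ for $i$ large. Hence the whole content is the curvature estimate, together with the fact that the convergence has multiplicity one so that the limit is a genuine embedded self-shrinker; the latter point is addressed at the end.

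To prove the curvature estimate I would argue by contradiction, using the structure theory for embedded minimal disks. Suppose there is a sequence in the class and points $x_i\in B_R(0)\cap\Sigma_i$ with $\lambda_i:=|A_{\Sigma_i}|(x_i)\to\infty$. Near $x_i$, on scales up to a fixed $\rho_0\asymp 1$ the metric $g_{ij}$ has bounded geometry (since $|x_i|$ is bounded), and on the curvature scale $1/\lambda_i$ it is, after a constant rescaling, $C^m$-close to the Euclidean metric. Using the finite-genus hypothesis, after passing to a subsequence and shrinking $\rho_0$ one reduces to the case where $B_{\rho_0}(x_i)\cap\Sigma_i$ is a disk: the alternative, a non-contractible curve collapsing near $x_i$, is excluded because by the pair-of-pants decomposition (Theorems \ref{t:cm7c} and \ref{c:main}) and Corollary \ref{t:cm7d} the surfaces would then converge to flat parallel planes near $x_i$, forcing $|A_{\Sigma_i}|(x_i)$ to stay bounded. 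Now the structure theory for embedded minimal disks applies (the three main steps producing and then extending a multi-valued graph, together with the one-sided curvature estimate, Theorem \ref{t:onesided}): $B_{\rho_0}(x_i)\cap\Sigma_i$ contains a multi-valued graph of pitch $\asymp 1/\lambda_i$, and hence consists of $\asymp\lambda_i$ roughly parallel sheets inside $B_{\rho_0}(x_i)$. Each sheet contributes area $\asymp\rho_0^2$, so $\Area(B_{\rho_0}(x_i)\cap\Sigma_i)\gtrsim\lambda_i\rho_0^2\to\infty$, contradicting $\Area(B_{\rho_0}(x_i)\cap\Sigma_i)\le V\rho_0^2$.

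An alternative, more analytic route to the curvature estimate is to combine a Choi--Schoen $\varepsilon$-regularity theorem (valid here because $\Sigma$ is minimal for $g_{ij}$, which has bounded geometry on compact sets) with a uniform bound $\int_{B_R(0)\cap\Sigma}|A|^2\le C(R,V,g)$ obtained from Gauss--Bonnet: on $\Omega=B_R(0)\cap\Sigma$ one has $\int_\Omega|A|^2=\int_\Omega H^2-4\pi\chi(\Omega)+2\int_{\partial\Omega}\kappa_g$, where the shrinker equation $H=\langle x,\nn\rangle/2$ gives $\int_\Omega H^2\le\tfrac{1}{4}\int_\Omega|x|^2\le\tfrac{V}{4}R^4$, the genus bound and the convex-hull property (Lemma \ref{l:cvxhull} in the metric $g_{ij}$, which controls the number of boundary circles via $H_1$ together with the finiteness of the number of ends coming from the area bound) control $\chi(\Omega)$, and $R$ is chosen generically via the co-area formula to control the boundary term. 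One then rules out the remaining concentration of $\int|A|^2$ by blowing up to a non-flat complete embedded minimal surface of finite total curvature with quadratic area growth, which cannot exist by the rigidity in the monotonicity formula (Theorem \ref{t:monomin}, forcing its density ratio to be constant and hence the surface to be a plane). Either way, carrying out the structure-theoretic (or $\varepsilon$-regularity) machinery in the Riemannian, conformally changed setting, and handling the finite-genus bookkeeping, is the step I expect to be the main obstacle.

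With the curvature estimate in hand, a subsequence converges in $C^m_{loc}$ to a smooth self-shrinker $\Sigma_\infty$ with empty boundary, genus at most $g$, and the area bound. It remains to see that the convergence has multiplicity one. If not, two sheets of $\Sigma_i$ accumulate along some leaf of $\Sigma_\infty$, and the normalized difference of these sheets converges to a positive solution $u>0$ of the Jacobi-type equation $Lu=0$ on $\Sigma_\infty$, where $L=\mathcal{L}+|A|^2+\tfrac{1}{2}$. By the Fischer-Colbrie and Schoen criterion (in the weighted space), the existence of such a $u$ makes $\Sigma_\infty$ stable; but self-shrinkers with polynomial volume growth are unstable by Theorem \ref{t:spectral0}. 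This contradiction shows the limit is a genuine smooth embedded self-shrinker in the class, completing the proof that the class is compact.
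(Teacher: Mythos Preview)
Your ``alternative, more analytic route'' is essentially the one the paper takes, and your final paragraph (positive Jacobi field from the normalized difference of sheets, contradicting Theorem~\ref{t:spectral0}) is exactly the paper's steps (4)--(5). The paper also opens with a local Gauss--Bonnet bound on $\int_{B_R\cap\Sigma}|A|^2$ (citing Ilmanen's version) and then Choi--Schoen to get smooth subconvergence away from finitely many concentration points.

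The gap is in how you dispose of those concentration points. Your blow-up claim is false as stated: non-flat complete embedded minimal surfaces in $\RR^3$ with finite total curvature and quadratic area growth certainly exist---the catenoid is one---and Theorem~\ref{t:monomin} only says the density ratio is non-decreasing, not constant. What the paper does instead is invoke Allard's regularity theorem: if the varifold limit had multiplicity one near a concentration point, Allard would force the convergence there to be smooth graphical and the curvature bounded, a contradiction; hence concentration forces multiplicity greater than one. Then the very argument you wrote in your last paragraph (positive solution of $Lu=0$ $\Rightarrow$ stability $\Rightarrow$ contradiction with Theorem~\ref{t:spectral0}) kills this case as well. So the instability of self-shrinkers does double duty, ruling out both curvature concentration and higher-multiplicity smooth convergence in one stroke, and the a priori curvature estimate is a \emph{consequence} of the compactness argument rather than an input to it. You had the right weapon; you only aimed it at the second target.

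Your primary route via the disk structure theory of \cite{CM6}--\cite{CM9} is a genuinely different strategy that the paper does not pursue. The heuristic (pitch $\asymp 1/\lambda_i$ $\Rightarrow$ $\asymp\lambda_i$ sheets in $B_{\rho_0}$ $\Rightarrow$ area blowing up) is sound for helicoidal blow-ups, and you are right to flag porting the structure theory to the incomplete conformal metric as the main obstacle. But this is heavy machinery for what Choi--Schoen/Allard/instability does in a page, and your reduction to the disk case via Corollary~\ref{t:cm7d} is stated for minimal planar domains in balls $B_{R_i}\subset\RR^3$ with $R_i\to\infty$, not for self-shrinkers in a fixed compact region, so that step would also need real adaptation.
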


The surfaces in this theorem are assumed to be homeomorphic to closed surfaces with finitely many disjoint disks removed.
The genus of the surface is defined to be the genus of the corresponding closed surface.  For example,  an annulus is a sphere with two disks removed and, thus, has genus zero.

The main motivation for this result is that self-shrinkers model singularities in mean curvature flow.  Thus, the above theorem can be thought of as a compactness result for the space of all singularities.

This should be compared with the Choi-Schoen compactness theorem for minimal surfaces in a manifold with positive Ricci curvature, \cite{CiSc}.  
However,  the conformal metric is not complete and even the scalar curvature changes sign.

\subsection{The proof of smooth compactness}

There are five main points in the proof of Theorem \ref{c:cpt}:
\begin{enumerate}
\item The bound on the genus plus local area bounds imply local bounds on $\int |A|^2$ (this follows from the local Gauss-Bonnet estimate  in theorem $3$ of \cite{I1}).
\item Using (1) and the Choi-Schoen curvature estimate, \cite{CiSc},  we get a subsequence that converges smoothly, possibly with multiplicity,  away from isolated ``singular points'' where the curvature 
concentrates.
\item By Allard's theorem, \cite{Al}, the existence of singular points implies that the convergence is with multiplicity greater than one.
\item If the limit has multiplicity greater than one, then the limit is stable as a minimal surface in the conformally changed metric (by rescaling to get a Jacobi field as in \cite{CM13}).
\item Combining (4) and Theorem \ref{t:spectral0} shows that there cannot be any singular points.
\end{enumerate}

We will say a bit more about step (4) and why 
multiplicity   implies stability.  The basic point is that
as $2$ sheets come together, they are both graphs over the limit and 
the difference $w_i$ between these $2$ graphs {\underline{does not vanish}} (by embeddedness).  Thus, 
$w_i$ does not change sign and (almost) satisfies the linearized equation  $L w_i = 0$.   The
$w_i$'s go to $0$, but the Harnack inequality gives convergence for (a subsequence of)
\begin{equation}
	u_i = \frac{w_i}{w_i(p)} \, .  \notag
\end{equation}
It is not hard to show that the limiting function $u$ is a positive solution of $Lu=0$ with $u(p) =1$.  Of course, $u$ is initially defined only away from the isolated singular points, but it is possible to show that it extends across these potential singularities.  Finally, 
as we saw for minimal surfaces, this implies positivity of the operator $L$.

\section{The entropy}

The $F_{x_0,t_0}$ functional was defined 
for $t_0>0$ and $x_0\in \RR^{n+1}$ by
\begin{align}
F_{x_0 , t_0} (\Sigma) = (4\pi t_0)^{-n/2} \, \int_{\Sigma} \, \e^{-\frac{|x- x_0|^2}{4t_0}} \, d\mu\, .   \notag
\end{align}
If $M_t$ flows by mean curvature and $t>s$, then Huisken's monotonicity formula gives 
\begin{equation}	\label{e:huiskenF}
		F_{x_0 , t_0} ( M_t)  \leq  F_{x_0 , t_0 + (t-s)} (M_s)  \, .
\end{equation}
Thus, we see that a fixed $F_{x_0,t_0}$ functional is not monotone under the flow, but the supremum over all of these functionals is monotone.    We call this invariant
 the entropy and denote it by
\begin{align}	\label{e:entropy}
\lambda (\Sigma) = \sup_{x_0 , t_0} \, F_{x_0 , t_0} (\Sigma)  \, .  
\end{align}
The entropy has four key properties:
\begin{enumerate}
\item $\lambda$ is invariant under dilations, rotations, and translations.
\item $\lambda (M_t)$ is non-increasing under MCF.
\item If $\Sigma$ is a self-shrinker, then $\lambda (\Sigma) = F_{0,1} (\Sigma) = \Theta_{0,0}$.
\item Entropy is preserved under products with a line, i.e., $\lambda (\Sigma \times \RR) = 
\lambda (\Sigma)$.
\end{enumerate}

\subsection{A few entropies}
Stone, \cite{St}, computed the densities $\Theta_{0,0}$, and thus also $\lambda$, for self-shrinking spheres, planes and cylinders:
\begin{itemize}
\item $\lambda (\RR^2) = 1$.
\item $\lambda (\SS^2_2) = \frac{4}{\e} \approx 1.4715$.
\item $\lambda (\SS^1_{\sqrt{2}}) = \sqrt{ \frac{2\pi}{\e} } \approx 1.5203$.
\end{itemize}
Moreover, 
he also showed that $\lambda (\SS^n)$ is decreasing in $n$.

\subsection{How entropy will be used}
The main point about $\lambda$ is that it can 
 be used to rule out certain singularities because
 of the monotonicity of entropy under MCF and its invariance under dilations:
 
 \begin{Cor}
If $\Sigma$ is a self-shrinker given by a tangent flow for $M_t$ with $t>0$, then
\begin{equation}
  F_{0,1} (\Sigma) = \lambda (\Sigma) \leq \lambda (M_0) \, . \notag
 \end{equation}
 \end{Cor}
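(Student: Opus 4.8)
The plan is to identify $\lambda(\Sigma)$ with Huisken's density of the original flow $M_t$ at the singular point about which the tangent flow is formed, and then to bound that density above by $\lambda(M_0)$ using Huisken's monotonicity and the definition of the entropy. Write $(x_0,t_0)$, with $t_0>0$, for the space-time point of $M_t$ at which the tangent flow is taken, so that $\Sigma$ arises --- after a translation and the parabolic rescalings $M^i_s = \lambda_i\,(M_{t_0+\lambda_i^{-2}s}-x_0)$, $\lambda_i\to\infty$ --- as the time $-1$ slice of a self-similar limit flow $M^\infty_s=\sqrt{-s}\,\Sigma$.

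First I would invoke property (3) of the entropy: since $\Sigma$ is a self-shrinker, $\lambda(\Sigma)=F_{0,1}(\Sigma)=\Theta^{M^\infty}_{0,0}$, where $\Theta^{M^\infty}_{0,0}$ is Huisken's density of the tangent flow at the origin; so it suffices to show $\Theta^{M^\infty}_{0,0}\le\lambda(M_0)$. Next I would check that Huisken's density is unchanged under the blow-up, i.e. $\Theta^{M^\infty}_{0,0}=\Theta^M_{x_0,t_0}$. This uses that the Gaussian-weighted area $\int_{M_s}\Phi_{(x_0,t_0)}$ is invariant under the parabolic rescaling (a change of variables in $\Phi$), together with the continuity of $\Phi$-weighted integrals under the Brakke/varifold convergence $M^i_s\to M^\infty_s$ --- and this last point, which rests on the uniform Gaussian area bounds furnished by Huisken's monotonicity, is the main technical obstacle. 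Granting it, $\Theta^{M^\infty}_{0,0}=\lim_i\int_{M^i_s}\Phi_{(0,0)}=\lim_i\int_{M_{t_0+\lambda_i^{-2}s}}\Phi_{(x_0,t_0)}=\Theta^M_{x_0,t_0}$.

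Finally I would combine the definition of $\Theta^M_{x_0,t_0}$ with Huisken's monotonicity \eqr{e:huiskenF}. Writing $\int_{M_t}\Phi_{(x_0,t_0)}=F_{x_0,t_0-t}(M_t)$ and applying \eqr{e:huiskenF} with second parameter $t_0-t$ and initial time $0$ shows this quantity is non-increasing in $t$, so that
\[
 \Theta^M_{x_0,t_0}=\lim_{t\to t_0}F_{x_0,t_0-t}(M_t)\le F_{x_0,t_0}(M_0)\le\sup_{x,\tau}F_{x,\tau}(M_0)=\lambda(M_0)\,.
\]
Chaining the steps gives $F_{0,1}(\Sigma)=\lambda(\Sigma)=\Theta^M_{x_0,t_0}\le\lambda(M_0)$. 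A shorter but essentially equivalent route avoids Huisken's density: by dilation and translation invariance (property (1)), $\lambda(M^i_{-1})=\lambda(M_{t_0-\lambda_i^{-2}})\le\lambda(M_0)$ once $t_0-\lambda_i^{-2}\ge0$, using monotonicity of the entropy under MCF (property (2)); and since $\lambda$ is the supremum of the continuous functionals $F_{x,\tau}$ it is lower semicontinuous under $M^i_{-1}\to\Sigma$, whence $\lambda(\Sigma)\le\liminf_i\lambda(M^i_{-1})\le\lambda(M_0)$. In either approach the sole delicate step is the passage to the limit under the weak convergence of the rescaled flows.
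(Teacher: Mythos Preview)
Your proposal is correct and matches the paper's approach: the paper does not spell out a proof but simply says the corollary follows from ``the monotonicity of entropy under MCF and its invariance under dilations,'' which is exactly your second route (property~(1) gives $\lambda(M^i_{-1})=\lambda(M_{t_0-\lambda_i^{-2}})$, property~(2) bounds this by $\lambda(M_0)$, property~(3) gives $F_{0,1}(\Sigma)=\lambda(\Sigma)$, and lower semicontinuity of $\lambda$ under the limit closes the argument). Your first route via $\Theta^M_{x_0,t_0}$ is an equally standard alternative, and you have correctly flagged the one genuinely technical point --- the passage of Gaussian integrals through the varifold limit --- in both versions.
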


\subsection{Classification of entropy stable singularities}

To illustrate our results, we will first specialize to the case where $n=2$, that is to mean curvature flow of surfaces in $\RR^3$.

\begin{Thm}	\label{c:nonlin1a}
(Colding-Minicozzi, \cite{CM1})
Suppose that $\Sigma \subset \RR^3$ is a smooth complete embedded self-shrinker without boundary and with polynomial volume growth.
\begin{itemize}
\item If $\Sigma$ is not a sphere, a plane, or a cylinder,  then there is a graph $\tilde{\Sigma}$ over $\Sigma$ of a compactly supported function with arbitrarily small $C^m$ norm (for any fixed $m$) so that
 $\lambda( \tilde{\Sigma})<\lambda(\Sigma)$.
 \end{itemize}
In  particular,   $\Sigma$  cannot arise as a tangent flow to the MCF starting from $\tilde{\Sigma}$.
\end{Thm}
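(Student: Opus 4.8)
The plan is to first show that such a $\Sigma$ is \emph{$F$-unstable} in a strong sense — there is one fixed normal direction along which the second variation $F''$ is strictly negative no matter how one simultaneously perturbs the center $x_0$ and the scale $t_0$ — and then to promote this infinitesimal instability to a genuine decrease of entropy on a nearby surface. As a preliminary reduction, note that if $H\equiv 0$ then $\Sigma$ is minimal with $\langle x,\nn\rangle = 2H = 0$, hence a cone through $0$, hence (smooth and embedded) a plane, which is excluded; and if $H$ does not change sign then $\Sigma$ is mean convex, so by the classification of mean convex self-shrinkers with polynomial volume growth (Huisken, together with the argument handling the noncompact cases) $\Sigma$ is a plane, a sphere $\SS^2_2$, or a cylinder $\SS^1_{\sqrt 2}\times\RR$, again excluded. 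Thus we may assume $H$ changes sign.

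Using polynomial volume growth to obtain a discrete spectrum for the operator $L=\cL+|A|^2+\tfrac12$ on the Gaussian weighted $L^2$ space (this is where the construction in the proof of Theorem~\ref{t:spectral0} enters), let $\mu=\sup\operatorname{spec}(L)$, attained by a \emph{positive} eigenfunction $u_1$, $Lu_1=\mu u_1$. By Theorem~\ref{t:spectral}, $LH=H$, so $1\in\operatorname{spec}(L)$ and $\mu\ge 1$; moreover $\mu=1$ would force $H$ to be a multiple of the positive function $u_1$, contradicting that $H$ changes sign, so $\mu>1$. Since $H$ and each $\langle v,\nn\rangle$ are eigenfunctions of the symmetric operator $L$ with eigenvalues $1$ and $\tfrac12$, both different from $\mu$, the function $u_1$ is weighted $L^2$-orthogonal to $H$ and to every $\langle v,\nn\rangle$. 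Substituting $f=u_1$, together with arbitrary $y\in\RR^{n+1}$ and $h\in\RR$, into the second variation formula~\eqref{e:secvar2}, the orthogonality annihilates the cross terms $2\int fhH$ and $\int f\langle y,\nn\rangle$, leaving
\begin{equation}
	F'' = (4\pi)^{-n/2}\left(-\mu\int_{\Sigma} u_1^2\,\e^{-\frac{|x|^2}{4}} - h^2\int_{\Sigma}H^2\,\e^{-\frac{|x|^2}{4}} - \frac12\int_{\Sigma}\langle y,\nn\rangle^2\,\e^{-\frac{|x|^2}{4}}\right) < 0 \notag
\end{equation}
for all $y$ and $h$. A cutoff using the decay of $u_1$ and the continuity of $F''$ in $f$ (uniform for $(y,h)$ in a fixed compact set) replaces $u_1$ by a compactly supported $f$ for which the strict inequality persists.

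To upgrade this, let $\tilde\Sigma_s$ be the graph of $sf$ over $\Sigma$, which has arbitrarily small $C^m$ norm for $|s|$ small. The key point is that $\lambda(\Sigma)=F_{0,1}(\Sigma)$ and that $(0,1)$ is a \emph{nondegenerate} maximum of $(x_0,t_0)\mapsto F_{x_0,t_0}(\Sigma)$: its Hessian there equals $(4\pi)^{-n/2}$ times $-h^2\int_{\Sigma} H^2\e^{-|x|^2/4}-\tfrac12\int_{\Sigma}\langle y,\nn\rangle^2\e^{-|x|^2/4}$ (no mixed $hy$ term, again by orthogonality of $H$ and $\langle v,\nn\rangle$), which is negative definite because $H\not\equiv 0$ and $\langle v,\nn\rangle\not\equiv 0$ for every $v\ne 0$ — the latter since a splitting $\Sigma=\gamma\times\RR$ would force $\gamma$ to be an embedded self-shrinking curve, hence a line or a circle by Abresch-Langer~\cite{AbLa}, i.e. $\Sigma$ a plane or a cylinder. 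Combined with the standard fact that $F_{x_0,t_0}(\Sigma)$ is bounded away from $\lambda(\Sigma)$ outside any neighborhood of $(0,1)$, it follows that for $|s|$ small the supremum $\lambda(\tilde\Sigma_s)=\sup_{x_0,t_0}F_{x_0,t_0}(\tilde\Sigma_s)$ is attained at a unique $(x_0^s,t_0^s)=(s\,y_s,\,1+s\,h_s)$ with $(y_s,h_s)$ bounded. Comparing with the one-parameter families $g_{(y,h)}(s)=F_{sy,\,1+sh}(\tilde\Sigma_s)$ for fixed $(y,h)$ — which satisfy $g(0)=\lambda(\Sigma)$, $g'(0)=0$ (the shrinker is critical for $F$ in all three parameters, Proposition~\ref{p:critall}), and $g''(0)=F''<0$ with the uniform bound of the previous step — one obtains $g_{(y,h)}(s)\le\lambda(\Sigma)-c\,s^2$ uniformly over a compact set of parameters, hence $\lambda(\tilde\Sigma_s)=g_{(y_s,h_s)}(s)\le\lambda(\Sigma)-c\,s^2<\lambda(\Sigma)$ for all small $s\ne 0$. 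Taking $\tilde\Sigma=\tilde\Sigma_s$ proves the displayed assertion. The ``in particular'' is then immediate: if $\Sigma$ were a tangent flow for the MCF starting at $\tilde\Sigma$, the corollary preceding the theorem gives $\lambda(\Sigma)=F_{0,1}(\Sigma)\le\lambda(\tilde\Sigma)$, contradicting $\lambda(\tilde\Sigma)<\lambda(\Sigma)$.

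I expect the main obstacle to be the last step: converting the pointwise-in-$(y,h)$ negativity of $F''$ into a statement about the entropy requires controlling the \emph{full} supremum defining $\lambda(\tilde\Sigma_s)$ — in particular, that for surfaces near $\Sigma$ the maximizing $(x_0,t_0)$ stays near $(0,1)$ and moves only at rate $O(s)$ — which forces one to combine the nondegeneracy of the critical point $(0,1)$, the exclusion of a line splitting (via Abresch-Langer in $\RR^3$), and a uniform second-order Taylor estimate for $F_{x_0,t_0}$. By contrast, once the discreteness of the spectrum of $L$ is in hand, identifying the destabilizing direction $u_1$ and checking $F''<0$ is essentially the spectral computation above.
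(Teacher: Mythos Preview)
Your approach is essentially the paper's: reduce to the case where $H$ changes sign via the mean-convex classification (Theorem~\ref{t:huisken}), show $\mu_1<-1$ (Theorem~\ref{c:pde}), use the bottom eigenfunction to produce an $F$-unstable direction, and then run the splitting argument (Theorem~\ref{t:nonlin1c}, using Lemma~\ref{l:strict} for the no-line-splitting input) to pass from $F$-instability to a genuine entropy drop. The one place your execution diverges is the noncompact case: you invoke a global weighted-$L^2$ eigenfunction $u_1$ and exact orthogonality to $H$ and $\langle v,\nn\rangle$, then cut off, whereas the paper works directly with the Dirichlet eigenfunction on a large ball $B_R$ and proves only \emph{approximate} orthogonality (Lemma~\ref{l:ortho}); the paper's route sidesteps having to justify that $\mu_1>-\infty$ and that $u_1$ lies in the weighted space, which you assert but do not prove here.
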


Thus, spheres, planes and cylinders are the only generic self-shrinkers.
This should be contrasted  with Huisken's result for convex flows, where we see that any small perturbation remains convex and, thus, still becomes extinct at a round point.

Essentially the same result holds in all dimensions, with one small difference: the perturbation does not have compact support if the shrinker is a product of a lines with an unstable shrinker in one dimension less; see \cite{CM1}.

\subsection{F-stability}

We saw that every self-shrinker is unstable as a critical point of $F_{0,1}$ and, in fact, 
it is already unstable just from the variations corresponding to  translations and dilations.  Roughly speaking, we will say that a self-shrinker is
$F$-stable if these are the only sources of instability (this is essentially orbital stability for a corresponding dynamical system).  Namely,
a self-shrinker $\Sigma$ is $F$-stable if for every variation $\Sigma_s$ there exist variations $x_s$ and $t_s$ so that
\begin{equation}
	\frac{d^2}{ds^2} \, \big|_{s=0} \, F_{x_s, t_s} (\Sigma_s) \geq 0 \, . \notag
\end{equation}

It is not hard to see that $\SS^n$ and $\RR^n$ are $F$-stable:

\begin{Lem}	\label{l:sn}
(Colding-Minicozzi, \cite{CM1})
The $n$-sphere of radius $\sqrt{2n}$ in $\RR^{n+1}$ is   $F$-stable.
\end{Lem}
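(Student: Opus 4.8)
The plan is to read off $F$-stability of $\Sigma=\SS^n_{\sqrt{2n}}$ from the second variation formula (Theorem~\ref{t:secvar}) once the spectrum of the operator $L$ on $\Sigma$ is understood. Since $\Sigma$ is compact, all weighted Gaussian integrals converge. On $\Sigma$ the position vector is normal, so $x^T=0$ and the drift Laplacian reduces to the ordinary one, $\cL=\Delta_\Sigma$; moreover $|x|^2\equiv 2n$ and $|A|^2\equiv n/(2n)=\tfrac12$, so $L=\cL+|A|^2+\tfrac12=\Delta_\Sigma+1$, and the Gaussian weight $\e^{-|x|^2/4}\equiv\e^{-n/2}$ is constant on $\Sigma$ (so weighted and ordinary $L^2$ on $\Sigma$ agree up to a constant, and $L$ is a self-adjoint operator with discrete spectrum).

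First I would record the spectrum. The eigenvalues of $-\Delta_{\SS^n_R}$ are $k(k+n-1)/R^2$, $k=0,1,2,\dots$; with $R^2=2n$ these are $0$, $\tfrac12$ (eigenspace the restrictions of linear functions, equivalently $\langle v,\nn\rangle=\langle v,x\rangle/\sqrt{2n}$ for $v\in\RR^{n+1}$, of dimension $n+1$), and $k(k+n-1)/(2n)\geq(n+1)/n$ for $k\geq 2$. Hence $L=\Delta_\Sigma+1$ has eigenvalue $1$ on the constants, eigenvalue $\tfrac12$ on the $(n+1)$-dimensional span of the $\langle v,\nn\rangle$, and all other eigenvalues $\leq-\tfrac1n<0$. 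This is consistent with and refines Theorem~\ref{t:spectral}: $H$ is a positive constant on $\Sigma$, hence an eigenfunction with eigenvalue $1$, and the functions $\langle v,\nn\rangle$ exactly fill out the eigenvalue-$\tfrac12$ eigenspace; being eigenfunctions for distinct eigenvalues of the symmetric operator $L$, $H$ and each $\langle v,\nn\rangle$ are orthogonal in the weighted inner product.

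Next I would complete the square in Theorem~\ref{t:secvar}. Using $LH=H$ and $L\langle y,\nn\rangle=\tfrac12\langle y,\nn\rangle$, symmetry of $L$, and the orthogonality just noted, a short manipulation gives
\begin{equation}
        F''=(4\pi)^{-n/2}\int_\Sigma -\big(f-hH-\langle y,\nn\rangle\big)\,L\big(f-hH-\langle y,\nn\rangle\big)\,\e^{-\frac{|x|^2}{4}}\,d\mu\, ,\notag
\end{equation}
which matches Theorem~\ref{t:secvar} term by term (the quadratic terms $-h^2H^2-\tfrac12\langle y,\nn\rangle^2$, and the absence of any $hH\langle y,\nn\rangle$ cross term, are exactly what the orthogonality of $H$ and the $\langle v,\nn\rangle$ produces). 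Given any normal variation with function $f$, I would choose $h$ and $y$ so that $hH+\langle y,\nn\rangle$ is the weighted $L^2$-projection of $f$ onto $\text{span}\{H\}\oplus\text{span}\{\langle v,\nn\rangle:v\in\RR^{n+1}\}$; this is possible because $H$ is a nonzero constant and $v\mapsto\langle v,\nn\rangle$ is a linear isomorphism onto the eigenvalue-$\tfrac12$ eigenspace of $L$. For this choice $\tilde f:=f-hH-\langle y,\nn\rangle$ lies in the span of the eigenfunctions of $L$ with eigenvalue $\leq-\tfrac1n$, so $-\int_\Sigma\tilde f\,L\tilde f\,\e^{-|x|^2/4}\geq\tfrac1n\int_\Sigma\tilde f^2\,\e^{-|x|^2/4}\geq 0$, hence $F''\geq 0$. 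This is exactly $F$-stability.

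The only real content is the spectral computation in the second step, together with the observation that $H$ and the $\langle v,\nn\rangle$ exhaust the nonnegative part of the spectrum of $L$ — equivalently, that below the translation and dilation directions there is a spectral gap all the way down to $-\tfrac1n$. That gap is the crux: it fails for other compact self-shrinkers (which is why the round sphere turns out to be the only $F$-stable one), and $F$-stability of $\SS^n$ is precisely the assertion that the only instabilities of $F$ at the round sphere are the ``trivial'' ones coming from translating and rescaling — and those are absorbed by the choice of $h$ and $y$ above.
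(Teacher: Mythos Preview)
Your proof is correct and follows essentially the same approach as the paper: both identify $L=\Delta_\Sigma+1$ on $\SS^n_{\sqrt{2n}}$, use the spectrum of the Laplacian to see that constants and the linear functions $\langle v,\nn\rangle$ exhaust the nonnegative eigenspaces of $L$ with a gap of $\tfrac1n$ below, and then choose $h$ and $y$ to cancel the projection of $f$ onto those two spaces. Your completing-the-square identity $F''=-\int_\Sigma \tilde f\,L\tilde f\,\e^{-|x|^2/4}$ with $\tilde f=f-hH-\langle y,\nn\rangle$ is a slightly cleaner packaging of the same computation the paper carries out by hand in \eqr{e:p1}--\eqr{e:p3}.
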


\begin{proof}
Note that
  $x^T = 0$,  $A$ is $1/\sqrt{2n}$ times the metric, and  $L = \Delta + 1$.  Therefore, by Theorem \ref{t:secvar},
  the lemma will follow from showing that
given an arbitrary normal variation $f \nn$, there exist $h \in \RR$ and $y \in \RR^{n+1}$ so that
  \begin{equation}	\label{e:secvarsn2}
	  \int_{\SS^n}
	 \left[ - f \, \left(   \Delta f + f
		\right) + \sqrt{2n} \, f\,h    -   \frac{n}{2} \, h^2
		+ f \, \langle y , \nn \rangle  - \frac{\langle y , \nn \rangle^2}{2}
		\right]    \geq 0  \, .
\end{equation}
Recall that the eigenvalues of the Laplacian{\footnote{See, e.g., ($14$) on page $35$ of Chavel, \cite{Ca}.}} on the $n$-sphere of radius one are given by $k^2 + (n-1) \, k$ for $k= 0 , 1, \dots$ with $0$ corresponding to the constant function and the first non-zero eigenvalue $n$ corresponding to the restrictions of the linear functions in $\RR^{n+1}$.  It follows that the eigenvalues of $\Delta$ on the sphere of radius $\sqrt{2n}$ are given by
\begin{equation}
	\mu_k = \frac{k^2 + (n-1) \, k}{2n} \, ,
\end{equation}
with $\mu_0 = 0$ corresponding to the constant functions and $\mu_1 = \frac{1}{2}$ corresponding to the linear functions.  Let $E$ be the space of $W^{1,2}$ functions that are orthogonal to constants and linear functions; equivalently, $E$ is the span of all the eigenfunctions for $\mu_k$ for all $k \geq 2$.
Therefore, we can choose $a \in \RR$ and $z \in \RR^{n+1}$ so that
\begin{equation}
	f_0 \equiv f - a - \langle z , \nn \rangle  \in E \, .
\end{equation}
Using the orthogonality of the different eigenspaces, we get that
\begin{align}	\label{e:p1}
	\int_{\SS^n}  -f \, (\Delta f +  f) &\geq (\mu_2- 1) \, \int_{\SS^n} f_0^2 + (\mu_1 - 1) \, \int_{\SS^n} \langle z , \nn \rangle^2 + (\mu_0 - 1) \, \int_{\SS^n} a^2 \notag \\
	&= \frac{1}{n} \, \int_{\SS^n} f_0^2  - \frac{1}{2} \, \int_{\SS^n} \langle z , \nn \rangle^2   -  \int_{\SS^n} a^2 \, .
\end{align}
Again using the orthogonality of different eigenspaces, we get
  \begin{equation}	 	\label{e:p2}
	  \int_{\SS^n}
	 \left[ \sqrt{2n} \, f\,h
		+ f \, \langle y , \nn \rangle
		\right]    =    \int_{\SS^n}
	 \left[ \sqrt{2n} \, a \,h
		+  \langle z , \nn \rangle \, \langle y , \nn \rangle
		\right]
		  \, .
\end{equation}
Combining \eqr{e:p1} and \eqr{e:p2}, we get that the left hand side of \eqr{e:secvarsn2} is greater than or equal to
  \begin{align}	 	\label{e:p3}
	 & \int_{\SS^n}  \left[ \frac{f_0^2 }{n}  - \frac{1}{2}   \langle z , \nn \rangle^2   -   a^2+
	  \sqrt{2n} \, a \,h    -   \frac{n}{2} \, h^2
		+  \langle z , \nn \rangle \, \langle y , \nn \rangle   - \frac{\langle y , \nn \rangle^2}{2}
		\right]   \notag \\
		& \quad =
		 \int_{\SS^n}  \left[ \frac{f_0^2 }{n}  -
		 \frac{1}{2} \left(  \langle z , \nn \rangle  - \langle y , \nn \rangle \right)^2   -
		 \left( a - \frac{\sqrt{n} \, h }{\sqrt{2}} \right)^2 		\right]
		  \, .
\end{align}
This can be made non-negative by choosing $y = z$ and $h = \frac{\sqrt{2} \, a }{\sqrt{n}} $.

\end{proof}

\subsection{The splitting theorem}

The importance of $F$-stability comes from the following ``splitting theorem'' from \cite{CM1}:

 If $\Sigma_0$ is a self-shrinker that does not split off a line and
$\Sigma_0$ is $F$-unstable, then there is a compactly supported variation $\Sigma_s$ with
\begin{equation}
	\lambda (\Sigma_s) < \lambda (\Sigma_0) \, \,   \forall \, \, s \ne 0 \, . \notag 
\end{equation}
The precise statement of the splitting theorem is :

 \begin{Thm}	\label{t:nonlin1c}
 (Colding-Minicozzi, \cite{CM1})
Suppose that $\Sigma \subset \RR^{n+1}$ is a  smooth complete embedded self-shrinker with $\partial \Sigma = \emptyset$,  with polynomial volume growth, and $\Sigma$ does not split off a line isometrically. 

 If $\Sigma$ is $F$-unstable, then there is a compactly supported variation $\Sigma_s$ with $\Sigma_0 = \Sigma$ so that $\lambda (\Sigma_s) < \lambda (\Sigma)$ for all $s \ne 0$.
\end{Thm}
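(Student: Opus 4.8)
The plan is to manufacture a \emph{compactly supported} function $f$ on $\Sigma$ and study $G(s,x_0,t_0) := F_{x_0,t_0}(\Sigma_s)$, where $\Sigma_s = \{ x + s\, f(x)\, \nn(x)\, | \, x \in \Sigma \}$ is the normal graph of $s\,f$. Since $\lambda(\Sigma_s) = \sup_{x_0,t_0} F_{x_0,t_0}(\Sigma_s)$ and $\lambda(\Sigma) = F_{0,1}(\Sigma) = G(0,0,1)$ by property (3) of the entropy, to get $\lambda(\Sigma_s) < \lambda(\Sigma)$ for all small $s \ne 0$ it is enough to establish two things: (i) $(0,0,1)$ is a strict interior local maximum of $G$; and (ii) there are a neighborhood $U$ of $(0,1)$ in $(x_0,t_0)$-space and an $s_0 > 0$ so that $F_{x_0,t_0}(\Sigma_s) < \lambda(\Sigma)$ for every $(x_0,t_0) \notin U$ and every $|s| < s_0$. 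Given (i) and (ii), a routine compactness argument on $\overline{U}$ minus a small ball about $(0,1)$ produces $\epsilon > 0$ with $\sup_{x_0,t_0} F_{x_0,t_0}(\Sigma_s) < \lambda(\Sigma)$ for all $0 < |s| < \epsilon$. The function $f$ is built from the $F$-instability hypothesis to force (i), while the ``no line'' hypothesis drives both (i) and (ii).

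First I would produce $f$. By definition, $\Sigma$ being $F$-unstable means there is a variation that \emph{no} choice of $x_s, t_s$ stabilizes. Completing the square in the scalar $h$ and the vector $y$ in the second variation formula of Theorem \ref{t:secvar}, and using that $H$ and the functions $\langle v, \nn \rangle$ are eigenfunctions of $L$ with eigenvalues $1$ and $\frac{1}{2}$ (Theorem \ref{t:spectral}), hence mutually orthogonal in the weighted $L^2$ space, one checks that $\Sigma$ is $F$-unstable if and only if there is a function $g$ orthogonal to $H$ and to every $\langle v, \nn\rangle$ with $\int_\Sigma g\, L g\, \e^{-\frac{|x|^2}{4}} > 0$. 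Such a $g$ may be taken compactly supported: cut it off far from the origin and subtract its projection onto the (finite-dimensional) span of $H$ and the $\langle v, \nn\rangle$, just as in the proof of Theorem \ref{t:spectral0}. The resulting compactly supported $f$ then satisfies $\frac{d^2}{ds^2}\big|_{s=0}\, F_{x_s,t_s}(\Sigma_s) < 0$ for \emph{every} choice of $x_s$ and $t_s$.

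Next I would prove (i). As a self-shrinker, $\Sigma$ is a critical point of $F_{0,1}$ in all three parameters (Proposition \ref{p:critall}, Lemma \ref{l:varl0}), so all first derivatives of $G$ vanish at $(0,0,1)$. For the Hessian there: along directions with $f \equiv 0$ (varying only $x_0$ and $t_0$), Theorem \ref{t:secvar} gives $-h^2 \int_\Sigma H^2\, \e^{-\frac{|x|^2}{4}} - \frac{1}{2}\int_\Sigma \langle y, \nn \rangle^2\, \e^{-\frac{|x|^2}{4}}$, with no mixed $h\,y$ term (again by the orthogonality from Theorem \ref{t:spectral}); since $\Sigma$ does not split off a line, $H \not\equiv 0$ (otherwise $\Sigma$ would be a hyperplane through the origin) and $\langle y, \nn\rangle \not\equiv 0$ for every $y \ne 0$, so this block is negative definite. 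Along a direction $(1,y,h)$, the Hessian of $G$ is exactly the second variation $F''$ of Theorem \ref{t:secvar} with variation function $f$ and parameters $y, h$, which is negative by the construction of $f$. Hence the Hessian of $G$ at $(0,0,1)$ is negative definite, so $(0,0,1)$ is a strict interior local maximum of $G$, giving (i).

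The remaining point (ii) is where I expect the real work to lie, since this is where the purely infinitesimal instability has to become a statement over the whole, non-compact parameter space. On any fixed compact set of $(x_0,t_0)$ missing $(0,1)$ one needs $F_{x_0,t_0}(\Sigma) < \lambda(\Sigma)$; this comes from property (3) of the entropy and the equality case of Huisken's monotonicity formula \eqr{e:huisken2} --- equality along the flow $\sqrt{-t}\,\Sigma$ relative to a shifted space-time center would force $\Sigma$ to be simultaneously a self-shrinker about a second point, hence to split off a line --- together with the continuity of $F_{x_0,t_0}$ under the localized perturbation $\Sigma_s$. The delicate regimes are $t_0 \to \infty$, $|x_0| \to \infty$, and $t_0 \to 0$. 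For the first two, the polynomial volume growth of $\Sigma$, inherited by $\Sigma_s$ (which equals $\Sigma$ outside a compact set), forces $F_{x_0,t_0}(\Sigma_s) \to 0$ uniformly. For $t_0 \to 0$, $\Sigma_s$ is a smooth embedded hypersurface, so $\sup_{x_0} F_{x_0,t_0}(\Sigma_s) \to 1$ as $t_0 \to 0$, uniformly for small $s$; and since $\Sigma$ does not split off a line it is not a hyperplane, so $\lambda(\Sigma) > 1$ (it is standard that $\lambda \ge 1$ for every self-shrinker, with equality precisely for hyperplanes), whence $F_{x_0,t_0}(\Sigma_s) < \lambda(\Sigma)$ once $t_0$ is below a fixed threshold. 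Combining these a priori bounds with (i) and the compactness argument of the first paragraph yields $\epsilon > 0$ with $\lambda(\Sigma_s) = \sup_{x_0,t_0} F_{x_0,t_0}(\Sigma_s) < \lambda(\Sigma)$ for all $0 < |s| < \epsilon$, which is the claim. Thus the main obstacle is exactly (ii): turning the second-order information into a global entropy decrease, the subtlest piece being the uniform control of $F_{x_0,t_0}(\Sigma_s)$ as $t_0 \to 0$, which is where the strict inequality $\lambda(\Sigma) > 1$ (equivalently, that $\Sigma$ is not a plane) enters.
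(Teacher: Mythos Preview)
Your overall architecture matches the paper's exactly: define $G(s,x_0,t_0)=F_{x_0,t_0}(\Sigma_s)$, show it has a strict local maximum at $(0,0,1)$, and then rule out the supremum escaping to any other region of $(x_0,t_0)$-space. The paper organizes the last part into five facts (strict local max; strict global max of $G(\cdot,\cdot,0)$ at $(0,1)$ via Lemma~\ref{l:strict}; a uniform bound on $|\partial_s G|$ on compact sets; and control for $|x_0|$ large and for $|\log t_0|$ large), and then splits into near/intermediate/far regions just as you do.

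There is one genuine error. You assert that polynomial volume growth forces $F_{x_0,t_0}(\Sigma_s)\to 0$ as $t_0\to\infty$ or $|x_0|\to\infty$. This is false: for a hyperplane $F_{0,t_0}\equiv 1$ for all $t_0$, and for a cylinder $F_{x_0,1}$ is invariant under translations along the axis. These examples are excluded by the no-splitting hypothesis, but you give no argument why a non-splitting shrinker behaves differently, and in fact $F\to 0$ is neither needed nor, in general, true. The correct mechanism --- which you already invoke for the intermediate region --- is Lemma~\ref{l:strict}: the no-splitting hypothesis gives a uniform gap $F_{x_0,t_0}(\Sigma)\le\lambda(\Sigma)-\delta$ once $|x_0|+|\log t_0|>\epsilon$. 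Since $\Sigma_s$ agrees with $\Sigma$ outside a fixed compact set $K$, one has
\[
\bigl|F_{x_0,t_0}(\Sigma_s)-F_{x_0,t_0}(\Sigma)\bigr|\ \le\ (4\pi t_0)^{-n/2}\,\bigl(\Vol(K_s)+\Vol(K)\bigr),
\]
and an analogous estimate using the Gaussian decay on $K$ when $|x_0|$ is large with $t_0$ bounded. So the difference goes to zero, uniformly for $s$ in a bounded interval, in the regimes $t_0\to\infty$ and $|x_0|\to\infty$ with $t_0\ge\tau>0$; combined with the $\delta$-gap this yields the paper's steps (4) and (5). Your $t_0\to 0$ argument via $\lambda(\Sigma)>1$ is a correct way to handle that end of step (5).

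A smaller point: your construction of $f$ by orthogonalization is both unnecessary and flawed. Subtracting the weighted-$L^2$ projection onto $H$ and the $\langle v,\nn\rangle$'s destroys compact support on a noncompact $\Sigma$, since those eigenfunctions are not compactly supported. But you never needed this: the definition of $F$-unstable (with respect to compactly supported variations) directly hands you a compactly supported $f$ with $F''<0$ for every choice of $(y,h)$, which is exactly property (V2) in the paper's sketch.
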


  \noindent
{\bf{The idea of the splitting theorem is roughly:}}
\begin{itemize}
\item Use that $\Sigma$ does not split to show that $F_{0,1}$ is a strict maximum for $F_{x_0 , t_0}$.
\item Deform $\Sigma_0$ in the $F$-unstable direction $\Sigma_s$.
\item Consider the function $G(s,x_0 , t_0)$ given by
\begin{equation}
	G(s,x_0,t_0) = F_{x_0,t_0} (\Sigma_s) \, . \notag 
\end{equation}
and show that this has a strict maximum at $x_0 = 0$, $t_0 = 1$ and $s=0$.
\end{itemize}

The precise statement of the first step is:

\begin{Lem}	\label{l:strict}
(Colding-Minicozzi, \cite{CM1})
Suppose that $\Sigma$ is a smooth complete embedded self-shrinker with $\partial \Sigma = \emptyset$,   polynomial volume growth, and $\Sigma$  does not split off a line  isometrically.  Given $\epsilon > 0$, there exists $\delta > 0$ so
\begin{equation}
	\sup \, \{ F_{x_0 , t_0}(\Sigma) \, | \, |x_0| + |\log t_0| > \epsilon \} \, < \, \lambda - \delta \, .
\end{equation}
\end{Lem}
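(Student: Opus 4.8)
\emph{Strategy.} I will argue by contradiction, reducing matters to a compact subset of $\RR^{n+1}\times(0,\infty)$ where continuity and a strict maximum suffice, after controlling the three ``ends'' $t_0\to0$, $t_0\to\infty$ and $|x_0|\to\infty$ separately. Suppose no such $\delta$ exists; then there are $(x_i,t_i)$ with $|x_i|+|\log t_i|\ge\epsilon$ and $F_{x_i,t_i}(\Sigma)\to\lambda$. The basic tool is the self-similar flow $M_\tau=\sqrt{-\tau}\,\Sigma$ for $\tau<0$, with $M_{-1}=\Sigma$; the change of variables $x=\sqrt{-\tau}\,p$ identifies $\int_{M_\tau}\Phi_{(x_0,\,t_0-1)}(\cdot,\tau)$ with $F_{x_0/\sqrt{-\tau},\,1+(1-t_0)/\tau}(\Sigma)$, a quantity equal to $F_{x_0,t_0}(\Sigma)$ at $\tau=-1$ and tending to $F_{0,1}(\Sigma)=\lambda$ as $\tau\to-\infty$. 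Since \eqr{e:huisken2} makes $\tau\mapsto\int_{M_\tau}\Phi_{(x_0,\,t_0-1)}$ nonincreasing, this already yields $F_{x_0,t_0}(\Sigma)\le\lambda$ for all $t_0>0$.

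\emph{Step 1: strict inequality away from $(0,1)$.} First I would show $F_{x_0,t_0}(\Sigma)<\lambda$ whenever $(x_0,t_0)\neq(0,1)$. If equality held, then $\tau\mapsto\int_{M_\tau}\Phi_{(x_0,\,t_0-1)}$ would be constant $=\lambda$ on $(-\infty,-1]$, so by the equality case of \eqr{e:huisken2} the flow would satisfy $H\,\nn=\frac{(x-x_0)^\perp}{2(t_0-1-\tau)}$ for $\tau<-1$, i.e.\ $M_\tau$ would also shrink self-similarly toward the space-time point $(x_0,\,t_0-1)$. Equating this self-similar structure with $M_\tau=\sqrt{-\tau}\,\Sigma$ and letting $\tau\to-\infty$ forces $\sqrt{t_0}\,\Sigma+x_0=\Sigma$: for $t_0=1$ this means $\Sigma$ is invariant under translation by $x_0\neq0$, and for $t_0\neq1$ it exhibits $\Sigma$ as invariant under a nontrivial homothety, hence (being smooth and complete) a hyperplane. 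In either case $\Sigma$ splits off a line, a contradiction; so strictness holds.

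\emph{Step 2: the three ends, then compactness.} Passing to a subsequence of $(x_i,t_i)$, I consider the cases. If $t_i\to0$, then $F_{x_i,t_i}(\Sigma)$ is a Gaussian area of the smooth embedded $\Sigma$ at scale $\sqrt{t_i}\to0$, and using the polynomial volume growth --- which via \eqr{e:huisken2} gives the uniform bound $\Area(B_R(y)\cap\Sigma)\le C_n\,\lambda(\Sigma)\,R^n$ for all $y,R$ --- one gets $\limsup F_{x_i,t_i}(\Sigma)\le1$, and $1<\lambda$ since $\Sigma$ is not a plane. If $t_i\to\infty$ (or merely $|x_i|/\sqrt{t_i}\to\infty$), I rescale: $F_{x_i,t_i}(\Sigma)=F_{x_i/\sqrt{t_i},\,1}\!\bigl(t_i^{-1/2}\Sigma\bigr)$; the uniform Gaussian area bound (with Brakke compactness) lets $t_i^{-1/2}\Sigma$ subconverge to a blow-down limit $C$ of $\Sigma$, a self-shrinking minimal cone with $\lambda(C)\le\lambda(\Sigma)$, and the rigidity of Step 1 --- now run in the limit $\tau\to0^-$ --- sharpens this to $\lambda(C)<\lambda(\Sigma)$ because $\Sigma$ is not a cone; passing to the limit gives $F_{x_i,t_i}(\Sigma)\to F_{w,1}(C)\le\lambda(C)<\lambda(\Sigma)$. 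If $|x_i|\to\infty$ with $t_i$ in a compact subinterval of $(0,\infty)$, the uniform Gaussian area bound forces $F_{x_i,t_i}(\Sigma)\to0$. Each case contradicts $F_{x_i,t_i}(\Sigma)\to\lambda$, so $(x_i,t_i)$ lies in a fixed compact $K\subset\RR^{n+1}\times(0,\infty)$ with $(0,1)\notin K$; there, continuity of $F_{\cdot,\cdot}(\Sigma)$ (dominated convergence, using the Gaussian area bounds) together with Step 1 gives $\sup_K F<\lambda$, the final contradiction.

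\emph{Main obstacle.} The soft contradiction scheme is routine; the real work is the uniform control at the three ends: equicontinuity of $(\Sigma',x_0,t_0)\mapsto F_{x_0,t_0}(\Sigma')$ under the noncompact limits, uniformity in the center of $F_{x_0,t_0}(\Sigma)\to\Theta_\Sigma(x_0)$ as $t_0\to0$, existence of a subsequential blow-down $C$ together with the strictness $\lambda(C)<\lambda(\Sigma)$, and the decay $F_{x_0,t_0}(\Sigma)\to0$ as $|x_0|\to\infty$. All of these hinge on polynomial volume growth, which converts Huisken's monotonicity into the uniform Euclidean area bound above; and both strictness statements (Step 1 and the blow-down analysis) are precisely where the hypothesis that $\Sigma$ does not split off a line is used.
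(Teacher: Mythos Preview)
The survey paper does not actually prove this lemma; it is quoted from \cite{CM1} and then used as step (2) in the sketch of Theorem~\ref{t:nonlin1c}. So I will evaluate your argument on its own terms.

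Your Step 1 is correct in spirit and essentially the right mechanism. One simplification: you do not need to send $\tau\to-\infty$ to extract the invariance $\sqrt{t_0}\,\Sigma+x_0=\Sigma$. The equality case of \eqr{e:huisken2} at $\tau=-1$ already gives $H=\frac{\langle x-x_0,\nn\rangle}{2t_0}$ on $\Sigma$, and combining with the self-shrinker equation $H=\frac{\langle x,\nn\rangle}{2}$ yields $(t_0-1)\langle x,\nn\rangle=-\langle x_0,\nn\rangle$ pointwise. From this you read off directly: either $t_0=1$ and $\langle x_0,\nn\rangle\equiv0$ (so $\Sigma$ splits off the line $\RR x_0$), or $t_0\neq1$ and $\langle x-y_0,\nn\rangle\equiv0$ for $y_0=-x_0/(t_0-1)$, making $\Sigma$ a smooth cone and hence a hyperplane. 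No limiting set equalities are needed.

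Step 2 contains a genuine error. Your assertion that ``if $|x_i|\to\infty$ with $t_i$ in a compact subinterval of $(0,\infty)$, the uniform Gaussian area bound forces $F_{x_i,t_i}(\Sigma)\to0$'' is false whenever $\Sigma$ is non-compact. Take any sequence $x_i\in\Sigma$ with $|x_i|\to\infty$; the tangent plane to $\Sigma$ at $x_i$ already contributes Gaussian mass close to $1$, so $\liminf_i F_{x_i,t_0}(\Sigma)\geq1$. The uniform area bound gives only $F\leq C_n\lambda$, not decay. What your contradiction scheme actually requires at this end is the weaker (and true) statement $\limsup_i F_{x_i,t_i}(\Sigma)<\lambda$, but this is not a soft consequence of area bounds: it is exactly where the hypothesis that $\Sigma$ does not split must be used again, and it needs a separate argument (for instance, showing via the first variation formula in Lemma~\ref{l:varl0} that $(0,1)$ is the \emph{only} critical point of $(x_0,t_0)\mapsto F_{x_0,t_0}(\Sigma)$, and then controlling the behavior along rays).

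Your treatment of $t_i\to0$ has a similar, though less severe, gap: the inequality $\limsup F_{x_i,t_i}(\Sigma)\leq1$ is a statement about the local sheet structure of $\Sigma$ being uniformly single-sheeted at small scales, and polynomial volume growth alone does not furnish this uniformity when $x_i$ is allowed to escape to infinity. For compact $\Sigma$ your argument goes through; for non-compact $\Sigma$ you again need more.
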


\begin{proof}
(sketch of Theorem \ref{t:nonlin1c}).
   Assume  that $\Sigma$ is not $F$-stable and, thus, there is a
  one-parameter normal variation $\Sigma_s$  for $s \in
[-2\, \epsilon ,   2 \, \epsilon ]$ with $\Sigma_0 = \Sigma$   so that:
\begin{enumerate}
\item[(V1)] For each $s$, the variation vector field is given by a function $f_{\Sigma_s}$ times the normal $\nn_{\Sigma_s}$ where every $f_{\Sigma_s}$ is supported in a fixed compact subset of $\RR^{n+1}$.
\item[(V2)] For any variations $x_s$ and $t_s$ with $x_0 = 0$ and $t_0 = 1$, we get that
\begin{equation}
	\partial_{ss} \big|_{s=0} \, F_{x_s , t_s} (\Sigma_s) < 0 \, .
\end{equation}
\end{enumerate}
We will use this to prove that $\Sigma$ is also entropy-unstable.

\vskip2mm
\noindent
{\bf{Setting up the proof}}:
Define a function $G: \RR^{n+1} \times \RR^{+} \times [-2\, \epsilon ,   2 \, \epsilon ] \to \RR^{+} $ by
\begin{equation}
	G(x_0 , t_0 , s) = F_{x_0 , t_0} \, \left( \Sigma_s \right) \, .
\end{equation}
We will show that there exists some $\epsilon_1 > 0$ so that if $s \ne 0$ and $|s| \leq \epsilon_1$, then
\begin{equation}	\label{e:goal}
	\lambda (\Sigma_s) \equiv \sup_{x_0 , t_0} \, G(x_0 , t_0 , s) < G(0,1,0) = \lambda (\Sigma) \, ,
\end{equation}
and this will give the theorem with $\tilde{\Sigma}$ equal to $\Sigma_s$ for any   $s  \ne 0$ in $ (-\epsilon_1 ,\epsilon_1)$; by taking $s >0$ small enough, we can arrange that $\tilde{\Sigma}$ is as close as we like to $\Sigma_0 = \Sigma$.

The remainder of the proof is devoted to establishing \eqr{e:goal}.  The key points will be:
\begin{enumerate}
\item   $G$ has a strict local maximum at $(0,1,0)$.
\item  The restriction of $G$ to $\Sigma_0$, i.e., $G(x_0 , t_0, 0)$, has a strict global maximum at $(0,1)$.
\item	 $\left| \partial_s G \right|$ is uniformly bounded on compact sets.
\item  $G(x_0,t_0 , s)$ is strictly less than $G(0,1,0)$ whenever  $|x_0|$ is sufficiently large.
\item $G(x_0,t_0 , s)$ is strictly less than $G(0,1,0)$ whenever $\left| \log t_0 \right|$ is sufficiently large.
\end{enumerate}

\vskip2mm
\noindent
{\bf{The proof of \eqr{e:goal} assuming (1)--(5)}}:
We will divide into three separate regions depending on the size of $|x_0|^2 + ( \log t_0 )^2$.

First,
it follows from steps (4) and (5) that there is some $R > 0$ so that \eqr{e:goal} holds for every $s$ whenever
\begin{equation}
	x_0^2 + ( \log t_0 )^2 > R^2 \, .
\end{equation}

Second, as long as $s$ is small, step (1) implies that \eqr{e:goal} holds when $x_0^2 + ( \log t_0 )^2$ is sufficiently small.

Finally,  in the intermediate region where $x_0^2 + ( \log t_0 )^2$  is bounded from above and bounded uniformly away from zero, step (2) says that $G$ is strictly less than $\lambda (\Sigma)$ at $s=0$ and step (3) says that the $s$ derivative of $G$ is uniformly bounded.  Hence,  there exists some $\epsilon_3 > 0$ so that
$G(x_0 , t_0 , s)$ is strictly less than $\lambda (\Sigma)$ whenever $(x_0 , t_0)$ is in the intermediate region as long as  $|s| \leq \epsilon_3$.

This completes the proof of \eqr{e:goal} assuming (1)--(5).  See \cite{CM1} for the proofs of (1)--(5).
\end{proof}

\subsection{Classification of F-stable self-shrinkers}

 \begin{Thm}	\label{t:liketo}
 (Colding-Minicozzi, \cite{CM1})
 If $\Sigma$ is a smooth\footnote{The theorem holds when $n\leq 6$ and $\Sigma$ is an oriented integral varifold that is smooth off of a singular set with locally finite $(n-2)$-dimensional Hausdorff measure.}
 complete embedded self-shrinker in  $\RR^{n+1}$ without boundary and with polynomial volume growth that is $F$-stable with respect to compactly supported variations, then it is either the round sphere or a hyperplane.
 \end{Thm}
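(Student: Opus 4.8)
The plan is to read off the classification from the second variation formula (Theorem~\ref{t:secvar}) combined with the eigenfunction identities $LH=H$ and $L\langle v,\nn\rangle=\tfrac12\langle v,\nn\rangle$ of Theorem~\ref{t:spectral}. The crucial first step is a \emph{reduction of $F$-stability to a spectral condition on $L$}: I will show that $\Sigma$ is $F$-stable if and only if $\int_\Sigma(-gLg)\,\e^{-|x|^2/4}\ge 0$ for every compactly supported $g$ that is orthogonal, in the Gaussian-weighted $L^2$-space, to $H$ and to the $n+1$ functions $\nn_i:=\langle e_i,\nn\rangle$. Granting this, $H$ realizes the bottom of the spectrum of $-L$, so it cannot change sign; then either $H\equiv0$, which forces $\Sigma$ to be a hyperplane through the origin, or $H>0$, in which case a classification result together with a direct instability computation for cylinders finishes the proof.

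For the reduction, fix a compactly supported normal variation $f\,\nn$. Since $L$ is symmetric in the weighted space and $H,\nn_1,\dots,\nn_{n+1}$ are eigenfunctions with eigenvalues $1$ and $\tfrac12$, they are mutually orthogonal; write $f=c_H\,H+\sum_i b_i\,\nn_i+g$ with $g$ weighted-orthogonal to $V:=\mathrm{span}\{H,\nn_1,\dots,\nn_{n+1}\}$. Because $L$ preserves $V$, hence also $V^\perp$, one gets $\int_\Sigma(-fLf)\,\e^{-|x|^2/4}=\int_\Sigma(-gLg)\,\e^{-|x|^2/4}-c_H^2\int_\Sigma H^2\,\e^{-|x|^2/4}-\tfrac12\sum_{i,j}B_{ij}b_ib_j$, where $B_{ij}=\int_\Sigma \nn_i\nn_j\,\e^{-|x|^2/4}$. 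In the second variation formula the dilation parameter $h$ and the translation parameter $y$ enter only through $2fhH-h^2H^2$ and $f\langle y,\nn\rangle-\tfrac12\langle y,\nn\rangle^2$; choosing $h$ and $y$ to match $c_H$ and $(b_i)$ makes these contribute exactly $c_H^2\int H^2\,\e^{-|x|^2/4}+\tfrac12\sum B_{ij}b_ib_j$, cancelling the two negative terms, so what survives is $(4\pi)^{-n/2}\int_\Sigma(-gLg)\,\e^{-|x|^2/4}$. This is the claimed equivalence. (If $H\equiv0$ then $\langle x,\nn\rangle\equiv0$, so $\Sigma$ is a cone and hence a hyperplane through the origin, and we are done; if the $\nn_i$ are linearly dependent then $\Sigma$ splits off a line, a case absorbed below.) From this equivalence, $F$-stability forces the Rayleigh quotient of $-L$ over compactly supported functions to be $\ge-1$, while $H$ itself — which lies in the weighted Sobolev space, polynomial volume growth being exactly what guarantees this and the convergence of all Gaussian integrals — has Rayleigh quotient $-1$. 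So $H$ is a lowest eigenfunction of $-L$, and a lowest eigenfunction of a Schrödinger-type operator on a connected manifold cannot change sign (compare $H$ with $|H|$ and use the strong maximum principle on $\Sigma$). Hence $H\equiv0$, or, after a choice of orientation, $H>0$.

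Now suppose $H>0$. I invoke the classification of smooth complete embedded self-shrinkers with $H\ge0$ and polynomial volume growth: such a surface is a hyperplane, the round sphere $\SS^n_{\sqrt{2n}}$, or a generalized cylinder $\SS^k_{\sqrt{2k}}\times\RR^{n-k}$ with $1\le k\le n-1$; this is Huisken's rigidity theorem, with his a priori curvature bound removed in \cite{CM1}. The hyperplane and the round sphere are $F$-stable by Lemma~\ref{l:sn}, so it remains to show each cylinder $\SS^k_{\sqrt{2k}}\times\RR^{n-k}\subset\RR^{k+1}\times\RR^{n-k}$ is $F$-unstable. On such a cylinder $|A|^2\equiv\tfrac12$ and $\nn$ lies in the $\RR^{k+1}$-factor, so the space $V$ above consists of functions of the spherical variable alone. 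Take $g=\eta_R(|x'|)\,x_1'$, where $x'$ is the $\RR^{n-k}$-coordinate, $x_1'$ its first entry, and $\eta_R$ a cutoff supported near the origin; by oddness in $x_1'$, $g$ is orthogonal to $V$ up to an exponentially small error that is removed by adding a tiny compactly supported correction. Since $g$ depends only on $x'$, one computes $Lg=\cL_{\RR^{n-k}}g+g$, so $\int_\Sigma(-gLg)\,\e^{-|x|^2/4}$ is a positive multiple of $\int_{\RR^{n-k}}\bigl(|\nabla g|^2-g^2\bigr)\,\e^{-|x'|^2/4}$, which for $g$ near the first Hermite function $x_1'$ equals $-\int_{\RR^{n-k}}\e^{-|x'|^2/4}<0$ (since $|\nabla x_1'|^2=1$ while the second moment of the weight is $2$). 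This contradicts the stability criterion of the first step, so no cylinder is $F$-stable; hence $\Sigma$ is the round sphere $\SS^n_{\sqrt{2n}}$ or a hyperplane.

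The step I expect to be most delicate is the spectral reduction of the second paragraph: one must make rigorous the self-adjointness of $L$ on the Gaussian-weighted space, the orthogonal splitting along the $L$-invariant subspace $V$, and — above all — the assertion that the bottom of the spectrum of $-L$ is attained by a function not changing sign on the noncompact manifold $\Sigma$; this is exactly where the polynomial volume growth hypothesis is essential, to control the Gaussian integrals and the behaviour at infinity. The single deepest external ingredient is the classification of self-shrinkers with $H\ge0$ without an a priori bound on the second fundamental form; for a fully self-contained treatment that, rather than the soft functional analysis, would be the principal obstacle.
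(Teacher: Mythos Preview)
Your outline is correct and matches the paper's two-step strategy exactly: first show that $F$-stability forces $H$ not to change sign, then invoke the classification of mean convex self-shrinkers (Theorem~\ref{t:huisken}) and eliminate the intermediate cylinders. The difference is in how step one is packaged. You optimize over $(h,y)$ in the second variation to obtain the clean identity $\sup_{h,y}F''(f)=(4\pi)^{-n/2}\int(-gLg)\,\e^{-|x|^2/4}$ with $g=f-\mathrm{proj}_V f$, and then read off $\mu_1\ge -1$ directly; the paper instead argues the contrapositive, showing $\mu_1<-1$ when $H$ changes sign (Theorem~\ref{c:pde}) and then producing an $F$-unstable variation from a Dirichlet eigenfunction on a large ball via an ``almost orthogonality'' lemma (Lemma~\ref{l:ortho}). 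Your route is conceptually cleaner; the paper's route is more robust to the technical issues you flag. You also carry out explicitly the $F$-instability of $\SS^k\times\RR^{n-k}$ for $1\le k\le n-1$, which the survey omits; your odd test function $\eta_R(|x'|)\,x_1'$ is exactly right (and is in fact \emph{exactly} orthogonal to $V$, no correction needed, since $H$ and the $\nn_i$ depend only on the spherical variable).

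The one place where your write-up understates the difficulty is the sentence ``$H$ itself lies in the weighted Sobolev space, polynomial volume growth being exactly what guarantees this.'' Polynomial volume growth gives $H=\tfrac12\langle x,\nn\rangle\in L^2_w$ and $\nn_i\in L^2_w$, but \emph{not} $\nabla H\in L^2_w$, which needs $|A|\,|x|\in L^2_w$; the same estimate is what justifies the integrations by parts behind ``$L$ preserves $V^\perp$'' and the orthogonality $H\perp\nn_i$. This is precisely what the paper isolates in Proposition~\ref{p:cmnon}: once one knows $\mu_1>-\infty$ (which your optimization argument gives immediately, since the $(h,y)$-terms are bounded by $\|f\|^2$), one gets $|A|\,|x|\in L^2_w$ and the rest of your spectral reduction becomes rigorous. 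So your self-diagnosis of the delicate point is accurate, and the fix is exactly the estimate the paper provides.
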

 
 Combined with the splitting theorem, this gives the classification of generic self-shrinkers.

 The main steps in the proof of Theorem \ref{t:liketo} are:
 \begin{itemize}
 \item Show that $F$-stability implies mean convexity (i.e., $H \geq 0$).
 \item Classify the mean convex self-shrinkers (see Theorem \ref{t:huisken}
below).
\end{itemize}

The classification of mean convex self-shrinkers began with
  \cite{H3},  where Huisken showed that the only smooth {\underline{closed}}  self-shrinkers with non-negative mean curvature in $\RR^{n+1}$ (for $n > 1$) are round spheres (i.e., $\SS^n$).  When $n=1$,   Abresch and Langer, \cite{AbLa},   had already shown that the circle is the only simple closed self-shrinking curve.
 In a second paper, \cite{H4}, Huisken dealt with the non-compact case.  He showed in \cite{H4} that  the only smooth {\underline{open}} embedded self-shrinkers  in $\RR^{n+1}$
with $H \geq 0$, polynomial volume growth, and $|A|$ bounded are
 isometric products of a round sphere and a linear subspace (i.e. $\SS^k\times \RR^{n-k}\subset \RR^{n+1}$).    We will show  that Huisken's classification holds even without the $|A|$ bound which will be crucial for our applications:

 \begin{Thm}	\label{t:huisken}
 (\cite{H3}, \cite{H4} and \cite{CM1})
 $\SS^k\times \RR^{n-k}$ are the only smooth complete embedded self-shrinkers without boundary, with polynomial volume growth, and   $H \geq 0$ in $\RR^{n+1}$.
  \end{Thm}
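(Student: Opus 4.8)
The plan is to reduce to the strictly mean convex case, study the scalar $w = |A|^2/H^2$, show that it is a nonnegative subsolution of the drift Laplacian of a finite weighted measure, and then deduce rigidity by a maximum principle (compact case) or a weighted integration by parts (noncompact case), followed by a pointwise argument identifying $\Sigma$. First I would dispose of the degenerate case: if $H \equiv 0$, then the self-shrinker equation $H = \langle x , \nn \rangle /2$ gives $x^{\perp} \equiv 0$, so by the equality case of the monotonicity formula (Theorem \ref{t:monomin}) $\Sigma$ is a minimal cone with vertex at the origin, hence --- being smooth and embedded --- a hyperplane, i.e. the $k = 0$ member of the family. So assume $H \not\equiv 0$. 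By Theorem \ref{t:spectral}, $LH = H$, and since $L = \cL + |A|^2 + \frac{1}{2}$ this reads $\cL H = (\frac{1}{2} - |A|^2) H$, i.e. $\Delta_{\Sigma} H - \frac{1}{2} \langle x , \nabla H \rangle + (|A|^2 - \frac{1}{2}) H = 0$. Since $H \ge 0$, $H \not\equiv 0$, $\partial \Sigma = \emptyset$, and the coefficients are smooth and locally bounded, the strong maximum principle (after moving the positive part of the zeroth order coefficient to the good side) gives $H > 0$ on all of $\Sigma$.

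With $\varphi = \log H$ now globally defined, I would compute the equation for $w = |A|^2 \e^{-2\varphi} = |A|^2/H^2$. From $\cL H = (\frac{1}{2} - |A|^2) H$ and the chain rule $\cL \varphi = \cL H / H - |\nabla \varphi|^2$ one gets $\cL \varphi = \frac{1}{2} - |A|^2 - |\nabla \varphi|^2$; combining this with the self-shrinker Simons identity --- which, since the second fundamental form satisfies the tensorial eigenequation $\cL A = (\frac{1}{2} - |A|^2) A$, reads $\cL |A|^2 = 2 (\frac{1}{2} - |A|^2) |A|^2 + 2 |\nabla A|^2$ --- a direct computation using $\cL (fg) = f \cL g + g \cL f + 2 \langle \nabla f , \nabla g \rangle$ gives, for the drift operator $\tilde{\cL} f := \cL f + 2 \langle \nabla \varphi , \nabla f \rangle$,
\[
	\tilde{\cL}\, w = 2 \e^{-2\varphi} \left( |\nabla A|^2 + |A|^2 |\nabla \varphi|^2 - \langle \nabla |A|^2 , \nabla \varphi \rangle \right) \ge 2 \e^{-2\varphi} \left( |\nabla A| - |A|\, |\nabla \varphi| \right)^2 \ge 0 \, ,
\]
the first inequality using Kato's inequality $|\nabla |A|| \le |\nabla A|$ and Cauchy--Schwarz. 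The point is that $\tilde{\cL}$ is the drift Laplacian of $d\tilde\mu := H^2 \e^{-|x|^2/4}\, d\mu$, i.e. $\dv_{\Sigma} ( H^2 \e^{-|x|^2/4} \nabla w ) = (\tilde{\cL} w)\, H^2 \e^{-|x|^2/4}$, and that $\tilde\mu$ has finite total mass, since $H^2 \le |x|^2/4$ and $\Sigma$ has polynomial volume growth.

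To conclude: if $\Sigma$ is compact, the maximum principle forces the subsolution $w$ to be constant. In the noncompact case, testing $\tilde{\cL} w \ge 0$ against $\eta^2 w$ and integrating by parts against $\tilde\mu$ yields $\int \eta^2 |\nabla w|^2 \, d\tilde\mu \le 4 \int w^2 |\nabla \eta|^2 \, d\tilde\mu$, and with a logarithmic-type cutoff $\eta$ the right side tends to $0$, again giving $w$ constant. In either case $|A|^2 = c\, H^2$; tracing back the equality in the Kato/Cauchy--Schwarz chain forces $\nabla A = \nabla \varphi \otimes A$, and diagonalizing $A$ at a point (where $H > 0$) this forces, via Codazzi, that either $\nabla \varphi \equiv 0$ so that $A$ is parallel --- whence $\Sigma$ is a hyperplane, a round sphere, or a generalized cylinder by the classical classification of hypersurfaces with parallel second fundamental form --- or $A$ has rank one everywhere, in which case its kernel distribution is parallel, $\Sigma$ splits off $\RR^{n-1}$ isometrically, and the remaining embedded self-shrinking curve is the round circle by Abresch--Langer. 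In all cases $\Sigma = \SS^k \times \RR^{n-k}$.

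The step I expect to be the main obstacle is making the noncompact integration work without Huisken's standing hypothesis that $|A|$ be bounded: the cutoff estimate above genuinely needs an a priori bound on $w = |A|^2/H^2$ (and enough integrability of $|A|^2 \e^{-|x|^2/4}$) to send the right-hand side to $0$, and removing this is exactly the new content of \cite{CM1}. I would attack it either by a contradiction/rescaling argument --- blowing up about points where $|A|/H \to \infty$ to extract a complete minimal hypersurface (the Gaussian weight and the mean curvature both becoming negligible in the limit) that splits off lines and is ruled out by a Bernstein-type theorem --- or by a weighted Moser iteration for $\tilde{\cL}$ exploiting the finiteness of $\tilde\mu$ together with a Simons-type $L^2$ estimate for $|A|$ valid for polynomial-volume-growth self-shrinkers. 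Everything else is either the reduction of the first step or Huisken's pointwise analysis.
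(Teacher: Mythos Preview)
Your reduction to $H>0$ via the strong maximum principle and your endgame (constancy of $|A|/H$ forces equality in Kato and in Cauchy--Schwarz, hence either parallel $A$ or rank-one $A$, then the standard classification) are both in line with the paper. The route through the ratio $w=|A|^2/H^2$ is Huisken's original argument, and you have correctly isolated exactly where it stalls without his standing bound on $|A|$: your cutoff step needs $\int w^2\,d\tilde\mu = \int |A|^4 H^{-2}\,\e^{-|x|^2/4} < \infty$, which you do not have a priori.

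The paper closes this gap by a different mechanism than either of your two proposed fixes. Rather than working with the ratio, it treats $H$ and $|A|$ separately as (super)solutions of $Lu=u$ and uses the positivity of $H$ to extract a \emph{stability inequality}: since $H>0$ solves $(L-1)H=0$, the Fischer--Colbrie--Schoen criterion, transplanted to the drift operator $\cL$, gives
\[
\int_\Sigma \left(|A|^2 - \tfrac{1}{2}\right) f^2\,\e^{-|x|^2/4} \;\le\; \int_\Sigma |\nabla f|^2\,\e^{-|x|^2/4}
\]
for all compactly supported $f$. Testing with $f=\eta\,|A|$ and combining with the Simons-type inequality $L|A|\ge |A|$ (equivalently your $\cL|A|^2 = (1-2|A|^2)|A|^2 + 2|\nabla A|^2$), one bootstraps --- exactly as in Schoen--Simon--Yau for stable minimal hypersurfaces --- to
\[
\int_\Sigma \left(|A|^2+|A|^4+\bigl|\nabla|A|\bigr|^2+|\nabla A|^2\right)\e^{-|x|^2/4} < \infty\,.
\]
These integral bounds are precisely what justify the integrations by parts yielding $H=C\,|A|$ and $|\nabla A|=\bigl|\nabla|A|\bigr|$, after which the pointwise classification proceeds as you outline. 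Your option (b) gestures in this direction with its ``Simons-type $L^2$ estimate,'' but the engine is the stability inequality coming from $H>0$, not Moser iteration; your blow-up option (a) is not the paper's route and would need substantially more to make work --- you would have to control where the bad points sit relative to the origin and argue carefully about what equation survives in the limit before any Bernstein theorem could bite.
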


  The $\SS^k$ factor in Theorem \ref{t:huisken} is round and has radius $\sqrt{2k}$; we allow the possibilities of a hyperplane (i.e., $k=0$) or a sphere ($n-k = 0$).

\subsection{Proof in the compact case}

Since $L$ is symmetric in the weighted space, its spectral theory is similar to the Laplacian:
\begin{enumerate}
\item There are eigenvalues $\mu_1 < \mu_2 \leq \dots$ with $\mu_i \to \infty$ and eigenfunctions $u_i$ with
\begin{equation}
	L \, u_i = - \mu_i \, u_i \, . \notag 
\end{equation}
\item The lowest eigenfunction $u_1$ does not change sign.
\item If $\mu_i \ne \mu_j$, then $u_i$ and $u_j$ are orthogonal, i.e.,
\begin{equation}
	\int_{\Sigma} \, u_i \, u_j \, \e^{ - \frac{|x|^2}{4} } = 0 \, . \notag
\end{equation}
\end{enumerate}

Let $\Sigma$ be a closed self-shrinker and suppose that $H$ changes sign.
We will show that $\Sigma$ is $F$-unstable.
We know that $L \, H =  H$.  Since $H$ changes sign,  it is NOT the lowest eigenfunction.
It follows that $\mu_1 < -1$.  Let $u_1$ be the corresponding (lowest) eigenfunction and define the variation
\begin{equation}
	\Sigma_s = \{ x + s \, u_1 (x) \, \nn (x) \, | \, x \in \Sigma \} \, . \notag
	\end{equation}
Given variations $x_s = s y$ and $t_s = 1 + s h$, the second variation is $(4\pi)^{- \frac{n}{2} }$ times
\begin{equation}
	\int \left[ \mu_1 \, u_1^2 + 2 u_1 h H - h^2 H^2 + u_1 \langle y , \nn \rangle -
	\frac{ \langle y , \nn \rangle^2}{2}  \right] \, \e^{ - \frac{|x|^2}{4} } \, . \notag 
\end{equation}
But $u_1$ is orthogonal to the other eigenfunctions $H$ and $\langle y , \nn \rangle$, so
\begin{equation}
	= \int \left[ \mu_1 \, u_1^2 - h^2 H^2  -
	\frac{ \langle y , \nn \rangle^2}{2}  \right] \, \e^{ - \frac{|x|^2}{4} } \, . \notag 
\end{equation}
This is obviously negative no matter what $h$ and $y$ are.

\section{An application}

Fix $D$, $V$ and $g$ and let $\cM_{D,V,g}$ be all self-shrinkers in $\RR^3$ with:\\
\begin{itemize}
\item Diameter at most $D$.\\
\item Entropy at most $V$.\\
\item Genus at most $g$.
\end{itemize}

Then: C-M compactness theorem implies that $\cM_{D,V,g}$ is smoothly compact.

Combined with our entropy stability:  There exists $\epsilon > 0$ so that if $\Sigma \in \cM_{D,V,g}$ is not $\SS^2$, then there is a graph $\tilde{\Sigma}$ over $\Sigma$ with

\begin{equation}
	\lambda (\tilde \Sigma) \leq \lambda (\Sigma) - \epsilon \, . \notag
\end{equation}

\subsection{Piece-wise MCF}

We next define an ad hoc notion
of generic MCF that requires the least amount of technical set-up, yet should suffice for many applications.

A piece-wise MCF    is a finite collection of MCF's $M^i_t$ on time intervals $[t_i , t_{i+1}]$ so that each $M^{i+1}_{t_{i+1}}$ is the graph over $M^i_{t_{i+1}}$ of a function $u_{i+1}$,

   \begin{align}
   	\Area \, \left( M^{i+1}_{t_{i+1}} \right) &= \Area \, \left( M^i_{t_{i+1}} \right) \, , \notag  \\ 
    \lambda \, \left( M^{i+1}_{t_{i+1}} \right) &\leq  \lambda \, \left( M^i_{t_{i+1}} \right) \, . \notag
   \end{align}

With this definition, area is non-increasing in $t$ even across the jumps.

\subsection{Generic compact singularities}

In this subsection, we will 
assume  that the multiplicity one conjecture of Ilmanen holds.  Under this assumption, we have the following generalization of the Grayson-Huisken theorems:

\begin{Thm} (Colding-Minicozzi, \cite{CM1})
For any closed embedded surface $\Sigma \subset \RR^3$, there exists a piece-wise MCF $M_t$ starting at $\Sigma$ and defined up to time $t_{0}$ where the surfaces become singular.  
Moreover, $M_t$ can be chosen so that if
\begin{equation}	\label{e:boundeddiam}
\liminf_{t \to t_{0}} \, \, \frac{\text{diam} M_t}{\sqrt {t_{0}-t}}< \infty\, , \notag
\end{equation}
then $M_t$ becomes extinct in a round point.
\end{Thm}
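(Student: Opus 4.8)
The plan is to build $M_t$ by an iterative ``surgery at the singular time'' procedure, using the entropy $\lambda$ of \eqr{e:entropy} as the quantity that forces the procedure to terminate. Fix $V=\lambda(\Sigma)$ and let $g$ be the genus of $\Sigma$. Recall that $\lambda\geq 1$ for every closed surface (since $F_{x_0,t_0}(\Sigma)\to 1$ as $t_0\to 0$ for $x_0\in\Sigma$), that $\lambda$ is scale-, rotation- and translation-invariant, that it is continuous on compact surfaces, and that it is non-increasing both along smooth MCF (Huisken's monotonicity, Theorem \ref{t:huiskenmon}) and across the jumps of a piece-wise MCF. Recall also that, by Huisken's monotonicity together with Brakke's compactness theorem, a blow-up of the flow at a singular point subconverges to a self-shrinker (a tangent flow), and that under the assumed multiplicity-one conjecture this tangent flow is a \emph{smooth embedded} self-shrinker with polynomial volume growth, with entropy at most that of the flow, and with genus at most $g$ (the genus does not increase under the flow or under graphical jumps). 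These facts are what let Theorem \ref{c:nonlin1a} and Theorem \ref{c:cpt} be applied to the objects that actually arise.

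The iteration runs as follows. Starting from the current surface, flow smoothly by MCF to the first singular time $t_0$. If $\liminf_{t\to t_0}\diam(M_t)/\sqrt{t_0-t}=\infty$, stop: the hypothesis of the theorem fails for this flow and there is nothing to prove. Otherwise there is a sequence $t_j\to t_0$ along which $\diam(M_{t_j})\leq D_0\sqrt{t_0-t_j}\to 0$, so the surface concentrates near a single point $x_0$, and the rescaled surfaces $(t_0-t_j)^{-1/2}(M_{t_j}-x_0)$ subconverge to a \emph{compact} embedded self-shrinker $S$ with genus at most $g$, entropy at most $V$, and diameter at most some $D$ (being compact, $S$ is in particular neither a plane nor a cylinder, so the only ``forbidden'' noncompact models have been ruled out by the diameter hypothesis; cylindrical necks are exactly what make $\liminf\diam/\sqrt{t_0-t}=\infty$, using uniqueness of cylindrical tangent flows). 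If $S=\SS^2$, then by White's regularity theorem, together with uniqueness of tangent flows at a round-sphere singularity, the flow is smooth near $(x_0,t_0)$ and all of $M_t$ shrinks to $x_0$ through nearly round surfaces, i.e.\ $M_t$ becomes extinct in a round point; stop. If $S\neq\SS^2$, then $S\in\cM_{D,V,g}\setminus\{\SS^2\}$, so Theorem \ref{c:nonlin1a} provides a compactly supported, $C^m$-small graph $\tilde S$ over $S$ with $\lambda(\tilde S)<\lambda(S)$. Since $(t_0-t_j)^{-1/2}(M_{t_j}-x_0)$ is $C^m$-close to $S$ for $j$ large, transplant this perturbation to a small graph over $M_{t_j}$ itself, and after an arbitrarily small further adjustment arrange that the area does not increase across the jump; this is a legitimate piece-wise MCF step. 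By scale-invariance and continuity of $\lambda$ on compact surfaces the new surface has $\lambda=\lambda(\tilde S)+o(1)$, while $\lambda(M_{t_j})\geq\lambda(S)$ by monotonicity of $\lambda$ along the preceding flow; hence the entropy has dropped by at least $\tfrac12(\lambda(S)-\lambda(\tilde S))>0$ and is still at most $V$. Restart the iteration from the new surface.

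It remains to see that the iteration stops after finitely many steps, and this is the main obstacle. Each ``bad singularity'' step decreases $\lambda$, which is bounded below by $1$, so it suffices that the decrement is bounded below by a fixed $\epsilon>0$ independent of the step. This is exactly where the smooth compactness theorem, Theorem \ref{c:cpt}, is used: the compactness of $\cM_{D,V,g}$, combined with the fact that $\SS^2$ is an \emph{isolated} point of the space of embedded self-shrinkers (the Jacobi-type operator $L=\Delta+1$ on the round self-shrinking sphere has no kernel once the translation and dilation modes are removed), shows that $\cM_{D,V,g}\setminus\{\SS^2\}$ is compact, so the entropy gap furnished by Theorem \ref{c:nonlin1a} admits a uniform positive lower bound $\epsilon=\epsilon(D,V,g)$; and one checks that the compact tangent flows that can actually occur along the procedure have diameter bounded in terms of $V$ and $g$ alone, so a single triple $(D,V,g)$, hence a single $\epsilon$, serves for every step. (This uniform diameter bound for the relevant shrinkers — equivalently, the uniformity of $\epsilon$ along the iteration — together with the use of the multiplicity-one hypothesis to ensure the tangent flows are genuine smooth embedded shrinkers, is the technical heart of the argument.) Since $\lambda$ starts at $V$, is bounded below by $1$, and drops by at least $\epsilon/2$ at each bad-singularity step, there are at most finitely many such steps; when the iteration terminates it must terminate in the round-point alternative, and the finitely many pieces produced constitute the desired piece-wise MCF $M_t$. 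This proves the theorem.
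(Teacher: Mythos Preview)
Your proposal follows essentially the same approach as the paper. The survey does not give a detailed proof of this theorem; it sets up the argument in the preceding ``An application'' section exactly along the lines you describe: fix $D,V,g$, use the smooth compactness theorem (Theorem~\ref{c:cpt}) to make $\cM_{D,V,g}$ compact, then combine with the entropy instability (Theorem~\ref{c:nonlin1a}) to get a uniform $\epsilon>0$ so that every $\Sigma\in\cM_{D,V,g}\setminus\{\SS^2\}$ admits a graph $\tilde\Sigma$ with $\lambda(\tilde\Sigma)\leq\lambda(\Sigma)-\epsilon$, and iterate. Your identification of the multiplicity-one conjecture as the input needed to make the tangent flow a genuine smooth embedded shrinker, and of the uniform diameter bound (hence uniform $\epsilon$) as the technical crux, is exactly right and matches what the paper flags.

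Two minor points worth tightening. First, the definition of piece-wise MCF in the paper requires \emph{equality} of areas across the jump, not just non-increase; this is easily arranged since the perturbation can be chosen arbitrarily $C^m$-small and the entropy-decreasing direction is generic, but you should say ``equal'' rather than ``does not increase''. Second, your step ``if $S=\SS^2$ then the flow becomes extinct in a round point'' deserves one more sentence: once some rescaled $M_{t_j}$ is $C^2$-close to the round sphere it is strictly convex, and then Huisken's theorem (Theorem cited as \cite{H1}) gives extinction in a round point directly, without needing uniqueness of tangent flows.
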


 \section{Non-compact self-shrinkers}

\subsection{The  spectrum of $L$ when $\Sigma$ is non-compact}

If $\Sigma$ is non-compact,  there may not be a lowest eigenvalue for $L= \cL +|A|^2 + \frac{1}{2}$.  
However, we can still define the bottom of the spectrum (which we still call $\mu_1$) by
\begin{equation}	\label{e:lam1}
	\mu_1 = \, \inf_{f} \,  \, \frac{    - \int_{\Sigma} \left(  f \, L f \right) \, \e^{- \frac{|x|^2}{4} } }{   \int_{\Sigma} f^2 \, \e^{- \frac{|x|^2}{4} } } \, ,
	\notag
\end{equation}
where the infimum is taken over
 smooth functions $f$ with compact support.   

\noindent
{\bf{Warning}}:
Since $\Sigma$ is non-compact,   we must allow the possibility that $\mu_1 = - \infty$.

\vskip2mm
 We have the following characterization of $\mu_1$ generalizing the compact case (as usual $\Sigma$ is a complete self-shrinker without boundary and with polynomial volume growth):

 \begin{Pro}	\label{p:cmnon}
 (Colding-Minicozzi, \cite{CM1})
 If $\mu_1 \ne - \infty$, then:
 \begin{itemize}
 \item 
 There is a positive function $u$ on $\Sigma$ with $L \, u = - \mu_1 \, u$.
 \item   
 If $v$ is in the weighted $W^{1,2}$ space and $L \, v = - \mu_1 \, v$, then $v = C \, u$ for  $C \in \RR$.
 \item $|A| \, |x|$ is in the weighted $L^2$ space.
 \end{itemize}
 \end{Pro}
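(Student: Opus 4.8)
The plan is to adapt the standard exhaustion-plus-compactness argument used to produce a positive ground-state eigenfunction for a Schrödinger operator on a complete manifold, carried out here in the weighted (Gaussian) space in which $L$ is self-adjoint. First I would fix an exhaustion $\Sigma \cap B_R$ by precompact domains and solve the Dirichlet eigenvalue problem for $L$ on each, obtaining a lowest Dirichlet eigenvalue $\mu_1(R)$ with a first eigenfunction $u_R > 0$ (positivity is automatic for the bottom Dirichlet eigenfunction, since $|u_R|$ is also a minimizer of the Rayleigh quotient and the strong maximum principle forces it to be strictly positive in the interior). By the variational characterization of $\mu_1$, $\mu_1(R) \downarrow \mu_1$ as $R \to \infty$, and the hypothesis $\mu_1 \ne -\infty$ keeps these bounded below. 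Normalizing $u_R$ at a fixed base point $p$ and applying the Harnack inequality on compact subsets (together with elliptic estimates for the uniformly elliptic operator $L$ on each fixed ball, which has smooth coefficients), a subsequence converges in $C^2_{loc}$ to a positive function $u$ on all of $\Sigma$ with $L u = -\mu_1 u$ and $u(p) = 1$. This gives the first bullet.

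For the second bullet — uniqueness of the ground state in the weighted $W^{1,2}$ space — I would argue as in the compact case using the positivity of $u$. Given $v$ in the weighted $W^{1,2}$ space with $Lv = -\mu_1 v$, set $w = v/u$ and compute, using $Lu = -\mu_1 u$, that $u^2 \nabla^T w$ is (weighted-)divergence free: explicitly,
\begin{equation}
	\dv_{\Sigma}\!\left( u^2 \, \e^{-\frac{|x|^2}{4}} \, \nabla^T w \right) = 0 \, . \notag
\end{equation}
Pairing against $w$ times a cutoff and integrating by parts, the Gaussian weight together with the polynomial volume growth of $\Sigma$ lets the cutoff error terms be absorbed, yielding $\int_{\Sigma} u^2 \, |\nabla^T w|^2 \, \e^{-|x|^2/4} = 0$, hence $w$ is constant and $v = C u$. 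The integrability needed to justify the cutoff argument is exactly where the Gaussian weight and polynomial volume growth are used, and this is the one place one must be slightly careful.

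For the third bullet, $|A|\,|x| \in L^2_{\text{weighted}}$, I would use the two eigenfunction identities already available: $L \langle v,\nn\rangle = \tfrac12 \langle v,\nn\rangle$ from Theorem \ref{t:spectral} applied to the $n+1$ coordinate vectors, so that the functions $\langle e_i, \nn\rangle$ satisfy $L$-eigenfunction equations, and $\cL x_i = -\tfrac12 x_i$ from \eqref{e:cLcoord}. Testing the self-adjointness of $L$ (or of $\cL$) against well-chosen combinations of coordinate functions $x_i$ and normals $\langle e_i,\nn\rangle$ — essentially applying the integration-by-parts identity $\int u \,\cL v \, \e^{-|x|^2/4} = -\int \langle \nabla^T u, \nabla^T v\rangle \, \e^{-|x|^2/4}$ with $u = v = x_i$ and summing over $i$ — produces an identity in which the curvature term $\sum_i |A|^2 |x^T|^2$ (or a comparable quantity) appears with a good sign against the finite quantity $\int |x|^2 \e^{-|x|^2/4}$, which is finite by polynomial volume growth. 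Combining the tangential and normal pieces gives $\int |A|^2 |x|^2 \, \e^{-|x|^2/4} < \infty$. The main obstacle I anticipate is not any single step in isolation but rather the bookkeeping in the second and third bullets: ensuring all the integration-by-parts boundary terms genuinely vanish in the weighted space, which requires first establishing enough a priori integrability (e.g. that $|A|$, $|\nabla A|$ and the relevant test functions lie in the weighted $L^2$ space) before the formal manipulations are legitimate — this is precisely the role of the polynomial-volume-growth hypothesis, and getting the order of the estimates right is the delicate part.
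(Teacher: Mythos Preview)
The paper itself does not prove this proposition; it simply records that it combines lemmas 9.15 and 9.25 of \cite{CM1}. Your approach to the first two bullets is the standard one and is essentially what is done in \cite{CM1}: exhaustion by Dirichlet problems plus Harnack to produce a positive $u$ with $Lu=-\mu_1 u$, and the $w=v/u$ computation (justified with cutoffs in the Gaussian space) for uniqueness. Those parts are fine, modulo the bookkeeping you already flag.

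The genuine gap is in the third bullet. Your argument there never invokes the hypothesis $\mu_1 \ne -\infty$, which is the whole point. Concretely, the integration-by-parts identity you propose with $u=v=x_i$ for the operator $\cL$ produces no curvature term at all, since $\cL$ is just the drift Laplacian; and if you replace $\cL$ by $L=\cL+|A|^2+\tfrac12$, the identity becomes circular (you are rewriting $\int |A|^2 x_i^2\,\e^{-|x|^2/4}$ in terms of itself). The eigenfunction equations for $\langle e_i,\nn\rangle$ do not help either: summing $\sum_i |\nabla^T\langle e_i,\nn\rangle|^2 = |A|^2$ collapses the identity to a tautology. What actually works is the variational \emph{inequality} that $\mu_1>-\infty$ provides: for every compactly supported $f$,
\[
\int_{\Sigma} |A|^2 f^2 \, \e^{-|x|^2/4}
\;\le\;
\int_{\Sigma} |\nabla^T f|^2 \, \e^{-|x|^2/4}
\;-\;\Bigl(\mu_1+\tfrac12\Bigr)\int_{\Sigma} f^2 \, \e^{-|x|^2/4}\,.
\]
Plugging in $f=\phi\,x_i$ with a standard cutoff $\phi$, the right-hand side is bounded independently of the cutoff because $|\nabla^T x_i|\le 1$ and $\Sigma$ has polynomial volume growth; summing over $i$ and letting the cutoff exhaust $\Sigma$ gives $\int |A|^2|x|^2\,\e^{-|x|^2/4}<\infty$. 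In short: bullet three is an inequality coming from the finiteness of $\mu_1$, not an identity coming from the self-shrinker equations.
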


 (This proposition combines lemmas $9.15$
 and $9.25$ in \cite{CM1}.)

 \subsection{$\mu_1$ when $H$ changes sign}

We have already seen that the mean curvature $H$ is an eigenfunction of $L$ with eigenvalue $-1$.
The next theorem shows that if $H$ changes, then the bottom of the spectrum $\mu_1$ is strictly less than $-1$.

 \begin{Thm}	\label{c:pde}
 (Colding-Minicozzi, \cite{CM1})
 If the mean curvature $H$ changes sign, then $\mu_1 < -1$.
 \end{Thm}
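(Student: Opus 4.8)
The plan is to argue by contradiction using the variational characterization of $\mu_1$ in \eqref{e:lam1} together with the fact that $H$ is an eigenfunction of $L$ with eigenvalue $-1$ (Theorem \ref{t:spectral}). Suppose $\mu_1 \geq -1$. The strategy is to show that then $H$ must be (a constant multiple of) the lowest eigenfunction, hence cannot change sign, a contradiction. The subtlety is that in the non-compact setting there need not literally be a lowest eigenfunction, so I would work directly with the Rayleigh quotient and with cutoff functions.

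First I would record that, since $LH = H$, formally $H$ is an eigenfunction with eigenvalue $-1$, so one expects $\mu_1 \leq -1$; the content is the strict inequality when $H$ changes sign. Assuming $\mu_1 \ge -1$, we then have $\mu_1 = -1$ and $H$ is an eigenfunction realizing the bottom of the spectrum. Now I would invoke Proposition \ref{p:cmnon}: since $\mu_1 = -1 \neq -\infty$, there is a \emph{positive} function $u$ on $\Sigma$ with $Lu = -\mu_1 u = u$, and moreover any weighted-$W^{1,2}$ solution of $Lv = v$ is a constant multiple of $u$. The second step is to check that $H$ itself lies in the weighted $W^{1,2}$ space — this is where the polynomial-volume-growth hypothesis and the estimate "$|A|\,|x|$ is in the weighted $L^2$ space" from Proposition \ref{p:cmnon} enter, since controlling $\int |\nabla H|^2 e^{-|x|^2/4}$ and $\int H^2 e^{-|x|^2/4}$ for a self-shrinker reduces (via Simons-type identities for $H$ on a shrinker) to bounds involving $|A|$, $|x|$, and the volume growth. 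Granting $H \in W^{1,2}_{\text{weighted}}$, the uniqueness clause forces $H = C\,u$ for some constant $C$. But $u > 0$, so $H$ does not change sign — contradicting the hypothesis. Therefore $\mu_1 < -1$.

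The main obstacle is the second step: verifying that $H$ (the candidate eigenfunction) genuinely belongs to the weighted Sobolev space to which the uniqueness statement of Proposition \ref{p:cmnon} applies. On a compact self-shrinker this is automatic, but on a complete non-compact shrinker with only polynomial volume growth one must show the weighted Dirichlet energy of $H$ is finite. I would handle this by a cutoff argument: plug $\eta^2 H$ (with $\eta$ a logarithmic or linear cutoff) into the weighted integration-by-parts identity for $L$, use $LH = H$ to turn the zeroth-order term around, and absorb the error terms using that $|A|\,|x| \in L^2_{\text{weighted}}$ together with the Gaussian weight controlling the growth of $|x|$ against the polynomial volume growth. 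Once $H \in W^{1,2}_{\text{weighted}}$ is secured, everything else is a direct appeal to Theorem \ref{t:spectral} and Proposition \ref{p:cmnon}, and the conclusion $\mu_1 < -1$ follows. An alternative, perhaps cleaner, route avoiding the uniqueness clause: if $\mu_1 = -1$, then $H$ is an eigenfunction that changes sign, so $H$ is orthogonal (in the weighted $L^2$ inner product) to the positive lowest eigenfunction $u$ from Proposition \ref{p:cmnon}; but $u > 0$ and $\int H u\, e^{-|x|^2/4} = 0$ is already inconsistent with $H$ being a nonzero scalar multiple of $u$, and more to the point one shows an eigenfunction with the smallest eigenvalue cannot change sign, so $\mu_1 = -1$ is impossible.
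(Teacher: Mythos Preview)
Your main argument is correct and matches the paper's: assume $\mu_1 > -\infty$, invoke Proposition~\ref{p:cmnon} to get $|A|\,|x|$ in weighted $L^2$, show $H$ lies in the weighted $W^{1,2}$ space (so $H$ is a legitimate test function, giving $\mu_1 \le -1$, and the uniqueness clause then rules out $\mu_1 = -1$). The paper makes the $W^{1,2}$ step concrete by differentiating the shrinker equation $H = \tfrac{1}{2}\langle x,\nn\rangle$ to obtain $2\,\nabla_{e_i} H = -a_{ij}\langle x, e_j\rangle$, so $|\nabla H| \le C\,|A|\,|x|$ and the weighted $L^2$ bound on $|A|\,|x|$ immediately controls $\nabla H$; your ``alternative route'' at the end is muddled (orthogonality of eigenfunctions with the \emph{same} eigenvalue is not automatic), but it is not needed.
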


The  idea of the proof follows:  
\begin{enumerate}
\item
We can assume that $\mu_1 \ne - \infty$.  Thus,  Proposition \ref{p:cmnon} gives that $|A| \, |x|$ is in the weighted $L^2$ space.
\item Differentiating the self-shrinker equation $H = \frac{1}{2} \, \langle x , \nn \rangle$ gives
\begin{equation}
	2\, \nabla_{e_1} H =  - A_{ij} \langle x , e_j \rangle \, . \notag 
\end{equation}
It follows from this and (1) 
 that $H$, $\nabla H$, and $|A| \, H$ are  in the weighted $L^{2}$ space.
\item  The bounds in (3) are enough to justify  using $H$ as a test function in the definition of $\mu_1$ and get   that  $\mu_1 \leq -1$.

\item It remains to rule out that $\mu_1 = -1$.  But this would imply that $H$ does not change sign by the uniqueness part of Proposition \ref{p:cmnon}.
\end{enumerate}

\subsection{The $F$-unstable variation when $\mu_1 < -1$}

We have shown that if $H$ changes sign, then  $\Sigma$ has $\mu_1 < -1$.  
When $\Sigma$ was closed, it followed immediately from this and  the orthogonality of eigenfunctions with different eigenvalues (for a symmetric operator) that $\Sigma$ was $F$-unstable. 
However, this orthogonality uses an integration by parts  which is  not justified when $\Sigma$ is open.  
  Instead, we show that the lowest eigenfunction on a sufficiently large ball is {\emph{almost}} orthogonal to $H$ and the translations.  
 This turns out to be enough to prove $F$- instability:

   \begin{Lem}  (Colding-Minicozzi, \cite{CM1})	\label{l:ortho}
   If $\mu_1 < -1$, then there exists $\bar{R}$ so that if $R \geq \bar{R}$ and $u$ is a Dirichlet eigenfunction for $\mu_1 (B_R)$, then
   for any $h \in \RR$ and any $y \in \RR^{n+1}$ we have
   \begin{equation}	\label{e:ortho}
   	\left[  -u \, L \, u + 2 u \, h \, H + u \,
	 \langle y , \nn \rangle
		    -   h^2 \, H^2
		  - \frac{\langle y , \nn \rangle^2}{2}  \right]_{B_R} < 0 \, .
   \end{equation}
   \end{Lem}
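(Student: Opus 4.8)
Write $[\,\cdot\,]_{B_R}=\int_{B_R\cap\Sigma}(\,\cdot\,)\,\e^{-\frac{|x|^2}{4}}\,d\mu$, and let $u=u_R$ be a lowest Dirichlet eigenfunction of $L=\cL+|A|^2+\frac12$ on $B_R\cap\Sigma$, extended by zero. Since \eqr{e:ortho} is homogeneous of degree two under $(u,h,y)\mapsto(\lambda u,\lambda h,\lambda y)$, we may normalize $[u^2]_{B_R}=1$; then $-[uLu]_{B_R}=\mu_1(B_R)$, and by domain monotonicity $\mu_1(B_R)\searrow\mu_1$ as $R\to\infty$. By Theorem \ref{t:secvar}, applied to the variation $\Sigma_s=\{x+s\,u\,\nn\}$, $x_s=s\,y$, $t_s=1+s\,h$, the left side of \eqr{e:ortho} equals $(4\pi)^{n/2}\,\partial_{ss}\big|_{s=0}F_{x_s,t_s}(\Sigma_s)$; as a function of $(h,y)\in\RR\times\RR^{n+1}$ it is a concave quadratic (its quadratic part is $-h^2[H^2]_{B_R}-\tfrac12[\langle y,\nn\rangle^2]_{B_R}\le0$), so it is negative for all $(h,y)$ if and only if its maximum is negative. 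Completing the square in $h$ and $y$ shows that this maximum equals
\begin{equation}
\mu_1(B_R)+\frac{\beta_R^2}{\kappa_R}+\frac12\,c_R^{T}M_R^{-1}c_R,\qquad \beta_R=[uH]_{B_R},\ \ \kappa_R=[H^2]_{B_R},\notag
\end{equation}
with $(c_R)_i=[u\langle\partial_i,\nn\rangle]_{B_R}$, $(M_R)_{ij}=[\langle\partial_i,\nn\rangle\langle\partial_j,\nn\rangle]_{B_R}$, and $M_R^{-1}$ the inverse of $M_R$ on its range (one checks $c_R\perp\ker M_R$, so the maximum is finite). It therefore suffices to control the two correction terms.

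\textbf{Two easy points.} Cauchy--Schwarz gives $\beta_R^2\le\kappa_R[u^2]_{B_R}=\kappa_R$ and $c_R^{T}M_R^{-1}c_R\le[u^2]_{B_R}=1$, so the maximum is $\le\mu_1(B_R)+\tfrac32$; hence if $\mu_1\le-\tfrac32$ --- in particular if $\mu_1=-\infty$ --- the lemma holds for all large $R$. So assume $-\tfrac32<\mu_1<-1$; then $\mu_1$ is finite, so Proposition \ref{p:cmnon} gives $|A|\,|x|\in L^2(\e^{-\frac{|x|^2}{4}})$. Differentiating $H=\frac12\langle x,\nn\rangle$ yields $2\nabla_{e_i}H=-A_{ij}\langle x,e_j\rangle$, so $|\nabla H|\le|A|\,|x|$; together with $|H|\le\frac12|x|$ and polynomial volume growth, $H\in W^{1,2}(\e^{-\frac{|x|^2}{4}})$. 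Likewise $|A|\in L^2(\e^{-\frac{|x|^2}{4}})$ (bounded on compact sets, and $\le|A|\,|x|$ elsewhere), so using $|\nabla\langle v,\nn\rangle|\le|A|$ also $\langle v,\nn\rangle\in W^{1,2}(\e^{-\frac{|x|^2}{4}})$ for every $v$. Finally $\kappa:=[H^2]_\Sigma$ is finite, and positive since $\mu_1<-1$ rules out $\Sigma$ being a hyperplane, so $H\not\equiv0$.

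\textbf{Almost orthogonality.} It remains to prove $\beta_R\to0$ and $c_R\to0$ as $R\to\infty$ (note $\kappa_R\to\kappa>0$). By interior elliptic estimates, the convergence $\mu_1(B_R)\to\mu_1$, and --- crucially --- the fact that no weighted $L^2$-mass escapes to infinity, the normalized eigenfunctions $u_R$ converge to a positive $u_\infty\in W^{1,2}(\e^{-\frac{|x|^2}{4}})$ with $L\,u_\infty=-\mu_1u_\infty$ and $[u_\infty^2]_\Sigma=1$, in such a way that $\beta_R\to[u_\infty H]_\Sigma$ and $c_R\to([u_\infty\langle\partial_i,\nn\rangle]_\Sigma)_i$; this is exactly the convergence underlying the construction of the bottom eigenfunction in Proposition \ref{p:cmnon}. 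By Theorem \ref{t:spectral}, $H$ and $\langle v,\nn\rangle$ are eigenfunctions of the weighted-symmetric operator $L$ with eigenvalues $-1$ and $-\tfrac12$, both different from $\mu_1$; hence $[u_\infty H]_\Sigma=0$ and $[u_\infty\langle\partial_i,\nn\rangle]_\Sigma=0$ for all $i$ by orthogonality of eigenfunctions. Therefore the maximum from the first paragraph tends to $\mu_1<-1<0$, and choosing $\bar R$ so that it stays negative for all $R\ge\bar R$ finishes the proof.

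\textbf{The main obstacle.} The only non-routine step is the no-loss-of-mass statement in the third paragraph: that the generalized lowest eigenfunction of Proposition \ref{p:cmnon} lies in $W^{1,2}(\e^{-\frac{|x|^2}{4}})$ and is the weighted-$L^2$ limit of the $u_R$. One reads this off the proof of Proposition \ref{p:cmnon} in \cite{CM1}; alternatively it follows from a weighted Caccioppoli inequality together with an Agmon-type estimate, the point being that outside a fixed compact set $\e^{-\frac{|x|^2}{4}}$ decays faster than any $L$-eigenfunction with controlled eigenvalue can grow. A route avoiding the limit altogether: the weighted Green identity for $L$, using $u_R|_{\partial B_R}=0$, $L u_R=-\mu_1(B_R)u_R$ and $LH=H$, gives $\beta_R=|1+\mu_1(B_R)|^{-1}\int_{\partial B_R\cap\Sigma}H\,\partial_\nu u_R\,\e^{-\frac{|x|^2}{4}}$, which one bounds via $[H^2+|\nabla H|^2]_\Sigma<\infty$ and the exponentially small weight on $\partial B_R$ (taking $B_R$ extrinsic) --- trading the compactness input for trace estimates of a similar flavor.
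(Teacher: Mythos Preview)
Your argument is structurally aligned with the paper's: the survey proves only the easy case $\mu_1<-\tfrac32$ (your second paragraph recovers exactly that computation), and for the remaining range it says ``we show that the lowest eigenfunction on a sufficiently large ball is \emph{almost} orthogonal to $H$ and the translations.'' Your reduction to showing $\beta_R\to 0$ and $c_R\to 0$ is precisely that statement, so the outline is right.

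Two points deserve comment. First, a small slip: your ``easy case'' should read $\mu_1<-\tfrac32$ strictly. Domain monotonicity gives $\mu_1(B_R)>\mu_1$ for every finite $R$, so at $\mu_1=-\tfrac32$ the crude bound $\mu_1(B_R)+\tfrac32$ is still positive; that boundary value must be absorbed into the hard case.

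Second, and more substantively: your route to almost-orthogonality passes through the limit $u_\infty$ and then invokes ``orthogonality of eigenfunctions with different eigenvalues.'' The paper goes out of its way to warn that this orthogonality ``uses an integration by parts which is not justified when $\Sigma$ is open,'' and that is why it works directly on $B_R$. Now, if you really have $u_\infty,H,\langle v,\nn\rangle\in W^{1,2}(\e^{-|x|^2/4})$, then the integration by parts \emph{can} be justified by a standard cutoff: with $\phi_R$ supported in $B_{2R}$ and equal to $1$ on $B_R$, the cross term $[\phi_R^2(u_\infty\cL H-H\cL u_\infty)]$ reduces to commutators with $\nabla\phi_R$ that vanish as $R\to\infty$ by Cauchy--Schwarz, yielding $(1+\mu_1)[u_\infty H]=0$. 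So your identification of the main obstacle is correct, but you should say explicitly that the orthogonality step is not free and indicate the cutoff argument; as written it reads as if you are using the very shortcut the paper flags as illegitimate. Your alternative via the weighted Green identity on $B_R$ (boundary term $\int_{\partial B_R}H\,\partial_\nu u_R\,\e^{-|x|^2/4}$) is closer in spirit to the direct approach and avoids the limit, but then the work shifts to controlling $\partial_\nu u_R$ on $\partial B_R$ uniformly in $R$; as you note, this trades one analytic input for another of similar strength.

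In short: the reduction is correct and matches the paper; the execution in the range $-\tfrac32\le\mu_1<-1$ is incomplete but you have correctly located where the missing estimate lives.
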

   
   Here, in \eqr{e:ortho}, we used $\left[ \cdot \right]_{B_R}$ to denote the Gaussian weighted integral over the ball $B_R$.

\vskip2mm
To illustrate how the almost orthogonality comes in, we will explain a simple case of Lemma \ref{l:ortho} when $\mu_1 < - \frac{3}{2}$ (this still leaves the possibility that $\mu_1$ is between $-\frac{3}{2}$ and $-1$).

\vskip1mm
\noindent
{\bf{Sketch of \eqr{e:ortho} when $\mu_1 < - \frac{3}{2}$}}:
 Using the Cauchy-Schwartz inequality $ ab \leq \frac{1}{2} \, (a^2 + b^2)$ on the cross-term
  $ u \,
	 \langle y , \nn \rangle $,  the left hand side of \eqr{e:ortho} is bounded from above by
\begin{equation}	     	\label{e:ortho2}
	 \left[ \left( \frac{1}{2} + \mu_1(B_R) \right) \, u^2 + 2 u \, h H
		    -   h^2 \, H^2
		\right]_{B_R} 	  \, .
\end{equation}
We will show that this is  negative when $R$ is   large.
 Since
   $\mu_1 < -\frac{3}{2} $, we can choose $\bar{R}$ so that $\mu_1 (B_{\bar{R}}) < - \frac{3}{2}$.
 Given any $R \geq \bar{R}$, then \eqr{e:ortho2} is strictly less than
   \begin{equation}	
	 \left[ -  u^2 + 2 u \, h H
		    -   h^2 \, H^2
		\right]_{B_R}  =
	- \left[ 	 \left(    u  - h H \right)^2
		\right]_{B_R}	  \, ,
\end{equation}
which  gives \eqr{e:ortho} in this case.

\subsection{Classification of mean convex self-shrinkers}

Throughout this subsection, $\Sigma$ is a complete, non-compact self-shrinker with polynomial volume growth, $\partial \Sigma = \emptyset$ and $H> 0$.  We will sketch the proof of the classification theorem, i.e., Theorem \ref{t:huisken}, which gives that $\Sigma$ is a cylinder.

The classification relies heavily upon the following ``Simons' identity'' for the second fundamental form $A$ of a self-shrinker $\Sigma$:
\begin{equation}
	L \, A = A \, , \notag 
\end{equation}
where we have extended $L$ to act on tensors in the natural way.
Taking the trace of this recovers that $L \, H = H$ since traces and covariant derivatives commute (the metric is parallel).

Roughly speaking, the identity $L A = A$ says that
 the whole matrix $A$ is a lowest eigenfunction for $L$.   
 Moreover, the matrix strong maximum principle shows that the kernel of $A$ consists of parallel vector fields that split off a factor of $\RR^k$.
The remaining principle curvatures must  then all be multiples of each other (by the uniqueness of the lowest eigenfunctions for $L$).  
We will show that the remaining non-zero principle curvature are in fact the same, i.e., $\Sigma$ is the product of an affine space and a totally umbilic submanifold.

 \vskip2mm
The main steps in the proof Theorem \ref{t:huisken} are:
\begin{enumerate}
\item
  Using $L \, A = A$, it follows that
 \begin{equation}	\label{e:LmodA}
	L \, |A| = |A|  +  \frac{\left| \nabla A \right|^2 - \left| \nabla |A| \right|^2}{ |A|}
	 \geq |A|  \, .   \notag
\end{equation}
\item Using $L \, A = A$ and the ``stability inequality'' coming from $H>0$, we show that
\begin{equation}
	\int \left( |A|^2 + |A|^4 + |\nabla |A||^2 + |\nabla A|^2 \right) \, \e^{ - \frac{|x|^2}{4} } < \infty \, . 
	\notag
\end{equation}
(This should remind you of the Schoen-Simon-Yau, \cite{ScSiY}, curvature estimates for stable minimal hypersurfaces.)
\item Using (2) to show that various integrals converge and justify various integrations by parts,  (1) and $L \, H = H$ imply that $H = C \, |A|$ for $C > 0$.
\item 
The combination of $|\nabla A|^2 = |\nabla |A||^2$ and $H = C \, |A|$ - and some work - give the classification.  This last step is essentially the same argument as in \cite{H3}.
\end{enumerate}


\end{document}